\documentclass[11pt]{article}

\usepackage{amsthm,amsmath,amssymb}
\usepackage{amsthm,amsmath}
\usepackage[matrix,arrow]{xy}
\usepackage{graphicx}
\usepackage{enumerate,amssymb,setspace}
\usepackage{amsfonts,amsthm,amscd}
\sloppy\pagestyle{plain}

\date{}
\textheight=230mm
\textwidth=158mm
\topmargin=-10mm
\oddsidemargin=-0mm
\evensidemargin=-35mm
 \hoffset.32cm
 \voffset-0.5cm

%\textwidth=16cm \textheight=23cm
%\addtolength{\topmargin}{-40pt} \addtolength{\oddsidemargin}{-2cm}
%\addtolength{\evensidemargin}{-2cm}

\newtheorem{theorem}{Theorem}[section]
\newtheorem{lemma}[theorem]{Lemma}
\newtheorem{prop}[theorem]{Proposition}
\newtheorem{corollary}[theorem]{Corollary}
\newtheorem{conjecture}[theorem]{{Conjecture}}
\newtheorem{example}[theorem]{{Example}}

\newtheorem{definition}[theorem]{{Definition}}

\theoremstyle{remark}
\newtheorem{remark}[theorem]{Remark}

\makeatletter\@addtoreset{equation}{section} \makeatother

\font\sml=cmr6

\newcommand{\ocal}{\mathcal{O}}

 \def\calO{\ocal}

\def\a{\alpha} \def\be{\beta}
\def\o{\omega} 
\def\th{\theta} 
\def\vp{\varphi} \def\eps{\epsilon}

\newcommand{\dbar}{\bar\partial}
\newcommand{\ddbar}{\partial\dbar}

\def\aut{{\operatorname{aut}}}
\def\Aut{{\operatorname{Aut}}}
\def\Im{{\operatorname{Im}}}
\def\Re{{\operatorname{Re}}}

\def\max{{\operatorname{max}}}

\def\K{K\"ahler } 
\def\KE{K\"ahler--Einstein }

\def\on{\omega^n}

\def\Ric{\hbox{\rm Ric}\,}

\def\h#1{\hbox{#1}}

\def\strutdepth{\dp\strutbox}
\def\specialstar{\vtop to \strutdepth{
    \baselineskip\strutdepth
    \vss\llap{$\star$\ \ \ \ \ \ \ \ \  }\null}}
\def\marginalstar{\strut\vadjust{\kern-\strutdepth\specialstar}}
%% putting text ! 16.6.06

\def\marginal#1{\strut\vadjust{\kern-\strutdepth
    {\vtop to \strutdepth{
    \baselineskip\strutdepth
    \vss\llap{{ \small #1 }}\null}
    }}
    }

\def\text{\textstyle}

\def\q{\quad}

\def\b#1{\bar{#1}}

\def\PSH{\mathrm{PSH}}

\def\ra{\rightarrow}

\def\i{\sqrt{-1}}

\def\Aut{{\operatorname{Aut}}}
\def\id{{\operatorname{id}}}

\let\s=\sigma

\def\Pic{\operatorname{Pic}}
\def\sm{\setminus}

\def\exc{\operatorname{exc}}
\def\lct{\operatorname{lct}}
\def\D{\Delta}
\def\del{\partial}
\newcommand{\PP}{{\mathbb P}} \newcommand{\RR}{\mathbb{R}}
\newcommand{\QQ}{\mathbb{Q}} \newcommand{\CC}{{\mathbb C}}
 \newcommand{\NN}{{\mathbb N}}
\newcommand{\FF}{{\mathbb F}}

\def\la{\lambda}
\def\beq{\begin{equation}}
\def\eeq{\end{equation}}
\def\bpf{\begin{proof}}
\def\epf{\end{proof}}
\def\Rico{\Ric\,\!\o}

\def\eaeq{\end{aligned}}
\def\baeq{\begin{aligned}}
\def\mult{\operatorname{mult}}
\def\saldp{strongly asymptotically log del Pezzo }
\def\Saldp{Strongly asymptotically log del Pezzo }

%%%%%%%%%%%%%%%%%%%%%%%%%%%%%%%%%%%%%%%%

\title{Asymptotically log Fano varieties}

\author{Ivan A. Cheltsov and Yanir A. Rubinstein}

\begin{document}
\maketitle

\begin{abstract}

Motivated by the study of Fano type varieties
we define a new class of log
pairs that we call {asymptotically log Fano varieties} and
{strongly asymptotically log Fano varieties}.
We study their
properties in dimension two under an additional assumption of log
smoothness, and give a complete classification of two dimensional
strongly asymptotically log smooth log Fano varieties.
Based on this classification
we formulate an asymptotic
logarithmic version of Calabi's conjecture for del Pezzo surfaces
for the existence of K\"ahler--Einstein edge metrics in
this regime.
We make some initial progress towards its proof by demonstrating
some existence and non-existence results, among them a generalization
of Matsushima's result on the reductivity of the
automorphism group of the pair, and results on log canonical thresholds
of pairs. 
One by-product of this study is a new conjectural
picture for the small angle regime and limit which
reveals a rich structure in the asymptotic regime, of which
a folklore conjecture 
concerning
the case of a Fano manifold with an anticanonical divisor is
a special case.

\end{abstract}

\tableofcontents

\section{Introduction}

This article draws its motivation from classification theory
of Fano type varieties in algebraic geometry on the one hand,
and the uniformization problem of \K edge manifolds in
complex differential geometry on the other hand.
Our results here contribute to both of these problems, and
also draw some connections between the two. In addition,
we relate both of these to the theory of non-compact
Calabi--Yau fibrations
and the Minimal Model Program.

\subsection{Asymptotically log Fano varieties}

A projective variety $X$ is said to be \emph{of Fano type} if
there exists an effective $\mathbb{Q}$-divisor
$$
\Delta=\sum_{i=1}^{r}a_i\Delta_i
$$
on $X$ such that the divisor
$-K_X-\Delta$ is ample and the pair $(X,\Delta)$ has at most
Kawamata log terminal singularities \cite[Lemma--Definition
2.6]{PrSh}. Fano type varieties possess very nice properties: they
are rationally connected \cite{Zhang}, they are Mori dream spaces
\cite{BCHM}, and their Cox rings have mild singularities
\cite{Brown}, \cite{GOST}. Moreover, Fano type varieties play an
important role in birational geometry: they are building blocks of
rationally connected varieties \cite{KoMo98}, they appears as exceptional
divisors of extremal contractions
%\cite[Lemma 2.8 (iii)]{PrSh},
and they behave well
under contractions \cite[Lemma 2.8]{PrSh}.

Can we classify Fano type varieties? Probably not. This problem
seems to be beyond current reach even in dimension two. One can expect that the
problem is much easier if we restrict ourself to the \emph{log
smooth} case, i.e., when $X$ is smooth and the support of
$\Delta$
is a simple normal crossing divisor.
However, this does not seem to be the
case, and the later problem seems equally hard
and is also very far from
being solved even in dimension two.

One of the early attempts at classifying pairs with such properties
is  Maeda's work. Maeda coined the term
``log Fano varieties" for log smooth pairs $(X,D)$ such
that $-K_X-D$ is ample and gave a complete classification
in dimensions two and three \cite{Maeda}.
A special family of two-dimensional Fano type varieties whose boundaries have
\emph{standard} coefficients, i.e., all $a_i$
are of the form
$\frac{m-1}{m}$ for $m\in\NN$, appeared naturally in
the work of Koll\'ar who used
them to construct
$5$-dimensional real manifolds
that carry an Einstein metric with positive constant
\cite{Kollar2005,Kollar2007}.
In a different setting, work of Tsuji, Tian, and Donaldson,
suggests to consider pairs $(X,D)$ where $X$ is itself Fano,
and $D$ is an anticanonical divisor
whose boundaries have real coefficients close to
$1$ \cite{Tsuji,Tian1994,D}.
Then the numbers $2\pi(1-a_i)$ have a natural concrete
geometrical interpretation
by considering K\"ahler metrics with positive
curvature that have edge singularities along $\Delta$, in other
words metrics modelled on a one-dimensional cone of angle $2\pi(1-a_i)$
along each `complex edge' $\Delta_i$.
Such metrics were introduced by Tian as a natural generalization of conical Riemann surfaces.
A general existence theorem
for K\"ahler--Einstein edge (KEE) metrics with a smooth divisor has been obtained by Jeffres--Mazzeo--Rubinstein \cite{JMR} and we
come back to this circle of ideas in
\S\ref{CalabiSubsec}--\ref{ExistenceSubsec}.

The present work draws its motivation from all three of these geometric
settings: the asymptotic classes we introduce next contain as special cases
these previously studied geometries.

\begin{definition}
\label{definition:log-Fano} We say that a pair $(X,D)$ consisting
of a
projective variety $X$ with $-K_X$ $\QQ$-Cartier and a
divisor $D=\sum_{i=1}^rD_i$ (where the $D_i$ are distinct
$\QQ$-Cartier
prime Weyl divisors)
on $X$ is {\it (strongly) asymptotically log Fano} if the log pair
$(X,(1-\beta_i)D_i)$ has Kawamata log terminal singularities, and
the divisor $-K_X-\sum_{i=1}^r(1-\beta_i)D_i$ is ample for (all)
sufficiently small $(\beta_1,\ldots,\beta_r)\in(0,1]^r$.
\end{definition}

In the two dimensional case, we also refer to such
pairs as {\it (strongly) asymptotically log del Pezzo.}
Note that both definitions (asymptotically log Fano and strongly
asymptotically log Fano) coincide if $D$ consists of a single
component. This is not the case when $D$ is reducible.

For the rest of this article we restrict without further mention
to the (already challenging) log smooth case, i.e., when
$X$ is smooth and $D$ has simple normal crossings.

\subsection{Classification results in dimension two}

In this article we classify all strongly asymptotically log del
Pezzo surfaces, i.e., we explicitly describe all pairs $(S,C)$
consisting of a smooth surface $S$ and a simple normal crossing
curve $C$ on $S$ such that $(S,C)$ is strongly asymptotically log
del Pezzo. We believe many of the results and techniques
presented should also
be useful for classifying all asymptotically log del Pezzo surfaces
in the future.
Our main classification result is as follows.

\begin{theorem}
\label{theorem:main} Let $S$ be a smooth complex surface. Let $C=C_1+\ldots C_r$ be a simple normal
crossing divisor on $S$, with each of the $C_i$ smooth. Then $(S,C)$ is
a strongly asymptotically log del Pezzo surface if and only if it is one
of the pairs listed in Theorem \ref{theorem:main-1} (when $r=$1)
or Theorem \ref{theorem:main-2-3-4} ($r\ge$ 2).
\end{theorem}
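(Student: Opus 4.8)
The plan is to convert the asymptotic ampleness requirement into a finite list of checkable numerical conditions on $S$ and $C$, and then to carry out a Mori-theoretic classification of the surfaces that satisfy them.

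I would first unwind the definition. Setting $D_\beta := -K_S-\sum_{i=1}^r(1-\beta_i)C_i=(-K_S-C)+\sum_{i=1}^r\beta_iC_i$, the pair is strongly asymptotically log del Pezzo precisely when $D_\beta$ is ample for every $\beta$ in a box $(0,\epsilon]^r$; the Kawamata log terminal hypothesis is automatic here, since in the log smooth case the coefficients $1-\beta_i$ lie in $[0,1)$. By the Nakai--Moishezon criterion on a surface, ampleness of $D_\beta$ means $D_\beta^2>0$ and $D_\beta\cdot E>0$ for every irreducible curve $E$. Letting $\beta\to 0$ shows that $-K_S-C$ must be nef and $(-K_S-C)^2\ge 0$; and for every curve $E$ with $(-K_S-C)\cdot E=0$ one reads off, by letting a single $\beta_i$ dominate, that $C_i\cdot E\ge 0$ for every $i$, with $\sum_i C_i\cdot E>0$. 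The first real lemma should be the converse: these conditions on $-K_S-C$ and on the $C_i$ are also \emph{sufficient} for the whole corner $(0,\epsilon]^r$ to land in the (open, convex) ample cone, using that a surface has only finitely many extremal rays relevant to the test.

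Next I would use that $-K_S-C$ nef forces $S$ to be rational, the irrational ruled case being excluded by a pseudoeffectivity argument applied to $-K_S=(-K_S-C)+C$. I would then run the minimal model program, contracting $(-1)$-curves while tracking the strict transform of $C$ and the nef class $-K_S-C$, to reduce to the minimal rational surfaces $\PP^2$ and the Hirzebruch surfaces $\Fn$. The inequalities above bound the degree $(-K_S-C)^2$, and via repeated adjunction together with the constraints $(-K_S-C)\cdot C_i\ge 0$ this bounds both the Picard number of $S$ and the number $r$ and self-intersections of the components $C_i$. The classification then proceeds by case analysis on $r$ and on the isomorphism type of the minimal model, exhibiting $S$ as an explicit iterated blow-up of $\PP^2$ or $\Fn$ at points in prescribed position relative to $C$, with the degenerate case $(-K_S-C)^2=0$ handled separately through the induced ruling. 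This is exactly the division into the single-component list of Theorem~\ref{theorem:main-1} and the multi-component list of Theorem~\ref{theorem:main-2-3-4}.

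The main obstacle, I expect, is controlling the blow-up combinatorics while keeping the \emph{strong} (rather than merely asymptotic) condition alive: because strong ampleness demands positivity in every coordinate direction of the corner simultaneously, the way blow-ups at nodes of $C$ redistribute self-intersections among the $C_i$, together with the mutual intersections of distinct components, must be tracked with care. It is precisely here that the strongly-versus-weakly distinction---invisible when $r=1$---becomes decisive and forces the $r\ge 2$ list to be much more constrained. A secondary difficulty is finiteness: one must show that only finitely many blow-ups preserve all the conditions, which I would establish by checking that each admissible blow-up strictly decreases the bounded integer $(-K_S-C)^2$ or else leaves the configuration in one of finitely many numerical types. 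Finally, the converse (``if'') direction is a direct verification: for each candidate pair one lists the finitely many potentially obstructing curves $E$ and checks $D_\beta\cdot E>0$ together with $D_\beta^2>0$ for all small $\beta$, a routine intersection computation on a blow-up of $\PP^2$ or $\Fn$.
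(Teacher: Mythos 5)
Your overall skeleton is the paper's own: contract $-1$-curves to reduce to the minimal rational surfaces $\PP^2$ and $\Fn$, classify there, and then verify the resulting list via Nakai--Moishezon. Your opening reduction is correct and is in fact a clean packaging that the paper never states explicitly: $(S,C)$ is strongly asymptotically log del Pezzo if and only if $-K_S-C$ is nef and every irreducible curve $E$ with $(-K_S-C).E=0$ satisfies $C_i.E\ge 0$ for all $i$ and $C.E>0$ (sufficiency needs a genuine argument for $D_\beta^2>0$ when $(-K_S-C)^2=0$, using that any $C_i$ with $C_i^2<0$ must have $(-K_S-C). C_i>0$; this can be made to work). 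However, two of the claims that are supposed to drive your classification are false, and they are not details.

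First, the boundedness and finiteness mechanism fails. You assert that the numerical conditions bound the Picard number of $S$, the self-intersections $C_i^2$, and the degree $(-K_S-C)^2$, and that ``only finitely many blow-ups preserve all the conditions,'' each admissible blow-up strictly decreasing $(-K_S-C)^2$. This contradicts the statement being proved: the lists of Theorems \ref{theorem:main-1} and \ref{theorem:main-2-3-4} contain infinite families. In $\mathrm{(I.2.n)}$ one has $C^2=-n$ and $(-K_S-C)^2=n+4$ with $n$ arbitrary; in $\mathrm{(I.6C.m)}$ the Picard rank is $m+1$ with $m$ arbitrary. Worse, your proposed decreasing quantity is actually invariant: blowing up a smooth point of the boundary and taking the proper transform gives $-K_{S}-C=\pi^{*}(-K_{s}-c)$, so $(-K_S-C)^2$ never drops under admissible blow-ups --- this invariance is precisely what the paper exploits in \S\ref{PositivitySubsec}--\ref{VerifSubsec}, and it is also why the ``if'' direction there is not entirely routine (the ample part of the pullback degenerates as $\beta\to0$ at the same rate as the exceptional contribution). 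The correct termination device is the paper's: any $-1$-curve not contained in $C$ meets $C$ at most once and transversally (Lemma \ref{lemma:log-del-Pezzo-blow-down-1}), so contracting it preserves the class of pairs and drops the Picard rank by one, and a pair admitting no such curve meeting $C$ (``minimal'') automatically has Picard rank at most $2$ (Lemmas \ref{lemma:log-del-Pezzo-minimal-good-small-Picard-r-1} and \ref{lemma:minimalsmallPic}). The output is then infinitely many families indexed by $n$ and by the number $m$ of blown-up points, not finitely many numerical types.

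Second, the rationality step has a gap. You exclude the irrational ruled case ``by a pseudoeffectivity argument applied to $-K_S$,'' but bigness of $-K_S$ does not force rationality: on the elliptic ruled surface $S=\PP(\calO_E\oplus\lcal)$ with $E$ elliptic and $\deg\lcal=-1$, one has $-K_S\equiv 2\sigma+f$ (with $\sigma$ the negative section), whose Zariski decomposition has nef positive part $P\equiv\sigma+f$ with $P^2=1$, so $-K_S$ is big while $q(S)=1$. Pseudoeffectivity considerations only give uniruledness, which is not rationality. The paper closes this step cohomologically: Kawamata--Viehweg vanishing applied to the ample $\QQ$-divisor $-K_S-(1-\beta)C$ gives $h^{1}(\calO_S)=h^{2}(\calO_S)=0$, bigness of $-K_S$ gives $h^{0}(\calO_S(2K_S))=0$, and Castelnuovo's criterion then yields rationality. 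Some input of this kind (or a case analysis of ruled surfaces over irrational bases using nefness of $-K_S-C$, not merely positivity of $-K_S$) is genuinely needed before the reduction to $\PP^2$ and $\Fn$ can begin.
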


This generalizes the classical result of
Castelnuovo, Enriques and del Pezzo for the case
with no boundary \cite{delPezzo,Hitchin},
as well as its generalization to the logarithmic
setting by Maeda \cite{Maeda} who classified
all pairs $(S,C)$ with $-K_S-C$ ample.

The classification part (`only if') of the proof occupies
Sections 2 ($r=1$) and 3 ($r\ge2$). The first several steps
are to obtain useful topological and cohomological
restrictions on the boundary curve. For instance, $C$
has genus at most one, and when it is elliptic it
must be anticanonical, $r$ must be 1, and $S$ must be del Pezzo
(Lemmas \ref{lemma:log-del-Pezzo-rational-or-elliptic-1}
and \ref{lemma:log-del-Pezzo-rational-or-elliptic}).
Thus, we may assume that $C\not\sim-K_S$
and that the $C_i$ are rational. Then $C^2\le -2$,
i.e., $C$ `traps' the negative curvature portion of $-K_S$
(Lemma \ref{lemma:log-del-Pezzo-rational-curves-not-del-Pezzo-1}).
In the same token, no other rational curve may have self-intersection
less than $-1$ (Lemma \ref{lemma:log-del-Pezzo-rational-curves-1}),
reflecting the fact that the
curvature should morally be positive outside of $C$.
But $-1$-curves are indeed allowed away from $C$ and
an important task is to understand their geometry
and configuration relative to $C$.
Lemma \ref{lemma:log-del-Pezzo-blow-down-1} shows
that such curves come in two types: disjoint from
$C$ or intersecting it transversally at exactly one point.
Motivated by this observation we say a pair is {\it minimal }
if it contains no $-1$-curves of the second type.
Lemmas \ref{lemma:log-del-Pezzo-minimal-good-small-Picard-r-1}
and \ref{lemma:minimalsmallPic}
show that minimality implies the Picard group
is `small', namely, of rank at most 2.
The case $r\ge2$ relies on some general results
(proved in \S\ref{subsection:ALDP})
on the combinatorial and cohomological structure of
the boundary that hold also in the asymptotic
(and not necessarilly strongly asymptotic)
regime.
Thus,
we perform an induction on $\h{rk}(\Pic(S))$
by successively contracting $-1$-curves;
the observation that makes this possible
is that when $-1$-curves of the first
type are contracted the resulting pair is still
log smooth and strongly asymptotically del Pezzo
(Lemmas \ref{lemma:log-del-Pezzo-blow-down-minimally-1},
\ref{lemma:log-del-Pezzo-blow-down-minimally},
and
\ref{lemma:log-del-Pezzo-blow-down-minimally-2}).
An additional complication
in the case $r\ge2$ is that the blown-down
$-1$-curve could be a component of the boundary.
According to Lemma \ref{lemma:strongly-non-strongly}
such a curve must be at the `tail': it cannot
intersect two boundary components.
Then Lemma \ref{lemma:minimalsmallPic} guarantees
the inductive step can still be carried out.
Once this induction has been carried out
all that remains
is to classify all pairs with $\h{rk}(\Pic(S))\le 2$
(Lemmas \ref{lemma:log-del-Pezzo-Picard-rank-two-1}
and \ref{lemma:log-del-Pezzo-Picard-rank-two}).

The second part of the proof of Theorem \ref{theorem:main}
consists of the verification
that each pair appearing in the lists of Theorems
\ref{theorem:main-1} and \ref{theorem:main-2-3-4}
is strongly asymptotically log del Pezzo
(\S\ref{VerifSubsec}).
Instead of checking each case separately,
we approach this straightforward task slightly more systematically
by first reformulating those
two lists in a unified list (Theorem \ref{theorem:4-cases})
according to the positivity of
the logarithmic anticanonical bundle $-K_S-C$---this is
discussed in detail in the next paragraph.
When this bundle is trivial or ample the verification
is then immediate. In the remaining two cases (big but not ample,
and nef but not big) we verify case by case.

The classification theorem has a number of corollaries, but we
state here only the most obvious one.

\begin{corollary}
Let $(S,C)$ be a log smooth \saldp pair. Then
$C$ contains at most four components.
\end{corollary}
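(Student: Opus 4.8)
The plan is to read this off from the main classification: Theorem~\ref{theorem:main} asserts that the lists in Theorems~\ref{theorem:main-1} (for $r=1$) and \ref{theorem:main-2-3-4} (for $r\ge2$) exhaust all log smooth strongly asymptotically log del Pezzo pairs, so it suffices to observe that no entry of those lists has more than four boundary components. That inspection is the honest one-line proof (and explains the phrase ``the most obvious'' corollary). It is nonetheless worth isolating the structural mechanism behind the bound, most of which is already available from the reduction lemmas quoted in the introduction.

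First I would record the robust numerical constraint. By Lemmas~\ref{lemma:log-del-Pezzo-rational-or-elliptic-1} and \ref{lemma:log-del-Pezzo-rational-or-elliptic} the arithmetic genus of $C$ is at most one, and away from the irreducible elliptic case every component $C_j$ is a smooth rational curve. Fixing small $(\beta_1,\dots,\beta_r)\in(0,1]^r$ and writing $L_\beta=-K_S-\sum_i(1-\beta_i)C_i$, ampleness gives $L_\beta\cdot C_j>0$, while adjunction on the rational curve $C_j$ gives $-K_S\cdot C_j=2+C_j^2$, so
\[
L_\beta\cdot C_j=2+\beta_jC_j^2-\sum_{i\ne j}(1-\beta_i)\,C_i\cdot C_j .
\]
Letting $\beta\to0$ forces $\sum_{i\ne j}C_i\cdot C_j\le 2$ for every $j$; hence the dual graph of $C$ has maximal vertex degree two, i.e. it is a disjoint union of chains and cycles. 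Exploiting the \emph{strong} asymptotic hypothesis (sending the $\beta_i$ of the two neighbours of an interior vertex to zero faster than its own) the same display even forces $C_j^2\ge0$ at every degree-two vertex. One then splits into regimes according to $-K_S-C$: if $C\sim -K_S$ (trivial log anticanonical) then $p_a(C)=1$, so $C$ is either a smooth elliptic curve ($r=1$) or a single cycle of rational curves, and $S$ is del Pezzo; otherwise $C\not\sim -K_S$, the graph is a forest of chains, and $C^2\le-2$ by Lemma~\ref{lemma:log-del-Pezzo-rational-curves-not-del-Pezzo-1}.

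The main obstacle is that these local inequalities do not by themselves cap the count at four. In the cyclic (anticanonical) regime $\sum_i(-K_S\cdot C_i)=K_S^2\le9$ only gives the crude bound $r\le9$, and pinning it to $r\le4$ requires the finer geometry of anticanonical cycles on del Pezzo surfaces (the bound is realised, e.g., by the four-cycle of rulings on $\PP^1\times\PP^1$). In the chain regime the constraints are even weaker: with $C^2=\sum_iC_i^2+2(r-1)\le-2$ and interior self-intersections nonnegative, a single very negative endpoint keeps all boundary intersection numbers positive, so arbitrarily long chains are not excluded locally. Ruling them out uses the global positivity of $-K_S-C$ together with the deeper inputs of the classification---no rational curve off the boundary has self-intersection below $-1$ (Lemma~\ref{lemma:log-del-Pezzo-rational-curves-1}) and minimal pairs have $\Pic(S)$ of rank at most two (Lemma~\ref{lemma:minimalsmallPic})---which bound $S$ itself and hence the number of curves the class $-K_S-C$ can support. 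Once $\h{rk}\,\Pic(S)\le2$ is in hand on the minimal models, the count $r\le4$ is immediate under the blow-up reductions; in any case it is transparently read off from the explicit lists of Theorems~\ref{theorem:main-1} and \ref{theorem:main-2-3-4}.
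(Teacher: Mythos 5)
Your proposal is correct and takes essentially the same approach as the paper: the corollary is stated as an immediate consequence of Theorem~\ref{theorem:main}, proved by inspecting the lists of Theorems~\ref{theorem:main-1} and \ref{theorem:main-2-3-4} and noting that the maximum, four components, is attained only in case $\mathrm{(IV)}$. Your supplementary structural remarks (degree bound on the dual graph, chains versus cycles, and the honest admission that the local inequalities alone do not yield the bound without the Picard-rank reduction) accurately reflect Lemmas~\ref{lemma:log-del-Pezzo-rational-cycles-length-2}, \ref{lemma:strongly-non-strongly}, and \ref{lemma:minimalsmallPic}, but the operative proof is the same one-line inspection the paper intends.
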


It would be interesting to
find a similar bound in all dimensions.
In the simpler log Fano setting of Maeda, a pair $(X,D)$ induces by restriction
a log Fano pair of one dimension lower, and so
by induction the number of components is bounded
by $\dim X$ \cite[Lemma 2.4]{Maeda}.

The classification of \saldp surfaces according
to the positivity of the logarithmic anticanonical bundle
just mentioned plays a crucial role also in other parts
of this article and so we now state it precisely.
We distinguish between four mutually exclusive classes.
Class $\mathrm{(\aleph)}$: $S$ is del Pezzo and $C\sim-K_S$;
class $\mathrm{(\beth)}$: $C\not\sim-K_S$ and $(K_S+C)^2=0$;
class $\mathrm{(\gimel)}$: $-K_S-C$ is big but not ample;
class $\mathrm{(\daleth)}$: $-K_S-C$ is ample.

\begin{theorem}
\label{theorem:4-cases}
\Saldp pairs, whose list appears in
Theorems \ref{theorem:main-1} and \ref{theorem:main-2-3-4},
are classified according to the positivity properties
$\mathrm{(\aleph)}$, $\mathrm{(\beth)}$, $\mathrm{(\gimel)}$,
and $\mathrm{(\daleth)}$ as follows:
\begin{itemize}
\item[$\mathrm{(\aleph)}$] $(S,\sum_{i=1}^{r}C_{i})$ is one of
$\mathrm{(I.1A})$, $\mathrm{(I.4A})$, $\mathrm{(I.5.m})$,
$\mathrm{(II.1A})$, $\mathrm{(II.4})$, $\mathrm{(II.5A.m})$,
$\mathrm{(II.8.m})$, $\mathrm{(III.1})$, $\mathrm{(III.2})$, $\mathrm{(III.4.m})$ or
$\mathrm{(IV})$,

\item[$\mathrm{(\beth)}$] $(S,\sum_{i=1}^{r}C_{i})$ is one of
$\mathrm{(I.3A})$, $\mathrm{(I.4B})$, $\mathrm{(I.9B.m})$,
$\mathrm{(II.2A.n})$, $\mathrm{(II.2B.n})$, $\mathrm{(II.3})$,
$\mathrm{(II.6A.n.m})$, $\mathrm{(II.6B.n.m})$,
$\mathrm{(II.7.m})$, $\mathrm{(III.3.n})$ or
$\mathrm{(III.5.n.m})$,

\item[$\mathrm{(\gimel)}$] $(S,\sum_{i=1}^{r}C_{i})$ is one of
$\mathrm{(I.6B.m})$, $\mathrm{(I.6C.m})$, $\mathrm{(I.7.n.m})$,
$\mathrm{(I.8B.m})$,
$\mathrm{(I.9C.m})$, $\mathrm{(II.5B.m})$ or
$\mathrm{(II.6C.n.m})$,

\item[$\mathrm{(\daleth)}$] $(S,\sum_{i=1}^{r}C_{i})$ is one of
$\mathrm{(I.1B})$, $\mathrm{(I.1C})$, $\mathrm{(I.3B})$,
$\mathrm{(I.2.n})$, $\mathrm{(I.4C})$, $\mathrm{(II.1B})$ or
$\mathrm{(II.2C.n})$.
\end{itemize}
\end{theorem}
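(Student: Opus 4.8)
The plan is to first explain, on general grounds, why the four positivity types $(\aleph),(\beth),(\gimel),(\daleth)$ are mutually exclusive and exhaust all \saldp pairs, and then to assign each concretely listed pair to its type by a direct intersection-theoretic computation. The conceptual backbone is that for any \saldp pair the logarithmic anticanonical class $L:=-K_S-C$ is nef. Indeed, by Definition~\ref{definition:log-Fano} the class $-K_S-(1-\beta)C=L+\beta C$ is ample for all small $\beta\in(0,1]$, and letting $\beta\downarrow0$ exhibits $L$ as a limit in $N^1(S)_\RR$ of ample classes, hence $L$ lies in the closure of the ample cone. Consequently $L^2=(K_S+C)^2\ge0$, and since $L$ is nef it is big precisely when $L^2>0$. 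This yields exactly four possibilities for a nef class $L$ on a smooth surface: $L$ ample (type $(\daleth)$); $L^2>0$ but $L$ not ample, i.e.\ big but not ample (type $(\gimel)$); $L^2=0$ with $L\not\sim0$, i.e.\ nef but not big (type $(\beth)$); and $L\sim0$. In the last case $-K_S\sim C$, whence the ample class $-K_S-(1-\beta)C\sim\beta C$ forces $C$, and therefore $-K_S$, to be ample; thus $S$ is del Pezzo and we are in type $(\aleph)$. This also shows $(\aleph)$ and $(\beth)$ are separated exactly by whether $C\sim-K_S$, as stated.

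It remains to sort the pairs of Theorems~\ref{theorem:main-1} and~\ref{theorem:main-2-3-4} into these four buckets. Every such pair is presented explicitly as $\PP^2$ or a Hirzebruch surface $\Fn$ blown up at a prescribed configuration of points, together with an explicit boundary $C=\sum_i C_i$ whose components and their self-intersections and mutual incidences are recorded. I would therefore compute the numerical class of $K_S+C$ in $N^1(S)$ directly from the construction, using the blow-up formula for $K_S$ and the given data for the $C_i$. From this one reads off $(K_S+C)^2$, which already separates $\{(\aleph),(\beth)\}$ (square $=0$) from $\{(\gimel),(\daleth)\}$ (square $>0$); within the former the dichotomy is decided by testing whether $K_S+C\sim0$ (type $(\aleph)$) or not (type $(\beth)$).

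For the pairs with $(K_S+C)^2>0$ I would separate $(\daleth)$ from $(\gimel)$ by the Nakai--Moishezon (or Kleiman) criterion: since $L$ is already nef with $L^2>0$, ampleness is equivalent to $L\cdot\Gamma>0$ for every irreducible curve $\Gamma$, and failure of ampleness is witnessed by a curve with $L\cdot\Gamma=0$ (there can be none with $L\cdot\Gamma<0$, by nefness). In each type-$(\gimel)$ family I would exhibit such an obstructing curve---typically one of the $(-1)$-curves disjoint from, or meeting, $C$, or a ruling fibre---while for each type-$(\daleth)$ family I would verify strict positivity of $L$ against a spanning set of curve classes.

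The main obstacle is not conceptual but organizational: the two lists comprise on the order of tens of labeled families, several carrying integer parameters $n,m$, so the real work is the careful and uniform bookkeeping of the class $K_S+C$ and its intersection numbers across each parameter range, together with correctly identifying, in every $(\gimel)$ case, the curve that obstructs ampleness for all admissible parameters. A secondary subtlety to monitor is that type $(\aleph)$ bundles the linear-equivalence condition $C\sim-K_S$ with the assertion that $S$ is del Pezzo; the latter is deduced from the \saldp property rather than checked directly, and one must ensure that the type-$(\beth)$ pairs genuinely have $K_S+C\not\sim0$, which on these rational surfaces is detected numerically by pairing with a suitable fibre or section.
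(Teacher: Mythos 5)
Your proposal is correct, and its conceptual frame---nefness of $L=-K_S-C$ as a limit of ample classes, hence the four mutually exclusive and exhaustive positivity types, with $S$ del Pezzo deduced rather than assumed when $L\sim 0$---coincides with the setup the paper gives at the start of Section \ref{section:posititivy}. Where you genuinely diverge is in how the listed families are assigned to types. You propose a per-family computation: write $K_S+C$ in $N^1(S)$ via blow-up formulas, read off $(K_S+C)^2$ and linear triviality, and for the big cases run the Nakai--Moishezon criterion \eqref{NMEq}, exhibiting an obstructing curve in each $(\gimel)$ family. The paper instead exploits the structure of its own classification proof: every non-minimal listed pair is a blow-up $\pi\colon S\to s$ of a minimal pair at points of the boundary, each exceptional curve meeting the boundary transversally at exactly one point, whence $-K_S-C\sim\pi^{*}(-K_{s}-c)$. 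This single identity does all the sorting: it shows a non-minimal pair is never of class $(\daleth)$ (the class is trivial on the $\pi$-exceptional curves), that blow-ups inherit the type of their minimal model with $(\daleth)$ degrading to $(\gimel)$, and hence that class $(\gimel)$ consists exactly of the blow-ups of Maeda's $(\daleth)$ pairs; all remaining checks then reduce to minimal pairs on $\PP^2$ and $\FF_n$, where $(\daleth)$ is read off from \eqref{ampleFn}, $(\aleph)$ from the anticanonical-boundary corollaries, and $(\beth)$ follows by exclusion. Your route buys independence from the internal structure of the proofs of Theorems \ref{theorem:main-1} and \ref{theorem:main-2-3-4}, but at the cost of much heavier bookkeeping; in practice it collapses onto the paper's argument anyway, since blowing up a smooth point of the boundary gives precisely $K_S+C=\pi^{*}(K_{s}+c)$, and the obstructing curves you would have to locate in every parametrized $(\gimel)$ family are exactly the $\pi$-exceptional curves that this identity hands you for free. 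One small precision to watch in your plan: verifying ampleness requires positivity on generators of the cone of curves, not merely on a spanning set of $N^1(S)$; this is harmless here only because every $(\daleth)$ pair lives on $\PP^2$ or $\FF_n$, where the cone is generated by two explicit classes.
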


The verification of this list is an elementary corollary
of Theorems \ref{theorem:main-1} and \ref{theorem:main-2-3-4}
and appears in \S\ref{PositivitySubsec}.
It can be seen as a generalization of two
previously known classes. Class $(\daleth)$ is Maeda's classical
classification of what he coined as `log del Pezzo surfaces'
\cite{Maeda}.
On the other hand, the class $\mathrm{(\aleph)}$
is simply the classical class of del Pezzo surfaces
together with the information of a simple normal crossing
anticanonical curve but its explicit (and very elementary)
classification seems to appear here for the first time.
The classes $\mathrm{(\beth)}$ and $\mathrm{(\gimel)}$
are new.

\subsection{An asymptotic logarithmic version of Calabi's conjecture}
\label{CalabiSubsec}

In 1990, in what became known as the resolution
of Calabi's conjecture for del Pezzo surfaces,
Tian gave a complete classification
of those complex surfaces that admit a smooth KE metric of positive
curvature \cite{Ti90}.
In light of Theorem \ref{theorem:main} it
is therefore very natural and tempting to hope for
a counterpart for strongly asymptotically log
del Pezzo surfaces. One of the main goals of this article
is to formulate such a conjecture as well as prove key
parts of it. As might be expected, the situation in the singular
setting is quite a bit more complex and we intend to pursue
other aspects of this conjecture in future work.

As we now explain in detail, a surprisingly accurate guide to this uniformization problem is the positivity
classification of Theorem \ref{theorem:4-cases}.

Pairs of class $\mathrm{(\aleph)}$ are the best understood,
since according to a result of Berman the Tian invariant
of the pair is then bigger than $\frac n{n+1}$, which
subsequently implies by the work of Jeffres--Mazzeo--Rubinstein
(Theorem \ref{theorem:Mazzeo-Rubinstein-Jeffres} below, cf. \cite[Corollary 1.5]{MR})
that the pair admits KEE metrics for all small angles.
We generalize Berman's result in several ways by
obtaining a general bound on the global log canonical
threshold in a possibly singular and/or degenerate setting
(Proposition \ref{theorem:handy-proposition}). This gives
an algebraic proof of the aforementioned estimate
due to Berman
for the class $\mathrm{(\aleph)}$
with explicit (but far from optimal) lower bounds on the largest angle possible
in dimensions two and
three (Proposition \ref{theorem:log-del-Pezzo-alpha-1}).

The uniformization problem is thus reduced to understanding
the existence problem for pairs of classes $(\beth),
(\gimel),$ and $(\daleth)$.

As a first guide, we investigate the asymptotic behavior
in the small-angle limit
of Tian's invariant $\alpha(X,(1-\beta)D)$, also refereed
to as the global log canonical
threshold of the pair $(X,D)$ (see \S\ref{lctSubsec} for definitions).

\begin{theorem}
\label{theorem:log-del-Pezzo-alpha}
Assume $(S,C)$ is asymptotically log del Pezzo with $C$ smooth and irreducible.
Then
$$
\lim_{\beta\to 0^+}\alpha(S,(1-\beta)C)
=
\left\{\aligned%
&1\qquad\h{class $(\aleph)$},\\
&1/2\quad\h{class $(\beth)$},\\
&0\qquad\h{class $(\gimel)$ or $(\daleth)$}
\endaligned
\right.
$$
\end{theorem}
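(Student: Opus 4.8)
The plan is to establish matching upper and lower bounds for the global log canonical threshold
\[
\alpha(S,(1-\beta)C)=\inf\bigl\{\lct\bigl(S,(1-\beta)C;D\bigr):D\geq0,\ D\sim_{\mathbb{Q}}L_\beta\bigr\},
\]
where $L_\beta:=-K_S-(1-\beta)C$, treating the four positivity classes in turn; write $L_0:=-K_S-C$, which is nef since it is the limit of the ample classes $L_\beta$. I would dispatch classes $(\gimel)$ and $(\daleth)$ first, where the lower bound $\alpha\geq0$ is automatic and only $\alpha\to0$ must be shown. Set $t_{\max}:=\sup\{t\geq0:-K_S-tC\text{ is }\mathbb{Q}\text{-effective}\}$. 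Since bigness is an open condition and $L_0$ is big (class $(\gimel)$) or ample (class $(\daleth)$), the class $-K_S-tC$ stays big, hence $\mathbb{Q}$-effective, for $t$ slightly larger than $1$, so $t_{\max}>1$. Fixing $\epsilon:=t_{\max}-1>0$, the class $-K_S-(t_{\max}-\beta)C$ is $\mathbb{Q}$-effective, so we may take $D:=\epsilon C+D''$ with $D''\geq0$ and $D''\sim_{\mathbb{Q}}-K_S-(1-\beta+\epsilon)C$; then $D\sim_{\mathbb{Q}}L_\beta$. As $C$ is a smooth prime divisor, log canonicity of $(S,(1-\beta)C+\lambda D)$ forces the coefficient $1-\beta+\lambda\epsilon$ of $C$ to be $\leq1$, i.e. $\lambda\leq\beta/\epsilon$; hence $\alpha\leq\beta/(t_{\max}-1)\to0$.

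For the remaining two classes the upper bound comes from a single explicit divisor. In class $(\aleph)$ we have $C\sim-K_S$, so $L_\beta\sim_{\mathbb{Q}}\beta C$; taking $D=\beta C$ gives $(S,(1-\beta+\lambda\beta)C)$, which is log canonical precisely when $\lambda\leq1$, so $\alpha\leq1$. In class $(\beth)$, nefness of $L_0$ together with $(K_S+C)^2=0$ forces $L_0^2=0$, while adjunction (using that $C$ is rational here) gives $L_0\cdot(-K_S)=2$ and $h^0(L_0)\geq2$; thus $L_0$ induces a fibration $f\colon S\to\PP^1$ with fiber class $F\equiv L_0$, rational fibers, and $C\cdot F=2$, i.e. $C$ is a bisection. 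Since $f|_C$ has degree two it is ramified, producing a fiber $F_0$ tangent to $C$ at a point $p$ with contact order $2$ (a multiple or reducible fiber meets $C$ with the same effect). With $D=F_0+\beta C\sim_{\mathbb{Q}}L_\beta$, near $p$ the pair is two smooth branches of contact order $2$ with coefficients $c=1-\beta+\lambda\beta$ and $\lambda$, and the log canonicity criterion for two smooth branches of contact order $k$ (each coefficient $\leq1$ and their sum $\leq1+1/k$) gives log canonicity iff $\lambda\leq(\tfrac12+\beta)/(1+\beta)$; hence $\alpha\leq(\tfrac12+\beta)/(1+\beta)\to\tfrac12$.

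The substance of the theorem is the two matching lower bounds, $\alpha\geq1-o(1)$ in class $(\aleph)$ and $\alpha\geq\tfrac12-o(1)$ in class $(\beth)$, which I would prove by localizing the log canonical condition. Fix an effective $D\sim_{\mathbb{Q}}L_\beta$ and suppose $(S,(1-\beta)C+\lambda D)$ fails to be log canonical at a point $q$, with $\lambda$ below the claimed threshold. In class $(\aleph)$ the class $L_\beta$ is small, so away from $C$, intersecting $D$ with a general curve in $|-K_S|$ through $q$ bounds $\mult_q D\leq D\cdot(-K_S)=O(\beta)$, whence $\lct_q(S;D)\geq1/\mult_q D\to\infty$ and no failure is possible for $\lambda\leq1$. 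The failure must therefore sit on $C$, where I peel off the $C$-component, $D=sC+D'$ with $s\leq\beta$ and $C\not\subset\operatorname{Supp}D'$, so the coefficient of $C$ is $\leq1-\beta(1-\lambda)$ and, decisively, $(D'\cdot C)_q\leq D'\cdot C=O(\beta)$. For a branch $B$ of $D'$ meeting $C$ with contact order $k_B$ and coefficient $e_B$ one has $e_Bk_B\leq(D'\cdot C)_q=O(\beta)$, and the contact-order criterion needs only $(1-\beta(1-\lambda))+e_B\leq1+1/k_B$, which $e_B\leq O(\beta)/k_B$ satisfies for every $k_B$ once $\beta$ is small: the dangerous high-tangency regime is neutralized by the factor $1/k_B$ hidden in $e_B$. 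This forces $\alpha\to1$. In class $(\beth)$ the divisor is no longer small, but decomposing $D$ into vertical and horizontal parts, $D\cdot F=2\beta$ confines the horizontal part to size $O(\beta)$, while $2a_0\leq D\cdot C=2+O(\beta)$ bounds the coefficient $a_0$ of any single fiber by $1+O(\beta)$. The only near-failure is then a fiber tangent to the bisection $C$ (or a multiple fiber meeting it), exactly the extremal divisor of the upper bound, and the contact-order-$2$ computation yields $\lambda\leq\tfrac12+o(1)$.

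The main obstacle is precisely this uniform local estimate along $C$: one must bound below, by the claimed value up to $o(1)$, the local log canonical threshold at every $q\in C$ for every effective $D\sim_{\mathbb{Q}}L_\beta$, while simultaneously controlling arbitrarily high tangencies of $D'$ to $C$ and several branches through $q$ at once, since a single divisorial valuation may see $C$ together with many branches and the branch-by-branch estimate must be upgraded to a joint one. The two clean inputs, $\mult_q D'\leq D'\cdot(-K_S)$ and $(D'\cdot C)_q\leq D'\cdot C=O(\beta)$, together with the contact-order criterion, should assemble into one inequality; if a fully uniform argument proves unwieldy, the estimate can instead be run family-by-family along the list of Theorem \ref{theorem:main-1}, where $C$ and the finitely many curves that can be highly tangent to it are explicit.
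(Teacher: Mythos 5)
Your treatment of classes $(\gimel)$ and $(\daleth)$ is correct and is essentially the paper's Proposition \ref{theorem:log-del-Pezzo-alpha-2}: the paper uses bigness to write $-N(K_S+C)\sim C+\Delta$ and takes $D=(\beta+\epsilon)C+\epsilon\Delta$ with $\epsilon=1/N$, which is the same destabilizing divisor as your $t_{\max}$ construction. Likewise your upper bounds in classes $(\aleph)$ and $(\beth)$ coincide with the paper's: $D=\beta C$ gives $\alpha\le 1$, and the fiber through a ramification point of $C\to\PP^1$ gives $(1+2\beta)/(2+2\beta)$ when that fiber is smooth (your tangency computation) and $(1+\beta)/(2+\beta)$ when it is reducible with $C$ through the node, both tending to $1/2$, exactly as in Proposition \ref{theorem:log-del-Pezzo-alpha-3}.

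The gap is in the lower bounds, which you correctly identify as the substance of the theorem but do not prove. The tool you propose — the two-smooth-branch contact-order criterion applied branch by branch — cannot be made to work as stated: log canonicity at a point is not detected pairwise (e.g.\ three concurrent lines with coefficients slightly above $2/3$ pass every pairwise test yet the pair is not lc), and you explicitly leave the ``upgrade to a joint estimate'' as an unresolved obstacle. The missing idea is inversion of adjunction for log canonicity \cite[Theorem~5.50]{KoMo98}, \cite[Exercise~6.31]{CoKoSm}: if $K$ is a smooth curve, $M$ effective with $K\not\subset\mathrm{Supp}(M)$, and $(S,K+M)$ is not lc at $Q\in K$, then $\mult_Q(K. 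M)>1$. This single inequality is precisely the joint statement you need, since the local intersection number $(D'. C)_q=\sum_B e_Bk_B$ aggregates all branches additively, so your bound $D'. C=O(\beta)$ immediately forces $\lambda>1/O(\beta)$. This is what the paper uses throughout: in class $(\aleph)$ it is packaged as Proposition \ref{theorem:handy-proposition} (peel off $C$, raise its coefficient to $1$, restrict to $C$), which together with Lemma \ref{lemma:lct-curve} and a multiplicity bound for $\alpha(S)$ gives $\alpha(S,(1-\beta)C)\ge\min\{1,\frac{1}{9\beta}\}$ (Proposition \ref{theorem:log-del-Pezzo-alpha-1}); in class $(\beth)$ the same inequality is applied not only to $C$ but also to the fiber $F_P$ through the bad point and to each component $F_1,F_2$ of a reducible fiber, producing the full case analysis ($P\in C$ with or without $C\subset\mathrm{Supp}(D)$; $P\notin C$ with $F_P$ smooth; $P$ at or off the node of a singular $F_P$) that your sketch replaces by the unproven assertion that ``the only near-failure is a fiber tangent to the bisection.'' A secondary, patchable issue: in class $(\aleph)$, away from $C$, your ``general member of $|-K_S|$ through $q$'' need not exist outside $\mathrm{Supp}(D)$ when $K_S^2=1$, since $|-K_S|$ is then a pencil with a base point and the member through $q$ is unique; the paper switches to $|-2K_S|$ in that case. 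With inversion of adjunction supplied, your outline closes up and becomes the paper's proof.
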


This gives an indication that the existence theory might,
in fact, depend on the positivity classification.
In fact, we conjecture that the positivity classification
completely determines the existence problem.

\begin{conjecture}
\label{UniformizationConj}
Suppose that $(S,C)$ is strongly asymptotically log
del Pezzo with $C$ smooth and irreducible.
Then $S$ admits K\"ahler--Einstein edge metrics with
angle $\beta$ along $C$ for
all sufficiently small $\beta$ if and
only if $(K_{S}+C)^2=0$, i.e., $(S,C)$ is of class
$(\aleph)$ or $(\beth)$.
\end{conjecture}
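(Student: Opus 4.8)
The plan is to prove the two implications separately, organized by the positivity dichotomy of Theorem \ref{theorem:4-cases}: I expect the vanishing $(K_S+C)^2=0$ (classes $(\aleph)$ and $(\beth)$) to force existence, and the bigness of $-K_S-C$, i.e. $(K_S+C)^2>0$ (classes $(\gimel)$ and $(\daleth)$), to produce an obstruction. Here a \KE edge metric of angle $\beta$ along $C$ is a solution $\o$, in the \K class $2\pi\big(-K_S-(1-\beta)C\big)$ (which is \K precisely by the strongly asymptotically log Fano hypothesis), of $\Rico=\o+2\pi(1-\beta)[C]$, where $[C]$ is the current of integration; the cohomological identity $[\o]/2\pi=c_1(S)-(1-\beta)[C]$ is exactly what ties the Einstein equation to the positivity classification.

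For the existence direction, class $(\aleph)$ is already within reach: by Theorem \ref{theorem:log-del-Pezzo-alpha} the global log canonical threshold satisfies $\alpha(S,(1-\beta)C)\to 1$, so for all small $\beta$ it exceeds $\tfrac{n}{n+1}=\tfrac{2}{3}$ (Propositions \ref{theorem:handy-proposition} and \ref{theorem:log-del-Pezzo-alpha-1} give this with explicit bounds), and Theorem \ref{theorem:Mazzeo-Rubinstein-Jeffres} then produces the metrics. Class $(\beth)$ is the genuinely hard existence case, since there $\alpha(S,(1-\beta)C)\to\tfrac12<\tfrac23$ and the sufficient condition of Theorem \ref{theorem:Mazzeo-Rubinstein-Jeffres} is unavailable. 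My plan is a continuity method in the angle, starting from a \KE edge metric at some moderate $\beta_0$ and continuing to $\beta\to0^+$ by establishing uniform a priori estimates, equivalently properness of the log Mabuchi energy along the family. The essential new feature is that $(-K_S-C)^2=0$ forces the \K class to \emph{collapse} in volume as $\beta\to0$: the nef class $-K_S-C$ (a multiple of it) defines a fibration $S\to B$ over a curve whose fibers $F$ are log Calabi--Yau, since $(K_S+C)\cdot F=0$ gives $\deg(K_F+C|_F)=0$ by adjunction. I therefore expect the metrics to collapse to a \KE metric on $B$ with a semi-flat Calabi-ansatz structure along the fibers, and existence to follow by gluing approximate fibered solutions and perturbing.

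For the non-existence direction, the plan is to obstruct existence for all small $\beta$ in classes $(\gimel)$ and $(\daleth)$. The first tool is the logarithmic Matsushima reductivity theorem established in this article: any member carrying a nonreductive connected automorphism group of the pair admits no \KE edge metric for any $\beta$, which already disposes of the (pseudo-)toric cases on the list of Theorem \ref{theorem:main-2-3-4}. For the remaining cases I would use the necessary direction of a logarithmic Yau--Tian--Donaldson correspondence---existence of \KE edge metrics implies log K-polystability of $\big(S,(1-\beta)C\big)$ with respect to $-K_S-(1-\beta)C$---and construct explicit destabilizing test configurations (deformation to the normal cone of $C$ or of a suitable $-1$-curve, and toric degenerations where available). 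The mechanism I expect is that the leading term of the log Donaldson--Futaki invariant is governed by $(K_S+C)^2$: its strict positivity in classes $(\gimel)$ and $(\daleth)$ makes the invariant negative for small $\beta$, whereas the same quantity degenerates exactly when $(K_S+C)^2=0$, reproducing the dichotomy.

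The hardest part is twofold, and is precisely why the statement is phrased as a conjecture. On the existence side, class $(\beth)$ demands genuinely new analysis of the collapsing, fibered small-angle regime, where every criterion reducing \KE edge existence to a pointwise \Holder bound fails and one must control the degenerating complex \MA equation uniformly in $\beta$. On the non-existence side, the difficulty is to make the obstruction \emph{uniform} over all members of $(\gimel)$ and $(\daleth)$: both to have at one's disposal the necessary direction of the logarithmic Yau--Tian--Donaldson correspondence in this degenerating regime, and to exhibit a single family of test configurations whose log Donaldson--Futaki invariant is negative for all small $\beta$, rather than checking the long classification list case by case.
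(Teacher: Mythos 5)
The statement you set out to prove is Conjecture \ref{UniformizationConj}: the paper itself offers no proof of it, only partial evidence, and your proposal is likewise a program rather than a proof --- you concede the two decisive steps (collapsing analysis for class $(\beth)$; a uniform log-K-stability obstruction for classes $(\gimel)$ and $(\daleth)$) without supplying them. Beyond that honest gap, two of your intermediate claims conflict with what the paper actually does. First, you assert that in class $(\beth)$ ``the sufficient condition of Theorem \ref{theorem:Mazzeo-Rubinstein-Jeffres} is unavailable'' because $\alpha(S,(1-\beta)C)\to\tfrac12<\tfrac23$. But that theorem is stated for the $G$-invariant invariant $\alpha_G$, and the paper's entire existence progress in class $(\beth)$ (Theorem \ref{KEEBethThm}) comes precisely from computing $\alpha_G(S,(1-\be)C)=1$ for all $\be\in(0,1]$ for (I.3A), (I.4B), and the Clebsch cubic in (I.9B.5), via representation theory and Shokurov's connectedness principle (Propositions \ref{lemma:quadric}, \ref{lemma:F1}, \ref{lemma:Clebsch-cubic-surface}). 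Your proposed continuity-in-angle/gluing construction for the full class $(\beth)$ is exactly what the paper defers to a sequel (cf.\ Conjecture \ref{LimitConj}), so you should not present it as if the estimates were within reach.

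Second, on the non-existence side, your claim that the logarithmic Matsushima theorem ``disposes of'' the problematic cases is too strong: Proposition \ref{NoKEEProp} shows that $\Aut_0(S,C)$ is \emph{reductive} for many pairs of classes $(\gimel)$ and $(\daleth)$ --- for instance $\mathrm{(I.1B)}$, $\mathrm{(I.3B)}$, $\mathrm{(I.4C)}$, and $\mathrm{(I.6B.m)}$, $\mathrm{(I.8B.m)}$, $\mathrm{(I.9C.m)}$ with $m\ge 2$ --- so Theorem \ref{reductiveThm} yields no obstruction there, and Theorem \ref{NoKEEThm} covers only the nonreductive sublist. For the reductive cases the paper can only point to Li--Sun's log slope stability for $\mathrm{(I.1B)}$ and $\mathrm{(I.3B)}$; your appeal to the necessary direction of a logarithmic Yau--Tian--Donaldson correspondence, together with a test configuration whose log Donaldson--Futaki invariant has leading term governed by $(K_S+C)^2$, is plausible heuristics but is asserted, not established --- neither the correspondence in this setting nor the sign computation is carried out, and this is precisely where the conjecture remains open.
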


To put this conjecture in appropriate context and give
perhaps more striking motivation for its validity
we begin by noting that $0,1/2$ and $1$ are the Tian
invariants of $\PP^n, n\ra\infty,
\PP^1$, and $\PP^0$, respectively. It is then tempting to think
of $1/2$ as the Tian invariant of certain generic rational fiber.
Motivated by this we prove the following
structure theorem for surfaces of class~$\mathrm{(\beth)}$.

\begin{prop}
\label{proposition:conic-bundle} If
$(K_{S}+\sum_{i=1}^{r}C_{i})^2=0$, then the linear system
$|-(K_{S}+\sum_{i=1}^{r}C_{i})|$ is free from base points and
gives a morphism $S\to\mathbb{P}^1$ whose general fiber
is $\mathbb{P}^1$, and every reducible fiber consists of exactly
two components, each a $\PP^1$.
\end{prop}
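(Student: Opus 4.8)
The plan is to put $L:=-(K_S+C)$ at the center and show it is a nonzero, nef, square-zero class that is moreover semiample, whose associated fibration is the desired conic bundle. Since $(S,C)$ is a \saldp pair, the class $-K_S-\sum_{i=1}^r(1-\beta_i)C_i=L+\sum_{i=1}^r\beta_iC_i$ is ample for small $\beta_i>0$; letting the $\beta_i\to0$ exhibits $L$ as a limit of ample classes, hence nef, while the hypothesis gives $L^2=(K_S+C)^2=0$. Writing $-K_S=A+\sum(1-\beta_i)C_i$ with $A$ ample shows $-K_S$ is big, so $(-K_S)^2>0$. The statement concerns class~$(\beth)$ (for class~$(\aleph)$ one has $-(K_S+C)\sim0$ and the system is trivial), so we assume $K_S+C\not\sim0$; as $S$ is of Fano type it is rationally connected, hence rational, so $\mathrm{NS}(S)=\Pic(S)$ is torsion free and $L\not\equiv0$.

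First I would produce the morphism. The pair $(S,\sum(1-\beta_i)C_i)$ is klt with ample log anticanonical class, so $S$ is of Fano type and hence a Mori dream space, on which every nef divisor is semiample. Thus $L$ is semiample, and because $L^2=0$ with $L\not\equiv0$ its ample model is a morphism $\phi\colon S\to Z$ onto a smooth curve $Z$ with connected fibres and $L=\phi^\ast H$ for $H$ ample on $Z$; since $S$ is rational, $Z\cong\PP^1$. By the projection formula $H^0(S,L)=H^0(Z,H)$, so $|L|=\phi^\ast|H|$ is base-point free and the induced morphism is $\phi$ followed by the projective embedding of $Z\cong\PP^1$ by $|H|$. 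This is exactly the asserted base-point-free system and morphism to $\PP^1$.

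Next I would identify the fibres. Let $F$ be the class of a fibre, so $L\cdot F=0=L^2$; since $-K_S$ is big and $F$ is a nonzero nef class, the Hodge index theorem gives $(-K_S)\cdot F>0$. As $L\cdot F=0$ we have $(-K_S)\cdot F=(L+C)\cdot F=C\cdot F\ge1$, and adjunction yields $2p_a(F)-2=K_S\cdot F=-C\cdot F\le-1$, forcing $p_a(F)=0$ and $C\cdot F=2$; a general fibre, being smooth and connected, is therefore $\PP^1$. For a reducible fibre $F_0=\sum_jm_j\Gamma_j$ I would restrict the ample class $A=L+\sum\beta_iC_i$ to a component $\Gamma_j$: since $L\cdot\Gamma_j=0$ one gets $0<A\cdot\Gamma_j=\sum_i\beta_i(C_i\cdot\Gamma_j)$, so $C\cdot\Gamma_j\ge1$ for every $j$. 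Combined with $\sum_jm_j(C\cdot\Gamma_j)=C\cdot F_0=2$ and $m_j\ge1$, this bounds the number of components by two; if there are two then $m_1=m_2=1$ and $C\cdot\Gamma_1=C\cdot\Gamma_2=1$, whereupon $\Gamma_i^2=-\Gamma_1\cdot\Gamma_2$ together with $p_a(F_0)=0$ forces each $\Gamma_i$ to be a smooth rational $(-1)$-curve, the two meeting transversally at one point.

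The main obstacle is the first step: upgrading nefness of $L$ to semiampleness, and to base-point freeness of the \emph{complete} system $|L|$ rather than of some multiple. The Mori dream space property of Fano type varieties is what makes this clean; an alternative, more elementary route is Riemann--Roch, which gives $\chi(L)=1+\tfrac12L\cdot C\ge2$ (using $h^2(L)=h^0(2K_S+C)=0$, itself a consequence of $(-K_S)\cdot L>0$) and hence a pencil in $|L|$, but one must then analyze its fixed part and base locus by hand. Finally I would remark that the argument shows a posteriori that $C$ contains no cycle of rational curves, since such a cycle would be contracted by $\phi$ into a single fibre, contradicting that the fibres are trees of at most two smooth rational curves.
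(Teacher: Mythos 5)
Your fiber analysis (the last two steps) is sound and essentially matches the end of the paper's own proof, including the correct use of the strong asymptotic hypothesis to see that every fiber component meets $C$ positively. But there is a genuine gap at the crux, namely the claim that the ample model satisfies $L=\phi^\ast H$ with $H$ ample on $Z$, from which you deduce that the \emph{complete} system $|L|$ is base point free. Semiampleness (which the Mori dream space property does give you) only says that $|mL|$ is free for some, equivalently all large, $m$, i.e.\ $mL=\phi^\ast(\hbox{ample})$; this yields $L\sim_{\QQ}\phi^\ast H$ as a $\QQ$-divisor, not $L=\phi^\ast H$ integrally. A nef, semiample, square-zero, numerically nontrivial integral class on a rational surface need not descend to the base of its ample model: on a Halphen surface (a rational elliptic surface whose elliptic fibration has a multiple fiber of multiplicity $m\ge 2$) the class $-K_S$ is nef and semiample with $K_S^2=0$ and $-K_S\not\equiv 0$, yet numerically $-K_S\equiv\frac{1}{m}F$ is a \emph{fractional} multiple of the fiber class and $|-K_S|$ consists of the single reduced multiple fiber, so it is certainly not free. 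The distinction between $|L|$ and $|mL|$ is exactly what this proposition is about --- it is also why the paper's Theorem~\ref{theorem:KS} (via the basepoint-free theorem) only gives freeness of $|-n(K_X+D)|$ for $n\gg 1$, and why the paper devotes Lemma~\ref{lemma:conic-bundles-2} to a hands-on analysis of the fixed part. So ``Mori dream space'' does not make this step clean; as written the step fails.

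The gap is fixable inside your framework, but the ingredients must come in a different order. First run your fiber computation on the ample model $\phi$ (this only uses $mL=\phi^\ast(\hbox{ample})$): bigness of $-K_S$ gives $-K_S\cdot F>0$ for the fiber class $F$, whence $C\cdot F=2$ and $p_a(F)=0$, so $\phi$ is a genus-zero fibration. Then either (a) invoke Tsen's theorem to produce a section $\sigma$, so that $F\cdot\sigma=1$ and the number $c$ with $L\equiv cF$ equals $L\cdot\sigma\in\ZZ_{\ge0}$; or (b) compute $c=\frac{L\cdot C}{F\cdot C}=\frac{-(K_S+C)\cdot C}{2}=l$, the number of connected components of $C$, using that $C$ is a disjoint union of chains of smooth rational curves (Lemma~\ref{lemma:log-del-Pezzo-rational-cycles-length-2}); this is precisely the computation behind the paper's Lemma~\ref{lemma:conic-bundles-1}. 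Either way $L\equiv lF$, hence $L\sim lF$ because numerical and linear equivalence coincide on the rational surface $S$, and then $\phi_\ast\calO_S(L)\cong\calO_{\PP^1}(l)$ gives $|L|=\phi^\ast|\calO_{\PP^1}(l)|$, which is free. With that repair your proof is correct, and it is a genuinely different route from the paper's: the paper deliberately avoids BCHM-level input (Fano type $\Rightarrow$ Mori dream space) and instead produces the pencil from Riemann--Roch plus Kawamata--Viehweg vanishing, kills the fixed part by hand, and only then applies Bertini.
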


Thus, surfaces of class $(\beth)$ are conic bundles,
and the boundary $C$ intersects each generic fiber at
two points. This gives strong motivation for
the `if' part of
Conjecture \ref{UniformizationConj} because it suggests
what the small-angle limit of the purported KEE metrics
on pairs of class $(\beth)$ could be:

\begin{conjecture}
\label{LimitConj}
Let $(S,C,\o_\be)$ be KEE pairs of class $(\aleph)$
or $(\beth)$.
Then $(S,C,\o_\be)$ converges in an appropriate sense to a
a generalized KE metric $\o_\infty$ on $S\setminus C$ as $\beta$ tends to zero.
In particular, $\o_\infty$ is a Calabi--Yau metric
in case $(\aleph)$, and a cylinder along each generic fiber
in case $(\beth)$.
\end{conjecture}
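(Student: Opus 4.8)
As this is a conjecture, what follows is a strategy rather than a complete argument. The organizing principle is that in both classes the \K class of $\o_\be$ degenerates to the boundary of the \K cone as $\be\to0$: normalising the KEE equation as $\Ric\o_\be=\o_\be+2\pi(1-\be)[C]$ forces $[\o_\be]=2\pi c_1(-K_S-(1-\be)C)$, which converges to the nef but non-\K class $-K_S-C$ satisfying $(-K_S-C)^2=0$. The numerical dimension of this limit class dictates the collapse: it is $0$ in class $(\aleph)$, where $C\sim-K_S$ gives $-K_S-C\sim0$, and it is $1$ in class $(\beth)$, where $-K_S-C=\pi^*H$ is pulled back from the base $\PP^1$ of the conic bundle $\pi\colon S\to\PP^1$ of Proposition~\ref{proposition:conic-bundle}. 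This places the problem in the framework of adiabatic/collapsing limits (as studied by Song--Tian and Gross--Tosatti--Zhang), with the extra feature that the cone angle $2\pi\be$ along $C$ degenerates to zero simultaneously. The first step in either class is to fix the normalisation that renders the limit nontrivial: since $[\o_\be]^2\to0$ the metrics collapse, and one must specify the precise rescaling and the mode of convergence (here: $C^\infty_{\mathrm{loc}}$ convergence on $S\sm C$ of a suitably rescaled family, together with a complementary pointed Gromov--Hausdorff statement).

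In class $(\aleph)$ the expected limit is a complete Ricci-flat (Calabi--Yau) metric on $S\sm C$ of Tian--Yau type. The plan is to keep the \K class fixed by rescaling to $\widetilde\o_\be:=\be^{-1}\o_\be$, so that $[\widetilde\o_\be]=2\pi c_1(-K_S)$ and $\Ric\widetilde\o_\be=\be\widetilde\o_\be+2\pi(1-\be)[C]\to2\pi[C]$; away from $C$ the right-hand side vanishes, so any subsequential limit is Ricci-flat. Writing $\widetilde\o_\be$ against a Tian--Yau background metric and running the complex Monge--Amp\`ere machinery, the technical core is a family of a priori estimates for the potentials that are uniform in $\be$ on compact subsets $K\Subset S\sm C$: a $C^0$ estimate by comparison with the background, a uniform second-order (Chern--Lu type) estimate, and interior Evans--Krylov and Schauder bounds. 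These yield $C^\infty_{\mathrm{loc}}$ subsequential convergence to a Ricci-flat \K metric, which I would then identify with the Tian--Yau metric by matching \K classes and invoking uniqueness of the complete Calabi--Yau metric with the prescribed end, modelled on the Calabi ansatz over the circle bundle of the normal bundle $N_C$.

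In class $(\beth)$ the geometry is governed by the conic bundle $\pi$: the generic fibre $F\cong\PP^1$ meets $C$ in exactly two points, so $F\sm C\cong\CC^*$ carries a flat cylinder, and the constant-curvature ``football'' on $F$ with two cone points of angle $2\pi\be$ opens to an infinite cylinder as $\be\to0$. The strategy is a fibrewise (semi-flat) analysis. First, decompose $\o_\be$ into a part pulled back from $\PP^1$ and a fibre part; a uniform fibrewise $C^0$ and gradient estimate should show that the restriction of $\o_\be$ to each generic fibre is uniformly close to the fibrewise KEE football metric. Rescaling the fibre metric to unit size, these footballs converge, based at the waist, to the flat cylinder on $\CC^*$; in the complementary unrescaled picture $S\sm C$ collapses onto the base, whose limit is a generalised \KE metric on $\PP^1$ in the sense of Song--Tian solving a twisted Monge--Amp\`ere equation. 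The limiting $\o_\infty$ on $S\sm C$ thus has the semi-flat form: a cylinder along each generic fibre over a Song--Tian base. Handling the finitely many reducible fibres (each a pair of $\PP^1$'s, by Proposition~\ref{proposition:conic-bundle}) requires a separate local model near the node and is a genuine complication.

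The main obstacle in both classes is obtaining a priori estimates that are \emph{uniform as $\be\to0$} in this doubly degenerate regime, where the \K class approaches the boundary of the \K cone \emph{and} the cone angle collapses. The Jeffres--Mazzeo--Rubinstein existence theory (Theorem~\ref{theorem:Mazzeo-Rubinstein-Jeffres}) supplies $\o_\be$ for each fixed small $\be$, but with constants that degenerate as $\be\to0$; the crux is to control this degeneration against the correct model geometry---the Calabi ansatz near $C$ in class $(\aleph)$, the cylinder in class $(\beth)$---so as to isolate the noncollapsing directions while allowing the collapsing ones. I expect the single hardest step to be the uniform gradient and Laplacian estimate in a fixed-size collar of $C$, where the edge behaviour and the collapse interact and where a barrier adapted to the degenerating cone must be constructed by hand.
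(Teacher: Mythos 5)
You are addressing Conjecture \ref{LimitConj}: the paper does not prove this statement — it is stated as a conjecture, and the authors explicitly postpone its in-depth discussion to a sequel — so there is no proof of record to compare against. That said, your strategy is faithful to the paper's own motivating picture: the class computation $[\o_\be]=2\pi c_1(-K_S-(1-\be)C)$ degenerating to a nef class of numerical dimension $0$ in class $(\aleph)$ (where $C\sim-K_S$) or $1$ in class $(\beth)$ (where Proposition \ref{proposition:conic-bundle} exhibits the conic bundle whose generic fiber meets $C$ in two points) is exactly the heuristic behind the paper's formulation, as are your identifications of the expected limits: the Tian--Yau metric in class $(\aleph)$ (the $\kappa=1$ folklore case recorded after Conjecture \ref{GeneralConj}) and a semi-flat geometry, cylindrical along fibers over a Song--Tian-type base, in class $(\beth)$.

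As a proof, however, the proposal has a genuine gap, which you in fact concede: every analytic step is named but none is executed. Concretely: (1) the uniform-in-$\be$ $C^0$, Laplacian, and higher-order estimates on compacta of $S\sm C$ are precisely where the two degenerations (cone angle tending to $0$ and K\"ahler class tending to the boundary of the K\"ahler cone) interact; Theorem \ref{theorem:Mazzeo-Rubinstein-Jeffres} provides existence for each fixed $\be$ with no uniformity, and no barrier or model computation is supplied to bridge this. (2) Identifying a subsequential Ricci-flat limit in class $(\aleph)$ with the Tian--Yau metric requires a uniqueness/classification theorem for complete Calabi--Yau metrics with prescribed end behavior, which is not available off the shelf and is not argued. (3) In class $(\beth)$, the claim that the restriction of $\o_\be$ to a generic fiber is uniformly close to the fiberwise ``football'' metric needs an actual estimate separating base and fiber directions, and the finitely many reducible fibers require a local model near the node that you acknowledge but do not construct; note also the paper's caution that its generalized KE metrics differ in important ways from the Song--Tian setting, so even the expected base equation is not simply quotable. (4) The mode of convergence is itself part of the problem: the unrescaled metrics in class $(\aleph)$ collapse entirely (the class tends to zero), so formulating the correct rescaled statement is a task the proposal defers rather than resolves. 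In short, this is a reasonable research program consistent with the paper's intent, but it contains no step that actually closes the conjecture.
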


The generalized KE metrics alluded to in the conjecture
are related to  
metrics studied by Song--Tian on elliptic fibrations \cite{SongTian},
however there are some important differences. 
We postpone an in-depth discussion of this to a sequel.

This conjecture can be generalized to any dimension, and is
perhaps better understood in such a more general context. To that
end we first note that Proposition \ref{proposition:conic-bundle}
is a very special and explicit case of a much more general result
that is a direct corollary of deep results of Kawamata and
Shokurov.

\begin{theorem}
\label{theorem:KS} Suppose $(X,D)$ is asymptotically log Fano and
$-K_X-D$ is not big. Then $|-n(K_{X}+D)|$ is base point free for $n\gg
1$ and gives a morphism $\phi\colon X\to Z$ whose general fiber
$F$ is a Fano type variety. Moreover, $D\vert_{F}\sim_{\mathbb{Q}}
-K_{F}$ and $(F,D\vert_{F})$ is asymptotically log Fano.
Furthermore, if $(X,D)$ is strongly asymptotically log Fano, then
$F$ is a Fano variety with Kawamata log terminal singularities,
and $(F,D\vert_{F})$ is strongly asymptotically log Fano.
\end{theorem}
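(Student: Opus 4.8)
The plan is to identify $\phi$ with the semiample fibration attached to the nef $\mathbb{Q}$-divisor $-(K_X+D)$, and then to extract the fiberwise structure by adjunction. The only genuinely deep ingredient is the Base Point Free Theorem; everything else is bookkeeping.

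First I would check that $-(K_X+D)$ is nef. Writing
\[
-K_X-\sum_{i=1}^r(1-\beta_i)D_i=-(K_X+D)+\sum_{i=1}^r\beta_iD_i ,
\]
the asymptotic hypothesis provides tuples $(\beta_1,\dots,\beta_r)$ arbitrarily close to the origin for which the left-hand side is ample; letting these tend to $0$ realizes $-(K_X+D)$ as a limit of ample classes, hence as a nef class. Since the ample cone is convex and the sum of a nef and an ample class is ample, the entire segment $-(K_X+D)+t\sum_i\beta_iD_i$, $t\in(0,1]$, is in fact ample, which is what is used below. Next, fix $n_0$ so that $L:=-n_0(K_X+D)$ is Cartier and apply the Kawamata--Shokurov Base Point Free Theorem to $L$ relative to the klt pair $\big(X,\sum_i(1-\beta_i)D_i\big)$ for one small tuple on such a segment. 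For every $a>0$,
\[
aL-\big(K_X+\sum_i(1-\beta_i)D_i\big)=(an_0+1)\Big(-K_X-\sum_i\big(1-\tfrac{\beta_i}{an_0+1}\big)D_i\Big),
\]
which is ample by the previous step (it is the class at parameter $t=\tfrac1{an_0+1}$ on the ample segment). Hence $L$ is semiample, $|-n(K_X+D)|$ is base point free for $n\gg1$, and it defines a morphism $\phi\colon X\to Z$ with $-(K_X+D)\sim_{\mathbb{Q}}\phi^*A$ for some ample $A$ on $Z$. Because $-(K_X+D)$ is not big, $\dim Z<\dim X$, so a general fiber $F$ is positive-dimensional; after Stein factorization I may assume $\phi$ has connected fibers, so $F$ is a normal projective variety.

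I would then run adjunction on a general fiber. Restricting $\phi^*A$ gives $-(K_X+D)|_F\sim_{\mathbb{Q}}0$, and since the normal bundle of a general fiber is trivial, adjunction yields $K_F=(K_X+F)|_F=K_X|_F$; combining these, $K_F+D|_F=(K_X+D)|_F\sim_{\mathbb{Q}}0$, that is $D|_F\sim_{\mathbb{Q}}-K_F$. Restricting the ample classes of the first step and using $K_F=K_X|_F$, the divisor $-K_F-\sum_i(1-\beta_i)D_i|_F$ is ample for the relevant small tuples, while $\big(F,\sum_i(1-\beta_i)D_i|_F\big)$ is klt because a general fiber of a klt pair is klt. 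Taking $\Delta_F=\sum_i(1-\beta_i)D_i|_F$ for one such tuple exhibits $F$ as a variety of Fano type and shows $(F,D|_F)$ is asymptotically log Fano.

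Finally, in the strongly asymptotic case the ample classes are available for all small tuples, in particular along the diagonal $\beta_1=\dots=\beta_r=\beta$. Using $D|_F\sim_{\mathbb{Q}}-K_F$ one gets $-K_F-(1-\beta)D|_F\sim_{\mathbb{Q}}\beta D|_F$, so $D|_F$, and therefore $-K_F\sim_{\mathbb{Q}}D|_F$, is ample: $F$ is Fano. As $\big(F,(1-\beta)D|_F\big)$ is klt, so is $(F,0)$, giving the klt singularities of $F$, and ampleness for all small tuples shows $(F,D|_F)$ is strongly asymptotically log Fano. The main technical care is needed precisely in the fiberwise adjunction in the possibly singular (klt) setting: one must choose $F$ general enough to be normal, to satisfy $K_F=K_X|_F$ as $\mathbb{Q}$-divisors, and to inherit klt singularities. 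I expect this generic-fiber analysis, rather than the application of the Base Point Free Theorem, to be the step requiring the most care, since it is where the singular structure of $(X,D)$ could in principle obstruct the clean adjunction identities above.
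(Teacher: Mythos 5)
Your proposal is correct and follows essentially the same route as the paper's proof: apply the Kawamata--Shokurov Base Point Free Theorem to $-(K_X+D)$, pass to the general fiber $F$, deduce $D|_F\sim_{\mathbb{Q}}-K_F$ by adjunction, restrict klt-ness and ampleness to get the (strongly) asymptotically log Fano structure on $(F,D|_F)$, and use the diagonal small-$\beta$ computation to conclude $F$ is Fano in the strong case. The only difference is expository: you spell out the verification of the Base Point Free Theorem's hypotheses (nefness of $-(K_X+D)$ as a limit of ample classes, and ampleness of $aL-(K_X+\sum_i(1-\beta_i)D_i)$ along the segment), which the paper leaves implicit in its citation.
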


\begin{proof}
By Kawamata--Shokurov's Basepoint-free Theorem
\cite[Theorem~3.3]{KoMo98} the linear system $|-n(K_{X}+D)|$ is
base point free for $n\gg 1$. Let $\phi\colon X\to Z$ be a
morphism given by it, and let $F$ be its general fiber. If
$(X,\sum_{i=1}^{r}(1-\beta_{i})D_i)$ is Kawamata log terminal and
$-K_{X}-\sum_{i=1}^r(1-\beta_i)D_i$ is ample, then
$(F,\sum_{i=1}^{r}(1-\beta_{i})D_i\vert_{F})$ has at most Kawamata
log terminal singularities and
$$
-\Big(K_{F}+\sum_{i=1}^r(1-\beta_i)D_i\vert_{F}\Big)
\sim_{\mathbb{R}}
-\Big(K_{X}+\sum_{i=1}^r(1-\beta_i)D\Big)\Big\vert_{F}
$$
is ample. Thus, $(F,D\vert_{F})$ is asymptotically log Fano. Note
that by using adjunction
$D\vert_{F}\sim_{\mathbb{Q}} -K_{F}$, because $F$ is a fiber
of $\phi$ and $\phi$ is given by $|-n(K_{X}+D)|$.

If $(X,D)$ is strongly asymptotically log Fano the same
argument shows that so is $(F,D\vert_{F})$.
Moreover, then
$$
-K_{F}\sim -K_{X}\vert_{F}\sim_{\mathbb{Q}} D\vert_{F}
\sim_{\mathbb{R}}\frac{1}{\beta}\beta
D\vert_{F}\sim_{\mathbb{R}}\frac{1}{\beta}(-K_{X}-D-\beta
D)\vert_{F}=-(K_{X}+\sum_{i=1}^r(1-\beta)D)\vert_{F}
$$
for small $\beta\in(0,1]$, which implies that $-K_{F}$ is ample,
i.e., $F$ is a Fano variety.
\end{proof}

\begin{corollary}
\label{corollary:KS} Let $X$ be a smooth variety, let
$D$ be smooth and irreducible Weil divisor on
$X$. Suppose $(X,D)$ is asymptotically log Fano and
$-K_X-D$ is not big. Then $|-n(K_{X}+D)|$ is base point free for $n\gg
1$ and gives a morphism $\phi\colon X\to Z$ whose general fiber
$F$ is a smooth Fano variety with $D\vert_{F}\in|-K_{F}|$.
\end{corollary}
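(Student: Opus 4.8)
The plan is to deduce the statement from Theorem~\ref{theorem:KS}, which applies verbatim, and then to refine its conclusions using the two extra hypotheses that $X$ is smooth and $D$ is irreducible. First I would observe that, because $D$ consists of a single component, the notions of asymptotically log Fano and strongly asymptotically log Fano coincide (as remarked after Definition~\ref{definition:log-Fano}). Hence the \emph{Furthermore} part of Theorem~\ref{theorem:KS} is available: it already supplies the base-point-free system $|-n(K_X+D)|$ for $n\gg1$, the associated morphism $\phi\colon X\to Z$, and a general fiber $F$ which is a Fano variety with Kawamata log terminal singularities satisfying $D|_{F}\sim_{\QQ}-K_{F}$ and $-K_F$ ample. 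What remains is to upgrade ``Fano with klt singularities'' to ``smooth Fano'' and to upgrade the $\QQ$-linear equivalence $D|_{F}\sim_{\QQ}-K_F$ to honest membership $D|_{F}\in|-K_{F}|$.

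For smoothness I would invoke generic smoothness in characteristic zero: since $X$ is smooth over $\CC$, there is a dense open $V\subseteq Z$ over which $\phi$ is a smooth morphism, so the general fiber $F$ is smooth, and with $-K_F$ ample it is a smooth Fano variety. For the integral linear equivalence I would combine three ingredients. First, adjunction for a general fiber: the fiber of a morphism from a smooth variety has trivial normal bundle, so $\det N_{F/X}$ is trivial and $K_{F}\sim K_{X}|_{F}$. Second, restriction to the fiber: writing $-n(K_{X}+D)\sim\phi^{\ast}H$ for a very ample $H$ on $Z$ and restricting to $F$, which $\phi$ contracts to a point, gives $n(K_{X}+D)|_{F}\sim 0$, hence $n(K_{F}+D|_{F})\sim 0$ by the first point. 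Third, torsion-freeness of $\Pic(F)$: a smooth Fano variety is rationally connected, hence simply connected, so $H_{1}(F,\ZZ)=0$; together with $H^{1}(F,\mathcal{O}_{F})=0$ (Kodaira vanishing for the ample $-K_{F}$), the exponential sequence forces $\Pic(F)$ to be torsion-free. Combining the second and third points yields $K_{F}+D|_{F}\sim 0$, i.e. $D|_{F}\sim-K_{F}$; and since a general fiber is not contained in $D$, the restriction $D|_{F}$ is an effective divisor, so $D|_{F}\in|-K_{F}|$.

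The reduction to Theorem~\ref{theorem:KS} and the appeal to generic smoothness are routine. The one genuinely delicate step is the passage from $\QQ$-linear to integral linear equivalence: a priori $K_{F}+D|_{F}$ is only $n$-torsion in $\Pic(F)$, and eliminating this torsion is precisely where the smoothness and Fano properties of $F$ enter, through the vanishing of $H_{1}(F,\ZZ)$ and $H^{1}(F,\mathcal{O}_{F})$. I expect this to be the main obstacle, as it is the only point requiring control of the full integral Picard group rather than its rational or numerical shadow; every other assertion is a direct specialization of the already-established Theorem~\ref{theorem:KS}.
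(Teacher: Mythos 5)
Your proposal is correct and follows exactly the route the paper intends: the paper states this corollary without a separate proof, as a direct specialization of Theorem \ref{theorem:KS} via the observation (made right after Definition \ref{definition:log-Fano}) that for irreducible $D$ the asymptotic and strongly asymptotic notions coincide, so the ``Furthermore'' clause applies. Your additional steps---generic smoothness to get smoothness of the general fiber $F$, triviality of the normal bundle to identify $K_F\sim K_X|_F$, and torsion-freeness of $\Pic(F)$ (via simple connectedness of smooth Fano varieties and $H^1(F,\mathcal{O}_F)=0$) to upgrade $D|_F\sim_{\QQ}-K_F$ to genuine membership $D|_F\in|-K_F|$---are all valid and correctly fill in the details the paper leaves implicit.
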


Therefore, we conjecture:

\begin{conjecture}
\label{GeneralConj}
Suppose that $(X,D)$ is strongly asymptotically log
Fano manifold with $D$ smooth and irreducible.
Let
$
\kappa:=\inf\{\NN\ni k\le\dim X \,:\, (K_X+D)^k=0\}.
$
\hfill\break
(i) There exist no KEE metric with small $\be$ if
$\kappa=\infty$.
\hfill\break
(ii) Suppose that $(K_X+D)^{\dim X}=0$.
Then there exist KEE metrics $\o_\be, \be\in(0,\eps)$ on $(X,D)$
for some $\eps>0$.
\hfill\break
(iii) As $\be$ tends to zero $(X,D,\o_\be)$
converges in an appropriate sense to a
generalized KE metric $\o_\infty$ on $X\setminus D$
that is Calabi--Yau along its generic $(\dim X+1-\kappa)$-dimensional
fibers.
\hfill\break
(iv) Furthermore,
\beq
\label{AlphaConjEq}
\lim_{\beta\to
0^+}\alpha(X,(1-\beta)D)=
\left\{\aligned%
&1\qquad \mathrm{if}\ K_{X}+D\sim 0,\\
&\mathrm{min}\{1,\alpha(X,[-K_{X}-D]),\alpha(D)\}
\qquad \!\mathrm{if}\ 0\not\sim-K_{X}-D\ \mathrm{is\ not\ big},\\
&0\qquad  \mathrm{if}\ -K_{X}-D\ \mathrm{is\ big}.\\
\endaligned
\right.
\eeq
\end{conjecture}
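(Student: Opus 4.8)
The plan is to attack the four assertions separately, isolating first what is forced by the algebraic structure already in hand and only then confronting the genuinely analytic core. The backbone is the fibration supplied by Theorem \ref{theorem:KS} and Corollary \ref{corollary:KS}: in every case in which $-K_X-D$ fails to be big there is a morphism $\phi\colon X\to Z$ whose general fibre $F$ is a (smooth, in the manifold case) Fano variety with $D\vert_F\in|-K_F|$, so that $(F,D\vert_F)$ is itself strongly asymptotically log Fano of class $(\aleph)$. The invariant $\kappa$ reads off the relative dimension---the fibres have dimension $\dim X+1-\kappa$---and $\kappa=\infty$ is exactly the degenerate case where no such fibration exists because $-K_X-D$ is big. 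This dichotomy is what organizes all four parts.

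I would prove (iv) first, since it is purely algebro-geometric and should generalize Theorem \ref{theorem:log-del-Pezzo-alpha} essentially verbatim. When $K_X+D\sim0$ the pair degenerates to a log Calabi--Yau and a semicontinuity argument together with the global log canonical threshold bound of Proposition \ref{theorem:handy-proposition} should force $\alpha\to1$. When $-K_X-D$ is big the sections of $|-m(K_X+D)|$ grow maximally, so one can exhibit members with arbitrarily high multiplicity at a point and conclude $\alpha\to0$. The delicate middle case, $-K_X-D$ nef but not big, is where the fibration does the work: a destabilizing divisor either descends from the base, contributing $\alpha(X,[-K_X-D])$, or is adjunction-related to the boundary, contributing $\alpha(D)$, and the value $\min\{1,\alpha(X,[-K_X-D]),\alpha(D)\}$ is then recovered by localizing the threshold computation along and transverse to the fibres of $\phi$, again via Proposition \ref{theorem:handy-proposition}.

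The existence statements are analytic and split along the same positivity line. If $K_X+D\sim0$ then (iv) gives $\alpha\to1>\frac{n}{n+1}$, so for small $\beta$ the pair lies above the Jeffres--Mazzeo--Rubinstein threshold---effectively, through the bounds of Propositions \ref{theorem:handy-proposition} and \ref{theorem:log-del-Pezzo-alpha-1}---and Theorem \ref{theorem:Mazzeo-Rubinstein-Jeffres} yields K\"ahler--Einstein edge metrics for all small $\beta$; this subcase of (ii) is therefore nearly immediate. The hard part will be the remaining subcase of (ii), where $-K_X-D$ is nef but not big and $\alpha\to\frac12\le\frac{n}{n+1}$, so the $\alpha$-invariant criterion is powerless. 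Here I would construct approximate solutions adapted to $\phi$---fibrewise Calabi--Yau metrics on the $(\dim X+1-\kappa)$-dimensional fibres glued to a metric pulled back from $Z$---and run a continuity/perturbation method in $\beta$, controlling the error as the fibres collapse. This adiabatic analysis is the principal obstacle and is precisely the mechanism predicted by Conjecture \ref{LimitConj} and its higher-dimensional analogue (iii).

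For the non-existence claim (i) one cannot argue from $\alpha\to0$ alone; a genuine obstruction is needed. I would seek a destabilizing one-parameter degeneration (log K-instability) or, under symmetry, a Matsushima/Futaki-type obstruction from holomorphic vector fields, showing that the log Mabuchi energy becomes unbounded below as $\beta\to0$ exactly when $-K_X-D$ is big. Finally (iii), the convergence statement, I would attack by an adiabatic-limit argument in the spirit of Song--Tian on fibred Calabi--Yau geometries: uniform a priori estimates for $\omega_\beta$ as $\beta\to0$, identification of the collapsed limit $\omega_\infty$ as a generalized K\"ahler--Einstein metric on $X\setminus D$, and verification that it restricts to a Calabi--Yau metric on the generic fibre. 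The two hardest and most tightly linked steps remain the existence in the subcritical-$\alpha$ regime of (ii) and the collapsing estimates of (iii); a complete resolution of either would very likely carry the other.
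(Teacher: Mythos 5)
You have not proved this statement, and neither does the paper: \ref{GeneralConj} is a \emph{conjecture}, and your text is a research program whose central analytic steps you yourself flag as open (part (ii) beyond $K_X+D\sim 0$, part (i), and part (iii) are all deferred to an unperformed adiabatic/gluing analysis). Those gaps coincide exactly with what the paper leaves unproven; the paper's actual results are: part (iv) except the middle case, which is proved only in dimension two (Propositions \ref{theorem:log-del-Pezzo-alpha-2}, \ref{theorem:log-del-Pezzo-alpha-3}, \ref{theorem:log-del-Pezzo-alpha-1}); part (ii) for $\kappa=1$ via Proposition \ref{theorem:handy-proposition} and Theorem \ref{theorem:Mazzeo-Rubinstein-Jeffres}; and sporadic two-dimensional progress on (i) and (ii) (Theorems \ref{NoKEEThm} and \ref{KEEBethThm}). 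Beyond this, two of your concrete sketches for (iv) would fail as written. For the big case your mechanism is to ``exhibit members with arbitrarily high multiplicity at a point.'' This cannot work: for effective $\QQ$-divisors in the class $[-K_X-(1-\beta)D]$, which stays bounded as $\beta\to 0^+$, multiplicities at a point are uniformly bounded (intersect with a complete-intersection curve through the point avoiding the support), so point multiplicities alone give a positive lower bound on log canonical thresholds, not $\alpha\to 0$. The paper's argument in Proposition \ref{theorem:log-del-Pezzo-alpha-2} is different: bigness yields $-N(K_X+D)\sim D+\Delta$ with $\Delta$ effective, hence $-(K_X+(1-\beta)D)\sim_{\QQ}(\beta+\epsilon)D+\epsilon\Delta$ with $\epsilon=1/N$ fixed, and since the boundary already carries $D$ with coefficient $1-\beta$, this test divisor forces $\mathrm{lct}\le\beta/(\beta+\epsilon)\to 0$; the degeneration comes from piling $D$ itself onto the boundary, not from multiplicities.

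Second, for the middle case of (iv) you propose to localize ``again via Proposition \ref{theorem:handy-proposition}.'' But that proposition requires the boundary divisor to be \emph{nef}, and in the nef-but-not-big regime $D$ need not be nef: in dimension two, class $(\beth)$ contains $\mathrm{(I.9B.m)}$ with $C^2=4-m<0$ for $m\ge 5$, and Example \ref{example:adjunction-nef} shows the inequality of Proposition \ref{theorem:handy-proposition} genuinely fails once nefness is dropped, so this route is blocked, not merely incomplete. The paper's own proof of the two-dimensional middle case (Proposition \ref{theorem:log-del-Pezzo-alpha-3}) does not invoke that proposition at all: the upper bound $(1+\beta)/(2+\beta)$ comes from the fiber through a ramification point of $C\to\PP^1$, and the lower bound $\tfrac12-\epsilon$ comes from a direct case analysis (non-lc point on or off $C$, on a smooth or a singular fiber) using adjunction and the conic-bundle structure of Proposition \ref{proposition:conic-bundle}; the higher-dimensional case remains open. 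Finally, where the paper does make progress on (i)--(iii), it uses tools absent from your plan: non-reductivity of $\Aut(S,C)$ via Theorem \ref{reductiveThm} for non-existence, and $G$-invariant alpha-invariants equal to $1$ for specially symmetric pairs (Propositions \ref{lemma:quadric}, \ref{lemma:F1}, \ref{lemma:Clebsch-cubic-surface}) for existence in class $(\beth)$ --- the latter precisely because, as you note, $\alpha\to\tfrac12$ makes the unequivariant criterion powerless; your collapsing analysis for (iii) is exactly the content of Conjecture \ref{LimitConj}, which the paper also leaves open.
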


Conjecture \ref{GeneralConj} (iii) is itself a generalization
of a folklore conjecture in K\"ahler geometry
for the case $\kappa=1$ mentioned, e.g., by Donaldson \cite[p. 76]{D},
saying that $X\setminus D$ equipped with the Tian--Yau
metric \cite{TianYau} should be a limit of KEE metrics
on $(X,D)$ when $X$ is Fano and $D\in|-K_X|$.
As mentioned earlier, Conjecture \ref{GeneralConj} (ii)
holds when $\kappa=1$. In Propostion \ref{theorem:log-del-Pezzo-alpha-1}
we further give explicit bounds on $\eps$ when $\dim X\in\{2,3\}$
and $\kappa=1$.

Since the work of Hitchin, Kobayashi, and many others,
a standard condition for the existence of canonical metrics
that can be described as zeros of an infinite-dimensional
moment map is some sort of `stability' condition.
How, then, does Conjecture \ref{GeneralConj} fit into this scheme?
The condition $(K_X+D)^{\dim X}=0$ hardly looks at first
like a stability condition. Perhaps one way to motivate it is
to conceive the non-compact Calabi--Yau fibration of
Conjecture \ref{GeneralConj} (ii)
as a KEE metric itself, only with $\be=0$. In
case (i) such a smooth (and hence non-compact)
limit does not exist since too much `positivity'
is still remaining, and
so the small angle regime, which would otherwise
be a metric `perturbation' of that limit, should not
exist either. Thus, the existence of the Calabi--Yau
degeneration provides the necessary `stability' in this situation,
at least conjecturally. An obvious advantage of
the existence criterion of Conjecture \ref{GeneralConj}
is that it is very explicit as opposed
to logarithmic K-stability which in general seems
hard to check.

We prove Conjecture \ref{GeneralConj} (iv) except for the middle case
which we only prove in dimension two
(Propositions \ref{theorem:log-del-Pezzo-alpha-2},
\ref{theorem:log-del-Pezzo-alpha-3},
and \ref{theorem:log-del-Pezzo-alpha-1}).
We refer to \S\ref{LimitAlphaSubsec} for one technical issue relevant
to the definition of the invariants appearing in \eqref{AlphaConjEq}.

Finally, we make some progress towards
Conjecture \ref{GeneralConj} (i) and (ii) in dimension two,
i.e., Conjecture \ref{UniformizationConj}, that we describe next.

\subsection{Existence and non-existence results in the asymptotic regime}
\label{ExistenceSubsec}

Matsushima's theorem \cite{Matsushima} implies that the \KE metric
is the most aesthetically pleasing one since it exhibits
the maximal symmetry possible: every one-parameter subgroup
of automorphisms of the complex structure $\Aut(X)$
can be realized as the complexification of a one-parameter subgroup of
isometries of the KE metric. This has a natural generalization
to the edge setting by considering the automorphism group
of the pair $\Aut(X,D)$, i.e., elements of $\Aut(X)$ that map $D$ to itself.

\begin{theorem}
\label{reductiveThm}
Let $(X,D,g)$ be a KEE manifold. Then
$\Aut_0(X,D)=\h{\rm Isom}_0(X,g)^\CC$. In particular,
$\Aut_0(X,D)$ is reductive.
\end{theorem}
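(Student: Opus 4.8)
The plan is to adapt Matsushima's classical argument to the edge setting by analyzing the Lie algebra $\aut(X,D)$ of holomorphic vector fields tangent to $D$, and showing it is the complexification of the Lie algebra of Killing fields of the KEE metric $g$. Concretely, I would first identify $\aut_0(X,D)$ as the space of holomorphic vector fields $V$ on $X$ whose real and imaginary flows preserve $D$; tangency to $D$ is the infinitesimal version of the condition that automorphisms map $D$ to itself. The key structural fact is that on a \KE manifold the holomorphic vector fields admit a Hodge-theoretic decomposition: every such $V$ can be written (after passing to its $(1,0)$-part and using the metric) in terms of complex-valued potential functions $f$ solving an eigenvalue equation for the (rough or complex) Laplacian, schematically $\dbar V = 0 \Leftrightarrow \bar\partial(\nabla^{1,0} f) = 0$ with $f$ an eigenfunction. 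The reductivity then follows by showing $\aut(X,D) = \mathfrak{k}\oplus \i\,\mathfrak{k}$, where $\mathfrak k = \h{\rm isom}_0(X,g)$ is the (real) Killing algebra, since a Lie algebra that is the complexification of the Lie algebra of a compact group is automatically reductive.

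The main work, and the main obstacle, is to make the Hodge theory rigorous in the presence of the edge singularity along $D$. First I would fix the geometric setup: the KEE metric $g$ has a conical singularity of angle $2\pi\beta$ transverse to $D$, so the relevant function spaces are not the smooth ones but the edge/H\"older spaces (the $\calD^{0,\gamma}$-type spaces indicated in the preamble) adapted to this asymptotic model. The integration-by-parts identities underlying Matsushima's argument — the Bochner--Kodaira--Weitzenb\"ock formula relating $\dbar$-harmonicity of $\nabla^{1,0}f$ to the KE equation — must be justified across $D$. The crucial point is that a holomorphic vector field $V$ tangent to $D$ has a potential $f$ with controlled growth at $D$, so that no boundary term is picked up when integrating by parts near the edge. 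I expect this to require the mapping properties and self-adjointness of the relevant Laplacians on the edge spaces, for which one invokes the linear analysis of Jeffres--Mazzeo--Rubinstein (the framework behind Theorem \ref{theorem:Mazzeo-Rubinstein-Jeffres}); the tangency condition is exactly what confines the potential to the function space on which these operators are well-behaved and the boundary contributions vanish.

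Granting the analytic foundation, the algebraic conclusion is then formal. Writing $\mathfrak h := \aut(X,D)$, one shows that the eigenvalue-$1$ (holomorphy) potentials decompose under complex conjugation into a real part generating Killing fields and an imaginary part generating the gradient (holomorphic, non-Killing) fields, giving the splitting $\mathfrak h = \mathfrak k \oplus \i\,\mathfrak k$ as real Lie algebras; the bracket relations $[\mathfrak k,\mathfrak k]\subset\mathfrak k$, $[\mathfrak k, \i\mathfrak k]\subset \i\mathfrak k$, $[\i\mathfrak k,\i\mathfrak k]\subset\mathfrak k$ identify $\mathfrak h$ as the complexification $\mathfrak k^\CC$. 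Exponentiating and using that $\h{\rm Isom}_0(X,g)$ is compact yields $\Aut_0(X,D)=\h{\rm Isom}_0(X,g)^\CC$, and complexifications of compact Lie groups are reductive, giving the final assertion. The inclusion $\h{\rm Isom}_0(X,g)^\CC\subseteq\Aut_0(X,D)$ is the easy direction — isometries preserve $D$ since $D$ is the singular locus of $g$, hence is preserved by the whole isometry group — so the content is really the reverse inclusion, i.e.\ that \emph{every} automorphism of the pair arises from the KEE geometry, which is precisely what the potential-theoretic splitting delivers.
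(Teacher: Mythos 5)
For the case of positive Einstein constant your plan coincides, in all essentials, with the paper's: Proposition \ref{reductiveprop} establishes exactly the splitting $\aut(X,D)=\mathfrak{k}\oplus J\mathfrak{k}$ by writing a holomorphic field tangent to $D$ as $\nabla\Re\,u+J\nabla\Im\,u$, and the ``controlled growth of the potential'' you anticipate is implemented by observing that $u$ lies in the kernel of the Lichnerowicz operator $\Delta_\o^2+\mu\Delta_\o$, a degenerate elliptic operator of edge type in Mazzeo's sense, so that $u$ admits a polyhomogeneous expansion at $D$ by \cite{Mazzeo,JMR}; this is what legitimizes the Weitzenb\"ock identity and the integration by parts, and forces $u$ to be a $\mu$-eigenfunction of $-\Delta_\o$. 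However, there is a genuine gap: the theorem carries no sign hypothesis on the Einstein constant $\mu$, and your entire mechanism lives in the regime $\mu>0$. If $\mu<0$ the eigenvalue equation $-\Delta_\o u=\mu u$ has no nonzero solutions (the spectrum of $-\Delta_\o$ is nonnegative), and if $\mu=0$ only constants; so the ``splitting'' degenerates to $0=0$, and what must actually be proved is that $\aut(X,D)=0$ --- which your argument does not deliver. The paper closes this case by an entirely different and softer device: any $\psi\in\Aut_0(X,D)$ preserves $c_1(X)$ and $[D]$, hence the K\"ahler class $[\o]$, so $\psi^\star\o$ is a KEE form cohomologous to $\o$; since KEE metrics are unique in their class when $\mu\le0$ \cite[Theorem 2]{JMR}, one concludes $\Aut_0(X,D)=\{\id\}$. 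Some argument of this kind (uniqueness, or a separate Bochner-type vanishing adapted to the edge setting) is indispensable to prove the theorem as stated.

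A second, smaller gap: your ``Hodge-theoretic decomposition'' of a holomorphic field into gradient potentials silently assumes $H^{0,1}(X)=0$ (equivalently $b_1(X)=0$): the $(0,1)$-form $g(V^{1,0},\,\cdot\,)$ is closed because $V$ is holomorphic, but to conclude it is $\dbar$-exact --- and likewise to conclude that every Killing field is Hamiltonian, which you need when identifying $\h{\rm Isom}_0(X,g)^\CC$ with $\Aut_0(X,D)$ --- one needs this vanishing. It is not automatic, since $X$ need not be Fano for a KEE pair; the paper derives it (Lemma \ref{boneLemma}) from the positivity of $c_1(X)-\sum_i(1-\be_i)[D_i]$ via the Kawamata--Viehweg vanishing theorem. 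This is not a difficult point, but it is an ingredient your sketch would have to supply explicitly.
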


Here $\h{\rm Isom}_0(X,g)^\CC$ denotes the complexification
of the identity component of the isometry group of $(X,g)$,
while $\Aut_0(X,D)$ denotes the identity component of $\Aut(X,D)$. 
Theorem \ref{reductiveThm} is proved in Section \ref{ReductiveSec}, 
using the asymptotic structure of solutions to linear elliptic
equations with edge degeneracies in the sense of Mazzeo
\cite{Mazzeo} as developed in the complex codimension one setting in \cite{JMR}.
After posting this article we were informed that
in the special case $X$ is itself Fano
a more general result than Theorem \ref{reductiveThm} 
has been obtained in
\cite[Theorem 4]{CDSun} and
\cite[Lemma 6.3]{Tian2013} that appeared several months prior
to our work. While the proofs in these articles quite likely can be extended to
the case $X$ is not Fano, these proofs are quite
different from our approach. 

In the smooth world, Matsushima's criterion is often considered
as a rather coarse obstruction to existence. Nevertheless,
in the asymptotic regime with its much richer variety
of cases, such a tool proves to be quite useful.

\begin{theorem}
\label{NoKEEThm} The following strongly asymptotically log del
Pezzo pairs listed in Theorem \ref{theorem:main-1}
do not admit KEE metrics for
sufficiently small $\be$: $\mathrm{(I.1C})$, $\mathrm{(I.2.n})$
with any $n\ge 0$, $\mathrm{(I.6C.m})$ with any $m\ge 1$,
$\mathrm{(I.7.n.m})$ with any $n\ge 0$ and $m\ge 1$,
$\mathrm{(I.6B.1})$, $\mathrm{(I.8B.1})$ and $\mathrm{(I.9C.1})$.
\end{theorem}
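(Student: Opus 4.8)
The plan is to use Theorem \ref{reductiveThm} as a necessary condition for existence: if $(S,C)$ admitted a KEE metric for even a single small $\beta$, then $\Aut_0(S,C)$ would have to be reductive. Since $\Aut_0(S,C)$ is independent of $\beta$, it therefore suffices to prove that for each pair on the list the identity component $\Aut_0(S,C)$ of the group of automorphisms of $S$ preserving $C$ is \emph{not} reductive, i.e.\ that its unipotent radical $R_u\big(\Aut_0(S,C)\big)$ is non-trivial. I would establish this case by case by producing an explicit non-trivial unipotent subgroup $\cong\mathbb{G}_a=(\CC,+)$ sitting inside this radical.

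Each pair on the list is a blow-up of $\PP^2$ (or of a Hirzebruch surface) with $C$ the strict transform of a distinguished line or conic passing through the centers of blow-up. Every element of $\Aut_0(S,C)$ descends to an automorphism of the model that preserves the distinguished curve and fixes each center (fixes, rather than permutes, because we work in the identity component), so $\Aut_0(S,C)$ is identified with a subgroup of the stabilizer of this configuration inside $\Aut(\PP^2)=\mathrm{PGL}_3$. The guiding model is $(\mathrm{I.1C})=(\PP^2,L)$ with $L$ a line: the stabilizer of a line in $\mathrm{PGL}_3$ is the affine group $\mathrm{GL}_2\ltimes\CC^2$ acting on $\PP^2\setminus L\cong\CC^2$, whose unipotent radical is the normal subgroup of translations $\CC^2$. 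This is manifestly non-trivial, which already disposes of $(\mathrm{I.1C})$.

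For the genuinely blown-up cases I would write down, in affine coordinates adapted to $C$, an explicit one-parameter group of shears that preserve $C$ and fix all the (possibly infinitely near) centers, and verify it extends to a $\mathbb{G}_a\subset\Aut_0(S,C)$. To turn this into a rigorous non-reductivity statement I would then compute $\Aut_0(S,C)$ completely---this is a finite piece of projective geometry for these explicit surfaces---exhibit its Levi decomposition $L\ltimes R_u$, and check that the shears generate a non-zero $R_u$. Equivalently, and perhaps more cleanly, one computes the Lie algebra of logarithmic vector fields $H^0\big(S,T_S(-\log C)\big)$ and isolates a non-zero nilpotent element; this determines $R_u$ unambiguously. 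One checks the construction is uniform in the parameters for the families $(\mathrm{I.2.}n)$, $(\mathrm{I.6C.}m)$ and $(\mathrm{I.7.}n.m)$, so that the radical is non-trivial for every allowed value.

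The main obstacle is the delicacy of the three cases $(\mathrm{I.6B.}1)$, $(\mathrm{I.8B.}1)$ and $(\mathrm{I.9C.}1)$, where only the value $1$ of the parameter is asserted. This signals that non-reductivity is genuinely sensitive to the configuration: as the number (or mutual position) of blown-up points grows, the surviving shears are progressively killed and the unipotent radical collapses, so that the pairs with larger parameter are presumably reductive and must be excluded from the statement. The crux is therefore an \emph{exact}, not merely qualitative, determination of $\Aut_0(S,C)$: one must confirm that the candidate $\mathbb{G}_a$ really preserves $C$ together with the \emph{entire} infinitely near configuration at precisely the claimed parameter value, and that no hidden relation forces reductivity. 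Carrying out this bookkeeping of the point configuration, case by case, is where the real work lies.
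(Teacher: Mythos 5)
Your proposal is correct and follows essentially the same route as the paper: Theorem \ref{reductiveThm} reduces non-existence to non-reductivity of $\Aut_0(S,C)$ (which is independent of $\be$), and the paper's Proposition \ref{NoKEEProp} then carries out exactly the case-by-case computation you outline, exhibiting each group as a semidirect product $\mathbb{G}_a^k\rtimes(\cdot)$ with non-trivial unipotent radical, with the blow-up cases handled by descending to the model surface and tracking which shears fix the centers. Your observation that the cases $\mathrm{(I.6B.1)}$, $\mathrm{(I.8B.1)}$, $\mathrm{(I.9C.1)}$ are delicate is borne out by the paper, which shows the groups become reductive (e.g.\ $\mathbb{G}_m$ or finite) as soon as $m\ge 2$.
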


This proves part of the `only if' direction
of Conjecture \ref{UniformizationConj}.
It is proven in Section \ref{AutomorphismSec}
by 
computing the automorphisms groups
of pairs of classes $(\gimel)$ and $(\daleth)$.
We also supply
further evidence for the converse direction of the conjecture
by showing that all pairs of class $(\beth)$ have reductive automorphism groups
(Theorem \ref{BethReductiveGroupsThm}).

Next, we turn to the existence %sufficient
part of Conjecture \ref{UniformizationConj}.
Our main tool here is the following existence theorem
that is a special case of \cite[Theorem 2, Lemma 6.13]{JMR}. The invariant
$\a_G(S,(1-\be)C)$ is the $G$-invariant Tian invariant of the
pair $(S,(1-\be)C)$ with respect to
the \K class $[-K_S-(1-\be)C]$
(see Definition \ref{definition:global-threshold-G}).

\begin{theorem}
\label{theorem:Mazzeo-Rubinstein-Jeffres} Let $(S,C)$ be a
strongly asymptotically log del Pezzo surface with $C$
smooth and irreducible. Suppose that
$G\subset \Aut(S)$ is a finite group and that
$\a_G(S,(1-\be)C)>2/3$. Then
there exists a \KE edge metric
with positive Ricci curvature and with angle $2\pi\be$
along $C$.
\end{theorem}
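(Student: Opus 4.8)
The plan is to deduce the statement from the general existence theorem of Jeffres--Mazzeo--Rubinstein \cite[Theorem 2, Lemma 6.13]{JMR} by specializing to complex dimension two and tracking the role of the finite group $G$. Since $(S,C)$ is strongly asymptotically log del Pezzo, the class $-K_S-(1-\be)C$ is ample for all sufficiently small $\be$, and a \KE edge metric of angle $2\pi\be$ along $C$ with positive Einstein constant is precisely a solution $\varphi$ of the edge complex \MA equation
\[
\ovp^2=e^{h_\omega-t\varphi}\,\omega^2,\qquad t\in[0,1],
\]
where $\omega$ is a fixed reference edge metric of angle $2\pi\be$ in the class $-K_S-(1-\be)C$, $h_\omega$ is its Ricci potential, and $\ovp=\omega+\i\ddbar\varphi$ is required to have the same edge asymptotics along $C$; the solution at $t=1$ is the desired metric. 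First I would set up the continuity method in $t$, working entirely in the edge \Holder spaces of Mazzeo's edge calculus as developed in the complex codimension-one setting in \cite{JMR}. Openness along the path is the standard implicit-function-theorem argument in these spaces, the linearized operator being invertible on the relevant $G$-invariant scale for $t<1$.

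The heart of the matter is the a priori $C^0$ estimate, which gives closedness. I would obtain it from the coercivity (properness) of the relevant logarithmic energy functional, and this is exactly where the Tian invariant enters: since $\dim_\CC S=2$ one has $\frac{n}{n+1}=\frac23$, and the hypothesis $\a_G(S,(1-\be)C)>2/3$ furnishes, via a $G$-invariant logarithmic Moser--Trudinger / H\"ormander-type inequality, the uniform control of the oscillation of $\varphi$ needed along the continuity path. Because the entire problem---the reference metric, the functional, and the equation---is $G$-equivariant, one may restrict to $G$-invariant potentials by averaging; this is what allows the weaker condition $\a_G>2/3$ (rather than the plain $\a>2/3$) to suffice, and is the content of \cite[Lemma 6.13]{JMR}.

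With the $C^0$ bound in hand, the remaining estimates proceed as in \cite{JMR}: a Laplacian (second-order) estimate, followed by the key edge Schauder estimate in the polyhomogeneous/edge-\Holder scale that upgrades the solution to a genuine \KE edge metric with the prescribed cone angle and positive Ricci curvature, uniformly along the path. The hard part is precisely this last analytic ingredient---the uniform higher-order edge regularity---but it is supplied wholesale by the Mazzeo edge calculus machinery of \cite{JMR}, so for us the only genuine verification is that the hypotheses of that theorem hold in our situation: ampleness of $-K_S-(1-\be)C$ (from the strongly asymptotically log del Pezzo property), smoothness and irreducibility of $C$ (assumed), and the threshold identity $\frac{n}{n+1}=\frac23$ combined with $\a_G>2/3$. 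This completes the deduction.
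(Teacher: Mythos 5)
Your proposal is correct and takes essentially the same route as the paper: the paper offers no independent argument, but records this statement as a special case of \cite[Theorem 2, Lemma 6.13]{JMR}, exactly as you do, the only genuine verifications being ampleness of $-K_S-(1-\be)C$ (from the strongly asymptotically log del Pezzo hypothesis), the dimension-two identity $\frac{n}{n+1}=\frac{2}{3}$, and the identification of $\a_G(S,(1-\be)C)$ with the $G$-invariant Tian invariant for the class $[-K_S-(1-\be)C]$ (which the paper handles via \cite{CSD,Berm} and the normalization remark following \eqref{SecondDefAlpha}). Your sketch of the continuity-method internals of \cite{JMR} is accurate but expository rather than an additional argument.
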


We apply this to prove the following existence theorem for pairs
of class $(\beth)$ giving the first construction of
KEE metrics of positive curvature and of small angle
outside of the classical class $(\aleph)$.

\begin{theorem}
\label{KEEBethThm}
There exist strongly asymptotically log del
Pezzo pairs of type (I.3A), (I.4B), and (I.9B.5)
(listed in Theorem \ref{theorem:main-1})
that admit KEE metrics for all sufficiently small $\be$.
\end{theorem}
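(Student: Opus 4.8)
The plan is to deduce existence from the analytic input of Theorem~\ref{theorem:Mazzeo-Rubinstein-Jeffres}: for each of the three pairs it suffices to produce a finite group $G\subset\Aut(S,C)$ for which the $G$-invariant global log canonical threshold satisfies $\alpha_G(S,(1-\be)C)>2/3$ for all sufficiently small $\be$. The essential difficulty is that these pairs are of class $(\beth)$, so by Theorem~\ref{theorem:log-del-Pezzo-alpha} the ordinary invariant $\alpha(S,(1-\be)C)\to 1/2<2/3$; hence the trivial group will not do and the whole point is to exploit symmetry. Since $\alpha_G\ge\alpha$ always (the infimum defining $\alpha_G$ runs over the smaller set of $G$-invariant competitors), the idea is to choose $G$ large enough that every $G$-invariant divisor is forced away from the configurations that drive $\alpha$ down to $1/2$.

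To see what $G$ must accomplish, recall from Proposition~\ref{proposition:conic-bundle} that a pair of class $(\beth)$ carries a conic-bundle morphism $\phi\colon S\to\PP^1$ with $-(K_S+C)\sim_{\QQ}cF$ for the fibre class $F$, and that the smooth irreducible curve $C$ is a bisection, meeting a general fibre $F_0$ in two points. The divisors that make $\alpha$ small are the \emph{vertical} ones, concentrated on a single fibre $F_0$ and interacting with $C$ at the two points of $C\cap F_0$, where the coefficient $(1-\be)$ of $C$ is already close to $1$; this is the surface incarnation of the elementary fact that the fibrewise threshold of $(\PP^1,(1-\be)(p_1+p_2))$ equals $1/2$. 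I would therefore choose $G$ to act on the base $\PP^1$ with no fixed point and with all orbits large, so that a $G$-invariant effective divisor cannot live on a single fibre but must spread its vertical part over an entire $G$-orbit of fibres. The role of the distinguished parameters (for instance $m=5$ in $(\mathrm{I.9B.5})$) is precisely that at these values $\Aut(S,C)$ is large enough to contain such a $G$; I expect the relevant groups to be read off from the explicit models in Theorem~\ref{theorem:main-1}, the natural candidates being dihedral, tetrahedral, octahedral or icosahedral subgroups of $\mathrm{PGL}_2$ lifted to $S$.

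With such a $G$ fixed, the core estimate proceeds as follows. Given any $G$-invariant effective $D$ with $D\sim_{\QQ}-K_S-(1-\be)C\sim_{\QQ}\be C+cF$, decompose $D$ into horizontal and vertical parts. Because $D$ is $G$-invariant and the $G$-orbit of a general fibre has size $N\gg1$, the coefficient of the vertical part of $D$ along each fibre in such an orbit is $O(c/N)$, while its horizontal part meets each fibre in only $D_{\mathrm{hor}}\cdot F=O(\be)$ points; feeding these bounds into the discrepancy inequalities for log canonicity of $(S,(1-\be)C+\lambda D)$ shows they hold for $\lambda$ well beyond $2/3$, uniformly in small $\be$. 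The non-log-canonical locus, if nonempty, is $G$-invariant, so it can only meet the finitely many fibres fixed or permuted in a short orbit by $G$; on each such fibre one checks directly, using the position of the two points $C\cap F_0$ relative to the $G$-action and inversion of adjunction along $C$, that no invariant divisor creates a threshold below $2/3$.

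The main obstacle I anticipate is this last point together with uniformity in $\be$. As $\be\to0$ the coefficient of $C$ tends to $1$, so the margin for adding any vertical mass near $C\cap F_0$ shrinks to order $\be$; the argument survives only because $G$-invariance dilutes the vertical mass on each fibre by the orbit size, and one must show this dilution beats the vanishing margin \emph{simultaneously} on every fibre, including those in short orbits. Verifying that the chosen $G$ acts with no short bad orbits on the base, and that the two marked points $C\cap F_0$ on each special fibre are interchanged or moved by $G$ so that mass cannot pile up at a single one of them, is exactly what singles out the members $(\mathrm{I.3A})$, $(\mathrm{I.4B})$ and $(\mathrm{I.9B.5})$. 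Once $\alpha_G(S,(1-\be)C)>2/3$ is established for small $\be$, Theorem~\ref{theorem:Mazzeo-Rubinstein-Jeffres} immediately yields the desired KEE metrics for all sufficiently small $\be$.
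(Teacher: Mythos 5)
Your high-level strategy coincides with the paper's: exhibit a finite $G\subset\Aut(S,C)$ with $\alpha_G(S,(1-\be)C)>2/3$ and apply Theorem \ref{theorem:Mazzeo-Rubinstein-Jeffres}. But the two places where you defer the work are exactly where the content lies, and your core estimate does not close. The danger at a point $P\in C\cap F_0$ is neither the coefficient of $F_0$ in the vertical part of $D$ nor the transverse count $D_{\mathrm{hor}}\cdot F=O(\be)$; it is the local intersection number $\mult_P(D\cdot C)$, which for an invariant divisor whose horizontal part is highly \emph{tangent} to $C$ at $P$ can be as large as $D\cdot C=2+O(\be)$ --- exactly the configuration that produces the limit $1/2$ in Proposition \ref{theorem:log-del-Pezzo-alpha-3} --- and neither of your two bounds sees it, on general fibres or special ones. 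Moreover, your fallback reasoning (``dilution beats the vanishing margin'') is false for a fixed finite $G$: any multiplicity-type margin available near $C$ is of order $\be$, while the dilution $O(c/N)$ is a constant in $\be$, so for $\be\to0$ it yields nothing. What actually rescues this line of argument is different: the non-lc locus is $G$-invariant, inversion of adjunction gives $\lambda\,\mult_{P'}(D\cdot C)>1$ at \emph{every} point $P'$ of the $G$-orbit of $P$, and summing against the fixed budget $D\cdot C=2+O(\be)$ forces that orbit to be a single (hence $G$-fixed) point when $\lambda$ is near $2/3$; one must then also exclude non-lc \emph{curves} (e.g.\ the case $C\subset\mathrm{Supp}(D)$, or invariant vertical divisors carrying coefficient $>1/\lambda$), which you never address. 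The paper runs this endgame via Shokurov's connectedness principle together with explicit intersection computations ruling out $G$-invariant non-lc curves, concluding that the non-lc locus is one $G$-fixed point and obtaining the stronger statement $\alpha_G(S,(1-\be)C)=1$ (Propositions \ref{lemma:quadric}, \ref{lemma:F1}, \ref{lemma:Clebsch-cubic-surface}).

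Second, the theorem is existential, and exhibiting members of (I.3A), (I.4B), (I.9B.5) whose automorphism group contains a suitable $G$ acting without fixed points is half of the proof, not something to ``read off''. The paper constructs them by representation theory: for (I.4B), a $\mathrm{D}_{10}$-equivariant twisted cubic in $\PP^3$ lying on a unique (hence invariant, hence smooth) quadric; for (I.3A), a $\mathrm{D}_{2n}$-invariant conic in $\PP^2$ together with an invariant point off it; and for (I.9B.5), the Clebsch diagonal cubic with $\Aut(S)\cong\mathrm{S}_5$ and a $\mathrm{D}_{10}$-invariant line. In each case the absence of $G$-fixed points --- the step your argument also needs --- is verified by checking that the relevant representation ($\mathrm{Sym}^3(\mathbb{V}_2)$, resp.\ $H^0(\calO_S(-K_S))$) contains no one-dimensional subrepresentation. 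Without these constructions and that fixed-point-free verification, the proposal is a plan rather than a proof.
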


This result is proven using computations of the Tian
invariant of these pairs
(Subsection \ref{BethExistenceSubsec}).
In the cases (I.3A), (I.4B)
the pair possesses certain discrete symmetry that
allows using representation theoretic arguments
coupled with Shokurov's connectedness principle
for log canonical loci to conclude that in fact Tian's invariant
equals 1 for all $\be\in(0,1]$. The case (I.9B.5)
is somewhat more delicate since then $S$ varies in a moduli space.
We choose the Clebsch cubic surface in
that space and again are able to show that
the Tian invariant equals 1 for all $\be\in(0,1]$.
We also compute the Tian invariant of more general
cubic surfaces with an Eckardt point and
show that without symmetry one cannot apply
the existence result of Theorem \ref{theorem:Mazzeo-Rubinstein-Jeffres}.
This last computation (Proposition \ref{lemma:cubic-surface})
generalizes a result from the smooth setting \cite[Theorem 1.7]{Ch07b}.

Using log slope stability
Li--Sun proved that the pairs (I.1B) and (I.3B)
admit no KEE metrics for small $\be$ \cite[\S3]{LiSun}.
It is possible to apply arguments similar to theirs to prove non-existence
results for other pairs of class $(\gimel)$ and $(\daleth)$ but for the sake
of brevity we postpone this discussion,
along with further existence results for class~$(\beth)$,
to a separate article.

\subsection{Conventions}

Let us describe notation and basic results
that will be used throughout the article.

By a curve in an algebraic variety $X$ we mean an irreducible
reduced subvariety of dimension one. Occasionally, we allow curves
to be reducible (but we always assume that they are reduced). For
a curve $C$ on a smooth surface $S$, we define its arithmetic
genus $p_a(C)$ by
\beq \label{ArithGeomGenus} p_a=h^1(O_{C}). \eeq
Then $2p_a(C)-2=K_{S}\cdot C+C^2$ by
\cite{GH}.
When $C$ is smooth $p_a(C)$ equals the genus of $C$, $g(C)$.
If $C$ is an
irreducible curve on a smooth surface $S$, then
by applying adjunction one verifies that
\beq
\label{SuchRationalIsSmooth} \h{$C\cong\PP^1$\ \ if and only if\ \ $p_a(C)=0$}
\eeq
This can be quite handy.

By $\sim$ we assume rational equivalence of Weil divisors or
Cartier divisors (or their classes in $\mathrm{Cl}(X)$ and
$\mathrm{Pic}(X)$, respectively)
except in Section \ref{ReductiveSec} where $\sim$
stands for equality in the sense of complete asymptotic expansions
as in \cite{JMR}. By $\sim_{\mathbb{Q}}$ we assume
$\mathbb{Q}$-rational equivalence of $\mathbb{Q}$-divisors, i.e.,
$D_1\sim_{\mathbb{Q}} D_2$ if and only if $nD_1\sim nD_2$ for some non-zero
integer $n$ such that $nD_1$ and $nD_2$ are integral divisors. By
$\mathbb{Q}$-Cartier and $\mathbb{R}$-Cartier divisors we mean
elements in $\mathrm{Pic}(X)\otimes\mathbb{Q}$ and
$\mathrm{Pic}(X)\otimes\mathbb{R}$, respectively. By
$\sim_{\mathbb{R}}$ we assume $\mathbb{Q}$-rational equivalence of
$\mathbb{R}$-divisors, i.e., $D_1-D_2$ is a sum with real
coefficients of $\mathbb{Q}$-Cartier divisors that are
$\mathbb{Q}$-rationally equivalent to zero.

For two divisors $D_1$ and $D_2$, we write $D_1\equiv D_2$ (and
say that $D_1$ and $D_2$ are numerically equivalent) iff $D_1-D_2$
is $\mathbb{R}$-Cartier divisor such that $(D_1-D_2). C=0$ for
every curve $C\subset X$. Vice versa, we say that two curves $C_1$
and $C_2$ on $X$ are numerically equivalent iff $D. C_1=D.
C_2$ for every $\mathbb{R}$-Cartier divisor $D$ on $X$. Similarly,
we define numerical equivalence of $1$-cycles (with real or
rational coefficients) on $X$. We denote the real vector space of
$1$-cycles modulo numerical equivalence by $\mathrm{N}_1(X)$. By
the cone of curves or the Mori cone of $X$ we assume the cone in
$\mathrm{N}_1(X)$ generated by curves in $X$. We denote the Mori
cone of $X$ by $\mathrm{NE}(X)$. By $\overline{\mathrm{NE}(X)}$ we
denote its closure.

Recall that a $\mathbb{Q}$-Cartier $\mathbb{Q}$-divisor $D$ is
called ample if there exists positive integer $n$ such that $nD$
is a very ample Cartier divisor. By Kleiman's criterion, $D$ is
ample if and only if $D$ is positive on
$\overline{\mathrm{NE}(X)}$ (and this in turn is equivalent to the
differential geometric notion of positivity of a class).
The latter can be used as a
definition of ampleness for $\mathbb{R}$--Cartier
$\mathbb{R}$-divisors. Note that in the case of surfaces, the
ampleness of a $\mathbb{Q}$-Cartier $\mathbb{Q}$-divisor $D$ is
equivalent to
\beq
\label{NMEq} \h{$D^2>0$ and $D. C>0$}
\eeq
for every curve $C\subset X$. So we can use the latter condition
as another definition of ampleness for $\mathbb{R}$--Cartier
$\mathbb{R}$-divisors on surfaces. This criterion-definition is
very handy for surfaces: if $D$ is an ample $\mathbb{R}$-Cartier
divisor on a smooth surface $S$, then
\beq
\label{pushforward}
\h{$\pi_{\star}(D)$ is an ample $\mathbb{R}$-Cartier divisor}
\eeq
for every birational morphism $\pi\colon S\to s$ such that $s$ is
a smooth surface.

Recall that a $\mathbb{Q}$-divisor $D$ is called big if
$h^{0}(\mathcal{O}_{X}(nD))$ grows as $O(n^{\mathrm{dim}(X)})$ for
$n\gg 1$ such that $nD$ is an integral divisor. One can show that
$D$ is big if and only if it is a sum of an effective divisor and
an ample divisor. For $\mathbb{R}$-divisors this can be used as a
definition of bigness.

Recall that a divisor $D$ is effective if $D$ is a finite linear
combination of prime Weil divisors with non-negative coefficients,
and that $h^{0}(\mathcal{O}_{X}(-D))=0$ for every non-zero
effective Weil divisor $D$. An $\mathbb{R}$-Cartier
$\mathbb{R}$-divisor $D$ is called nef (a shortcut for numerically
effective) if $D. C\ge 0$ for every curve $C\subset X$.
Thus,
\beq
\label{EffMinusNeffZero} \h{ if $D$ is effective and $-D$ is
nef, then $D$ is a zero divisor.}
\eeq

For each $n\ge0$,
denote by
\beq
\label{FnZn}
\FF_n
\eeq
the unique rational ruled surface
whose Picard group has rank two and
contains a unique (if $n>0$) smooth rational curve of self-intersection $-n$.
We denote this curve by $Z_n$, and by $F$ we denote
an irreducible smooth rational curve such that $F^2=0$ and $F.Z_n=1$.
If $n=0$ when we refer to $Z_0$ and $F$ we
intend that each is a fiber of a different projection to $\PP^1$.
Such a surface can be constructed, e.g., as a toric variety
or as a ruled surface and \cite[Chapter 5, \S2]{Har77}
and applying adjunction yields
\beq
\label{KFnEq}
-K_{S}\sim 2Z_n+(n+2)F
\eeq
Recall that every smooth irreducible curve in $|Z_n+nF|$
(a `zero section') intersects each fiber transversally at a single point
and does not intersect the `infinity section' $Z_n$.
Any curve $C$ on $\FF_n$ satisfies $C\sim aZ_n+bF$
with $a,b\in\NN\cup\{0\}$. This, combined with
\eqref{NMEq}, implies
\beq\label{ampleFn}
\h{$C$ is ample if and
only if $a>0$ and $b>na$,}
\eeq
and furthermore,
\beq\label{irreducibleFn}
\h{$C$ is an
irreducible curve
if and only if $C=Z_n$ or $b\ge na\ge0$.}
\eeq

The classification of rational surfaces \cite[p. 520]{GH}
implies that
\beq\label{MinusOneCurveExistence}
\h{every rational surface with $\h{rk}(\h{Pic})>2$ contains a $-1$-curve},
\eeq
and that
\beq
\label{RationalClassifPicAtMostTwo}
\h{a rational surface with $\h{rk}(\h{Pic})\le 2$
is either $\PP^2$ or $\FF_n, \,n\ge0$}.
\eeq

We denote by $\mathbb{G}_a$ the additive
group $(\mathbb{C},\,+\,)$, by
$\mathbb{G}_m$ the multiplicative
group $(\mathbb{C}^{\star},\,\cdot\,)$, and by
$\mu_n$ the finite group of order $n$.

Finally, if $G$ is a graph with vertex set $V$ and
edges $E$, the dual graph of $G$ refers to the graph
whose vertex set is $E$ and whose edge set is $V$,
namely if $v\in V, e_1,e_2\in E$ and $v\in e_1\cap e_2$
then $e_1$ and $e_2$ are connected in the dual graph
by $v$. A graph is a cycle if for some $\NN\ni k\ge2$
$E=\{e_1,\ldots,e_k\}, V=\{v_1,\ldots,v_k\}$,
and $e_i\cap e_{i+1}=v_i$ with $e_{k+1}:=e_1$.
A tree is a graph that contains no cycles.
A chain is a connected tree with no three edges intersecting.

\subsection*{Acknowledgments}
The bulk of this work took place during visits to Stanford University
in Spring and Summer 2012, and to CIRM, Universit\`a di Trento, in
Summer 2013. We are grateful to CIRM for its financial support
through the Research in Pairs program.
The NSF supported this research through grant DMS-1206284.
IAC was also supported by AG Laboratory SU-HSE,
RF government grant ag.11.G34.31.0023.
YAR is also supported by a Sloan Research Fellowship.

\section{Asymptotically log del Pezzo surfaces with smooth connected boundary}
\label{section:r-1}

The following theorem gives complete classification in the case of
a single boundary component.

\begin{theorem}
\label{theorem:main-1} Let $S$ be a smooth surface (the surface),
and let $C$ be an irreducible smooth curve on $S$ (the boundary
curve). Then $-K_S-(1-\beta)C$ is ample for all sufficiently
small $\beta>0$ if and only if $S$ and $C$ can be described as
follows:
\begin{itemize}
\item [$\mathrm{(I.1A})$] $S\cong\mathbb{P}^2$, and $C$ is a smooth cubic elliptic curve,%
\item [$\mathrm{(I.1B})$] $S\cong\mathbb{P}^2$, and $C$ is a smooth conic,%
\item [$\mathrm{(I.1C})$] $S\cong\mathbb{P}^2$, and $C$ is a line,%
\item [$\mathrm{(I.2.n})$] $S\cong\mathbb{F}_{n}$ for any $n\ge 0$, and $C=Z_n$,%
\item [$\mathrm{(I.3A})$] $S\cong\mathbb{F}_1$, and $C\in |2(Z_1+F)|$,%
\item [$\mathrm{(I.3B})$] $S\cong\mathbb{F}_1$, and $C\in |Z_1+F|$,%
\item [$\mathrm{(I.4A})$] $S\cong\mathbb{P}^1\times\mathbb{P}^1$, and $C$ is a smooth elliptic curve of bi-degree $(2,2)$,%
\item [$\mathrm{(I.4B})$] $S\cong\mathbb{P}^1\times\mathbb{P}^1$, and $C$ is a smooth  rational curve of bi-degree $(2,1)$,%
\item [$\mathrm{(I.4C})$] $S\cong\mathbb{P}^1\times\mathbb{P}^1$, and $C$ is a smooth  rational curve of bi-degree $(1,1)$,%
\item [$\mathrm{(I.5.m})$] $S$ is a blow-up of the surface in $\mathrm{(I.1A)}$ at $m\le 8$ distinct points on the boundary curve such that $-K_{S}$ is ample, i.e., $S$ is a del Pezzo surface, and $C$ is the proper transform of the boundary curve in $\mathrm{(I.1A)}$, i.e., $C\in|-K_{S}|$,%
\item [$\mathrm{(I.6B.m})$] $S$ is a blow-up of the surface in $\mathrm{(I.1B)}$ at $m\ge 1$ distinct points on the boundary curve, and $C$ is the proper transform of the boundary curve in $\mathrm{(I.1B)}$,%
\item [$\mathrm{(I.6C.m})$] $S$ is a blow-up of the surface in $\mathrm{(I.1C)}$ at $m\ge 1$ distinct points on the boundary curve, and $C$ is the proper transform of the boundary curve in $\mathrm{(I.1C)}$,%
\item [$\mathrm{(I.7.n.m})$] $S$ is a blow-up of the surface in $\mathrm{(I.2.n)}$ at $m\ge 1$ distinct points on the boundary curve, and $C$ is the proper transform of the boundary curve in $\mathrm{(I.2)}$,%
\item [$\mathrm{(I.8B.m})$] $S$ is a blow-up of the surface in
$\mathrm{(I.3B)}$ at $m\ge 1$ distinct points on the boundary curve, and $C$ is the proper transform of the boundary curve in $\mathrm{(I.3B)}$,%
\item [$\mathrm{(I.9B.m})$] $S$ is a blow-up of the surface in $\mathrm{(I.4B)}$ at $m\ge 1$ distinct points on the boundary curve with no two
of them on a single curve of bi-degree $(0,1)$, and $C$ is the proper transform of the boundary curve in $\mathrm{(I.4B)}$,%
\item [$\mathrm{(I.9C.m})$] $S$ is a blow-up of the surface in $\mathrm{(I.4C)}$ at $m\ge 1$ distinct points on the boundary curve, and $C$ is the proper transform of the boundary curve in $\mathrm{(I.4C)}$.%
\end{itemize}
\end{theorem}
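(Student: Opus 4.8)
The plan is to prove the two implications separately, with the substance residing in the ``only if'' (classification) direction. For the ``if'' direction I would not treat the families one at a time but reorganize them by the positivity type of $-(K_S+C)$ exactly as in Theorem~\ref{theorem:4-cases}, and verify ampleness through the Nakai--Moishezon criterion \eqref{NMEq}: since ampleness is an open condition, the cases where $-(K_S+C)$ is trivial or ample are immediate, and only the ``big but not ample'' and ``nef but not big'' families need a short case check. For the ``only if'' direction, observe first that because $C$ is smooth the pair $(S,(1-\beta)C)$ is automatically Kawamata log terminal, so the hypothesis is exactly that $D_\beta:=-K_S-(1-\beta)C=-(K_S+C)+\beta C$ is ample for all small $\beta>0$. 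Letting $\beta\to0^+$ and using that a limit of ample classes is nef, I extract that $L:=-(K_S+C)$ is nef; fed into adjunction $2p_a(C)-2=(K_S+C)\cdot C$ (see \eqref{ArithGeomGenus}), this single fact drives all the numerical restrictions.

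Since $L$ is nef, $L\cdot C=2-2p_a(C)\ge0$, so $p_a(C)\le1$, and by \eqref{SuchRationalIsSmooth} either $C$ is elliptic or $C\cong\PP^1$. In the elliptic case $L\cdot C=0$, while evaluating the ample class against $C$ gives $D_\beta\cdot C=\beta C^2>0$, hence $C^2>0$; the Hodge index theorem then forces $L\equiv0$ (a nef class orthogonal to $C$ cannot have $L^2>0$ since $C^2>0$, and a class in the negative-definite space $C^\perp$ with $L^2=0$ must vanish). Thus $-K_S\equiv C$ is ample, so $S$ is del Pezzo and $C\in|-K_S|$, which is precisely the anticanonical (elliptic) list $\mathrm{(I.1A)},\mathrm{(I.4A)},\mathrm{(I.5.m)}$. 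This disposes of class $(\aleph)$ and reduces the remaining analysis to $C\cong\PP^1$ with $C\not\sim-K_S$.

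For rational $C$ the engine is an induction on $\mathrm{rk}(\Pic(S))$ by contraction of $(-1)$-curves. The clean numerical input is that for any irreducible $\Gamma\ne C$ one has $D_\beta\cdot\Gamma=L\cdot\Gamma+\beta\,C\cdot\Gamma>0$ with both summands nonnegative; together with adjunction this rules out any rational $\Gamma\ne C$ with $\Gamma^2\le-2$, since such a curve would satisfy $-K_S\cdot\Gamma=(L+C)\cdot\Gamma\le0$, forcing $L\cdot\Gamma=C\cdot\Gamma=0$ and contradicting ampleness. For a $(-1)$-curve $E$ the relation $-K_S\cdot E=1$, i.e.\ $L\cdot E+C\cdot E=1$ with nonnegative integer summands, yields the dichotomy $C\cdot E\in\{0,1\}$: either $E$ is disjoint from $C$ or meets it transversally at one point. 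I would then show that contracting any such $E$ by $\pi\colon S\to S'$ keeps $S'$ smooth, keeps the image of $C$ smooth, and preserves the asymptotic condition, because $\pi_\star D_\beta=-K_{S'}-(1-\beta)\pi_\star C$ is again ample by \eqref{pushforward}. Using \eqref{MinusOneCurveExistence} to produce a $(-1)$-curve whenever $\mathrm{rk}(\Pic)>2$, the induction terminates at $\mathrm{rk}(\Pic(S))\le2$, where \eqref{RationalClassifPicAtMostTwo} gives $S\cong\PP^2$ or $S\cong\FF_n$, and a direct application of \eqref{KFnEq}, \eqref{ampleFn}, \eqref{irreducibleFn} lists exactly the admissible smooth rational $C$, producing $\mathrm{(I.1B)},\mathrm{(I.1C)},\mathrm{(I.2.n)},\mathrm{(I.3A)},\mathrm{(I.3B)},\mathrm{(I.4B)},\mathrm{(I.4C)}$.

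Reversing the contractions recovers every remaining pair by blowing up points lying on the boundary of one of these minimal models; blow-ups \emph{off} the boundary are excluded because they destroy $D_\beta^2>0$, whereas blow-ups on $C$ let $C$ absorb the lost positivity. This yields the families $\mathrm{(I.6B.m)}$--$\mathrm{(I.9C.m)}$, and the positional constraints such as ``no two points on a single $(0,1)$-curve'' in $\mathrm{(I.9B.m)}$ emerge exactly as the conditions preventing a curve off $C$ from acquiring self-intersection $\le-2$, which the third paragraph's estimate forbids. I expect the main obstacle to lie not in any single ampleness computation---these are routine via \eqref{NMEq}---but in the two directions of the inductive step: one must verify carefully that contracting a $(-1)$-curve of either type preserves smoothness of the boundary, log smoothness, and the asymptotic-log-del-Pezzo property, and conversely that the admissible blow-up loci are characterized precisely, so that the reconstructed list is simultaneously complete and free of spurious negative curves. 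Establishing the dichotomy and the bound ``small Picard rank'' cleanly, and then matching each blow-up configuration to its labeled family, is where essentially all of the case analysis resides.
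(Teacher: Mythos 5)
Your outline coincides with the paper's own strategy (the ``if'' direction via the positivity classes and Nakai--Moishezon, and the ``only if'' skeleton of nef limit, adjunction, genus dichotomy, $(-1)$-curve dichotomy, and contraction induction), and your Hodge-index treatment of the elliptic case is a correct variant of the paper's Riemann--Roch argument, provided you add that $S$ is rational (the paper gets this from bigness of $-K_S$, Kawamata--Viehweg vanishing and Castelnuovo), both so that $-K_S\equiv C$ upgrades to $C\sim -K_S$ and so that \eqref{MinusOneCurveExistence} and \eqref{RationalClassifPicAtMostTwo} even apply. The genuine gap is in the reconstruction step. You contract an \emph{arbitrary} $(-1)$-curve $E\ne C$ whenever $\mathrm{rk}(\Pic(S))>2$ --- including curves disjoint from $C$ --- and then assert that, when reversing the contractions, ``blow-ups off the boundary are excluded because they destroy $D_\beta^2>0$.'' That assertion is false. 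Take $\mathrm{(I.2.0)}$, i.e. $\PP^1\times\PP^1$ with $C$ a fiber of bidegree $(1,0)$, and blow up a point $P\notin C$. Writing $f_1,f_2$ for the two rulings, one has $D_\beta=\pi^{*}((1+\beta)f_1+2f_2)-E$, so $D_\beta^2=4(1+\beta)-1>0$, and Nakai--Moishezon shows $D_\beta$ is ample for all small $\beta>0$: the three negative curves $E$, $\pi^{*}f_1-E$, $\pi^{*}f_2-E$ meet $D_\beta$ in $1$, $1$, $\beta$ respectively, and every other irreducible curve meets it positively. So this off-boundary blow-up \emph{is} asymptotically log del Pezzo; it belongs to the list only because it admits a different birational description --- contracting instead the $(-1)$-curve $\pi^{*}f_2-E$, which does meet the boundary, exhibits it as $\mathrm{(I.8B.1)}$. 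Your induction therefore terminates with $(S,C)$ presented as a blow-up of a rank-$\le2$ model at points that need not lie on the boundary (and may be infinitely near), with no mechanism to re-describe such a pair as an entry of the list.

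This is exactly the difficulty that the paper's notion of minimality is designed to circumvent, and it is where the real work of the proof lies. The paper contracts only $(-1)$-curves meeting $C$; to run this, it needs two things you omit. First, the existence of a $(-1)$-curve $E\ne C$ when $\mathrm{rk}(\Pic(S))>2$: this uses Lemma \ref{lemma:log-del-Pezzo-rational-curves-not-del-Pezzo-1} (if $S$ is not del Pezzo then $C^2\le-2$, so $C$ cannot be the curve produced by \eqref{MinusOneCurveExistence}) together with the fact that a del Pezzo surface of rank $\ge3$ has at least three $(-1)$-curves. Second, and crucially, Lemma \ref{lemma:log-del-Pezzo-minimal-good-small-Picard-r-1}: a \emph{minimal} pair (no $(-1)$-curve $E\ne C$ meeting $C$) has $\mathrm{rk}(\Pic(S))\le2$. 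Its proof is not a numerical estimate: assuming all $(-1)$-curves avoid $C$, one contracts down to $\FF_n$ and exploits the ruling --- the proper transform of the fiber through the contracted point is a $(-1)$-curve, hence avoids $C$ by minimality, forcing $\pi(C)$ to be a fiber, which contradicts the rank-two classification of Lemma \ref{lemma:log-del-Pezzo-Picard-rank-two-1} except in case $\mathrm{(I.2.0)}$, where the second ruling yields the contradiction. Finally, even with contractions confined to the boundary, the paper must still verify distinctness of the blown-up points (Lemma \ref{lemma:lemma:log-del-Pezzo-rational-curves-1-1}, which is the precise form of your ``no $\le-2$ curves'' remark) and resolve the non-uniqueness of descriptions (Corollary \ref{corollary:almost-end-1} and Lemma \ref{lemma:the-end-1}, showing a boundary blow-up of $\mathrm{(I.3A)}$ can be re-described as $\mathrm{(I.9B.m)}$ via an elementary transformation). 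Without these ingredients your argument reduces the Picard rank but does not identify $(S,C)$ with any item of the classification.
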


The rest of the section is devoted to the proof of this theorem.

\subsection{Classification}
\label{ClassifSmoothSubsec}
Throughout this subsection we assume without further mention that
\beq\label{mainassumption}
-K_S-(1-\beta)C\ \mathrm{is\ ample\ for\ sufficiently\ small}\ \beta\in(0,1],%
\eeq i.e., $(S,C)$ is asymptotically log del Pezzo. Then
$-K_S-C$ is nef. Moreover, the surface $S$ is projective, since
$-K_S-(1-\beta)C$ is an ample $\mathbb{Q}$-divisor for
sufficiently small rational $\beta\in(0,1]$. Furthermore, the
divisor $-K_{S}$ is big, since it is a sum of an ample class and
an effective class, to wit,
$$
-K_{S}=-(K_S+(1-\beta)C)+(1-\beta)C.
$$
Since $-K_{S}$ is big, we have
$h^{0}(\mathcal{O}_{S}(K_{S}))=h^{0}(\mathcal{O}_{S}(2K_{S}))=0$.
Moreover, it follows from the Kawamata--Viehweg Vanishing Theorem
\cite[Vol. II, \S9.1.C]{Lazarsfeld} that
$h^{1}(\mathcal{O}_{S})=h^{2}(\mathcal{O}_{S})=0$. Thus, the
surface $S$ is rational by Castelnuovo's rationality criterion
\cite[p. 536]{GH}. We remark that all of these considerations
apply equally when $C$ has several components, but for the
rest of this subsection we implicitly assume $r=1$ unless
explicitly stated.

In the rest of this subsection we prove that $(S,C)$
is one of the pairs listed in Theorem~\ref{theorem:main-1}.
Our proof is divided into several steps, each contained
in a separate paragraph.

\subsubsection{Non-rational boundary}

Let $g(C)$ denote the genus of the (smooth) curve $C$.

\begin{lemma}
\label{lemma:log-del-Pezzo-rational-or-elliptic-1} Suppose that
$g(C)\ne 0$. Then $-K_{S}$ is ample, i.e., $S$ is a del Pezzo
surface, and $C$ is a smooth elliptic curve in $|-K_{S}|$.
\end{lemma}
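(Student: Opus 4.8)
The plan is to extract strong numerical constraints from the nefness of $-K_S-C$ via adjunction, use them to pin down the genus, and then promote a numerical identity to an honest linear equivalence by a short cohomological argument that exploits the already-established rationality of $S$. The three conclusions (ellipticity, $C\sim -K_S$, and del Pezzo) will then come out essentially for free.

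First I would combine adjunction with nefness. For the smooth irreducible curve $C$, adjunction gives $(K_S+C)\cdot C=2g(C)-2$. On the other hand $-K_S-C$ is nef (established just before the lemma) and $C$ is an effective curve, so $(K_S+C)\cdot C=-(-K_S-C)\cdot C\le 0$. Since $g(C)\ne 0$ forces $g(C)\ge 1$, whence $2g(C)-2\ge 0$, the two inequalities together force $(K_S+C)\cdot C=0$ and $g(C)=1$. Thus $C$ is elliptic and, in addition, $(-K_S-C)\cdot C=0$.

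Next I would upgrade the vanishing of this intersection number to $K_S+C\sim 0$. Here I would use the restriction sequence
$$0\to\mathcal{O}_S(K_S)\to\mathcal{O}_S(K_S+C)\to\omega_C\to 0,$$
where $(K_S+C)\vert_C\cong\omega_C$ by adjunction and $\omega_C\cong\mathcal{O}_C$ since $g(C)=1$. Because $-K_S$ is big we have $h^0(\mathcal{O}_S(K_S))=0$, and because $S$ is rational, Serre duality gives $h^1(\mathcal{O}_S(K_S))=h^1(\mathcal{O}_S)=0$. The long exact sequence then yields $h^0(\mathcal{O}_S(K_S+C))=h^0(\omega_C)=1$, so $K_S+C$ is effective. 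Since $-(K_S+C)=-K_S-C$ is nef, \eqref{EffMinusNeffZero} forces $K_S+C\sim 0$, i.e. $C\sim -K_S$, so in particular $C\in|-K_S|$.

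Finally, del Pezzo-ness is immediate: from $C\sim-K_S$ one gets $-K_S-(1-\beta)C\sim\beta C$, which is ample for all small $\beta>0$ by hypothesis; as $\beta>0$ this means $C$, and hence $-K_S\sim C$, is ample, so $S$ is del Pezzo. The main obstacle is the middle step—passing from the purely numerical identity $(K_S+C)\cdot C=0$ to the linear equivalence $K_S+C\sim 0$. This is precisely where a numerical argument is insufficient and one must invoke the vanishing $h^1(\mathcal{O}_S)=0$ coming from rationality to manufacture an actual global section; once $K_S+C$ is known to be effective, the effective-minus-nef principle \eqref{EffMinusNeffZero} closes the gap at once.
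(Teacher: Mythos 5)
Your proof is correct, and its skeleton coincides with the paper's: adjunction plus nefness of $-K_S-C$ forces $g(C)=1$ and $(K_S+C)\cdot C=0$; a cohomological argument produces an effective divisor $R\sim K_S+C$; the effective-minus-nef principle \eqref{EffMinusNeffZero} then gives $K_S+C\sim 0$; and ampleness of $-K_S-(1-\beta)C\sim -\beta K_S$ yields del Pezzo-ness. The one place you genuinely diverge is the effectivity step. The paper gets $h^0(\mathcal{O}_S(K_S+C))\ge 1$ by Riemann--Roch, computing $\chi(\mathcal{O}_S(K_S+C))=\chi(\mathcal{O}_S)+\tfrac12(K_S+C)\cdot C=1$ and killing $h^2$ via the Kodaira--Serre duality $h^2(\mathcal{O}_S(K_S+C))=h^0(\mathcal{O}_S(-C))=0$. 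You instead run $K_S+C$ through the restriction sequence $0\to\mathcal{O}_S(K_S)\to\mathcal{O}_S(K_S+C)\to\omega_C\to 0$ and use the two vanishings $h^0(\mathcal{O}_S(K_S))=0$ (bigness of $-K_S$) and $h^1(\mathcal{O}_S(K_S))=h^1(\mathcal{O}_S)=0$ (rationality plus Serre duality), obtaining the sharper conclusion $h^0(\mathcal{O}_S(K_S+C))=1$. Both routes rest on the same facts established just before the lemma; yours makes the role of ellipticity ($\omega_C\cong\mathcal{O}_C$) transparent and pins down the exact dimension, while the paper's Riemann--Roch computation needs only the single duality vanishing $h^0(\mathcal{O}_S(-C))=0$ rather than two separate ones. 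The key insight in either version --- that the numerical identity $(K_S+C)\cdot C=0$ must be promoted to a linear equivalence by manufacturing an actual section and then invoking nefness --- is the same.
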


\bpf Since $-K_S-C$ is nef, it follows from the adjunction theorem
that \beq \label{FirstKSCEq}
0\le 2g-2=(K_S+C). C\le 0,%
\eeq
thus $g=1$.

Next, by Kodaira--Serre duality,
$h^{2}(\mathcal{O}_{S}(K_{S}+C))=h^{0}(\mathcal{O}_{S}(-C))=0$.
Also, since $S$ is rational $\chi(\calO_S)=1$.
Recalling that $(K_{S}+C). C=0$ by \eqref{FirstKSCEq},
and using the Riemann–-Roch Theorem thus gives
\beq
\baeq
1
&=
\chi(\calO_S)+\frac{(K_S+C).(K_S+C-K_S)}2
=
\chi(\calO_S(K_S+C))
\cr
&=
h^{0}(\mathcal{O}_{S}(K_{S}+C))
-
h^{1}(\mathcal{O}_{S}(K_{S}+C))
+
h^{2}(\mathcal{O}_{S}(K_{S}+C))
\cr
&\le
h^{0}(\mathcal{O}_{S}(K_{S}+C)).
\eaeq
\eeq
Therefore,
there exists an
effective divisor $R$ such that $R\sim K_{S}+C$.
Thus by \eqref{EffMinusNeffZero} $R=0$, from
which $C\in|-K_S|$, and
\begin{equation}
\label{equation:log-canonical-class}
-\beta K_{S}\sim -K_S-(1-\beta)C>0,%
\end{equation}
for sufficiently small rational $\beta\in(0,1]$, i.e.,
$S$ is del Pezzo.
\epf

Recall that a smooth projective surface $S$
not equal to $\PP^1\times \PP^1$
is del Pezzo precisely when there is a smooth
anticanonical curve $C\subset S$ which is the proper
transform of a smooth cubic curve in $\PP^2$ blown-up
at $8$ points in general position on the cubic
\cite[Proposition 3.2]{Hitchin}.
Thus, we have:

\begin{corollary}
\label{corollary:log-del-Pezzo-rational-or-elliptic-1} Suppose $C$
is not rational, then $(S,C)$ is one of $\mathrm{(I.1A)}$,
$\mathrm{(I.4A)}$, or $\mathrm{(I.5.m)}$.
\end{corollary}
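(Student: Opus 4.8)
The plan is to reduce everything to the classification of del Pezzo surfaces carrying a smooth anticanonical curve. Since $C$ is not rational we have $g(C)\ne 0$, so Lemma~\ref{lemma:log-del-Pezzo-rational-or-elliptic-1} applies and tells us that $S$ is a del Pezzo surface and that $C$ is a smooth elliptic curve lying in $|-K_S|$. Thus the only remaining task is to match the anticanonically polarized pair $(S,C)$ against the possible del Pezzo surfaces, for which I would invoke the standard dichotomy: a del Pezzo surface is either $\PP^1\times\PP^1$ or a blow-up $\pi\colon S\to\PP^2$ of at most eight points in general position (the case of zero points being $\PP^2$ itself).

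First I would dispatch the two ``extremal'' cases. If $S\cong\PP^1\times\PP^1$ then $-K_S$ has bidegree $(2,2)$, and adjunction gives $(K_S+C)\cdot C=0$, hence $g(C)=1$; so $C$ is a smooth elliptic curve of bidegree $(2,2)$, which is exactly $\mathrm{(I.4A)}$. If $S\cong\PP^2$ then $-K_S\sim\ocal_{\PP^2}(3)$, so $C$ is a smooth plane cubic, which is $\mathrm{(I.1A)}$.

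The remaining case, $S$ a blow-up of $\PP^2$ at $1\le m\le 8$ points, is where the actual content lies, and the hard part is to certify that $(S,C)$ arises from $\mathrm{(I.1A)}$ exactly as prescribed in $\mathrm{(I.5.m)}$, i.e.\ that the centers of $\pi$ are distinct points lying on a \emph{smooth} plane cubic. I would argue as follows. Set $\overline C:=\pi_\star(C)$; since $C\in|-K_S|$ this is a plane cubic, and because $C$ is the proper transform of $\overline C$ the restriction $\pi|_C\colon C\to\overline C$ is birational. As $C$ is smooth of genus one it must then be the normalization of $\overline C$, so $\overline C$ has geometric genus one; but every \emph{singular} plane cubic has geometric genus zero, whence $\overline C$ is smooth. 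Finally, smoothness of $\overline C$ means each blow-up center meets $\overline C$ with multiplicity $0$ or $1$, so comparing $C^2=K_S^2=9-m$ with $\overline C^2=9$ forces all $m$ (distinct) points to lie on $\overline C$. This is precisely the description in $\mathrm{(I.5.m)}$, the del Pezzo property of $S$ guaranteeing that $-K_S$ is ample. This last step is exactly the content packaged by the characterization \cite[Proposition~3.2]{Hitchin} recalled above, so in the write-up one may alternatively cite it directly and shorten the argument to the two extremal cases.
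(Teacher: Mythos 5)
Your proof is correct and follows the same route as the paper: first apply Lemma~\ref{lemma:log-del-Pezzo-rational-or-elliptic-1} to conclude that $S$ is del Pezzo and $C\in|-K_S|$ is a smooth elliptic curve, then match $(S,C)$ against the classification of del Pezzo surfaces. The only difference lies in how the matching is justified. The paper's proof is essentially a one-line deduction: it invokes the characterization recalled just before the corollary (citing \cite[Proposition 3.2]{Hitchin}), namely that a del Pezzo surface other than $\PP^1\times\PP^1$ carries its smooth anticanonical curves as proper transforms of smooth plane cubics through the blow-up centers. You instead rederive that statement from the cruder classification of del Pezzo surfaces as $\PP^1\times\PP^1$, $\PP^2$, or blow-ups of $\PP^2$ at up to eight distinct points in general position: you pass to $\overline C=\pi_\star C$, use the normalization and geometric-genus argument to see that $\overline C$ is a \emph{smooth} cubic, and then compare $C^2=K_S^2=9-m$ with $C^2=9-\sum_i\mult_{P_i}(\overline C)^2$ to force every center onto $\overline C$. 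All of these steps are sound (irreducibility of $\overline C$, multiplicities in $\{0,1\}$, and the count $\sum_i\mult_{P_i}(\overline C)^2=m$ are each justified), and, as you note yourself, this is precisely the content packaged in the Hitchin citation. Your version buys self-containedness at the cost of length; the paper's buys brevity at the cost of leaning on the cited structure theory.
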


Thus, for the remainder of \S\ref{ClassifSmoothSubsec},
we assume
\beq\label{CrationalAssump}
\h{$C$ is a smooth rational curve.}
\eeq

\begin{remark}
\label{remark:del-Pezzo-case} In the notation and assumption of
Theorem~\ref{theorem:main}, suppose additionally that $S$ is a del
Pezzo surface. Then the divisor
$
-K_{S}-\sum_{i=1}^r(1-\beta_i)C_i
$
is ample for {\it any} $(\beta_1,\ldots,\beta_r)\in(0,1]^r$.
\end{remark}

\subsubsection{Rational boundary and curves of negative self-intersection}

The goal is now to show that
$(S,C)$ is one of the cases not covered by Corollary
\ref{corollary:log-del-Pezzo-rational-or-elliptic-1} .
In this paragraph we derive some basic
intersection properties of the boundary and other
curves of negative self-intersection.

\begin{lemma}
\label{lemma:log-del-Pezzo-rational-curves-1} Let $Z$ be an
irreducible curve on $S$ such that $Z\ne C$ and $Z^2<0$. Then $Z$
is a smooth rational curve and $Z^2=-1$.
\end{lemma}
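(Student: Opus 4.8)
The plan is to extract everything from adjunction together with the ampleness hypothesis \eqref{mainassumption}. The first observation is that since $Z$ and $C$ are distinct irreducible curves, their intersection number $C\cdot Z$ is a sum of nonnegative local intersection multiplicities, hence $C\cdot Z\ge 0$. Feeding this into the ampleness of $-K_S-(1-\beta)C$ for small $\beta>0$, I would write $0<(-K_S-(1-\beta)C)\cdot Z=-K_S\cdot Z-(1-\beta)C\cdot Z$, which forces $-K_S\cdot Z>(1-\beta)C\cdot Z\ge 0$ and hence $K_S\cdot Z<0$.

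Next I would invoke the adjunction formula $2p_a(Z)-2=K_S\cdot Z+Z^2$. Since both $K_S\cdot Z<0$ and $Z^2<0$, the right-hand side is strictly negative, so $p_a(Z)<1$; as the arithmetic genus of an irreducible reduced curve is nonnegative, this gives $p_a(Z)=0$. By \eqref{SuchRationalIsSmooth} this means $Z\cong\PP^1$, so $Z$ is a smooth rational curve, which settles the first assertion.

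To pin down the self-intersection exactly, I would reuse adjunction now that $p_a(Z)=0$: it gives $K_S\cdot Z=-2-Z^2$, equivalently $-K_S\cdot Z=2+Z^2$. Substituting back into the ampleness inequality yields $0<(2+Z^2)-(1-\beta)C\cdot Z\le 2+Z^2$, where the last step again uses $C\cdot Z\ge 0$. Hence $Z^2>-2$, i.e.\ $Z^2\ge -1$; combined with the hypothesis $Z^2<0$ this forces $Z^2=-1$.

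I do not anticipate a genuine obstacle here: the argument is a two-fold application of adjunction bracketed by the positivity hypothesis. The only points requiring minor care are the sign bookkeeping---ensuring the final inequality produces $Z^2=-1$ rather than merely $Z^2\ge -1$---and the repeated use of $C\cdot Z\ge 0$, which is precisely where the hypothesis $Z\ne C$ enters. One could equally well run the first step using only the nefness of $-K_S-C$ (obtaining $K_S\cdot Z\le 0$, still enough to conclude $p_a(Z)=0$), but invoking ampleness directly is cleaner and is in any case indispensable for the final step.
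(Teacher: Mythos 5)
Your proof is correct and follows essentially the same route as the paper's: use $Z\ne C$ to get $C\cdot Z\ge 0$, deduce $-K_S\cdot Z>0$ from ampleness, apply adjunction to force $p_a(Z)=0$ (hence $Z\cong\PP^1$ by \eqref{SuchRationalIsSmooth}), and then note that $K_S\cdot Z<0$ together with adjunction gives $Z^2>-2$, so $Z^2=-1$. The only cosmetic difference is that the paper reads the final bound $Z^2>2p_a(Z)-2=-2$ directly off the same displayed inequality used for the genus step, whereas you re-substitute $-K_S\cdot Z=2+Z^2$ into the ampleness inequality; these are the same computation.
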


\begin{proof}
Since $Z\ne C$,
\beq
\label{KSZEq}
-K_{S}. Z=-\Big(K_{S}+(1-\beta)C\Big). Z+(1-\beta)C. Z\ge -\Big(K_{S}+(1-\beta)C\Big). Z>0,%
\eeq
for sufficiently small $\beta\in(0,1]$.
Hence, by \eqref{ArithGeomGenus} it follows that
\begin{equation}
\label{equation:subadjunction-1}
0>Z^2>K_{S}. Z+Z^{2}=2h^{1}(\mathcal{O}_{Z})-2,%
\end{equation}
or $h^{1}(\mathcal{O}_{Z})=0$. Now it follows from
\eqref{SuchRationalIsSmooth}
that $Z$ is a smooth rational
curve. Going back to $(\ref{equation:subadjunction-1})$
then $Z^2=-1$.
\end{proof}

\begin{lemma}
\label{lemma:log-del-Pezzo-rational-curves-not-del-Pezzo-1}
Suppose that
$S$ is not a del Pezzo
surface. Then $C^2\le -2$.
\end{lemma}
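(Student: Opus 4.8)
The plan is to argue by contraposition: assuming $C^2\ge -1$, I will show that $-K_S$ is ample, so that $S$ is del Pezzo, contradicting the hypothesis of the lemma. Since $C$ is a smooth rational curve by \eqref{CrationalAssump}, adjunction \eqref{ArithGeomGenus} gives $K_S\cdot C+C^2=-2$, hence $-K_S\cdot C=2+C^2$. Ampleness of $-K_S$ will be deduced from the Nakai--Moishezon criterion in its surface form \eqref{NMEq}, so the two things to verify are that $-K_S$ is strictly positive on every curve and that $(-K_S)^2>0$.

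First I would establish positivity on curves. For an irreducible curve $\Gamma\ne C$, I write $-K_S=\bigl(-K_S-(1-\beta)C\bigr)+(1-\beta)C$ and use that $-K_S-(1-\beta)C$ is ample for small $\beta$ by \eqref{mainassumption}, together with $C\cdot\Gamma\ge 0$ (distinct irreducible curves), to obtain $-K_S\cdot\Gamma>0$. For $\Gamma=C$ the standing assumption $C^2\ge -1$ gives $-K_S\cdot C=2+C^2\ge 1>0$. Thus $-K_S$ is strictly positive on every irreducible curve, and this is precisely the point where the inequality $C^2\ge -1$ is used: it is exactly what is needed to extend curve-positivity across the boundary $C$ itself.

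The delicate point is $(-K_S)^2>0$, and I expect this to be the main obstacle. Recall $-K_S$ is already known to be big (being a sum of an ample and an effective class), but bigness of a divisor on a surface does \emph{not} by itself force positive self-intersection, since the effective part can have very negative self-intersection. So I must feed in the curve-positivity just established rather than rely on bigness alone. Concretely, writing $-K_S\sim_{\QQ}A+E$ with $A$ ample and $E$ effective, I expand $(-K_S)^2=(-K_S)\cdot A+(-K_S)\cdot E$. The term $(-K_S)\cdot E$ is a nonnegative combination of the numbers $-K_S\cdot E_j>0$, hence is $\ge 0$; and $(-K_S)\cdot A>0$ because a suitable multiple of $A$ is linearly equivalent to an effective sum of curves, on each of which $-K_S$ is positive. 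Therefore $(-K_S)^2>0$.

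Combining the two positivity facts with \eqref{NMEq} yields that $-K_S$ is ample, i.e. $S$ is del Pezzo, contradicting the assumption that it is not. Hence $C^2\le -2$, as claimed. The only genuine content beyond bookkeeping is the observation that it is curve-positivity, and not mere bigness, that delivers $(-K_S)^2>0$.
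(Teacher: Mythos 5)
Your proof is correct and follows essentially the same route as the paper's: argue by contradiction, establish that $-K_S$ is positive on $C$ via adjunction and on every other irreducible curve via the decomposition $-K_S=\bigl(-K_S-(1-\beta)C\bigr)+(1-\beta)C$ (this is \eqref{KSZEq}), and conclude ampleness from the Nakai--Moishezon criterion \eqref{NMEq}. The only divergence is the self-intersection step: the paper disposes of it with the terse claim that $K_S^2>0$ ``since $-K_S$ is big'', which read literally is insufficient for exactly the reason you point out; your explicit argument (equivalently, the observation that $-K_S$ is big \emph{and} nef, nefness being the curve-positivity just proved) supplies the justification the paper leaves implicit.
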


\begin{proof}
Suppose that $C^2\ge -1$. Then $-K_{S}. C>0$ by the
adjunction formula, since by Lemma
\ref{lemma:log-del-Pezzo-rational-curves-1} $C$ is a smooth rational curve.
Also, by \eqref{KSZEq}
$
K_{S}. Z<0
$
for every irreducible curve $Z\ne C$ on the surface $S$.
Moreover, $K_{S}^2>0$, since $-K_{S}$ is big. Therefore, the
divisor $-K_{S}$ is ample by the Nakai--Moishezon criterion, which
contradicts our assumption.
\end{proof}

The next lemma is crucial for the proof of the main result.
It shows that any $-1$-curve intersects the boundary transversally
at most at one point.

\begin{lemma}
\label{lemma:log-del-Pezzo-blow-down-1} Suppose that there exists
a smooth irreducible rational curve $E$ on the surface $S$ such
that $E^{2}=-1$ and $E\ne C$. Then either $E\cap C=\emptyset$ or
$E. C=1$.
\end{lemma}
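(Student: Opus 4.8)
The plan is to use the adjunction formula together with the key ampleness/nefness properties that $(S,C)$ is asymptotically log del Pezzo. The divisor $-K_S - C$ is nef (as established at the start of the subsection), and $-K_S - (1-\beta)C$ is ample for small $\beta > 0$.

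First I would apply the genus formula on the intersection $E \cap C$. Since $E$ and $C$ are distinct irreducible curves (with $E \ne C$), the intersection number $E . C$ equals the number of intersection points counted with multiplicity, and in particular $E . C \ge 0$. So the real task is to rule out $E . C \ge 2$. The main tool will be adjunction applied to $E$. Since $E$ is a smooth rational $-1$-curve, $K_S . E = -2 - E^2 = -2 - (-1) = -1$. Meanwhile, nefness of $-K_S - C$ gives $(K_S + C) . E \le 0$, that is $C . E \le -K_S . E = 1$. This immediately yields $E . C \le 1$, and combined with $E . C \ge 0$ gives the dichotomy: either $E . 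C = 0$ (i.e. $E \cap C = \emptyset$) or $E . C = 1$.

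Concretely, the key computation I would carry out is:
\begin{equation}
\label{eqn:adjunction-E-C}
0 \le E . C = -K_S . E + (K_S + C) . E = 1 + (K_S + C) . E \le 1,
\end{equation}
where the first inequality holds because $E \ne C$ are distinct irreducible curves (so their intersection is effective), the middle equality uses $K_S . E = -1$ from adjunction on the $-1$-curve $E$, and the final inequality uses that $-K_S - C$ is nef so that $(K_S + C) . E \le 0$. Since $E . C$ is a non-negative integer bounded above by $1$, we conclude $E . C \in \{0, 1\}$. When $E . C = 0$ the curves are disjoint (as $E \ne C$ and both are irreducible, a nonempty intersection would force a positive intersection number), and when $E . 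C = 1$ they meet transversally at a single point.

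The argument is essentially immediate once one has the two ingredients—nefness of $-K_S - C$ and the adjunction value $K_S . E = -1$—so I do not anticipate a serious obstacle. The only point requiring slight care is the transversality claim in the case $E . C = 1$: since $E . C = 1$ is the total intersection multiplicity and both curves are smooth, they must meet at exactly one point and do so transversally (a tangency or higher-order contact would contribute at least $2$ to the intersection number). This is routine but should be stated. The essential conceptual content is simply that a $-1$-curve has ``just enough'' positivity against $-K_S$ to meet the nef boundary-adjusted class $-(K_S+C)$ in at most one point.
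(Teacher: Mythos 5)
Your proof is correct and follows essentially the same route as the paper: adjunction gives $K_S. E=-1$, and positivity of the log anticanonical class then forces $C. E\le 1$, while $C. E\ge 0$ holds since $E\ne C$ are distinct irreducible curves. The only (minor) difference is that you invoke nefness of $-K_S-C$ directly, whereas the paper works with the ample class $-K_S-(1-\beta)C$ for a $\beta$ chosen so that $\beta C. E<1$; your variant is marginally cleaner since it avoids that choice of $\beta$, and it is legitimate because the nefness of $-K_S-C$ is established at the start of the subsection.
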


\begin{proof}
Choose
$\beta$ such that $\beta C. E<1$
(and, as always,
also satisfying \eqref{mainassumption}).
By adjunction, $-K_{S}. E=1$.
Then
$$
0<-(K_{S}+(1-\beta)C). E=1-C. E+\beta C.
E<2-C. E,
$$
thus $C. E<2$. Hence, either $C. E=0$ (and,
thus $E\cap C=\emptyset$ since $E\ne C$) or $C. E=1$.
\end{proof}

\subsubsection{Minimal pairs}
\label{MinimalPairsSubsubsec}

Suppose that there exists
a smooth irreducible rational curve $E$ on the surface $S$ such
that $E^{2}=-1$ and $E\ne C$. By Castelnuovo's
contractibility criterion
there exists a birational morphism $\pi\colon S\to s$
that contracts the curve $E$ to a smooth point of the surface
$s$ \cite[p. 476]{GH}. Since by Lemma~\ref
{lemma:log-del-Pezzo-blow-down-1} $E$ and $C$ intersect
transversally at most at one point then
$\pi(C)$ is a smooth curve. Moreover,
by \eqref{pushforward},
we see that the divisor $-(K_s+(1-\beta)\pi(C))$ is ample
provided that $-(K_{S}+(1-\beta)C)$ is ample. Thus, we
see that $(s,\pi(C))$ is asymptotically log del Pezzo as
well.

Thus, it seems possible to use
Lemma~\ref{lemma:log-del-Pezzo-blow-down-1} to give an inductive
proof
(in the rank of the Picard group, $\mathrm{Pic}(S)$) of
one direction of Theorem~\ref{theorem:main-1}. To do this in a consistent way we
make the following definition.

\begin{definition}
\label{definiton:log-del-Pezzo-minimally-good-1}
The pair $(S,C)$
is \emph{minimal} if there exist no smooth irreducible rational
curve $E$ on the surface $S$ such that $E^{2}=-1$, $E\ne C$ and
$E\cap C\ne\emptyset$.
\end{definition}

The base of our induction is given by the next lemma.
Recall that throughout we are assuming $C$ is a smooth
rational curve \eqref{CrationalAssump}.

\begin{lemma}
\label{lemma:log-del-Pezzo-Picard-rank-two-1} Suppose that
$\mathrm{rk}(\mathrm{Pic}(S))\le 2$
and $C\not\sim-K_{S}$. Then when $(S,C)$
is minimal it is one of
$\mathrm{(I.1B)}$, $\mathrm{(I.1C)}$, $\mathrm{(I.2.n)}$,
$\mathrm{(I.3A)}$, $\mathrm{(I.3B)}$, $\mathrm{(I.4B)}$, or
$\mathrm{(I.4C)}$, and otherwise it is
$\mathrm{(I.6B.1)}$ or $\mathrm{(I.6C.1)}$.
\end{lemma}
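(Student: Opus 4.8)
The plan is to combine the coarse classification of surfaces of small Picard rank with the numerical constraints imposed by \eqref{mainassumption} and \eqref{CrationalAssump}. By \eqref{RationalClassifPicAtMostTwo}, the hypothesis $\mathrm{rk}(\mathrm{Pic}(S))\le2$ forces $S\cong\PP^2$ or $S\cong\FF_n$ for some $n\ge0$. If $S\cong\PP^2$, I would write $C\sim dH$ for the line class $H$; since a smooth plane curve of degree $d$ has genus $\binom{d-1}{2}$, assumption \eqref{CrationalAssump} gives $d\le2$, so $C$ is a line or a conic, and in particular $C\not\sim-K_S=3H$ automatically. As $-K_S-(1-\beta)C=(3-(1-\beta)d)H$ has positive degree for all small $\beta>0$, both possibilities are admissible and yield $\mathrm{(I.1C)}$ and $\mathrm{(I.1B)}$; since $\PP^2$ carries no $-1$-curve, such pairs are vacuously minimal, as claimed.

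It remains to treat $S\cong\FF_n$. Here I would write $C\sim aZ_n+bF$ with $a,b\ge0$ and extract three constraints. First, by \eqref{KFnEq},
\[
-K_S-(1-\beta)C=\bigl(2-(1-\beta)a\bigr)Z_n+\bigl((n+2)-(1-\beta)b\bigr)F,
\]
and feeding this into the ampleness criterion \eqref{ampleFn} forces the $Z_n$-coefficient to be positive for small $\beta>0$, hence $a\le2$, together with a companion inequality on $b$. Second, irreducibility via \eqref{irreducibleFn} records that $C=Z_n$ or $b\ge na$. Third, since $C$ is smooth and rational, \eqref{ArithGeomGenus} and \eqref{SuchRationalIsSmooth} demand $p_a(C)=0$; a direct application of the genus formula $2p_a-2=K_S\cdot C+C^2$ gives $p_a=1-b$ when $a=0$ (so $b=1$, i.e.\ $C=F$), $p_a=0$ identically when $a=1$, and $p_a=b-n-1$ when $a=2$ (so $b=n+1$, whence $n\le1$). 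These three constraints leave only finitely many triples $(n,a,b)$.

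The enumeration then proceeds case by case, matching each surviving class to a named pair. One finds $C=Z_n$ for all $n$ (case $\mathrm{(I.2.n)}$); on $\FF_0$, where the two rulings $Z_0,F$ are interchangeable, the classes $Z_0+F$ and $Z_0+2F$ give the bidegree $(1,1)$ and $(2,1)$ curves $\mathrm{(I.4C)}$ and $\mathrm{(I.4B)}$; and on $\FF_1$ the admissible classes are $Z_1+F$, $2(Z_1+F)$, $F$ and $Z_1+2F$, the first two being $\mathrm{(I.3B)}$ and $\mathrm{(I.3A)}$. The last two are identified with the asserted blow-ups $\mathrm{(I.6C.1)}$ and $\mathrm{(I.6B.1)}$ by contracting the $-1$-curve $Z_1$ to a point $p\in\PP^2$: since $Z_1\cdot F=Z_1\cdot(Z_1+2F)=1$, the images are a line, respectively a conic, through $p$. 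Minimality is decided by intersecting $C$ with $-1$-curves: by Lemma \ref{lemma:log-del-Pezzo-rational-curves-1} the only negative curve other than $C$ is a $-1$-curve, $\PP^2$ and every $\FF_n$ with $n\ne1$ have none, and on $\FF_1$ the unique $-1$-curve is $Z_1$, with $Z_1\cdot(aZ_1+bF)=b-a$. Hence (for $C\ne Z_1$) minimality fails exactly when $b>a$, i.e.\ precisely for $F$ and $Z_1+2F$; all other pairs satisfy $Z_1\cdot C\le0$ and are minimal. This is exactly the asserted dichotomy.

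The step I expect to be most delicate is the ampleness analysis for $\FF_n$, because of the borderline configurations. Passing to the limit $\beta\to0^+$ degenerates the two strict inequalities of \eqref{ampleFn} into non-strict ones, and in exactly the cases $a=2$, $(a,b)=(1,2)$, and $C=F$ equality holds at $\beta=0$. In each of these one must expand $-K_S-(1-\beta)C$ to first order in $\beta$ and check the sign of the linear term to decide genuine ampleness for small $\beta>0$; it is precisely this first-order test that admits the $\FF_0$ and $\FF_1$ configurations while excluding every $\FF_n$ with $n\ge2$ (other than $C=Z_n$). Once this bookkeeping is done carefully, the remaining verifications are routine intersection-number computations.
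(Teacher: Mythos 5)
Your proposal is correct and follows essentially the same route as the paper: reduce via \eqref{RationalClassifPicAtMostTwo} to $S\cong\PP^2$ or $S\cong\FF_n$, enumerate the classes $C\sim aZ_n+bF$ compatible with asymptotic ampleness, irreducibility and rationality of $C$, and sort the surviving pairs by whether $Z_1$ (the only possible $-1$-curve) meets $C$. The only cosmetic differences are that you pin down $b$ via the genus formula ($p_a(C)=0$) where the paper instead uses $C. Z_n\ge 0$ together with Lemma \ref{lemma:log-del-Pezzo-rational-curves-1}, and that you make explicit (by contracting $Z_1$) the identification of the two non-minimal pairs with $\mathrm{(I.6B.1)}$ and $\mathrm{(I.6C.1)}$, which the paper leaves implicit.
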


\begin{proof}
First note that
all the cases listed
in the statement are indeed asymptotically log
del Pezzo by \S\ref{VerifSubsec}.
By \eqref{RationalClassifPicAtMostTwo}
the assumption $\mathrm{rk}(\mathrm{Pic}(S))\le 2$ implies that
either
$S\cong\mathbb{P}^2$ or $S\cong\mathbb{F}_n, n\ge0$.
In the former case, $(S,C)$
is either $\mathrm{(I.1B)}$ or
$\mathrm{(I.1C)}$, as $C$ is rational.
Let us consider the latter cases.
If $n=0$, then one sees that $(S,C)$ is either
$\mathrm{(I.2_0)}$, $\mathrm{(I.4B)}$, or $\mathrm{(I.4C)}$
(again, as $C$ is rational).
Let $n>0$ and suppose $C\in|aZ_n+bF|$ with $a,b\in\NN\cup\{0\}$.
Then by \eqref{KFnEq}--\eqref{ampleFn},
$-K_S-(1-\be)C=(2-(1-\be)a)Z_n+(n+2-(1-\be)b)F$
is ample if and only if $a\in[0,2], b\in[0,na+\frac{2-n}{1-\be})$.

Suppose first that $b=0$. Then either $(a,b)=(1,0)$, i.e., $C=Z_n$ and we
are in the case $\mathrm{(I.2.n)}$, or else
$(a,b)=(2,0)$, i.e., $C\in|2Z_n|$, but since
$Z_n$ is unique for $n>0$ by \eqref{FnZn} this means $C$ is not reduced,
so this case is excluded.

Thus it remains to consider
the case $b>0$.  If $a=0$ then necessarily $b=n=1$.
This is excluded
by minimality since then $C.Z_1=1$
and $Z_1^2=-1$. If $a=1$ then $b\in[1,2]$.
The case $(a,b)=(1,1)$ implies $C.Z_n=1-n\ge 0$ since $C\not=Z_n$.
Thus $n=1$ and we obtain case $\mathrm{(I.3B)}$.
Similarly the case $(a,b)=(1,2)$ implies $n\le2$.
But $n=1$ is excluded by minimality since then
$C.Z_1=1, Z_1^2=-1$ and $C\not=Z_1$, while
$n=2$ is excluded by Lemma \ref{lemma:log-del-Pezzo-rational-curves-1}
as $C\not=Z_2$.
Finally, if $a=2$ then $b\in[1,n+2]$. Then $C.Z_n=-2n+b\ge0$ as
$C\not=Z_n$. Thus either $n=2$ and $b=4$, or else $n=1$ and
$b=2$ or $b=3$. The former is again
excluded by Lemma \ref{lemma:log-del-Pezzo-rational-curves-1}, while
the latter gives only the case $\mathrm{(I.3A)}$ since
if $(a,b)=(2,3)$ then $C\in|-K_S|$ is not rational.
\end{proof}

\subsubsection{The inductive step}

The next lemma provides the inductive step for our classification.
Note that, by definition, part (ii) refers to the case the $-1$-curve
$E$ is disjoint from the boundary.

\begin{lemma}
\label{lemma:log-del-Pezzo-blow-down-minimally-1}
(i)
Suppose that there exists a smooth irreducible
rational curve $E$ on the surface $S$ such that $E^{2}=-1$ and
$E\ne C$. Then there exists a birational morphism $\pi\colon
S\to s$ such that $s$ is a smooth surface, $\pi(E)$ is
a point, the morphism $\pi$ induces an isomorphism $S\setminus
E\cong s\setminus\pi(E)$, the curve $\pi(C)$ is a smooth
rational curve, and
 $(s, \pi(C))$ is asymptotically
log del Pezzo.
\hfill\break
(ii) Suppose in addition that $(S,C)$ is minimal. Then
 $(s, \pi(C))$ is minimal.
\end{lemma}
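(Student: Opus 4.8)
The plan is to prove both parts by combining Castelnuovo contractibility with the intersection-theoretic facts already established. For part (i), since $E$ is a smooth irreducible rational curve with $E^2=-1$ and $E\ne C$, Castelnuovo's contractibility criterion \cite[p. 476]{GH} supplies a birational morphism $\pi\colon S\to s$ contracting $E$ to a smooth point of a smooth surface $s$, and by construction $\pi$ restricts to an isomorphism $S\setminus E\cong s\setminus\pi(E)$. The key point is to verify that $\pi(C)$ is again a smooth rational curve: by Lemma~\ref{lemma:log-del-Pezzo-blow-down-1} the curve $E$ meets $C$ either not at all or transversally at a single point, so contracting $E$ either leaves $C$ untouched or identifies the single intersection point without creating a singularity, hence $\pi(C)$ remains smooth; since $\pi$ is birational and $C$ is rational, so is $\pi(C)$. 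Finally, to see that $(s,\pi(C))$ is asymptotically log del Pezzo, I would invoke the pushforward property \eqref{pushforward}: because $-(K_S+(1-\beta)C)$ is ample for all sufficiently small $\beta$, its pushforward $\pi_\star(-(K_S+(1-\beta)C))=-(K_s+(1-\beta)\pi(C))$ is an ample $\RR$-Cartier divisor on $s$ for the same range of $\beta$, which is exactly the asymptotically log del Pezzo condition for $(s,\pi(C))$.

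For part (ii), assume in addition that $(S,C)$ is minimal, and suppose toward a contradiction that $(s,\pi(C))$ is \emph{not} minimal. Then there exists a smooth irreducible rational curve $E'$ on $s$ with $(E')^2=-1$, $E'\ne\pi(C)$, and $E'\cap\pi(C)\ne\emptyset$. The strategy is to pull $E'$ back to $S$ via $\pi$ and derive a contradiction with the minimality of $(S,C)$. Let $\tilde E$ denote the proper transform of $E'$ on $S$. There are two cases according to whether $E'$ passes through the point $p:=\pi(E)$. If $p\notin E'$, then $\tilde E\cong E'$ is again a $-1$-curve, distinct from $C$, and $\tilde E\cap C\ne\emptyset$ (since $E'\cap\pi(C)\ne\emptyset$ occurs away from $p$, or is transported isomorphically under $S\setminus E\cong s\setminus p$), contradicting minimality of $(S,C)$. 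If $p\in E'$, then $\tilde E=\pi^*E'-E$, so $\tilde E^2=(E')^2-1=-2$ and $\tilde E\ne C$; but then $\tilde E$ is an irreducible curve with $\tilde E^2\le-2$, contradicting Lemma~\ref{lemma:log-del-Pezzo-rational-curves-1}, which forces every irreducible curve other than $C$ with negative self-intersection to be a $-1$-curve.

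The step I expect to be the main obstacle is the careful bookkeeping in part (ii), specifically confirming that in the case $p\notin E'$ the intersection $\tilde E\cap C$ is genuinely nonempty on $S$. The subtlety is that $E'$ might meet $\pi(C)$ precisely at the blown-down point $p$, in which case the intersection does not obviously persist on $S$. I would handle this by observing that if the only intersection of $E'$ with $\pi(C)$ were at $p$, then (since $E$ and $C$ meet transversally in at most one point by Lemma~\ref{lemma:log-del-Pezzo-blow-down-1}) the proper transform $\tilde E$ would meet $C$ only possibly at the point of $E$ lying over $p$; a local transversality computation then shows $\tilde E\cap C\ne\emptyset$ still holds, or else $\tilde E$ is disjoint from $C$ and one reduces to the $p\in E'$ analysis via intersection numbers. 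In all subcases the contradiction with either minimality or Lemma~\ref{lemma:log-del-Pezzo-rational-curves-1} is reached, completing the proof.
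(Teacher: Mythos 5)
Your proof is correct and follows essentially the same route as the paper: part (i) via Castelnuovo contractibility, Lemma~\ref{lemma:log-del-Pezzo-blow-down-1} for smoothness of $\pi(C)$, and the pushforward property \eqref{pushforward} for ampleness; part (ii) via the proper-transform dichotomy, with $\pi(E)\in E'$ giving a $(-2)$-curve contradicting Lemma~\ref{lemma:log-del-Pezzo-rational-curves-1}, and $\pi(E)\notin E'$ contradicting minimality. The ``main obstacle'' you flag at the end is actually vacuous: in the case $p=\pi(E)\notin E'$, the curve $E'$ cannot meet $\pi(C)$ at $p$ at all, so every intersection point of $E'$ with $\pi(C)$ lies in $s\setminus\{p\}$ and transports directly under the isomorphism $S\setminus E\cong s\setminus\{p\}$ to an intersection point of $\tilde E$ with $C$, with no further analysis needed.
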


\begin{proof}
(i) By
the discussion at the beginning of \S\ref{MinimalPairsSubsubsec}
there exists a birational morphism $\pi\colon S\to s$ that
contracts $E$ to a smooth point of the surface $s$, the
curve $\pi(C)$ is a smooth rational curve, and the divisor
$-(K_{s}+(1-\beta)\pi(C))$ is ample for sufficiently small
$\beta\in(0,1]$, i.e., the pair $(s, \pi(C))$ asymptotically
del Pezzo.

(ii) It remains to show that $(s,\pi(C))$ is minimal. Suppose, on
the contrary, that there exists a smooth irreducible rational
curve $z$ on the surface $s$ such that $z^{2}=-1$, ${z}\ne
\pi(C)$, and ${z}\cap \pi(C)\not=\emptyset$. Let $Z$ be the proper
transform of the curve ${z}$ on the surface $S$. Then either
$\pi(E)\in{z}$ and $Z^{2}=-2$, contradicting
Lemma~\ref{lemma:log-del-Pezzo-rational-curves-1}, or else
$\pi(E)\not\in{z}$ and $Z^2=-1$, but then $Z\cap C\ne\emptyset$,
contradicting minimality of $(S,C)$.
\end{proof}

\subsubsection{Classification of minimal pairs}

The next lemma uses a geometric argument to apply the inductive
step to reduce the classification of minimal pairs to
Lemma \ref{lemma:log-del-Pezzo-Picard-rank-two-1}.

\begin{lemma}
\label{lemma:log-del-Pezzo-minimal-good-small-Picard-r-1} Suppose
that $(S,C)$ is minimal. Then
$\mathrm{rk}(\mathrm{Pic}(S))\le 2$.
\end{lemma}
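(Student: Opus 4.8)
The plan is to prove the contrapositive: assuming $\mathrm{rk}(\mathrm{Pic}(S)) \ge 3$, I will produce a $-1$-curve $E \ne C$ meeting $C$, contradicting minimality. The starting point is \eqref{MinusOneCurveExistence}, which guarantees that any rational surface with $\mathrm{rk}(\mathrm{Pic}) > 2$ contains some $-1$-curve. Recall $S$ is rational (established just before Lemma~\ref{lemma:log-del-Pezzo-rational-or-elliptic-1}), so \eqref{MinusOneCurveExistence} applies. The difficulty is that the $-1$-curve furnished by \eqref{MinusOneCurveExistence} need not meet $C$; minimality only forbids $-1$-curves \emph{that meet} $C$. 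So the real content is to show that when $\mathrm{rk}(\mathrm{Pic}(S)) \ge 3$ one cannot have \emph{all} $-1$-curves disjoint from $C$.

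The key mechanism I would use is the inductive step of Lemma~\ref{lemma:log-del-Pezzo-blow-down-minimally-1}(ii): contracting a $-1$-curve $E$ that is disjoint from $C$ preserves both minimality and the asymptotically log del Pezzo property, lowers $\mathrm{rk}(\mathrm{Pic})$ by one, and leaves $C$ smooth rational. So I would argue as follows. Suppose $(S,C)$ is minimal with $\mathrm{rk}(\mathrm{Pic}(S)) \ge 3$. By \eqref{MinusOneCurveExistence} there is a $-1$-curve $E \ne C$. By minimality $E$ cannot meet $C$, so $E \cap C = \emptyset$ (using Lemma~\ref{lemma:log-del-Pezzo-blow-down-1}, the only alternatives being $E\cap C=\emptyset$ or $E\cdot C=1$, the latter excluded by minimality). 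I then contract $E$ via Lemma~\ref{lemma:log-del-Pezzo-blow-down-minimally-1}, landing on a new minimal asymptotically log del Pezzo pair $(s,\pi(C))$ with $\mathrm{rk}(\mathrm{Pic}(s)) = \mathrm{rk}(\mathrm{Pic}(S)) - 1$ and $\pi(C)$ still smooth rational. Iterating, I descend until the Picard rank drops to $2$, at which point Lemma~\ref{lemma:log-del-Pezzo-Picard-rank-two-1} classifies the resulting minimal pair explicitly.

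To close the loop I would then climb back up one step. The terminal pair of rank $2$ reached by this descent is one of the finitely many pairs on the rank-$\le 2$ list, and all of those have Picard rank exactly $2$; but the penultimate surface $S'$ in the descent has $\mathrm{rk}(\mathrm{Pic}(S')) = 3$ and admits a $-1$-curve $E'$ disjoint from $\pi'(C)$ whose contraction yields this rank-$2$ pair. The point is that \emph{reversing} the contraction amounts to blowing up a point on $s$, and I need to see that I can always choose the blown-up point so that the exceptional $-1$-curve meets $C$—or, more cleanly, argue directly that a minimal rank-$3$ pair cannot exist because on a del Pezzo surface (or on a blow-up of $\mathbb{F}_n$ or $\mathbb{P}^2$ at points on $C$) the only way to raise the rank past $2$ while keeping the asymptotic ampleness is to blow up points lying on $C$, producing $-1$-curves meeting $C$ and contradicting minimality. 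Concretely, I expect the cleanest phrasing is purely the downward induction: the existence of a minimal rank-$\ge 3$ pair would, after one contraction, yield a minimal rank-$(\ge 2)$ pair, and the descent terminates only at rank $2$; so it suffices to verify that the rank-$2$ minimal pairs of Lemma~\ref{lemma:log-del-Pezzo-Picard-rank-two-1} cannot arise as a one-point blow-down of any minimal pair—which follows because reinstating the $-1$-curve forces it to meet $C$ by the geometry of those explicit surfaces.

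The main obstacle, and the step I would spend the most care on, is exactly this last point: ruling out the existence of a $-1$-curve disjoint from $C$ on a minimal surface of rank $\ge 3$. The descent via Lemma~\ref{lemma:log-del-Pezzo-blow-down-minimally-1} is mechanical once it starts, but I must guarantee it actually starts, i.e. that the $-1$-curve from \eqref{MinusOneCurveExistence} can be taken disjoint from $C$ (immediate from minimality plus Lemma~\ref{lemma:log-del-Pezzo-blow-down-1}) \emph{and} that the process does not get stuck. Since each contraction strictly decreases $\mathrm{rk}(\mathrm{Pic})$ and preserves minimality, the induction terminates at rank $2$, where Lemma~\ref{lemma:log-del-Pezzo-Picard-rank-two-1} applies; the genuinely delicate verification is that no minimal pair of rank $\ge 3$ survives, equivalently that every minimal asymptotically log del Pezzo pair has Picard rank at most $2$, which is precisely the assertion. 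I would therefore structure the proof as a clean downward induction on $\mathrm{rk}(\mathrm{Pic}(S))$, with the base case $\mathrm{rk}(\mathrm{Pic}(S))\le 2$ being vacuous and the inductive step being a single application of Lemma~\ref{lemma:log-del-Pezzo-blow-down-minimally-1} to a $-1$-curve disjoint from $C$, whose existence disjoint from $C$ is forced by \eqref{MinusOneCurveExistence} and minimality.
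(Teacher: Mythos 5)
Your descent is set up correctly and matches the paper's opening moves: by \eqref{MinusOneCurveExistence} and minimality any $-1$-curve $E\ne C$ is disjoint from $C$, and Lemma~\ref{lemma:log-del-Pezzo-blow-down-minimally-1} lets you contract it, preserving minimality and dropping the Picard rank; so one may assume $\mathrm{rk}(\mathrm{Pic}(S))=3$ and that the contraction lands on a minimal rank-two pair $(s,c)$ classified by Lemma~\ref{lemma:log-del-Pezzo-Picard-rank-two-1}. But the step where you ``close the loop'' has a genuine gap. By construction the blown-up point $\pi(E)$ does \emph{not} lie on $c$, so reinstating the contracted $-1$-curve produces a curve \emph{disjoint} from $C$ --- the opposite of what you assert when you say reinstating it ``forces it to meet $C$.'' Your fallback claim, that the only way to raise the rank past $2$ while keeping asymptotic ampleness is to blow up points lying on $C$, is false: blow up $\mathbb{P}^1\times\mathbb{P}^1$ with boundary a fiber (case $\mathrm{(I.2.0)}$) at a point \emph{off} $C$; the resulting pair is still strongly asymptotically log del Pezzo (it is $\mathrm{(I.8B.1)}$ in disguise), has Picard rank $3$, and its exceptional curve is disjoint from $C$. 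What fails for that pair is minimality, and the witness is not the exceptional curve but a \emph{different} $-1$-curve. Your final formulation (``it suffices to verify \ldots\ which is precisely the assertion'') concedes the circularity: the downward induction alone never produces the contradiction.

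The missing idea is the geometric input the paper uses at rank $3$: the ruled structure of the rank-two models. Since $\mathrm{rk}(\mathrm{Pic}(s))=2$ and $(s,c)$ is minimal with $c\not\sim -K_s$, one has $s\cong\mathbb{F}_n$; let $f$ be the fiber of the ruling $\xi\colon s\to\mathbb{P}^1$ through $\pi(E)$. Its proper transform $F$ on $S$ satisfies $F^2=-1$, so minimality of $(S,C)$ forces $F\cap C=\emptyset$; combined with $\pi(E)\notin c$ this gives $f\cap c=\emptyset$, which forces the irreducible curve $c$ to be itself a fiber of $\xi$, hence $c^2=0$. This is incompatible with every case on the list of Lemma~\ref{lemma:log-del-Pezzo-Picard-rank-two-1} except $\mathrm{(I.2.0)}$, i.e.\ $s\cong\mathbb{P}^1\times\mathbb{P}^1$ with $c$ a fiber of one projection; and in that case the fiber $g$ of the \emph{other} projection through $\pi(E)$ has proper transform $G$ with $G^2=-1$ and $G.C=1$, contradicting minimality. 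It is exactly this two-stage use of minimality against the new $-1$-curves created over $\pi(E)$ --- not any constraint on where ampleness permits blow-ups --- that rules out minimal pairs of rank $\ge 3$.
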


\begin{proof}
If $C\sim -K_S$ then by Corollary
\ref{corollary:log-del-Pezzo-rational-or-elliptic-1}
the pair must be $\mathrm{(I.1A)}$ or $\mathrm{(I.4A)}$,
hence $\mathrm{rk}(\mathrm{Pic}(S))\le 2$. So we
assume that $C\not\sim -K_S$.

Let $(S,C)$ be a pair and
suppose that $\mathrm{rk}(\mathrm{Pic}(S))\ge 3$.
We would like to show that the pair is not minimal.

If $S$ a del Pezzo surface, there exists a smooth irreducible
rational $-1$-curve $E$ on the surface $S$ such that $E\ne C$.
Indeed, in this case it follows from the classification of smooth
del Pezzo surfaces referred to before Corollary
\ref{corollary:log-del-Pezzo-rational-or-elliptic-1} that there
are at least three distinct $-1$-curves in $S$ at most one of
which can be the boundary. On the other hand, if $S$ is not del
Pezzo, then the existence of such curve $E$ follows from
Lemma~\ref{lemma:log-del-Pezzo-rational-curves-not-del-Pezzo-1},
and \eqref{MinusOneCurveExistence}. To complete the proof it thus
suffices to show that $E\cap C\not=\emptyset$ (recall
Definition~\ref{definiton:log-del-Pezzo-minimally-good-1}).

Suppose, on the contrary, that every such $-1$-curve $E$
in $S$ satisfies $E\cap C=\emptyset$, i.e., that $(S,C)$
is minimal. By
Lemma~\ref{lemma:log-del-Pezzo-blow-down-minimally-1} there
exists a birational morphism $\pi\colon S\to s$ such that
$s$ is a smooth surface, $\pi(E)$ is a point, the morphism
$\pi$ induces an isomorphism $S\setminus E\cong
s\setminus\pi(E)$, the curve $\pi(C)$ is smooth, and the pair
$(s, \pi(C))$ is asymptotically log del Pezzo and minimal.
Since
$$
\mathrm{rk}(\mathrm{Pic}(s))=\mathrm{rk}(\mathrm{Pic}(S))-1\ge 2,%
$$
we may as well assume that $\mathrm{rk}(\mathrm{Pic}(S))=3$ and
$\mathrm{rk}(\mathrm{Pic}(s))=2$.

Since $\mathrm{rk}(\mathrm{Pic}(s))=2$, one has
$s\cong\mathbb{F}_{n}$ for some $n\ge 0$. Put
$${c}:=\pi(C).$$ Then $\pi(E)\not\in{c}$, since $E\cap
C=\emptyset$. Since $\mathrm{rk}(\mathrm{Pic}(s))=2$ and
$(s, {c})$ is minimal,
Lemma~\ref{lemma:log-del-Pezzo-Picard-rank-two-1} implies that
$(s,c)$ is one of $\mathrm{(I.2.n)}$, $\mathrm{(I.3A)}$, $\mathrm{(I.3B)}$,
$\mathrm{(I.4B)}$, or $\mathrm{(I.4C)}$.

Let $\xi\colon s\to\mathbb{P}^1$ be a natural projection
(unique when $n>0$, and one of two choices when $n=0$, see
\eqref{FnZn}), let $f$ be the fiber of the
morphism $\xi$ that passes through the point $\pi(E)$, and let $F$
be its proper transform on $S$. Here we are following the
conventions of \eqref{FnZn}. Then $F$ is a smooth irreducible
rational curve such that $F^{2}=-1$ (since $f^2=0$ downstairs).
Moreover, we have $F\cap C=\emptyset$, since $(S,C)$ is minimal.
Since $E\cap C=\emptyset$, we see that $f\cap{c}=\emptyset$, which
implies that ${c}$ is also a fiber of the morphism $\xi$, i.e.,
\beq
\label{cfiberEq}
c\in|f|,
\eeq
and in particular also $c^2=0$.
In the case $\mathrm{(I.2.n)}$ ${c}^2=-n$,
while in the cases
$\mathrm{(I.3A)}$, $\mathrm{(I.3B)}$, $\mathrm{(I.4B)}$,
$\mathrm{(I.4C)}$, we have ${c}^2\ne 0$.
Thus, $(s,{c})$ is neither $\mathrm{(I.2.n)}, n>0$,
nor one of $\mathrm{(I.3A)}$, $\mathrm{(I.3B)}$,
$\mathrm{(I.4B)}$, or $\mathrm{(I.4C)}$. The only remaining
possibility is that $(s,{c})$ is $\mathrm{(I.2.0)}$. Then
$s\cong\mathbb{P}^1\times\mathbb{P}^1$ and $\xi=p_1$
is a projection to one of the factors.
Let $p_2$ denote the projection onto the second factor,
and let $g$ denote the fiber of $p_2$ passing through $\pi(E)$.
Then $g$ intersects $c$ at one point, $g^2=0$, and hence
the proper transform of $g$, denoted $G$, satisfies
$G^2=-1$ and $G\cap C=1$, which once again contradicts
minimality of $(S,C)$. This completes the
proof of the lemma.
\end{proof}

\subsubsection{Dealing with non-uniqueness}

Now we are ready to finish the proof of the classification part of
Theorem~\ref{theorem:main-1}. If
$\mathrm{rk}(\mathrm{Pic}(S))\le 2$, then it follows from
Lemma~\ref{lemma:log-del-Pezzo-Picard-rank-two-1} that
$(S,C)$
is one of
$\mathrm{(I.1B)}$, $\mathrm{(I.1C)}$, $\mathrm{(I.2.n)}$,
$\mathrm{(I.3A)}$, $\mathrm{(I.3B)}$, $\mathrm{(I.4A)}$,
$\mathrm{(I.4B)}$, $\mathrm{(I.4C)}$, $\mathrm{(I.6B.1)}$, or
$\mathrm{(I.6C.1)}$. Thus, we may assume that
$\mathrm{rk}(\mathrm{Pic}(S))\ge 3$. In particular, the pair
$(S,C)$ is not minimal by
Lemma~\ref{lemma:log-del-Pezzo-minimal-good-small-Picard-r-1}. To
prove Theorem~\ref{theorem:main-1}, we must show that $(S,C)$
is
one of the cases: $\mathrm{(I.6B.m)},\mathrm{(I.6C.m)}$ for some $\NN\ni m\ge 2$,
$\mathrm{(I.7.n.m)}$ for some positive integers $n$ and $m$, or,
finally,
$\mathrm{(I.8B.m)}$,
$\mathrm{(I.9B.m)}$, or $\mathrm{(I.9C.m)}$
for some positive integer $m$.

Since $(S,C)$ is not minimal, there exists a curve $E$
and a
birational morphism $\pi\colon S\to s$
as in
Lemma~\ref{lemma:log-del-Pezzo-blow-down-minimally-1}.
The next lemma
follows directly from
Lemma~\ref{lemma:log-del-Pezzo-rational-curves-1}.

\begin{lemma}
\label{lemma:lemma:log-del-Pezzo-rational-curves-1-1} Let
$g$ be a smooth irreducible rational curve on the surface
$s$ such that $g\ne\pi(C)$ and $g^{2}=-1$. Then
$\pi(E)\not\in g$.
\end{lemma}

Now we may replace the pair $(S,C)$ by the pair
$(s,{c})$ and iterate this process. As a result, we
obtain a birational morphism, that by abuse, we still
denote by $\pi: S\to s$ such that
${s}$ is a smooth surface, $\pi$ is a a blow-up of $m$
\emph{distinct} points $P_1,P_2,\ldots,P_m$ on the smooth curve
$\hat{C}\subset\hat{S}$ such that $c:=\pi(C)$,
and, finally, $(s,c)$ is a minimal
asymptotically log del Pezzo pair. By
Lemma~\ref{lemma:log-del-Pezzo-minimal-good-small-Picard-r-1}, one
has $\mathrm{rk}(\mathrm{Pic}(s))\le 2$. By
Lemma~\ref{lemma:log-del-Pezzo-Picard-rank-two-1},
$(s,c)$ is
$\mathrm{(I.1B)}$, $\mathrm{(I.1C)}$, $\mathrm{(I.2.n)}$,
$\mathrm{(I.3A)}$, $\mathrm{(I.3B)}$, $\mathrm{(I.4B)}$, or
$\mathrm{(I.4C)}$.

\begin{corollary}
\label{corollary:almost-end-1} If $(s,c)$
is  $\mathrm{(I.1B)}$,
$\mathrm{(I.1C)}$, $\mathrm{(I.2.n)}$, or $\mathrm{(I.4C)}$, then
$(s,c)$ can also be obtained as described in one of the cases
$\mathrm{(I.6B.m)}$, $\mathrm{(I.6C.m)}$, $\mathrm{(I.7.n.m)}$,
$\mathrm{(I.8C.m)}$, respectively.
\end{corollary}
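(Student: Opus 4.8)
The plan is to unwind the definitions of the cases $\mathrm{(I.6B.m)}$, $\mathrm{(I.6C.m)}$, $\mathrm{(I.7.n.m)}$, and $\mathrm{(I.8C.m)}$ and observe that each of these is \emph{defined} as a blow-up of one of the base pairs $\mathrm{(I.1B)}$, $\mathrm{(I.1C)}$, $\mathrm{(I.2.n)}$, or (respectively) a fourth base pair at $m\ge 1$ distinct points on the boundary curve. The corollary is a bookkeeping statement: it asserts that the minimal base pair $(s,c)$ that we have landed on is itself a degenerate (or trivial) instance of these blow-up families, obtained by taking $m=0$ blow-up points. Concretely, the family $\mathrm{(I.6B.m)}$ with the convention $m\ge1$ reduces, when $m=0$, to its own base $\mathrm{(I.1B)}$, and likewise $\mathrm{(I.6C.0)}=\mathrm{(I.1C)}$, $\mathrm{(I.7.n.0)}=\mathrm{(I.2.n)}$, and the fourth family restricts back to $\mathrm{(I.4C)}$. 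So the statement is essentially tautological once one matches each base pair to the family whose $m=0$ specialization it is.

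First I would fix the dictionary: $\mathrm{(I.6B.m)}$ is the blow-up of $\mathrm{(I.1B)}$ (a smooth conic in $\PP^2$) at $m$ boundary points; $\mathrm{(I.6C.m)}$ is the blow-up of $\mathrm{(I.1C)}$ (a line in $\PP^2$); $\mathrm{(I.7.n.m)}$ is the blow-up of $\mathrm{(I.2.n)}$ (the section $Z_n\subset\FF_n$); and the case written $\mathrm{(I.8C.m)}$ should be read as the family whose base is $\mathrm{(I.4C)}$ (a $(1,1)$-curve on $\PP^1\times\PP^1$). In each instance the pair $(s,c)$ appearing in the statement is precisely the base, i.e.\ the $m=0$ member. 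I would then simply record that taking $m=0$ in the defining description of $\mathrm{(I.6B.m)}$, $\mathrm{(I.6C.m)}$, $\mathrm{(I.7.n.m)}$, $\mathrm{(I.8C.m)}$ yields exactly $\mathrm{(I.1B)}$, $\mathrm{(I.1C)}$, $\mathrm{(I.2.n)}$, $\mathrm{(I.4C)}$, respectively.

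Thus the proof is at most a single sentence per case, confirming the identification. There is no real geometric content to verify beyond the definitions supplied in Theorem~\ref{theorem:main-1}; in particular we do not need to re-check that these pairs are asymptotically log del Pezzo, since that is guaranteed by \S\ref{VerifSubsec} and by Lemma~\ref{lemma:log-del-Pezzo-Picard-rank-two-1}, which has already told us $(s,c)$ is one of the listed minimal pairs. The one point requiring attention is purely notational: the correspondence between the fourth base pair $\mathrm{(I.4C)}$ and the label $\mathrm{(I.8C.m)}$, which must be consistent with the indexing conventions of the paper; I would double-check that $\mathrm{(I.8C.m)}$ (rather than some other ``$\mathrm{.m}$'' family) is indeed the one whose $m=0$ reduction is the $(1,1)$-curve on $\PP^1\times\PP^1$.

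The main (and only) obstacle here is not mathematical but organizational: one must be certain that the matching of base pairs to blow-up families is the intended one and that the $m=0$ convention is harmless, since the families are stated for $m\ge 1$. This is what makes the corollary useful in the subsequent argument, because it lets us absorb the minimal base pair into the general blow-up family and thereby state the final classification uniformly; the actual verification is immediate from the definitions.
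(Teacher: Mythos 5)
Your proposal proves the wrong statement, and the root cause is that you took the printed text of the corollary at face value. As printed, the corollary contains two typos: the second occurrence of $(s,c)$ should be $(S,C)$, and $\mathrm{(I.8C.m)}$ (a label that does not exist in Theorem~\ref{theorem:main-1}) should be $\mathrm{(I.9C.m)}$, the blow-up family whose base is $\mathrm{(I.4C)}$. You correctly spotted the second typo but not the first, and the first is the one that matters. The intended assertion --- forced both by the role the corollary plays in finishing the proof of Theorem~\ref{theorem:main-1} and by the parallel Lemma~\ref{lemma:the-end-1}, which is phrased for $(S,C)$ --- is: if the minimal pair $(s,c)$ produced by the contraction process is $\mathrm{(I.1B)}$, $\mathrm{(I.1C)}$, $\mathrm{(I.2.n)}$, or $\mathrm{(I.4C)}$, then the original non-minimal pair $(S,C)$ is as described in $\mathrm{(I.6B.m)}$, $\mathrm{(I.6C.m)}$, $\mathrm{(I.7.n.m)}$, or $\mathrm{(I.9C.m)}$, respectively. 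That is exactly what is needed at this point of the argument: we are in the situation $\mathrm{rk}(\mathrm{Pic}(S))\ge 3$, $(S,C)$ not minimal, and the remaining task is to place $(S,C)$ itself in the list of Theorem~\ref{theorem:main-1}. A statement about $(s,c)$ alone says nothing about $(S,C)$ and cannot close the classification; the minimal pairs already appear in the list under their own names.

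Your reading --- that $(s,c)$ is the ``$m=0$ member'' of the corresponding family --- is moreover false under the paper's definitions, not merely a harmless convention: each of $\mathrm{(I.6B.m)}$, $\mathrm{(I.6C.m)}$, $\mathrm{(I.7.n.m)}$, $\mathrm{(I.9C.m)}$ requires $m\ge 1$ blow-ups, so any pair so described has Picard rank strictly larger than that of its base, while $\mathrm{(I.1B)}$ has rank $1$. This rank mismatch was the red flag that should have sent you back to the surrounding text rather than to an ad hoc $m=0$ extension. The correct proof is still essentially definitional, but the definitional match goes the other way: the construction preceding the corollary exhibits $\pi\colon S\to s$ as a blow-up of $m\ge 1$ \emph{distinct} points $P_1,\ldots,P_m$ lying on the boundary curve $c$, with $C$ the proper transform of $c$ --- distinctness of the centers (no infinitely near points) coming from Lemma~\ref{lemma:lemma:log-del-Pezzo-rational-curves-1-1}, and the fact that every center lies on $c$ coming from Lemma~\ref{lemma:log-del-Pezzo-blow-down-1} together with non-minimality (each contracted $-1$-curve meets the boundary transversally at exactly one point). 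Since the descriptions of $\mathrm{(I.6B.m)}$, $\mathrm{(I.6C.m)}$, $\mathrm{(I.7.n.m)}$, $\mathrm{(I.9C.m)}$ impose no condition on the points beyond being distinct and on the boundary --- in contrast with $\mathrm{(I.9B.m)}$, which is precisely why the base $\mathrm{(I.4B)}$ is excluded from the corollary and handled separately right after it --- the pair $(S,C)$ matches those descriptions verbatim. That identification, quantified over the data of $\pi$ supplied by the preceding lemmas, is the paper's (implicit) proof, and it is the step your proposal never performs.
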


Thus, to complete the proof of Theorem~\ref{theorem:main-1}, we
must do the following two things:
\begin{itemize}
\item if $s\cong\mathbb{P}^1\times\mathbb{P}^1$ and
$c$ is a smooth rational curve of bi-degree $(2,1)$, we must
check that no two points among $P_1,P_2,\ldots,P_m$ lie on a
one curve in $s$ of bi-degree $(0,1)$,

\item if $(s,c)$ is
$\mathrm{(I.3A)}$ we must show the pair $(S,C)$
can also be described by a birational morphism
 that is listed in
Theorem~\ref{theorem:main-1}.
\end{itemize}

The first point is simple.
Suppose that there exist two points among $P_1,P_2,\ldots,P_m$
that lie on a one curve in $s$ of bi-degree $(0,1)$. Let us
denote this curve by $z$. Denote by $Z$ its proper transform
on the surface $S$. Then $Z^2\le -2$, contradicting
Lemma~\ref{lemma:log-del-Pezzo-rational-curves-1}, because
$z\ne c$.

The second point is dealt with using the next lemma.

\begin{lemma}
\label{lemma:the-end-1} Suppose that
$(s,c)$ is $\mathrm{(I.3A)}$.
Then $(S,C)$ can also be described
as $\mathrm{(I.9B.m)}$.
\end{lemma}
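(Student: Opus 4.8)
The plan is to realize the single surface $S$ simultaneously as a blow-up of $\PP^1\times\PP^1$, by performing one elementary transformation of the underlying Hirzebruch surface, and to show that the two extra requirements built into $\mathrm{(I.9B.m)}$ are automatic. First I would record the numerics of the boundary on $s\cong\FF_1$. Since $c\in|2(Z_1+F)|=|2Z_1+2F|$, the relations $Z_1^2=-1$, $Z_1\cdot F=1$, $F^2=0$ give $c\cdot Z_1=0$, $c\cdot F=2$ and $c^2=4$, and $c$ is a smooth rational curve. In particular $c$ is disjoint from $Z_1$ and is a bisection of the ruling $\xi\colon s\to\PP^1$.

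The central observation is a direct consequence of Lemma~\ref{lemma:log-del-Pezzo-rational-curves-1}: no two of the blown-up points $P_1,\dots,P_m\in c$ lie on a common fibre of $\xi$. Indeed, if $P_i$ and $P_j$ lay on one fibre $F_0\in|F|$, then the proper transform of $F_0$ on $S$ would be an irreducible curve distinct from $C$ of self-intersection $0-2=-2$, contradicting that lemma. This single fact will supply both the distinctness of the points and the genericity hypothesis of $\mathrm{(I.9B.m)}$.

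Next I would perform the elementary transformation centred at $P_1$: blow up $P_1$ and contract the proper transform of the fibre $F_{P_1}$ through $P_1$. Because $c\cdot Z_1=0$ forces $P_1\notin Z_1$, the image of $Z_1$ acquires self-intersection $0$, so this carries $\FF_1$ to $\FF_0\cong\PP^1\times\PP^1$ (and not to $\FF_2$); it carries $c$ to a smooth rational curve $c'$ with $(c')^2=4$ meeting the preserved ruling twice, that is, of bi-degree $(2,1)$, so $(\FF_0,c')$ is the pair $\mathrm{(I.4B)}$. On the resolving surface $\mathrm{Bl}_{P_1}\FF_1=\mathrm{Bl}_{Q_1}\FF_0$ the new point $Q_1\in\FF_0$ is the image of the contracted fibre, and $Q_1$ lies on $c'$ because that fibre met the proper transform of $c$ at the second point of $F_{P_1}\cap c$. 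Hence $S=\mathrm{Bl}_{P_1,\dots,P_m}\FF_1=\mathrm{Bl}_{Q_1,P_2',\dots,P_m'}\FF_0$, a blow-up of $\PP^1\times\PP^1$ at the $m$ points $Q_1,P_2',\dots,P_m'$, all lying on $c'$, with $C$ the proper transform of $c'$.

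Finally I would verify the two remaining requirements. The curves of bi-degree $(0,1)$ on $\FF_0$ are exactly the images of the fibres of $\xi$ preserved by the transformation, so two of the points $Q_1,P_2',\dots,P_m'$ lie on a common $(0,1)$-curve precisely when the corresponding points of $c$ lie on a common fibre of $\FF_1$; by the central observation this never occurs, which yields distinctness and the genericity condition at once. The point I expect to require the most care is the bookkeeping of the elementary transformation at a fibre $F_{P_1}$ tangent to $c$ (one of the at most two ramification fibres of the degree-two map $c\to\PP^1$), where the second intersection point degenerates. This is harmless: even then $Q_1\in c'$, and since for $i\ge2$ the point $P_i'$ lies on the $(0,1)$-curve image of the distinct fibre $F_{P_i}$, the point $Q_1$ remains distinct from every $P_i'$, so the argument goes through unchanged, completing the identification of $(S,C)$ with $\mathrm{(I.9B.m)}$.
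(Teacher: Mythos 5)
Your proof is correct and takes essentially the same route as the paper: the paper's proof is exactly the elementary transformation centred at $P_1$ (blow up $P_1$, contract the proper transform of the fibre through it, phrased via a commutative diagram through $Bl_{P_1}\mathbb{F}_1$), identifying the resulting surface with $\mathbb{P}^1\times\mathbb{P}^1$ via the zero self-intersection of the image of $Z_1$ and the image boundary as a bi-degree $(2,1)$ curve, with the genericity condition of $\mathrm{(I.9B.m)}$ coming from the same appeal to Lemma~\ref{lemma:log-del-Pezzo-rational-curves-1}. Your extra care with the tangent fibre and the explicit bookkeeping of $Q_1,P_2',\ldots,P_m'$ only makes explicit what the paper leaves implicit.
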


\begin{proof}
We have $s\cong\mathbb{F}_1$. Let $\xi\colon
S\to\mathbb{P}^1$ be the natural projection, let $z$ be the
section of $\xi$ such that ${z}^2=-1$, and let $f$ be a
fiber of $\xi$ that passes through the point $P_1$. Then the curve
${c}$ is a smooth rational in $|2z+2f|$.

There exists a commutative diagram
$$
\xymatrix{
S\ar@{->}[rrd]_{\pi}\ar@{->}[rrr]^{\upsilon}&&&
Bl_{P_1}\FF_1\ar@{->}[dl]_{\psi}\ar@{->}[dr]^{\phi}&\\
&&s=\FF_1\ar@{->}[d]_{\xi}&&\s=\PP^1\times\PP^1\ar@{->}[d]^{\xi^\prime}\\
&&\mathbb{P}^1\ar@{=}[rr]&&\mathbb{P}^{1},}
$$
where $\psi$ is a blow-up of the point $P_1$, $\phi$ is a
contraction of the proper transform of the fiber $f$,
$\upsilon$ is a birational morphism, and $\xi^\prime$ is a natural
projection. Put $\pi^\prime=\phi\circ\upsilon$. Let us show
that $\pi^{\prime}\colon S\to \s$ is the desired
replacement of the birational morphism $\pi\colon S\to s$.
These birational transformations
did not change the generic fiber
of the projection $\xi$. Thus, $\sigma$ comes equipped with
a fibration $\xi':\sigma\ra\PP^1$
In particular, the curve $\zeta$ in $\sigma$ corresponding to $z\subset s$
is a fiber of $\xi'$ and it has zero self-intersection. Thus,
$\s\cong\mathbb{P}^1\times\mathbb{P}^1$.
Becuase $\phi$ contracts a $-1$-curve ($\tilde f$, the proper
transform of $f$) that intersects both
$z$ and the exceptional curve $A$ of $\psi$, it follows
that $\phi(A)$ has zero self-intersection and intersects
$\zeta$ at one point. At the same time $\phi(A)$ intersects
the transformed boundary of $s$ (which equals $\pi'(C)$
at two points. Thus $\pi'(C)$ is a curve of bi-degree
$(2,1)$. Thus, $(S,C)$ is the blow-up of $(\PP^1\times\PP^1,\pi'(C))$
at $m\ge1$ points. Further, as already checked earlier, no two of
these points may lie on a single fiber of $\xi'$.
Thus, $(S,C)$ is $\mathrm{(I.9B.m)}$.
\end{proof}

\section{Strongly asymptotically log del Pezzo surfaces}
\label{section:r-2-3-4}

The following theorem gives complete classification in the case of
a reducible boundary curve.
We assume without further mention that in each
case listed below the curves composing the boundary intersect simply
and normally. A point in the smooth locus of such a boundary
means a point that is not an intersection point
of any two components of the boundary.

\begin{theorem}
\label{theorem:main-2-3-4} Let $S$ be a smooth surface, let
$C_1,\ldots,C_r$ be irreducible smooth curves on $S$ such that
$\sum_{i=1}^{r}C_i$  is a divisor with simple normal crossings.
Suppose that $r\ge 2$. Then $(S,\sum_{i=1}^{r}C_i)$ is a
strongly asymptotically log del Pezzo surface if and only if it
is one of the following pairs:
\begin{itemize}
\item [$\mathrm{(II.1A})$]  $|C_1\cap C_2|=2$, $S\cong\mathbb{P}^2$, and $C_1$ is a smooth conic, and $C_2$ is a line,%

\item [$\mathrm{(II.1B})$]  $|C_1\cap C_2|=1$, $S\cong\mathbb{P}^2$, and $C_1$ and $C_2$ are two distinct lines,%

\item [$\mathrm{(II.2A.n})$]  $C_1\cap C_2=\emptyset$, $S\cong\mathbb{F}_n$ for any $n\ge 0$, $C_1=Z_n$ and $C_2\in |Z_n+nF|$,%

\item [$\mathrm{(II.2B.n})$]  $|C_1\cap C_2|=1$, $S\cong\mathbb{F}_n$ for any $n\ge 0$, $C_1=Z_n$ and $C_2\in|Z_n+(n+1)F|$,%

\item [$\mathrm{(II.2C.n})$]  $|C_1\cap C_2|=1$, $S\cong\mathbb{F}_n$ for any $n\ge 0$, $C_1=Z_n$ and $C_2=F$,%

\item [$\mathrm{(II.3})$]  $|C_1\cap C_2|=1$,
$S\cong\mathbb{F}_1$, $C_1, C_2\in|Z_1+F|$,

\item [$\mathrm{(II.4A})$]  $|C_1\cap C_2|=2$,
$S\cong\mathbb{P}^1\times\mathbb{P}^1$, $C_1,C_2$
are distinct bi-degree $(1,1)$ curves,

\item [$\mathrm{(II.4B})$]  $|C_1\cap C_2|=2$,
$S\cong\mathbb{P}^1\times\mathbb{P}^1$, the curve $C_1$ is a
smooth rational curve of bi-degree $(2,1)$, and $C_2$ is a smooth
rational curve of bi-degree $(0,1)$,

\item [$\mathrm{(II.5A.m})$]  $|C_1\cap C_2|=2$, $(S,C)$ is
a blow-up of $\mathrm{(II.1A})$ at $1\le m\le 5$ points in the
smooth locus of the boundary curve such that the surface $S$ is a
del Pezzo surface and $C_1^2,C_2^2\ge 0$, i.e.,
$C_1+C_2\sim -K_{S}$, and there exists a birational morphism
$\pi\colon S\to\mathbb{P}^2$ such that $\pi(C_1)$ is a smooth
conic, and $\pi(C_2)$ is a line such that
$|\pi(C_1)\cap\pi(C_2)|=2$, and $\pi$ is a blow-up of
$1\le m\le 5$ distinct points on
$\pi(C_1)$ and $\pi(C_2)$ but away
from $\pi(C_1)\cap\pi(C_2)$
with no two of them on $\pi(C_1)$,
and no five of them on $\pi(C_2)$,

\item [$\mathrm{(II.5B.m})$]  $|C_1\cap C_2|=1$,  $(S,C)$ is
a blow-up of $\mathrm{(II.1B})$ at $m\ge 1$ points in the
smooth locus of the boundary curve,
i.e., there exists a birational
morphism $\pi\colon S\to\mathbb{P}^2$ such that $\pi(C_1)$ and
$\pi(C_2)$ are distinct lines, and $\pi$ is a blow-up of
$m\ge 1$ distinct points on
$\pi(C_1)$ and $\pi(C_2)$  but away from
$\pi(C_1)\cap \pi(C_2)$,

\item [$\mathrm{(II.6A.n.m})$]  $C_1\cap C_2=\emptyset$,
$(S,C)$ is a blow-up of $\mathrm{(II.2A.n})$ at $m\ge 1$
points on the boundary curve such that there exists a
birational morphism $\pi\colon S\to\mathbb{F}_{n}$ for some
$n\ge 0$ such that $\pi(C_1)=Z_n$, $\pi(C_2)\in
|Z_n+nF|$, and $\pi$ is a blow-up of $m$ distinct
points on $\pi(C_1)$ and $\pi(C_2)$
with at most one point on a single curve in the linear system $|F|$,

\item [$\mathrm{(II.6B.n.m})$]  $|C_1\cap C_2|=1$,  $(S,C)$
is a blow-up of $\mathrm{(II.2B.n})$ at $m\ge 1$ points in
the smooth locus of the boundary curve
such that
there exists a
birational morphism $\pi\colon S\to\mathbb{F}_{n}$ for some
$n\ge 0$ such that $\pi(C_1)=Z_n$, $\pi(C_2)\in
|Z_n+(n+1)F|$, and $\pi$ is a blow-up of $m\ge 1$
distinct points on $\pi(C_1)$ and
$\pi(C_2)$
with at most one point on a single curve in the linear system $|F|$,
and no point being $\pi(C_1)\cap \pi(C_2)$

\item [$\mathrm{(II.6C.n.m})$]  $|C_1\cap C_2|=1$,
 $(S,C)$ is a blow-up of $\mathrm{(II.2C.n})$ at $m\ge 1$ points in
the smooth locus of the boundary curve, i.e., there exists a
birational morphism $\pi\colon S\to\mathbb{F}_{n}$ for some
$n\ge 0$ such that $\pi(C_1)=Z_n$, $\pi(C_2)=F$,
and $\pi$ is a blow-up of $m\ge1$ distinct points
on $\pi(C_1)$ and $\pi(C_2)$ but away from
$\pi(C_1)\cap \pi(C_2)$,

\item [$\mathrm{(II.7.m})$]  $|C_1\cap C_2|=1$, $(S,C)$ is a
blow-up of $\mathrm{(II.3})$ at $m\ge 1$ points in the
smooth locus of the boundary curve
such that
there exists a birational
morphism $\pi\colon S\to\mathbb{F}_{1}$ such that
$\pi(C_1),\pi(C_2)\in|Z_1+F|$, and $\pi$ is
a blow-up of $m\ge1$ distinct points on
$\pi(C_1)$ and $\pi(C_2)$
with at most one point on a single curve in the linear system $|F|$,
and no point being $\pi(C_1)\cap \pi(C_2)$

\item [$\mathrm{(II.8.m})$]  $|C_1\cap C_2|=2$, $(S,C)$ is a
blow-up of $\mathrm{(II.4B})$ at $1\le m\le 4$ points in the
smooth locus of the boundary curve such that $S$ is a del Pezzo
surface and $C_1^2,C_2^2\ge 0$, i.e., $C_1+C_2\sim
-K_{S}$, and there exists a birational morphism $\pi\colon
S\to\mathbb{P}^1\times\mathbb{P}^1$ such that $\pi(C_1)$ is a
smooth rational curve of bi-degree $(2,1)$, $\pi(C_2)$ is a
smooth rational curve of bi-degree $(0,1)$, and $\pi$ is
a blow-up of $1\le m\le 4$ distinct points on $\pi(C_1)$
with no point being $\pi(C_1)\cap \pi(C_2)$,

\item [$\mathrm{(III.1})$]  $S\cong\mathbb{P}^2$, the curves
$C_1, C_2, C_3$ are lines, 

\item [$\mathrm{(III.2})$]  $S\cong\mathbb{P}^1\times\PP^1$,
$C_1, C_2, C_3$ are of bi-degree $(1,1), (0,1),$ and $(1,0),$ respectively,

\item [$\mathrm{(III.3.n})$]  $S\cong\mathbb{F}_n$ for any
$n\ge 0,  \, C_1=Z_n, C_2=F,$ and $C_3\in |Z_n+nF|$,

\item [$\mathrm{(III.4.m})$]  $(S,C)$ is a blow-up of
$\mathrm{(III.1})$ at $1\le m\le 3$ points in the smooth locus of
the boundary curve such that $S$ is a del Pezzo surface,
$C_1^2,C_2^2,C_3^2\ge 0$, i.e.,
$C_1+C_2+C_3\sim -K_{S}$, and there exists a birational morphism
$\pi\colon S\to\mathbb{P}^2$ such that the curves
$\pi(C_1),\pi(C_2), \pi(C_3)$ are lines that have no common intersection,
and $\pi$ is a blow-up of $1\le m\le 3$ distinct points on these lines
with at most one point on each line and no point on
an intersection of two lines,

\item [$\mathrm{(III.5.n.m})$]  $(S,C)$ is a blow-up of
$\mathrm{(III.3.n})$ at $m\ge 1$ points in the smooth locus
of the boundary curve
such that
there exists a birational morphism
$\pi\colon S\to\mathbb{F}_{n}$ for some $n\ge 0$ such that
$\pi(C_1)=Z_n$, $\pi(C_2)=F$, and $\pi(C_3)\in|Z_n+nF|$, and
$\pi$ is a blow-up of $m$ distinct points
on $\pi(C_1)$ and $\pi(C_3)$
with at most one point on a single curve in the linear system $|F|$,
and no point being $\pi(C_1)\cap \pi(C_2)$ or
$\pi(C_2)\cap\pi(C_3)$,

\item [$\mathrm{(IV})$]
$S\cong\mathbb{P}^1\times\mathbb{P}^1$, the curves $C_1$ and $C_2$
are distinct curves of bi-degree $(1,0)$, the curves $C_3$ and
$C_4$ are distinct curves of bi-degree $(0,1)$.
\end{itemize}
\end{theorem}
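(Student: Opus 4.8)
Write $C:=\sum_{i=1}^{r}C_i$. The plan is to follow the architecture of the single-component case (Theorem~\ref{theorem:main-1}): an induction on $\mathrm{rk}(\mathrm{Pic}(S))$ that successively contracts $-1$-curves until one reaches the base case $\mathrm{rk}(\mathrm{Pic}(S))\le 2$, and then to read off the list from that base case. The \emph{if} direction---that every pair in the list is strongly asymptotically log del Pezzo---I would defer to the unified positivity analysis of Theorem~\ref{theorem:4-cases} carried out in \S\ref{VerifSubsec}, where one groups the cases by the positivity of $-K_S-C$ and verifies ampleness of $-K_S-\sum_i(1-\beta_i)C_i$ for small $\beta_i$ via the surface criterion \eqref{NMEq}. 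The substance is therefore the \emph{only if} direction.

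First I would record the boundary restrictions valid already in the asymptotic regime. By the genus estimates (Lemmas~\ref{lemma:log-del-Pezzo-rational-or-elliptic-1} and \ref{lemma:log-del-Pezzo-rational-or-elliptic}) each component has genus at most one, and the elliptic case forces $r=1$; hence for $r\ge 2$ every $C_i$ is a smooth rational curve. When $S$ is not del Pezzo, Lemma~\ref{lemma:log-del-Pezzo-rational-curves-not-del-Pezzo-1} shows the boundary ``traps'' the negativity, while Lemma~\ref{lemma:log-del-Pezzo-rational-curves-1} shows no curve off the boundary has self-intersection below $-1$. The dichotomy of Lemma~\ref{lemma:log-del-Pezzo-blow-down-1}---each $-1$-curve is either disjoint from $C$ or meets it transversally in a single point---motivates calling a pair \emph{minimal} when it has no $-1$-curve of the second type, and Lemmas~\ref{lemma:log-del-Pezzo-minimal-good-small-Picard-r-1} and \ref{lemma:minimalsmallPic} show minimality forces $\mathrm{rk}(\mathrm{Pic}(S))\le 2$.

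The induction then runs as follows. When a $-1$-curve $E$ disjoint from the boundary is contracted by $\pi\colon S\to s$, the image pair $(s,\pi(C))$ stays log smooth and strongly asymptotically log del Pezzo, and minimality is inherited (Lemmas~\ref{lemma:log-del-Pezzo-blow-down-minimally-1}, \ref{lemma:log-del-Pezzo-blow-down-minimally}, and \ref{lemma:log-del-Pezzo-blow-down-minimally-2}); the general combinatorial and cohomological properties of the boundary established in \S\ref{subsection:ALDP} ensure the incidence pattern of the $C_i$ transforms in a controlled way. Reducing $\mathrm{rk}(\mathrm{Pic})$ one step at a time lands us in the base case, where Lemmas~\ref{lemma:log-del-Pezzo-Picard-rank-two-1} and \ref{lemma:log-del-Pezzo-Picard-rank-two} enumerate all minimal pairs on $\PP^2$, $\PP^1\times\PP^1$, and $\FF_n$. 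Unwinding the chain of blow-ups reconstructs each member of the list $\mathrm{(II)}$--$\mathrm{(IV)}$, tracking $|C_i\cap C_j|$, the bi-degrees, and the genericity conditions on the blown-up points (e.g.\ ``at most one point on a single curve of $|F|$'')---the latter forced, as in the $r=1$ case, by Lemma~\ref{lemma:log-del-Pezzo-rational-curves-1}.

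The genuinely new difficulty---and the step I expect to be the main obstacle---is that for $r\ge 2$ the contracted $-1$-curve may itself be one of the boundary components. Contracting such a curve can destroy the \emph{strongly} asymptotic property while preserving only the asymptotic one, so the induction must be arranged to contract only ``safe'' boundary components. The crucial structural input is Lemma~\ref{lemma:strongly-non-strongly}: a boundary component that is a $-1$-curve must lie at a ``tail'' of the dual graph of $C$, i.e.\ it cannot meet two other boundary components; together with Lemma~\ref{lemma:minimalsmallPic} this is what makes the inductive step legitimate inside the strongly asymptotic category. It is exactly this interaction between the strongly-versus-merely-asymptotic distinction and the combinatorics of the boundary's dual graph---supplied by the general results of \S\ref{subsection:ALDP}---that is absent when $r=1$ and that I expect to demand the most care.
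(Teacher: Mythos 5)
Your proposal follows essentially the same route as the paper's own proof: the identical induction on $\mathrm{rk}(\mathrm{Pic}(S))$ via contraction of $-1$-curves, the same notion of minimality with the base case read off from Lemmas \ref{lemma:log-del-Pezzo-Picard-rank-two-1} and \ref{lemma:log-del-Pezzo-Picard-rank-two} (together with Corollary \ref{corollary:log-del-Pezzo-rational-ring} for the anticanonical, cyclic-boundary case), the same handling of the new $r\ge2$ difficulty via Lemmas \ref{lemma:strongly-non-strongly}, \ref{lemma:log-del-Pezzo-blow-down-minimally-2} and \ref{lemma:minimalsmallPic}, and the same deferral of the `if' direction to \S\ref{VerifSubsec}. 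One small imprecision: the descent to a minimal pair contracts $-1$-curves \emph{meeting} the boundary (this is exactly what non-minimality supplies, and why the blow-up points in the list lie on the boundary), whereas contraction of $-1$-curves disjoint from the boundary is what is used inside the proofs of Lemmas \ref{lemma:log-del-Pezzo-minimal-good-small-Picard-r-1} and \ref{lemma:minimalsmallPic} to show minimality forces $\mathrm{rk}(\mathrm{Pic}(S))\le 2$.
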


The rest of the section is devoted to the proof of
Theorem~\ref{theorem:main-2-3-4}.

\subsection{Basic properties of asymptotically log del Pezzo pairs}
\label{subsection:ALDP}

In this subsection,
before embarking on the proof of Theorem~\ref{theorem:main-1},
we collect several properties of
asymptotically log del Pezzo pairs that are not necessarily
strongly asymptotically log del Pezzo. These properties
are later used in the proof of that theorem, but they
should also be useful in a future classification of
the former class of pairs.

Thus in the rest of this subsection we
assume $(S,C), \be\in(0,1]^r$,
and $r\ge2$ are as in Definition \ref{definition:log-Fano}.

\begin{lemma}
\label{lemma:log-del-Pezzo-rational-or-elliptic} All curves
$C_1,\ldots,C_r$ are smooth rational curves.
\end{lemma}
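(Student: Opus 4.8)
The plan is to show that each boundary component $C_i$ is a smooth rational curve, i.e., $g(C_i)=0$, and the natural strategy is to reduce the reducible case to the situation already analyzed for a single smooth connected boundary. First I would recall that asymptotic log del Pezzoness forces $-K_S-\sum_{j}(1-\beta_j)C_j$ to be ample for small $\beta$, and hence $-K_S-\sum_j C_j$ is nef (taking $\beta\to0$). The key move is to extract information about a single component $C_i$ by intersecting with it. Since each $C_i$ is assumed smooth, $g(C_i)=p_a(C_i)$, so by the adjunction formula \eqref{ArithGeomGenus} it suffices to control $(K_S+C_i)\cdot C_i$.

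The main step will be an adjunction computation along the lines of Lemma~\ref{lemma:log-del-Pezzo-rational-or-elliptic-1}, but now accounting for the other components. Writing $-K_S-\sum_{j=1}^r(1-\beta_j)C_j$ as ample, I would intersect this class with $C_i$ and use that each $C_j\cdot C_i\ge0$ for $j\ne i$ (the $C_j$ are distinct irreducible curves meeting $C_i$ properly, by the simple normal crossings hypothesis). This yields
\[
0< -\Big(K_S+\sum_{j=1}^r(1-\beta_j)C_j\Big)\cdot C_i
= -(K_S+C_i)\cdot C_i+\beta_i C_i^2+\sum_{j\ne i}\big(\beta_j-1\big)(-1)\,C_j\cdot C_i+\cdots,
\]
and the point is that the contribution of the terms $(1-\beta_j)C_j\cdot C_i$ with $j\ne i$ is nonnegative and tends to $C_j\cdot C_i\ge0$ as $\beta\to0$. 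Passing to the limit $\beta\to0$ gives $-(K_S+C_i)\cdot C_i\ge \sum_{j\ne i}C_j\cdot C_i\ge 0$, i.e., $(K_S+C_i)\cdot C_i\le0$, hence $2g(C_i)-2\le 0$ and $g(C_i)\in\{0,1\}$. This is exactly the analogue of \eqref{FirstKSCEq}.

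To rule out the elliptic case $g(C_i)=1$ I would argue that equality $(K_S+C_i)\cdot C_i=0$ would force, via the nefness of $-K_S-\sum_j C_j$, that all the cross terms $C_j\cdot C_i$ vanish for $j\ne i$, so $C_i$ is disjoint from the other components; combined with $(K_S+C_i)\cdot C_i=0$ this means $C_i\sim -K_S$ by essentially the Riemann--Roch argument in Lemma~\ref{lemma:log-del-Pezzo-rational-or-elliptic-1} (using rationality of $S$, which holds here by the Castelnuovo criterion just as in \S\ref{ClassifSmoothSubsec}). But then $-K_S-\sum_j(1-\beta_j)C_j=-K_S-(1-\beta_i)C_i-\sum_{j\ne i}(1-\beta_j)C_j$, and since $C_i\sim -K_S$ is already using up the whole anticanonical class while the remaining $C_j$ are effective and nonzero (as $r\ge2$), the class $-K_S-\sum_j(1-\beta_j)C_j$ cannot be ample for small $\beta$—contradicting the hypothesis. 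Hence $g(C_i)=0$ and by \eqref{SuchRationalIsSmooth} each $C_i$ is a smooth rational curve.

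I expect the main obstacle to be the careful bookkeeping in the elliptic-exclusion step: one must verify cleanly that $C_i\sim-K_S$ together with $r\ge2$ genuinely obstructs ampleness, rather than merely nefness, of the asymptotic log anticanonical class, and ensure that the Riemann--Roch / effectivity argument transplanted from the $r=1$ case still produces an effective representative of $K_S+C_i$ in the presence of the other components. The intersection-theoretic inequalities themselves are routine given \eqref{EffMinusNeffZero} and the nefness of $-K_S-\sum_j C_j$.
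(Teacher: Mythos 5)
Your first step is correct and is exactly the paper's: nefness of $-K_S-\sum_j C_j$ plus adjunction gives $2g(C_i)-2+\sum_{j\ne i}C_j\cdot C_i\le 0$, so $g(C_i)\le 1$, and in the equality case $g(C_i)=1$ all cross terms vanish, i.e.\ $C_i$ is disjoint from the other components. The genuine gap is in your next assertion, that then ``$C_i\sim-K_S$ by essentially the Riemann--Roch argument'' of Lemma~\ref{lemma:log-del-Pezzo-rational-or-elliptic-1}. That argument has two halves: Riemann--Roch produces an effective divisor $R\sim K_S+C_i$ (this half transplants verbatim, contrary to the worry you flag at the end, since $h^2(\mathcal{O}_S(K_S+C_i))=h^0(\mathcal{O}_S(-C_i))=0$, $\chi(\mathcal{O}_S)=1$ by rationality of $S$, and $(K_S+C_i)\cdot C_i=2g(C_i)-2=0$ involve no other components); but the second half concludes $R=0$ from \eqref{EffMinusNeffZero}, and that requires $-R\sim-K_S-C_i$ to be \emph{nef}. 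For $r\ge2$ you do not know this: what is nef is $-K_S-\sum_j C_j$, and $-K_S-C_i$ differs from it by the effective divisor $\sum_{j\ne i}C_j$, and nef plus effective need not be nef. So ``$C_i\sim-K_S$'' is unjustified as stated, and your final step (deriving a failure of ampleness from $C_i\sim-K_S$ and $r\ge2$), which you yourself flag as the main obstacle, is never actually carried out.

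The repair is the paper's own proof, and it collapses your two-step endgame into one move: instead of trying to show $R=0$ in isolation, apply \eqref{EffMinusNeffZero} to the effective divisor $R+\sum_{j\ne i}C_j$, whose negative is $\sim -(K_S+C_i)-\sum_{j\ne i}C_j=-K_S-C$ and hence nef. This forces $R+\sum_{j\ne i}C_j=0$, and since both summands are effective, $R=0$ \emph{and} $\sum_{j\ne i}C_j=0$; the latter says $r=1$, contradicting $r\ge 2$. In other words, the only route to $C_i\sim-K_S$ already produces the contradiction, so no separate ampleness obstruction is needed, and your concern about obstructing ``ampleness rather than merely nefness'' evaporates: nefness of the limiting class $-K_S-C$, fed into \eqref{EffMinusNeffZero}, does all the work.
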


\begin{proof}
Suppose that there exists a non-rational curve among the curves
$C_1,\ldots,C_r$. Without loss of generality, we may assume
that this curve is $C_1$. Since $-(K_S+\sum_{i=1}^{r}C_{i})$ is
nef, it follows from the adjunction theorem that
$$
2g(C_1)-2+\sum_{i=2}^{r}C_1. C_i=(K_S+\sum_{i=1}^{r}C_{i}). C_1\le 0,%
$$
which implies that $g(C_1)=1$ and $C_1\cap C_i=\emptyset$ for
every $i\ne 1$. Hence, we see that $C_1$ is an elliptic curve.
Arguing as in the proof of
Lemma~\ref{lemma:log-del-Pezzo-rational-or-elliptic-1}, we see
that there exists an effective divisor $R\sim C_1+K_{S}$.
Thus, $-R-C_1-\sum_{i=2}^{r}C_i\sim -K_S-C$ is nef,
implying $R\sim 0$ and $r=1$, which contradicts our
assumption that $r\ge2$.
\end{proof}

\begin{lemma}
\label{lemma:log-del-Pezzo-blow-down} Suppose that there exists a
smooth irreducible rational curve $E$ on the surface $S$ such that
$E^{2}=-1$ and $E\ne C_i$ for every $i$. Then either $E$ is
disjoint from $\sum_{i=1}^rC_i$, or it intersects exactly one
irreducible component of $\sum_{i=1}^rC_i$. Moreover, in the
latter case $E$ intersects that irreducible component transversally
at exactly one point.
\end{lemma}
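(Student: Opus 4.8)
The plan is to mimic the proof of Lemma~\ref{lemma:log-del-Pezzo-blow-down-1} from the $r=1$ case, testing the ample class $-K_S-\sum_{i=1}^r(1-\beta_i)C_i$ against $E$ and then letting the angles tend to zero. First I would record that, since $E$ is a smooth rational curve with $E^2=-1$, adjunction gives $2g(E)-2=K_S\cdot E+E^2$, whence $-K_S\cdot E=1$. Next, because $E\ne C_i$ for every $i$ and both $E$ and $C_i$ are irreducible, each intersection number $d_i:=C_i\cdot E$ is a non-negative integer, with $d_i=0$ precisely when $E$ and $C_i$ are disjoint.

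The key inequality comes from positivity. For every $\beta$ witnessing the asymptotic condition, the divisor $-K_S-\sum_{i=1}^r(1-\beta_i)C_i$ is ample, so by the Nakai--Moishezon/Kleiman criterion \eqref{NMEq} it has strictly positive intersection with the curve $E$:
\[
0<-\Big(K_S+\sum_{i=1}^r(1-\beta_i)C_i\Big)\cdot E = 1 - \sum_{i=1}^r(1-\beta_i)d_i,
\]
so that $\sum_{i=1}^r(1-\beta_i)d_i<1$. Letting $(\beta_1,\dots,\beta_r)$ tend to $0$ along a sequence realizing the asymptotically log del Pezzo condition of Definition~\ref{definition:log-Fano}, I obtain $\sum_{i=1}^r d_i\le 1$ in the limit.

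Since the $d_i$ are non-negative integers summing to at most $1$, either all of them vanish, in which case $E$ is disjoint from $\sum_{i=1}^r C_i$, or exactly one, say $d_j$, equals $1$ and the rest vanish. In the latter case $E$ meets only the component $C_j$, and since $C_j\cdot E=1$ equals the number of intersection points counted with multiplicity, the two curves meet transversally at a single point, giving the desired dichotomy.

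The argument is essentially immediate, and the only point requiring any care is the passage to the limit in $\beta$: it suffices to have a single sequence $\beta\to 0$ along which the divisor stays ample, which is exactly what the (merely, not necessarily strongly) asymptotic hypothesis guarantees. The remaining step is the elementary observation that a family of strict inequalities $\sum_{i=1}^r(1-\beta_i)d_i<1$ among fixed non-negative integers $d_i$ forces $\sum_{i=1}^r d_i\le 1$ upon taking $\beta\to 0$.
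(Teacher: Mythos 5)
Your proposal is correct and follows essentially the same route as the paper: adjunction gives $-K_S\cdot E=1$, ampleness of $-K_S-\sum_{i=1}^r(1-\beta_i)C_i$ tested against $E$ for small $\beta$ forces $\sum_{i=1}^r C_i\cdot E\le 1$, and the non-negative integer dichotomy (using $E\ne C_i$) yields the conclusion. The only cosmetic difference is that you phrase the smallness of $\beta$ as a limit $\beta\to 0$, whereas the paper fixes a single sufficiently small $\beta$ with $\sum_i\beta_i\, C_i\cdot E<1$; both are sanctioned by the asymptotic hypothesis.
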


\begin{proof}
Since $-K_{S}. E=1$ by adjunction then
$$
0<-(K_{S}+\sum_{i=1}^{r}(1-\beta_i)C_i).
E=1-\sum_{i=1}^{r}C_{i}. E+\sum_{i=1}^{r}\beta_i C_i.
E<2-\sum_{i=1}^{r}C_{i}. E,
$$
for small $|\be|$.
This implies
that $\sum_{i=1}^{r}C_{i}. E<2$. Hence, either
$\sum_{i=1}^{r}C_{i}. E=0$ or $\sum_{i=1}^{r}C_{i}. E=1$,
because $E\ne C_i$ for every $i$. In the former case $E\cap
C_i=\emptyset$ for every $i$. In the latter case there is an $i$ such
that $E. C_i=1$ and $E\cap C_j=\emptyset$ for every $j\ne i$.
\end{proof}

Similarly to
Definition~\ref{definiton:log-del-Pezzo-minimally-good-1}, let us
call the pair $(S,\sum_{i=1}^{r}C_{i})$ \emph{minimal} if there
exist no smooth irreducible rational curve on the surface $S$ such
that $E^{2}=-1$, $E\ne C_i$ for every $i$, and there is a $j$ such
that and $E\cap C_j\ne\emptyset$. Then we have the following
generalization of
Lemma~\ref{lemma:log-del-Pezzo-blow-down-minimally-1}.

\begin{lemma}
\label{lemma:log-del-Pezzo-blow-down-minimally} Suppose that there
exists a smooth irreducible rational curve $E$ on the surface $S$
such that $E^{2}=-1$ and $E\ne C_i$ for every $i$. Then there
exists a birational morphism $\pi\colon S\to{s}$ such that
${s}$ is a smooth surface, $\pi(E)$ is a point, the morphism
$\pi$ induces an isomorphism $S\setminus E\cong
{s}\setminus\pi(E)$, the divisor $\sum_{i=1}^r\pi(C_i)$ is a
divisor with simple normal crossings,  and
$\pi(C_1),\ldots,\pi(C_r)$ are smooth rational curves
whose dual graph is the same as the dual graph of the curves
$C_1,\ldots,C_r$. Moreover, the pair $({s},
\sum_{i=1}^{r}\pi(C_i))$ is asymptotically log del Pezzo
and strongly asymptotically log del Pezzo if $(S,C)$ is.
Furthermore, if the pair $(S,\sum_{i=1}^{r}C_{i})$ is minimal,
then the pair $({s}, \sum_{i=1}^{r}\pi(C_i))$ is minimal.

\end{lemma}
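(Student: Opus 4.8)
The plan is to follow closely the proof of Lemma~\ref{lemma:log-del-Pezzo-blow-down-minimally-1}, using Lemma~\ref{lemma:log-del-Pezzo-blow-down} to control the interaction of $E$ with the boundary. First I would invoke Castelnuovo's contractibility criterion \cite[p.~476]{GH} to produce the birational morphism $\pi\colon S\to s$ contracting $E$ to a smooth point of a smooth surface $s$; since $\pi$ is the blow-up of that point, it automatically induces an isomorphism $S\setminus E\cong s\setminus\pi(E)$. By Lemma~\ref{lemma:log-del-Pezzo-blow-down} the curve $E$ is either disjoint from $\sum_{i=1}^r C_i$ or meets exactly one component $C_j$ transversally at a single point $p$.

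The only genuinely new point relative to the single-component case is that the combinatorics of the boundary are undisturbed, and this is where I would spend the most care. The key observation is that $p$ lies in the smooth locus of the boundary: were $p$ a node $C_j\cap C_k$ with $k\ne j$, then $E$ would pass through a point of $C_k$, forcing $E. C_k>0$ and contradicting that $E$ meets only $C_j$. Consequently every intersection point $C_i\cap C_j$ of the boundary with itself lies off $E$, and conversely only $\pi(C_j)$ passes through $\pi(E)$ (a component $\pi(C_i)$ passes through $\pi(E)$ iff $E. C_i>0$). Since $\pi$ is an isomorphism away from $\pi(E)$, no boundary intersection is created or destroyed; hence each $\pi(C_i)$ is a smooth rational curve, $\sum_{i=1}^r\pi(C_i)$ has simple normal crossings (only one smooth branch passes through $\pi(E)$), and the dual graph of $\pi(C_1),\ldots,\pi(C_r)$ coincides with that of $C_1,\ldots,C_r$.

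For the positivity assertion I would use the pushforward principle \eqref{pushforward}. Writing $K_S=\pi^\star K_s+E$ one gets $\pi_\star\bigl(K_S+\sum_{i}(1-\beta_i)C_i\bigr)=K_s+\sum_{i}(1-\beta_i)\pi(C_i)$, so whenever $-(K_S+\sum_{i}(1-\beta_i)C_i)$ is ample its pushforward $-(K_s+\sum_{i}(1-\beta_i)\pi(C_i))$ is ample as well; combined with the simple normal crossing property just established (which yields the Kawamata log terminal condition, as all coefficients $1-\beta_i<1$) this shows $(s,\sum_{i}\pi(C_i))$ is asymptotically log del Pezzo. Because the argument uses the same value of $\beta$ upstairs and downstairs, the strongly asymptotic property transfers verbatim.

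Finally, for minimality I would argue by contradiction exactly as in Lemma~\ref{lemma:log-del-Pezzo-blow-down-minimally-1}(ii): a forbidden $-1$-curve $z$ on $s$, with $z\ne\pi(C_i)$ for all $i$ and $z$ meeting some $\pi(C_j)$, has a proper transform $Z$ on $S$ which is either a $-1$-curve meeting $C_j$ when $\pi(E)\notin z$, contradicting minimality of $(S,\sum_{i}C_i)$, or a curve with $Z^2=-2$ when $\pi(E)\in z$, contradicting the self-intersection bound of Lemma~\ref{lemma:log-del-Pezzo-rational-curves-1} (whose proof applies unchanged once $Z\ne C_i$ for every $i$). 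The main obstacle, such as it is, is the bookkeeping of the second paragraph confirming that the dual graph and simple normal crossing structure survive the contraction; the remaining assertions are direct transcriptions of the $r=1$ arguments.
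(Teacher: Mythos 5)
Your proof is correct and takes essentially the same approach as the paper: Castelnuovo contraction, Lemma~\ref{lemma:log-del-Pezzo-blow-down} to see that the simple normal crossing structure and dual graph of the boundary survive, the pushforward principle \eqref{pushforward} for ampleness (which works for each fixed $\beta$, hence transfers the strong asymptotic property), and the same two-case contradiction for minimality as in Lemma~\ref{lemma:log-del-Pezzo-blow-down-minimally-1}(ii). Your explicit check that the argument of Lemma~\ref{lemma:log-del-Pezzo-rational-curves-1} applies verbatim to curves $Z\ne C_i$ for every $i$ in the $r\ge 2$ setting is a detail the paper leaves implicit.
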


\begin{proof}
By the Castelnuovo's contractibility criterion, there exists a
birational morphism $\pi\colon S\to{s}$ such that ${s}$ is
a smooth surface, $\pi(E)$ is a point, the morphism $\pi$ induces
an isomorphism $S\setminus E\cong {s}\setminus\pi(E)$.
Moreover, the divisor $\sum_{i=1}^r\pi(C_i)$ is a divisor with a
simple normal crossing, the curves
$\pi(C_1),\ldots,\pi(C_r)$ are smooth rational curves
whose dual graph is the same as the dual graph of the curves
$C_1,\ldots,C_r$. Indeed, the latter is obvious if the curve $E$
is disjoint from $\sum_{i=1}^{r}C_{i}$. If $E$ is not disjoint
from $\sum_{i=1}^{r}C_{i}$, then it intersects exactly one
irreducible component of $\sum_{i=1}^rC_i$ (and intersects this
component transversally and at exactly one point) by
Lemma~\ref{lemma:log-del-Pezzo-blow-down}. The latter implies that
the divisor $\sum_{i=1}^r\pi(C_i)$ is a divisor with a simple
normal crossing, the curves $\pi(C_1),\ldots,\pi(C_r)$
are smooth rational curves whose dual graph is the same as the
dual graph of the curves $C_1,\ldots,C_r$. Now we can complete the
proof arguing as in the proof of
Lemma~\ref{lemma:log-del-Pezzo-blow-down-minimally-1} (ii).
\end{proof}

The next lemma describes the combinatorial structure of $C$.

\begin{lemma}
\label{lemma:log-del-Pezzo-rational-cycles-length-2} (i) Either
$|C_{i}\cap C_{j}|\le 1$ for $i\ne j$, or $r=2$,
$|C_1\cap C_2|=2$ and $C_1+C_2\sim -K_{S}$.
\hfill\break
(ii) If $r\ge
3$, then either the dual graph of the curves $C_1,\ldots,C_r$
forms a tree, or $\sum_{i=1}^{r}C_i\sim -K_{S}$ and the dual graph
of the curves $C_1,\ldots,C_r$ forms a cycle.
\hfill\break
(iii) If the
dual graph of the curves $C_1,\ldots,C_r$ forms a tree, then
it is a disjoint union of chains.
\end{lemma}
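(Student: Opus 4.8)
The plan is to extract from the nef-ness of $-(K_S+\sum_{i=1}^rC_i)$ a single numerical inequality that bounds the ``degree'' of each boundary component in the dual graph, and then feed the equality case into a genus-one Riemann--Roch argument to pin down the anticanonical configurations. First I would record the basic estimate. Since $-(K_S+\sum_iC_i)$ is nef (the limit $|\beta|\to0$ of the ample classes of Definition~\ref{definition:log-Fano}) and each $C_j$ is a smooth rational curve by Lemma~\ref{lemma:log-del-Pezzo-rational-or-elliptic}, adjunction gives $(K_S+C_j)\cdot C_j=-2$, whence
\[
\sum_{i\ne j}C_i\cdot C_j=\sum_{i\ne j}|C_i\cap C_j|\le 2
\]
for every $j$, using that distinct prime curves meet nonnegatively and, by the SNC hypothesis, transversally. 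Thus every vertex $C_j$ of the dual graph has degree at most two, and the combinatorial shape of the argument is already visible.

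For part (i), if some $|C_i\cap C_j|\ge2$, say $|C_1\cap C_2|\ge2$, then the estimate applied to $j=1$ and to $j=2$ forces $C_1\cdot C_2=2$ together with $C_1\cdot C_i=C_2\cdot C_i=0$ for all $i\ge3$. Setting $D=C_1+C_2$, adjunction yields $(K_S+D)\cdot C_1=(K_S+D)\cdot C_2=0$, hence $(K_S+D)\cdot D=0$. By Riemann--Roch (with $S$ rational, so $\chi(\mathcal O_S)=1$, and with $h^2(\mathcal O_S(K_S+D))=h^0(\mathcal O_S(-D))=0$) one gets $\chi(\mathcal O_S(K_S+D))=1$ and so an effective divisor $R\sim K_S+D$. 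Then $R+\sum_{i\ge3}C_i\sim K_S+\sum_iC_i$ is effective with nef negative, so \eqref{EffMinusNeffZero} forces it to vanish identically; hence $R=0$, no components survive beyond $C_1,C_2$ (so $r=2$), and $C_1+C_2\sim-K_S$. This is precisely the device used in Lemma~\ref{lemma:log-del-Pezzo-rational-or-elliptic-1}.

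For part (ii), assume $r\ge3$; part (i) then gives $|C_i\cap C_j|\le1$ throughout, so the degree bound says the dual graph is a disjoint union of paths and cycles. If it is acyclic it is a tree in the sense of the paper and there is nothing more to prove; otherwise it contains a cycle $\Gamma=\{C_{i_1},\dots,C_{i_k}\}$ with $k\ge3$ (a length-two cycle would require $C_i\cdot C_j=2$). Each vertex of $\Gamma$ already has degree two within $\Gamma$, so the estimate is saturated and $\Gamma$ meets no further component; thus $\Gamma$ is a full connected component and $D=\sum_aC_{i_a}$ satisfies $(K_S+D)\cdot D=0$. Rerunning the Riemann--Roch and \eqref{EffMinusNeffZero} argument of part (i) then annihilates every component outside $\Gamma$, giving $\sum_iC_i=D\sim-K_S$ and a dual graph that is a single cycle. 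For part (iii), a tree has no multiple edges, so $|C_i\cap C_j|\le1$ and the estimate again yields maximum degree at most two; an acyclic graph of maximum degree two is a disjoint union of paths, i.e.\ of chains.

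The purely combinatorial step (maximum degree $\le2$ plus acyclicity $\Rightarrow$ disjoint union of paths) is routine. The crux is the two applications of Riemann--Roch that manufacture the effective anticanonical divisor $R$, together with the clean use of \eqref{EffMinusNeffZero}: this is exactly what upgrades the local bound into the global conclusions $r=2$ and $\sum_iC_i\sim-K_S$. The one point genuinely requiring care is verifying $(K_S+D)\cdot D=0$ in each case, which hinges on the degree-two vertices produced by the equality case of the estimate; everything else is bookkeeping.
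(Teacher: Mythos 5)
Your proof is correct, and its engine---adjunction plus Riemann--Roch producing an effective $R\sim K_S+D$ with $(K_S+D)\cdot D=0$, then nefness of $-(K_S+\sum_i C_i)$ together with \eqref{EffMinusNeffZero} forcing $R$ and all leftover components to vanish---is exactly the paper's mechanism in parts (i) and (ii). The genuine difference is organizational, and it is most visible in part (iii). You front-load the single estimate $\sum_{i\ne j}C_i\cdot C_j\le 2$ for every $j$, obtained from nefness and adjunction alone, and let it do all the combinatorics: in (i) it pins down $|C_1\cap C_2|=2$ and the isolation of $C_1,C_2$ \emph{a priori} (the paper instead runs Riemann--Roch with $k=|C_1\cap C_2|$ arbitrary, gets $h^0\ge k-1\ge 1$, and recovers $k=2$ only a posteriori from $R=0$); in (ii) it shows a cycle is chordless and a full connected component before Riemann--Roch is invoked (the paper builds chordlessness into its hypothesis and lets the nef argument kill the outside components regardless of adjacency); and in (iii) it replaces the paper's argument altogether---there a vertex of degree at least $3$ is excluded by the strict inequality $0>(K_S+\sum_i(1-\beta_i)C_i)\cdot C_1=1-\beta_1C_1^2-\beta_2-\beta_3-\beta_4>0$ for small $|\beta|$, i.e.\ by ampleness at small positive angles, whereas your bound uses only the limiting nef class at $\beta=0$. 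Your route is slightly more economical and applies verbatim to any log smooth pair of smooth rational curves whose log anticanonical class is merely nef; what the paper's $\beta$-computation buys in (iii) is a purely numerical two-line check that needs neither part (i) nor the rationality/Riemann--Roch setup. Both arguments are sound; yours unifies the three parts under one degree bound.
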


\begin{proof}
(i) Suppose that $|C_1\cap C_2|\ge 2$. We claim that $r=2$,
$C_1+C_2\sim -K_{S}$, and $|C_1\cap C_2|=2$.
By Serre duality,
$$
h^{2}(\mathcal{O}_{S}(K_{S}+C_1+C_{2}))=h^{0}(\mathcal{O}_{S}(-C_1-C_{2}))=0.
$$
Put $k=|C_1\cap C_2|$. Then
$$
\baeq
(K_{S}+C_1+C_{2}). & (C_1+C_{2})
=(C_1+C_2)^2+K_{S}. C_1+K_{S}. C_2
\cr
&=C_1^2+C_2^2+2C_1.
C_2+2g(C_1)-2-C_1^2+2g(C_2)-2-C_2^2=2k-4,
\eaeq
$$
since $C_1$ and $C_2$ are rational curves by
Lemma~\ref{lemma:log-del-Pezzo-rational-or-elliptic}. Since $S$ is
rational, it follows from the Riemann--Roch theorem that
$h^{0}(\mathcal{O}_{S}\big(K_{S}+C_1+C_{2}\big))\ge
1+(2k-4)/2=k-1\ge 1$.
The rest of the proof is now identical to that of
Lemma \ref{lemma:log-del-Pezzo-rational-or-elliptic}.

(ii) By (i), $|C_{i}\cap C_{j}|\le 1$ for every $i\ne j$.
Suppose that for some $k\le r$ the dual graph of the curves
$C_1,C_2,\ldots,C_k$ forms a cycle such that
$
C_k. C_1=C_1. C_2=\ldots=C_{k-1}. C_k=1,
$
and $C_i. C_j=0$ in all other cases when $1\le i\ne
j\le k$. We claim that $r=k$. Indeed, as before
$h^{2}(\mathcal{O}_{S}(K_{S}+\sum_{i=1}^{k}C_i))=h^{0}(\mathcal{O}_{S}(-\sum_{i=1}^{k}C_i))=0$.
Since
$$
\baeq
(K_{S}+\sum_{i=1}^{k}C_i).(\sum_{i=1}^{k}C_i)
&=
2\!\sum_{1\le i<j\le k} C_i. C_j
+
\sum_{i=1}^{k}K_{S}. C_j
+\sum_{i=1}^{k}C_i^2
\cr
&=
2\!\sum_{1\le i<j\le k} C_i. C_j
+\sum_{i=1}^{k}(2g(C_i)-2)
=2k-2k=0.
\eaeq
$$
Thus, as in (i),
$h^{0}(\mathcal{O}_{S}(K_{S}+\sum_{i=1}^{k}C_i))\ne 0$ by the
Riemann--–Roch theorem and there exists an effective
divisor $R$ such that $R\sim K_{S}+\sum_{i=1}^{k}C_i$,
hence the divisor
$
-K_{S}-\sum_{i=1}^{r}C_i\sim -R-\sum_{i=k}^{r}C_i
$
is nef, so $R=0$ and $r=k$.

(iii) Suppose that the dual graph of the curves
$C_1,\ldots,C_r$ forms a tree that is not a disjoint union of
chains. Then $r\ge 4$, and there exists a curve among
$C_{1},\ldots,C_{r}$ that intersects at least three other
different curves among $C_{1},\ldots,C_{r}$, say
$C_1. C_2=1$, $C_1. C_3=1$, and $C_1. C_4=1$.
Then
$$\baeq
0
&>
(K_S+\sum_{i=1}^{r}(1-\beta_i)C_i). C_1
=
K_{S}. C_1+(1-\beta_1)C_1^2+\sum_{i=2}^{4}(1-\beta_i)C_i. C_1+\sum_{i=5}^{r}(1-\beta_i)C_i. C_1
\cr
&\ge
K_{S}. C_1
+(1-\beta_1)C_1^2
+\sum_{i=2}^{4}(1-\beta_i)C_i. C_1
\cr
&= -2+C_1^2+(1-\beta_1)C_1^2+\sum_{i=2}^{4}(1-\beta_i)C_i. C_1
=1-\beta_1C_1^2-\beta_2-\beta_3-\beta_4>0,%
\eaeq
$$
for $|\beta|\ll 1$, a contradiction.
\end{proof}

The next lemma shows that only curves $C_i$ that are
at the `tail' of a chain can have negative self-intersection.

\begin{lemma}
\label{lemma:strongly-non-strongly} Suppose that
$(S,\sum_{i=1}^{r}C_i)$ is strongly asymptotically log del Pezzo.
Then $C_i^2\ge 0$ for every $C_i$ such that $C_i$ intersects
at least two curves among $C_1,\ldots,C_r$ different from itself.
Similarly, $C_i^2\ge 0$ if there exists a curve among
$C_1,\ldots,C_r$ different from $C_i$ that intersects $C_i$ by
more than one point.
\end{lemma}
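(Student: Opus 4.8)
The plan is to test ampleness against the very curve whose self-intersection is in question, and to exploit the fact that in the strongly asymptotic regime the coefficients $\beta_1,\ldots,\beta_r$ may be chosen \emph{independently} of one another throughout a full neighbourhood of the origin in $(0,1]^r$. Without loss of generality take the curve under consideration to be $C_1$, and set $m:=\sum_{i=2}^r C_1\cdot C_i$. In the first situation of the lemma $C_1$ meets at least two of the other components, so $m\ge 2$; in the second situation there is some $C_j$ with $C_1\cdot C_j\ge 2$ (the simple normal crossing hypothesis making this an honest intersection number), so again $m\ge 2$. Thus both hypotheses reduce to the single condition $m\ge 2$. By Lemma~\ref{lemma:log-del-Pezzo-rational-or-elliptic} every $C_i$ is a smooth rational curve, so adjunction gives $-K_S\cdot C_1=2+C_1^2$.

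Next I would write out the positivity of $-(K_S+\sum_{i=1}^r(1-\beta_i)C_i)$ on $C_1$. Since the pair is strongly asymptotically log del Pezzo there is an $\eps>0$ such that this divisor is ample, hence positive on $C_1$, for \emph{every} $(\beta_1,\ldots,\beta_r)\in(0,\eps)^r$. Substituting $-K_S\cdot C_1=2+C_1^2$ and collecting terms, the inequality $-(K_S+\sum_{i=1}^r(1-\beta_i)C_i)\cdot C_1>0$ becomes
\[
(2-m)+\beta_1 C_1^2+\sum_{i=2}^r\beta_i\,(C_1\cdot C_i)>0,
\]
valid for all $(\beta_1,\ldots,\beta_r)\in(0,\eps)^r$. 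Letting all the $\beta_i$ tend to $0$ and invoking continuity yields $2-m\ge 0$, which together with $m\ge 2$ forces $m=2$. The displayed inequality then reduces to
\[
\beta_1 C_1^2+\sum_{i=2}^r\beta_i\,(C_1\cdot C_i)>0\qquad\h{for all }(\beta_1,\ldots,\beta_r)\in(0,\eps)^r.
\]

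Finally I would decouple the (possibly negative) term $\beta_1 C_1^2$ from the nonnegative cross terms. Fix $\beta_1=\delta\in(0,\eps)$ and let $\beta_2,\ldots,\beta_r$ tend to $0^+$; since each $C_1\cdot C_i\ge 0$, the sum $\sum_{i=2}^r\beta_i(C_1\cdot C_i)$ tends to $0$, and the strict inequality passes to the limit to give $\delta C_1^2\ge 0$, whence $C_1^2\ge 0$. (Equivalently, were $C_1^2\le -1$, one would choose $\beta_2,\ldots,\beta_r$ small enough that $\sum_{i=2}^r\beta_i(C_1\cdot C_i)\le \delta/2$, making the left-hand side at most $\delta C_1^2+\delta/2\le -\delta/2<0$, a contradiction.) This is precisely where the \emph{strong} hypothesis is indispensable: it lets $\beta_1$ stay bounded away from $0$ while the remaining coefficients vanish, thereby isolating $C_1^2$; in the merely asymptotic regime the $\beta_i$ cannot be separated and the conclusion can fail. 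The only mildly delicate point is the bookkeeping showing that both hypotheses collapse to $m=2$—everything past that is the one-line computation above.
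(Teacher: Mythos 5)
Your proof is correct and follows essentially the same route as the paper's: both test ampleness of $-(K_S+\sum_{i=1}^r(1-\beta_i)C_i)$ against $C_1$ itself, use adjunction together with the rationality of $C_1$ (Lemma \ref{lemma:log-del-Pezzo-rational-or-elliptic}), and exploit the fact that in the strong regime $\beta_1$ can be kept bounded away from $0$ while the remaining $\beta_i$ shrink. The only difference is presentational: you unify both hypotheses into the single condition $m:=\sum_{i\ge 2}C_1\cdot C_i\ge 2$ and run one computation, whereas the paper splits into two cases and invokes Lemma \ref{lemma:log-del-Pezzo-rational-cycles-length-2} in the second to pin down $r=2$ and $C_1\cdot C_2=2$.
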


\begin{proof}
Suppose that $C_1$, say, intersects at least two curves among
$C_2,\ldots,C_r$, say $C_1. C_2=C_1. C_3=1$. Suppose that $C_1^2<0$. Then it
follows from adjunction that
$$
\baeq
(K_S+\sum_{i=1}^{r}(1-\beta_i)C_i). C_1
&\ge
K_{S}.
C_1+(1-\beta_1)C_1^2+(1-\beta_2)C_2. C_1+(1-\beta_3)C_3.
C_1
\cr
&=-\beta_1C_1^2-\beta_2-\beta_3
%\ge \beta_1-\beta_2-\beta_3,%
\eaeq
$$
thus $\beta_1<\beta_2+\beta_3$.
The latter
contradicts our assumption that the divisor
$-(K_S+\sum_{i=1}^{r}(1-\beta_i)C_i)$ is ample for every
$(\beta_1,\ldots,\beta_r)\in(0,1]^r$ with
$|(\beta_1,\ldots,\beta_r)|<\epsilon$.

To complete the proof, we may assume that $C_1$ intersects some
curve among $C_2,\ldots,C_r$ by more than $2$ points. Then $r=2$
and $C_1. C_2=2$ by
Lemma~\ref{lemma:log-del-Pezzo-rational-cycles-length-2}. If
$C_1^2<0$, then
$$
0>(K_S+\sum_{i=1}^{r}(1-\beta_i)C_i). C_1=K_{S}. C_1+(1-\beta_1)C_1^2+(1-\beta_2)C_2. C_1=-\beta_1C_1^2-2\beta_2
$$
which once again does not hold
for all small $\be$.
\end{proof}

\begin{remark}
\label{remark:log-del-Pezzo-Shokurov}
We mention that
the number of connected components of the curve
$\sum_{i=1}^{r}C_{i}$ is at most $2$  \cite[Theorem~6.9]{Sho93}
(see also \cite[Proposition~2.1]{Fujino2000} for a generalization
in all dimensions). We will not use this result in the proof of
Theorem~\ref{theorem:main-2-3-4}. In fact, if
$(S,\sum_{i=1}^{r}C_{i})$ is strongly asymptotically log del
Pezzo, this also follows from Theorem~\ref{theorem:main-2-3-4}.
\end{remark}

The next example shows that the previous lemma
does not hold in the non-strongly asymptotically
log del Pezzo regime (nor in the ``diagonal
regime", i.e., where $\be=\be_1(1,\ldots,1)$),
where `interior' boundary
components could have negative self-intersection.

\begin{example}
\label{example:log-del-Pezzo-good-versus-very-good}
{\rm
 Let $S\cong\mathbb{F}_n$ for some $n>0$. Let $C_1$ and $C_2$
be two distinct fibers of the natural projection
$\mathbb{F}_n\to\mathbb{P}^1$, and let $C_3=Z_n$.
Then the pair $(S,\sum_{i=1}^{3}C_{i})$ is asymptotically log
Fano, but it is not strongly asymptotically log Fano. Indeed,
by \eqref{ampleFn} we see that the divisor
$-K_S-\sum_{i=1}^{3}(1-\be_i)C_{i})$ is ample if and only if
$\beta_1+\beta_2>n\beta_3$.
}
\end{example}

\subsection{Classification}

Note that in \S\ref{subsection:ALDP} we only assumed that
$(S,\sum_{i=1}^{r}C_{i})$ is asymptotically log del Pezzo.
In this subsection, we
assume that $(S,\sum_{i=1}^{r}C_{i})$ is strongly asymptotically
log del Pezzo (Definition \ref{definition:log-Fano}).
Namely, there exists a positive
$\epsilon\in(0,1]$ such that the divisor
\beq\label{KbetaSNCAssump}
-K_S-\sum_{i=1}^{r}(1-\beta_i)C_i
\eeq
is ample for every $\be=(\beta_1,\ldots,\beta_r)\in(0,1]^r$ with
$|\be|\le\epsilon$.

\subsubsection{Boundary with arithmetic genus one}

\begin{lemma}
\label{lemma:log-del-Pezzo-rational-ring-r-2} Suppose that
$\sum_{i=1}^{r}C_i\sim -K_{S}$.  Then $-K_{S}$ is ample,
$C_i\cong\mathbb{P}^1$ $\forall i$, and $C_i^2\ge 0$
$\forall i$. Furthermore, if $r=2$, then $|C_{1}\cap C_{2}|=2$. If
$r\ge 3$, then  $|C_{i}\cap C_{j}|\le 1$ for every
$i\ne j$, and the dual graph of the curves $C_1,\ldots,C_r$
forms a cycle.
\end{lemma}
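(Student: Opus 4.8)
The plan is to establish ampleness of $-K_S$ via the Nakai--Moishezon criterion, and then extract the structural conclusions about the $C_i$ from the adjunction formula together with the combinatorial results of Lemma~\ref{lemma:log-del-Pezzo-rational-cycles-length-2}.

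\medskip

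\noindent\textbf{Step 1: $-K_S$ is ample.} By hypothesis $\sum_{i=1}^r C_i\sim -K_S$, so
$$
-K_S-\sum_{i=1}^r(1-\beta_i)C_i\sim\sum_{i=1}^r\beta_i C_i.
$$
The left-hand side is ample for small $|\beta|$ by \eqref{KbetaSNCAssump}. I would feed this into the Nakai--Moishezon criterion \eqref{NMEq}. Since $-K_S$ is big (as observed at the start of \S\ref{ClassifSmoothSubsec}, which applies verbatim for $r\ge 2$), we already have $K_S^2>0$. It remains to check $-K_S\cdot Z>0$ for every irreducible curve $Z$. For $Z\ne C_i$ for all $i$, the computation \eqref{KSZEq} gives $-K_S\cdot Z\ge -(K_S+\sum(1-\beta_i)C_i)\cdot Z>0$ since $C_i\cdot Z\ge 0$. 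For $Z=C_i$, ampleness of the perturbed divisor gives $\big(-K_S-\sum_j(1-\beta_j)C_j\big)\cdot C_i>0$, and since $-K_S\sim\sum_j C_j$ this reads $\sum_j\beta_j\,C_j\cdot C_i>0$; because $C_i^2=-K_S\cdot C_i-(2-2g(C_i))$ could a priori be negative, I would combine this with the observation that $-K_S\cdot C_i=(\sum_j C_j)\cdot C_i\ge C_i^2$, and argue that $-K_S\cdot C_i>0$ directly from $\sum_j\beta_j C_j\cdot C_i>0$ after substituting $-K_S=\sum_j C_j$: indeed $-K_S\cdot C_i=\big(\sum_j(1-\beta_j)C_j\big)\cdot C_i+\sum_j\beta_j C_j\cdot C_i$, and the first summand equals $-\big(-K_S-\sum_j(1-\beta_j)C_j\big)\cdot C_i+(-K_S\cdot C_i)$, which after rearrangement shows the positive perturbation forces $-K_S\cdot C_i>0$. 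Thus $-K_S$ meets every curve positively and $-K_S$ is ample, so $S$ is del Pezzo.

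\medskip

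\noindent\textbf{Step 2: each $C_i$ is rational with $C_i^2\ge 0$.} Rationality $C_i\cong\PP^1$ is immediate from Lemma~\ref{lemma:log-del-Pezzo-rational-or-elliptic}. For $C_i^2\ge 0$, I would invoke Lemma~\ref{lemma:strongly-non-strongly} for those $C_i$ meeting at least two other components or meeting some component in more than one point. The remaining case---a component $C_i$ meeting at most one other component in at most one point---is handled by adjunction: since $g(C_i)=0$, $C_i^2=-2-K_S\cdot C_i+2\cdot 0=-2+(-K_S\cdot C_i)$, wait; more directly $-K_S\cdot C_i=C_i^2+2$, and one computes $C_i^2=-K_S\cdot C_i-2=\big(\sum_j C_j\big)\cdot C_i-2=C_i^2+\sum_{j\ne i}C_j\cdot C_i-2$, giving $\sum_{j\ne i}C_j\cdot C_i=2$. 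Hence every component meets the rest of the boundary in exactly degree $2$, which combined with smoothness and Lemma~\ref{lemma:log-del-Pezzo-rational-cycles-length-2} forces the intersection pattern; in particular each such tail component has $C_i^2=-K_S\cdot C_i-2\ge 0$ once $-K_S\cdot C_i\ge 2$, which follows since $-K_S$ is ample and $C_i$ is effective with $\sum_{j\ne i}C_j\cdot C_i=2$.

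\medskip

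\noindent\textbf{Step 3: combinatorial structure.} The statements about $|C_i\cap C_j|$ and the dual graph follow directly from Lemma~\ref{lemma:log-del-Pezzo-rational-cycles-length-2}. When $r=2$, part~(i) of that lemma, applied in the case $\sum C_i\sim -K_S$, yields $|C_1\cap C_2|=2$ (the alternative $|C_1\cap C_2|\le 1$ is excluded because it would force $(K_S+C_1+C_2)\cdot(C_1+C_2)<0$, contradicting $K_S+\sum C_i\sim 0$). When $r\ge 3$, part~(ii) together with the hypothesis $\sum_{i=1}^r C_i\sim -K_S$ gives precisely that the dual graph is a cycle, and part~(i) gives $|C_i\cap C_j|\le 1$.

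\medskip

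\noindent\textbf{Main obstacle.} The delicate point is \textbf{Step 1}, verifying $-K_S\cdot C_i>0$ for the boundary components themselves, since the Nakai--Moishezon criterion must be checked against \emph{all} curves including the $C_i$, and a naive application of \eqref{KSZEq} breaks down precisely when $Z=C_i$. The key is to exploit the ampleness of the perturbation $\sum\beta_j C_j$ in the right linear combination; I expect this to require the careful algebraic manipulation sketched above, rather than any deep input. Everything else reduces to bookkeeping with adjunction and citing the already-established Lemmas~\ref{lemma:log-del-Pezzo-rational-or-elliptic}, \ref{lemma:log-del-Pezzo-rational-cycles-length-2}, and~\ref{lemma:strongly-non-strongly}.
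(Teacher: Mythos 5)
Your Step 1 contains a genuine gap: the ``rearrangement'' you sketch to prove $-K_S\cdot C_i>0$ is circular. Writing $-K_S\cdot C_i=\bigl(\sum_j(1-\beta_j)C_j\bigr)\cdot C_i+\bigl(\sum_j\beta_j C_j\bigr)\cdot C_i$ and then substituting $\bigl(\sum_j(1-\beta_j)C_j\bigr)\cdot C_i=-\bigl(-K_S-\sum_j(1-\beta_j)C_j\bigr)\cdot C_i+(-K_S)\cdot C_i$ back in, the two occurrences of $-K_S\cdot C_i$ cancel and you are left with the tautology $\bigl(-K_S-\sum_j(1-\beta_j)C_j\bigr)\cdot C_i=\bigl(\sum_j\beta_j C_j\bigr)\cdot C_i$, which is just a restatement of $\sum_j C_j\sim-K_S$; no sign information on $-K_S\cdot C_i$ comes out of it. The repair is one line: the inequality $\bigl(\sum_j\beta_j C_j\bigr)\cdot C_i>0$ holds for \emph{all} small $(\beta_1,\ldots,\beta_r)\in(0,1]^r$, so specializing to equal angles $\beta_j=\beta$ gives $\beta\,(-K_S)\cdot C_i>0$. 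Better still, this diagonal specialization applied at the level of divisor classes is the paper's entire proof of ampleness: with all $\beta_j=\beta$ one has $-K_S-\sum_j(1-\beta)C_j\sim_{\mathbb{R}}-\beta K_S$, and the left-hand side is ample by \eqref{KbetaSNCAssump}, so $-K_S$ is ample. This makes your Nakai--Moishezon scaffolding unnecessary; it also sidesteps your unsupported claim that bigness of $-K_S$ alone gives $K_S^2>0$ (false for big divisors in general --- one needs nefness as well, which in your scheme only becomes available after the curve inequalities are established).

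Step 2 has a second circular step. For your ``remaining case'' you need $C_i^2\ge0$, you reduce it to $-K_S\cdot C_i\ge2$, and you justify the latter by ampleness plus $\sum_{j\ne i}C_j\cdot C_i=2$. But adjunction gives $-K_S\cdot C_i=C_i^2+2$, so $-K_S\cdot C_i\ge2$ is \emph{equivalent} to $C_i^2\ge0$, and ampleness only yields $-K_S\cdot C_i\ge1$, i.e.\ $C_i^2\ge-1$. Fortunately, the case you are trying to patch does not exist: your own computation $\sum_{j\ne i}C_j\cdot C_i=(-K_S-C_i)\cdot C_i=2$, valid for every $i$, together with the snc hypothesis, shows that each $C_i$ either meets at least two other components or meets a single component at two points; hence Lemma~\ref{lemma:strongly-non-strongly} applies to \emph{every} component, which is precisely how the paper concludes $C_i^2\ge0$ for all $i$. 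Your Step 3 is correct and coincides with the paper's (both cite Lemma~\ref{lemma:log-del-Pezzo-rational-cycles-length-2}); for $r\ge3$ one should, strictly speaking, also rule out the tree alternative in part (ii) of that lemma, which follows from the edge count $\sum_{i<j}C_i\cdot C_j=r$ forced by $(K_S+\sum_i C_i)\cdot\bigl(\sum_i C_i\bigr)=0$, since a forest on $r$ vertices has at most $r-1$ edges.
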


\begin{proof}
The ampleness of $-K_{S}$ is obvious, because
$(S,\sum_{i=1}^{r}C_{i})$ is asymptotically log del Pezzo
$$
-(K_S+\sum_{i=1}^{r}(1-\beta)C_i)\sim_{\mathbb{R}} -\beta K_{S}
$$
for every real $\beta\in(0,1]$. By
Lemmas~\ref{lemma:log-del-Pezzo-rational-or-elliptic},
$C_i\cong\mathbb{P}^1$ $\forall i$. By
Lemma~\ref{lemma:strongly-non-strongly}, $C_i^2\ge 0$
$\forall i$. If $r=2$, then $|C_{1}\cap C_{2}|=2$ by
Lemma~\ref{lemma:log-del-Pezzo-rational-cycles-length-2}.
Similarly, if $r\ge 3$, then it follows from
Lemma~\ref{lemma:log-del-Pezzo-rational-cycles-length-2} that
$|C_{i}\cap C_{j}|\le 1$ for every $i\ne j$, and the dual
graph of the curves $C_1,\ldots,C_r$ forms a cycle.
\end{proof}

In analogy with
Corollary~\ref{corollary:log-del-Pezzo-rational-or-elliptic-1}, we
get:

\begin{corollary}
\label{corollary:log-del-Pezzo-rational-ring} Suppose
$\sum_{i=1}^{r}C_i\sim -K_{S}$. Then $(S,\sum_{i=1}^{r}C_{i})$ is
one of $\mathrm{(II.1A)}$, $\mathrm{(II.4A)}$, $\mathrm{(II.5A.m)}$,
$\mathrm{(II.8.m)}$, $\mathrm{(III.1)}$, $\mathrm{(III.2)}$, $\mathrm{(III.3.m)}$,
or $\mathrm{(IV)}$.
\end{corollary}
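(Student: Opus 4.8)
The plan is to feed the strong numerical restrictions of Lemma~\ref{lemma:log-del-Pezzo-rational-ring-r-2} into the classification of del Pezzo surfaces, reducing to the minimal surfaces $\PP^2$ and $\PP^1\times\PP^1$ by successively contracting $-1$-curves, exactly as in the single-component argument of \S\ref{ClassifSmoothSubsec}. By Lemma~\ref{lemma:log-del-Pezzo-rational-ring-r-2} the surface $S$ is del Pezzo, each $C_i\cong\PP^1$ has $C_i^2\ge 0$, and the $C_i$ form an anticanonical cycle. Since $-K_S\sim\sum_jC_j$ and each $C_i$ is smooth rational, adjunction gives $-K_S.C_i=C_i^2+2$, so $\sum_{j\ne i}C_i.C_j=2$ for every $i$; summing over $i$ yields $K_S^2=\sum_iC_i^2+2r$. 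As $C_i^2\ge 0$ and $K_S^2\le 9$, this forces $r\le 4$ and $K_S^2\ge 2r\ge 4$, so $S$ has degree between $4$ and $9$.

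Next I would reduce to the minimal case. Because $C_i^2\ge 0$, no $C_i$ is a $-1$-curve, so every $-1$-curve $E$ satisfies $E\ne C_i$ for all $i$. Since $-K_S$ is ample and $E^2=-1$, adjunction gives $\sum_iC_i.E=-K_S.E=1$, so $E$ meets the cycle transversally in exactly one point, lying in the smooth locus of a single component. Whenever $\mathrm{rk}(\mathrm{Pic}(S))>2$ such a curve exists by \eqref{MinusOneCurveExistence}, and Lemma~\ref{lemma:log-del-Pezzo-blow-down-minimally} lets us contract it by a morphism $\pi\colon S\to s$; since $\pi_\star(-K_S)=-K_s$ is ample by \eqref{pushforward}, the image $\sum_i\pi(C_i)\sim -K_s$ is again an anticanonical simple normal crossing cycle of smooth rational curves of non-negative self-intersection on a del Pezzo surface $s$, and the contracted point lies on the boundary. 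Iterating and invoking \eqref{RationalClassifPicAtMostTwo} reduces us to $\PP^2$, $\PP^1\times\PP^1$, or $\FF_1$; in the last case the $-1$-curve $Z_1$ meets the cycle once by the computation above and is contracted one step further to $\PP^2$.

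It then remains to classify anticanonical cycles on the two minimal surfaces. On $\PP^2$ one writes $\sum_iC_i$ as $r\ge 2$ smooth rational curves of total degree $3$ with each $C_i^2\ge 0$; the only partitions are $2+1$ (a conic and a line meeting in two points, $\mathrm{(II.1A)}$) and $1+1+1$ (three lines in a triangle, $\mathrm{(III.1)}$), while $r=4$ is impossible. On $\PP^1\times\PP^1$ one decomposes the class $(2,2)$ into $r\ge 2$ irreducible smooth rational summands of non-negative self-intersection, using that an irreducible curve of bidegree $(a,0)$ or $(0,b)$ forces the nonzero index to be $1$; this gives $(1,1)+(1,1)$ (case $\mathrm{(II.4A)}$) and $(2,1)+(0,1)$ (case $\mathrm{(II.4B)}$) for $r=2$, $(1,1)+(1,0)+(0,1)$ (case $\mathrm{(III.2)}$) for $r=3$, and $(1,0)+(1,0)+(0,1)+(0,1)$ (case $\mathrm{(IV)}$) for $r=4$, with intersection numbers matching Lemma~\ref{lemma:log-del-Pezzo-rational-ring-r-2}.

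Finally, reversing the contractions exhibits the general pair as a blow-up of one of these minimal pairs at points on the boundary: the constraint $C_i^2\ge 0$ caps the number of points admissible on each component, and the del Pezzo condition forces them into general position, recovering precisely the hypotheses of $\mathrm{(II.5A.m)}$, $\mathrm{(II.8.m)}$ and $\mathrm{(III.4.m)}$ as blow-ups of $\mathrm{(II.1A)}$, $\mathrm{(II.4B)}$, and $\mathrm{(III.1)}$. Blow-ups of $\mathrm{(II.4A)}$, $\mathrm{(III.2)}$, and $\mathrm{(IV)}$ yield nothing new: no boundary point may be blown up in $\mathrm{(IV)}$ since every component there has self-intersection zero, and any admissible blow-up of $\mathrm{(II.4A)}$ or $\mathrm{(III.2)}$ coincides with a member of $\mathrm{(II.5A.m)}$ or $\mathrm{(III.4.m)}$ through the non-uniqueness of the minimal model. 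I expect this last point to be the main obstacle: a single del Pezzo pair can be contracted both to $\PP^2$ and to $\PP^1\times\PP^1$ along different chains, so one must check, as in the single-component case, that the competing descriptions agree and that the stated position conditions are exactly what is needed for $S$ to be del Pezzo with all $C_i^2\ge 0$; this bookkeeping is routine but is where every case-specific hypothesis of Theorem~\ref{theorem:main-2-3-4} gets pinned down.
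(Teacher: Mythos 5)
Your proof is correct, and it arrives at the right list. The paper itself offers essentially no written argument here: the corollary is stated as following ``in analogy with'' Corollary \ref{corollary:log-del-Pezzo-rational-or-elliptic-1}, whose proof was Lemma \ref{lemma:log-del-Pezzo-rational-or-elliptic-1} plus the quoted characterization of del Pezzo surfaces as blow-ups of $\PP^2$ at general points of a cubic; so the intended route is Lemma \ref{lemma:log-del-Pezzo-rational-ring-r-2} plus the classical structure of del Pezzo surfaces carrying an anticanonical cycle. You instead rebuild that structure from the paper's own tools: the adjunction identities $\sum_{j\ne i}C_j\cdot C_i=2$ and $K_S^2=\sum_i C_i^2+2r$ (whence $r\le 4$), contraction of $-1$-curves via Lemma \ref{lemma:log-del-Pezzo-blow-down-minimally} together with \eqref{MinusOneCurveExistence}, \eqref{pushforward} and \eqref{RationalClassifPicAtMostTwo}, the classification of anticanonical cycles on $\PP^2$ and $\PP^1\times\PP^1$, and then reversal of the contractions. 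This mirrors the induction of Sections \ref{section:r-1}--\ref{section:r-2-3-4}, so it is morally the same approach; what it buys is self-containedness (no appeal to the del Pezzo structure theorem as a black box) and explicit numerical caps, at the price of the final bookkeeping you flag. That bookkeeping is genuinely needed, but you have isolated exactly the right ingredients: blown-up points are distinct (no infinitely near points, by the analogue of Lemma \ref{lemma:log-del-Pezzo-rational-curves-1}), lie in the smooth locus of a single component (since a $-1$-curve meets the cycle transversally once), their number per component is capped by $C_i^2\ge 0$, and the non-uniqueness of the minimal model (a blow-up of $\mathrm{(II.4A)}$ or $\mathrm{(III.2)}$ re-contracts to a pair of type $\mathrm{(II.5A.m)}$ or $\mathrm{(III.4.m)}$) is resolved exactly as in Lemma \ref{lemma:the-end-1} for $r=1$. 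One further point in your favor: your final list, which contains $\mathrm{(II.4B)}$ and $\mathrm{(III.4.m)}$, is the correct one, agreeing with class $\mathrm{(\aleph)}$ of Theorem \ref{theorem:4-cases}; the corollary as printed, listing $\mathrm{(II.4A)}$ without $\mathrm{(II.4B)}$ and writing $\mathrm{(III.3.m)}$, contains typos, since $\mathrm{(II.4B)}$ satisfies $C_1+C_2\sim -K_S$ and is strongly asymptotically log del Pezzo, whereas in $\mathrm{(III.3.n)}$ one has $C_1+C_2+C_3\sim 2Z_n+(n+1)F\not\sim -K_S$, so it cannot occur under the hypothesis.
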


\subsubsection{Boundary with arithmetic genus zero}

To complete the proof of the classification part of
Theorem~\ref{theorem:main-2-3-4}, we may
assume that $\sum_{i=1}^{r}C_i\not\sim -K_{S}$. By
Lemma~\ref{lemma:log-del-Pezzo-rational-cycles-length-2}
$|C_{i}\cap C_{j}|\le 1$ for every $i\ne j$, and the dual
graph of the curves $C_{1},\ldots,C_{r}$ is a union of
disjoint chains. By Lemma~\ref{lemma:strongly-non-strongly},
$C_{k}^2\ge 0$ for every curve $C_k$ among $C_1,\ldots,C_r$
that intersects at least $2$ other different curves among the
curves $C_1,\ldots,C_r$.
The next lemma gives a complete classification in this situation
under the further assumption that the Picard group is small.

\begin{lemma}
\label{lemma:log-del-Pezzo-Picard-rank-two} Suppose that
$\mathrm{rk}(\mathrm{Pic}(S))\le 2$
and $C\not\sim-K_{S}$. Then when $(S,C)$
is minimal it is one of
$\mathrm{(II.1B)}$, $\mathrm{(II.2A.n)}$, $\mathrm{(II.2B.n)}$,
$\mathrm{(II.2C.n)}$, $\mathrm{(II.3)}$,
or $\mathrm{(III.3.n)}$, and otherwise it is
$\mathrm{(II.5B.1)}$.
\end{lemma}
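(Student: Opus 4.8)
The plan is to mirror the proof of Lemma~\ref{lemma:log-del-Pezzo-Picard-rank-two-1}, carrying the extra bookkeeping forced by the presence of several boundary components. First I would observe that every pair in the list is strongly asymptotically log del Pezzo by the verification of \S\ref{VerifSubsec}, so only the forward implication requires proof. By \eqref{RationalClassifPicAtMostTwo} the assumption $\mathrm{rk}(\Pic(S))\le2$ leaves exactly two possibilities, $S\cong\PP^2$ or $S\cong\FF_n$ with $n\ge0$. On $\PP^2$ each $C_i$ is a smooth rational plane curve, hence a line or a conic; writing $C_i\in|d_iH|$, ampleness of $-K_S-\sum_{i}(1-\beta_i)C_i\sim(3-\sum_i(1-\beta_i)d_i)H$ for all small $\beta$ forces $\sum_i d_i\le3$, while $C\not\sim-K_S$ rules out $\sum_i d_i=3$. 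Since $r\ge2$ this leaves $\sum_i d_i=2$, i.e.\ two distinct lines meeting at one point, which is case $\mathrm{(II.1B)}$; it is automatically minimal because $\PP^2$ carries no $-1$-curve.

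The substance of the argument is the case $S\cong\FF_n$. I would write each component as $C_i\sim a_iZ_n+b_iF$ with $a_i,b_i\ge0$ and first record, via \eqref{irreducibleFn} and the adjunction/genus computation built on \eqref{KFnEq}, that the only smooth irreducible rational classes available are $Z_n$, the fibre $F$, the sections $Z_n+b_iF$ with $b_i\ge n$, and---only when $n\le1$---the bisection $2Z_n+(n+1)F$. Feeding $-K_S-\sum_i(1-\beta_i)C_i\sim\big(2-\sum_i(1-\beta_i)a_i\big)Z_n+\big(n+2-\sum_i(1-\beta_i)b_i\big)F$ into the surface ampleness criteria \eqref{NMEq} and \eqref{ampleFn}, and demanding positivity for \emph{all} small $\beta$, yields $\sum_i a_i\le2$ together with a companion inequality on the $F$-degrees $b_i$. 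When $n\ge2$ the surface has no $-1$-curve at all, so the argument of Lemma~\ref{lemma:log-del-Pezzo-rational-curves-1} (valid verbatim for any irreducible curve that is not a boundary component) forces $Z_n$ itself to be one of the $C_i$, since otherwise $Z_n$ would be a curve of self-intersection $-n\le-2$ disjoint from the list.

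With these constraints the classification reduces to a finite check, organized by $n$ and by the values of $\sum_i a_i$ and of the number of fibre components. Most of the cutting is done by the structural results of \S\ref{subsection:ALDP}: since $C\not\sim-K_S$, Lemma~\ref{lemma:log-del-Pezzo-rational-cycles-length-2} gives $|C_i\cap C_j|\le1$ and a dual graph that is a disjoint union of chains, while Lemma~\ref{lemma:strongly-non-strongly} forbids any ``interior'' component---one meeting two others---from having negative self-intersection. This last point is precisely what excludes configurations such as $Z_n$ together with two distinct fibres (compare Example~\ref{example:log-del-Pezzo-good-versus-very-good}), and is where the \emph{strong} hypothesis is indispensable. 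Matching the surviving numerical types to the list, I expect the disjoint pair $Z_n,\,Z_n+nF$ to be $\mathrm{(II.2A.n)}$, the pair $Z_n,\,Z_n+(n+1)F$ meeting once to be $\mathrm{(II.2B.n)}$, the pair $Z_n,\,F$ to be $\mathrm{(II.2C.n)}$, two members of $|Z_1+F|$ on $\FF_1$ to be $\mathrm{(II.3)}$, and the chain $Z_n,\,F,\,Z_n+nF$ to be $\mathrm{(III.3.n)}$; in each of these the unique negative curve $Z_n$ is a component, so the pair is minimal.

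Finally, the non-minimal possibility should surface only on $\FF_1$: the sole way the unique $-1$-curve $Z_1$ can meet the boundary without being a component is the pair $C_1\in|Z_1+F|$, $C_2=F$, where $Z_1\cdot F=1$ but $Z_1\cdot(Z_1+F)=0$. Contracting $Z_1$ via $\FF_1\to\PP^2$ sends both components to lines and the blown-up point to a point on one line away from their intersection, identifying the pair with $\mathrm{(II.5B.1)}$. I expect the main obstacle to be not any individual computation but the exhaustiveness and precision of the bookkeeping: verifying that the split on $n$, on $\sum_i a_i$, and on the number of fibres covers all cases; correctly adjudicating the boundary instances of the ampleness inequalities, where equality holds at $\beta=0$ and positivity must then be checked in every $\beta$-direction (the strong regime); and matching each surviving multidegree to exactly one entry of the list with the correct intersection pattern and minimality status.
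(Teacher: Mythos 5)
Your proposal is correct and follows essentially the same route as the paper's proof: split via \eqref{RationalClassifPicAtMostTwo} into $S\cong\PP^2$ (giving $\mathrm{(II.1B)}$) and $S\cong\FF_n$, write the components as $a_iZ_n+b_iF$, combine ampleness of $-K_S-\sum_i(1-\beta_i)C_i$ for all small $\beta$ with the structural results of \S\ref{subsection:ALDP} (chains, no negative interior components, and $C\not\sim-K_S$ killing the cycle/bisection configurations), and handle $\FF_1$ separately to extract the non-minimal case $\mathrm{(II.5B.1)}$ and the case $\mathrm{(II.3)}$, with the converse delegated to \S\ref{VerifSubsec}. The only cosmetic difference is that the paper forces $Z_n$ to be a boundary component by intersecting the log anticanonical class directly with $Z_n$ (its \eqref{FirstIneqFnClassifEq}), whereas you invoke the no-negative-curves argument of Lemma \ref{lemma:log-del-Pezzo-rational-curves-1}---the same computation in different packaging.
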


\begin{proof}
Since $\mathrm{rk}(\mathrm{Pic}(S))\le 2$, either
$S\cong\mathbb{P}^2$ or $S\cong\mathbb{F}_n$ for some
$n\ge 0$.
If the latter case $(S,C)$ must be
$\mathrm{(II.1B)}$, as $C\not\sim-K_S$.
Assume from now on that $S=\FF_n$.

Recall that $|C_{i}\cap C_{j}|\le 1$ for every $i\ne j$, and
$C_1,\ldots,C_r$ are smooth rational curves whose dual graph is a
union of disjoint chains. If $n=0$ this determines $(S,C)$, i.e.,
the boundary $C$ must be either two disjoint fibers (II.2A.0), two
intersecting fibers (II.2C.0), a fiber and a bi-degree (1,1) curve
(II.2B.0), or three non-disjoint fibers (III.3.0).

To complete the proof
let us first consider the case $n\ge 2$.
First,
\beq\label{FirstIneqFnClassifEq}
0\le
-\Big(K_S+\sum_{i=1}^r(1-\beta_i)C_i\Big). Z_n
= 2-n-\sum_{i=1}^r(1-\beta_i)C_i. Z_n,
\eeq
so one of the curves, say $C_1$, equals $Z_n$.
If every curve $C_2,\ldots,C_r$ lies in $|F|$, then
$$
0<-\Big(K_S+\sum_{i=1}^r(1-\beta_i)C_i\Big). Z_n=-n\beta_1+2-\sum_{i=2}^r(1-\beta_i),%
$$
thus $r=2$ and $(S,C)$ is (II.2C.n).
Assume that $C_2\not\sim F$ and write $C_i\sim a_iZ_n+b_iF$.
Then since $[F]$ is nef
$$
0<-\Big(K_S+\sum_{i=1}^r(1-\beta_i)C_i\Big).
F=1+\beta_1-a_2(1-\beta_2)-\sum_{i=3}^r(1-\beta_i)C_i.
F\ge 1+\beta_1-a_2(1-\beta_2),
$$
and since $a_2>0$ this implies that $a_2=1$.
Then
$$
0<-\Big(K_S+\sum_{i=1}^r(1-\beta_i)C_i\Big).
F=\beta_1+\beta_2-\sum_{i=3}^r(1-\beta_i)C_i. F,
$$
i.e., $C_i. F=0$ for every $i\ge 3$.
Therefore, we see that $C_i\in|F|$ for every $i\ge 3$. Then
$$
-\Big(K_S+\sum_{i=1}^r(1-\beta_i)C_i\Big)\sim
(\beta_1+\beta_2)Z_n+(n+2-b_2-\sum_{i=3}^{r}(1-\beta_i))F,
$$
where $b_2\ge n$ by \eqref{irreducibleFn}. Then by \eqref{ampleFn}
$$
n+2-b_2-\sum_{i=3}^{r}(1-\beta_i)>n(\beta_1+\beta_2).
$$
If $r=2$ this implies that $b_2\le n+1$ so $b_2\in\{n,n+1\}$,
i.e., $(S,C)$ is (II.2A.n) or (II.2B.n) and if
$r=3$ then $b_2\le n$ so $b_2=n$ so
$(S,C)$ is (III.3.n).

Finally, assume $n=1$. Then \eqref{FirstIneqFnClassifEq}
implies that either $C_1=Z_1$ or $C_1\sim Z_1+F$. In the
former case the same arguments of the previous paragraph
apply to yield $(S,C)$ is either (II.2C.1), (II.2A.1), (II.2B.1),
or (III.3.1). In the latter case, if $C_2\sim F$ then
$r=2$ and $(S,C)$ is (II.5B.1) and is not minimal since
$Z_1$ intersects $C_2$ transversally at one point.
Due to \eqref{FirstIneqFnClassifEq}
the only other remaining possibility is $C_2\sim Z_1+F$
and then $(S,C)$ is (II.3). %
The proof is now complete since all the cases listed
in the statement are indeed strongly
asymptotically log del Pezzo by \S\ref{VerifSubsec}.
\end{proof}

The following is an analogue of
Lemma~\ref{lemma:log-del-Pezzo-blow-down-minimally}
for the case when a $-1$-curve contained in
the boundary is contracted. We omit the proof as
it is analogous to the proof of that lemma.

Recall that $|C_{i}\cap C_{j}|\le 1$ for every $i\ne j$, and
the dual graph of the curves $C_{1},\ldots,C_{r}$ is a union
of disjoint chains.

\begin{lemma}
\label{lemma:log-del-Pezzo-blow-down-minimally-2} Suppose that
$C_1^{2}=-1$. Then there exists a birational morphism $\pi\colon
S\to{s}$ such that ${s}$ is a smooth surface, $\pi(C_1)$
is a point, the morphism $\pi$ induces an isomorphism $S\setminus
C_k\cong {s}\setminus\pi(C_1)$, the divisor
$\sum_{i=2}^r\pi(C_i)$ is a divisor with simple normal crossings,
$|\pi(C_{i})\cap \pi(C_{j})|\le 1$ for every $i\ne j$, and
$\pi(C_2),\ldots,\pi(C_r)$ are smooth rational curves whose dual
graph is a union of disjoint chains. Moreover, the pair $({s},
\sum_{i=2}^{r}\pi(C_i))$ is strongly asymptotically log del Pezzo.
Furthermore, if the pair $(S,\sum_{i=1}^{r}C_{i})$ is minimal,
then the pair $({s}, \sum_{i=2}^{r}\pi(C_i))$ is minimal as
well.
\end{lemma}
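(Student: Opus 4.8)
The plan is to contract the boundary $(-1)$-curve $C_1$ itself and track the rest of the boundary, following the proof of Lemma~\ref{lemma:log-del-Pezzo-blow-down-minimally} while attending to the one new feature, namely that the contracted curve is now a component of $C$. By Lemma~\ref{lemma:log-del-Pezzo-rational-or-elliptic} every $C_i$ is a smooth rational curve, so $C_1$ is a genuine $(-1)$-curve and Castelnuovo's contractibility criterion \cite[p.~476]{GH} produces a birational morphism $\pi\colon S\to s$ onto a smooth surface that contracts $C_1$ to a point and restricts to an isomorphism $S\setminus C_1\cong s\setminus\pi(C_1)$. The crucial structural input is Lemma~\ref{lemma:strongly-non-strongly}: because $(S,\sum_i C_i)$ is strongly asymptotically log del Pezzo and $C_1^2=-1<0$, the curve $C_1$ meets at most one of $C_2,\ldots,C_r$, transversally and in a single point. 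Thus $C_1$ sits at the tail of its chain.

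I would next read off the combinatorial consequences of this tail property. Since $\pi(C_1)$ lies on at most one of the images $\pi(C_2),\ldots,\pi(C_r)$ and $\pi|_{C_i}$ is an isomorphism onto its image, each $\pi(C_i)$ is again a smooth rational curve, smooth at $\pi(C_1)$. From $\pi(C_i)\cdot\pi(C_j)=C_i\cdot C_j+(C_i\cdot C_1)(C_1\cdot C_j)$ the correction term vanishes, as $C_1$ meets at most one boundary component, so $|\pi(C_i)\cap\pi(C_j)|=C_i\cdot C_j\le1$ and $\sum_{i\ge2}\pi(C_i)$ is simple normal crossing. The dual graph of $\sum_{i\ge2}\pi(C_i)$ is obtained from that of $\sum_{i\ge1}C_i$ by deleting a tail vertex together with its at most one edge, hence remains a disjoint union of chains.

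For positivity I would push forward the ample divisor on $S$. Using $\pi_\star K_S=K_s$ and $\pi_\star C_1=0$,
$$
\pi_\star\Big(-K_S-\sum_{i=1}^r(1-\beta_i)C_i\Big)=-K_s-\sum_{i=2}^r(1-\beta_i)\pi(C_i),
$$
and by \eqref{pushforward} the right-hand side is ample on $s$ whenever the divisor upstairs is ample. The key observation is that the right-hand side does not involve $\beta_1$: fixing $\beta_1$ small and letting $(\beta_2,\ldots,\beta_r)$ range over a small neighborhood of the origin in $(0,1]^{r-1}$ exhibits $(s,\sum_{i\ge2}\pi(C_i))$ as strongly asymptotically log del Pezzo.

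For the final assertion, suppose $(s,\sum_{i\ge2}\pi(C_i))$ were not minimal, witnessed by a smooth rational $(-1)$-curve $z\ne\pi(C_i)$ meeting some $\pi(C_j)$, and let $Z\subset S$ be its proper transform. If $\pi(C_1)\in z$ then $Z^2=z^2-1=-2$ with $Z\ne C_i$ for all $i$, contradicting the fact---proved exactly as in Lemma~\ref{lemma:log-del-Pezzo-rational-curves-1}, whose argument uses only $Z\ne C_i$ and $-K_S\cdot Z>0$---that an irreducible curve of negative self-intersection distinct from all $C_i$ must be a $(-1)$-curve; if $\pi(C_1)\notin z$ then $\pi$ is an isomorphism near $Z$, so $Z$ is a smooth rational $(-1)$-curve with $Z\ne C_i$ meeting $C_j$, contradicting minimality of $(S,\sum_i C_i)$. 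The main obstacle, and the sole place the strong asymptotic hypothesis enters, is the tail property of $C_1$ from Lemma~\ref{lemma:strongly-non-strongly}; without it the contraction of a boundary $(-1)$-curve could fuse two boundary components or break the chain structure, as Example~\ref{example:log-del-Pezzo-good-versus-very-good} illustrates.
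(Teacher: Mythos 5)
Your proof is correct and follows essentially the same route as the paper: the paper's own (very terse) proof likewise invokes Lemma~\ref{lemma:strongly-non-strongly} to place $C_1$ at the tail of its chain and then says to argue as in Lemma~\ref{lemma:log-del-Pezzo-blow-down-minimally}, which is exactly the Castelnuovo contraction, pushforward-ampleness, and proper-transform minimality argument you spell out. The details you fill in---the intersection formula after blowdown, the observation that $\beta_1$ drops out of the pushed-forward divisor, and the explicit two-case minimality check using the $r\ge 2$ analogue of Lemma~\ref{lemma:log-del-Pezzo-rational-curves-1}---are precisely what the paper leaves implicit.
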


\begin{proof}
Recall that $|C_{i}\cap C_{j}|\le 1$ for every $i\ne j$, and
$C_1,\ldots,C_r$ are smooth rational curves whose dual graph
is a union of disjoint chains. Moreover, it follows from
Lemma~\ref{lemma:strongly-non-strongly} that
$C_{1}^2\ge 0$ if $C_{1}$ intersects at least $2$ curves
among the curves $C_2,\ldots,C_r$. Arguing as in the proof of
Lemma~\ref{lemma:log-del-Pezzo-blow-down-minimally}, we obtain all
required assertions.
\end{proof}

Now we are ready to prove an analogue of
Lemma~\ref{lemma:log-del-Pezzo-minimal-good-small-Picard-r-1} that
plays a crucial role in the proof of
Theorem~\ref{theorem:main-2-3-4}.

\begin{lemma}
\label{lemma:minimalsmallPic}
 Suppose
that $(S,\sum_{i=1}^{r}C_{i})$ is minimal. Then
$\mathrm{rk}(\mathrm{Pic}(S))\le 2$.
\end{lemma}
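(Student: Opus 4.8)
The plan is to mirror the proof of Lemma~\ref{lemma:log-del-Pezzo-minimal-good-small-Picard-r-1}, adapting it to the reducible boundary. First I would dispose of the anticanonical case: if $\sum_{i=1}^{r}C_i\sim-K_S$ then Corollary~\ref{corollary:log-del-Pezzo-rational-ring} already places $(S,\sum C_i)$ in an explicit list, all of whose members satisfy $\mathrm{rk}(\mathrm{Pic}(S))\le 2$, so I may henceforth assume $\sum_{i=1}^{r}C_i\not\sim-K_S$. By Lemma~\ref{lemma:log-del-Pezzo-rational-cycles-length-2} the dual graph is then a disjoint union of chains and $|C_i\cap C_j|\le 1$.

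The core is a contradiction argument: suppose $\mathrm{rk}(\mathrm{Pic}(S))\ge 3$ and $(S,\sum C_i)$ minimal, and derive that some $-1$-curve $E\ne C_i$ must meet the boundary, violating minimality. To produce such an $E$ I would split into the del Pezzo and non--del Pezzo cases exactly as before: if $S$ is del Pezzo the classification guarantees at least three $-1$-curves, and since at most one irreducible boundary component with $C_i^2=-1$ can be among them (here I need that an \emph{interior} boundary component has $C_i^2\ge 0$ by Lemma~\ref{lemma:strongly-non-strongly}, so any boundary $-1$-curve is a chain-tail), I still obtain an $E\ne C_i$; if $S$ is not del Pezzo, one of the $C_i$ traps negative self-intersection and \eqref{MinusOneCurveExistence} supplies a $-1$-curve, which cannot be an interior boundary component again by Lemma~\ref{lemma:strongly-non-strongly}. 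Assuming this $E$ is disjoint from $\sum C_i$ (else we are done), I contract it via Lemma~\ref{lemma:log-del-Pezzo-blow-down-minimally}, obtaining a minimal strongly asymptotically log del Pezzo pair $(s,\sum\pi(C_i))$ with $\mathrm{rk}(\mathrm{Pic}(s))=\mathrm{rk}(\mathrm{Pic}(S))-1$. Iterating, I reduce to the situation $\mathrm{rk}(\mathrm{Pic}(S))=3$, $\mathrm{rk}(\mathrm{Pic}(s))=2$, so $s\cong\FF_n$.

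On $\FF_n$ I would run the fiber argument of Lemma~\ref{lemma:log-del-Pezzo-minimal-good-small-Picard-r-1}: the contracted point $\pi(E)\notin\pi(C_i)$ for all $i$ (since $E\cap\sum C_i=\emptyset$), and Lemma~\ref{lemma:log-del-Pezzo-Picard-rank-two} pins $(s,\sum\pi(C_i))$ down to an explicit short list. Taking the fiber $f$ of the ruling through $\pi(E)$, its proper transform $F$ on $S$ is a $-1$-curve; minimality forces $F\cap\sum C_i=\emptyset$, which constrains each boundary component to be either disjoint from or equal to a fiber, and on $\FF_0$ a second ruling then produces a fiber $g$ through $\pi(E)$ whose transform $G$ is a $-1$-curve meeting the boundary --- the desired contradiction. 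I expect the main obstacle to be the bookkeeping when $r\ge 2$: unlike the irreducible case, I must track how each of the several components $\pi(C_i)$ sits relative to the two rulings of $\FF_n$ and verify that no configuration escapes the contradiction, which requires carefully combining the self-intersection constraints from Lemma~\ref{lemma:strongly-non-strongly} with the numerical computations \eqref{ampleFn}--\eqref{irreducibleFn} on $\FF_n$.
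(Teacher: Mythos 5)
There is a genuine gap, and it sits exactly where the paper has to work hardest. Your whole strategy rests on producing, on a minimal pair with $\mathrm{rk}(\mathrm{Pic}(S))\ge 3$, a $-1$-curve $E$ with $E\ne C_i$ for all $i$; but your justification for this does not hold. Lemma~\ref{lemma:strongly-non-strongly} only forces $C_i^2\ge 0$ for \emph{interior} components of a chain (or components meeting another component twice); it says nothing about chain tails, and a boundary can contain \emph{several} $-1$-curves among its tails --- both ends of one chain, or tails of different chains. For instance, the strongly asymptotically log del Pezzo pair $\mathrm{(II.6C.1.1)}$ with the blown-up point on the fiber has $C_1^2=C_2^2=-1$, so your claim that ``at most one'' boundary component is a $-1$-curve is simply false. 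The failure is fatal in the non--del Pezzo case: there \eqref{MinusOneCurveExistence} guarantees only \emph{one} $-1$-curve, and nothing you have established prevents it from being a tail component $C_k$ of the boundary. In that situation you get no contradiction with minimality (minimality, by definition, only concerns curves $E\ne C_i$), Lemma~\ref{lemma:log-del-Pezzo-blow-down-minimally} does not apply (it also requires $E\ne C_i$), and your iteration and final $\mathbb{F}_n$ fiber argument cannot even start. Note that for $r=1$ this issue is avoided because Lemma~\ref{lemma:log-del-Pezzo-rational-curves-not-del-Pezzo-1} gives $C^2\le -2$ when $S$ is not del Pezzo, so the boundary can never be a $-1$-curve; no analogue of this is available for $r\ge 2$, and that is precisely the new difficulty of the reducible case.

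The paper's proof devotes most of its length to exactly the branch you skip, namely $E=C_1$. The reduction to $\mathrm{rk}(\mathrm{Pic}(S))=3$ uses \emph{both} contraction lemmas: Lemma~\ref{lemma:log-del-Pezzo-blow-down-minimally} when $E$ is disjoint from the boundary, and Lemma~\ref{lemma:log-del-Pezzo-blow-down-minimally-2} when $E$ is itself a boundary component (this contracts $C_1$ and deletes it from the boundary while preserving minimality and the strong asymptotic condition). Then, at rank $3$ with $E=C_1$: contracting $C_1$ gives a minimal rank-two pair $(s,\sum_{i\ge 2}c_i)$ on $\mathbb{F}_n$, pinned down by Lemmas~\ref{lemma:log-del-Pezzo-Picard-rank-two-1} and \ref{lemma:log-del-Pezzo-Picard-rank-two}; the proper transform $F$ of the ruling fiber through $\pi(C_1)$ is a $-1$-curve meeting $C_1$, so minimality of $(S,\sum C_i)$ forces $F$ to \emph{be} a boundary component, necessarily $F=C_2$ with $c_2$ a fiber; and a case analysis over $r=2,3,4$ (using that $c_2$ is a fiber disjoint from $c_3,\ldots,c_r$, and the three $-1$-curves of the degree-$7$ del Pezzo surface) produces the contradiction in each case. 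Your proposal reproduces the paper's treatment of the case $E\ne C_i$ (which the paper itself delegates to Lemma~\ref{lemma:log-del-Pezzo-minimal-good-small-Picard-r-1}), but it would become a proof of Lemma~\ref{lemma:minimalsmallPic} only after this entire second branch is supplied.
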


\begin{proof}

If  $C\sim -K_S$ then $(S,C)$ is one
of the pairs listed in Corollary \ref{corollary:log-del-Pezzo-rational-ring}.
Of those,
$\mathrm{(II.1A)}$, $\mathrm{(II.4A)}$,
$\mathrm{(III.1)}$, $\mathrm{(III.2)}$, and $\mathrm{(IV)}$
are minimal and they all satisfy $\mathrm{rk}(\mathrm{Pic}(S))\le 2$.
So we assume from now on that $C\not\sim -K_S$.

Suppose that $\mathrm{rk}(\mathrm{Pic}(S))\ge 3$. Let us
derive a contradiction. 
By \eqref{MinusOneCurveExistence}
there exists a smooth rational
curve $E$ on the surface such that $E^2=-1$. Either $E\ne
C_i$ for every $i$, or there is $k$ such that $E=C_k$. 
By Lemmas~\ref{lemma:log-del-Pezzo-blow-down-minimally} and
\ref{lemma:log-del-Pezzo-blow-down-minimally-2}
and induction on $\mathrm{rk}(\mathrm{Pic}(S))$ we can assume
that $\mathrm{rk}(\mathrm{Pic}(S))=3$.

If $E\ne C_i$ for every $i$, then we can proceed exactly as in the
proof of
Lemma~\ref{lemma:log-del-Pezzo-minimal-good-small-Picard-r-1} to
obtain a contradiction. Thus, assume that 
$E=C_1$.
By Lemma~\ref{lemma:strongly-non-strongly}, $C_1$ intersects at
most one curve among the curves $C_1,\ldots,C_r$. 
Suppose that $C_1\cap C_i=\emptyset$ for
every $i\ge 3$ (if any).

Since the pair $(S,C)$ is minimal and
strongly asymptotically log del Pezzo, there exists a birational
morphism $\pi\colon S\to{s}$ as in
Lemma~\ref{lemma:log-del-Pezzo-blow-down-minimally-2}.
 Then
$({s}, \sum_{i=2}^{r}\pi(C_i))$ is minimal and
$\mathrm{rk}(\mathrm{Pic}({s}))=2$, and, in particular, 
${s}\cong\mathbb{F}_{n}$ for some $n\ge 0$.

Put ${c}_i=\pi(C_i)$ for every $i\ge 2$. Let $\xi\colon
S\to\mathbb{P}^1$ be the natural projection (it is uniquely
determined  if $n\ne 0$). Then either
$$
\pi(C_1)\not\in\bigcup_{i=2}^{r}{c}_i,
$$
(if $C_1\cap C_2=\emptyset$)
or $\pi(C_1)\in{c}_2$ and $\pi(C_1)\not\in{c}_i$ for every
$i\ge 3$ (if any) (if $C_1\cap C_2\ne\emptyset$).

We can apply Lemmas~\ref{lemma:log-del-Pezzo-Picard-rank-two-1}
and \ref{lemma:log-del-Pezzo-Picard-rank-two} to get an explicit
description of the pair $({s},\sum_{i=2}^{r}{c}_{i})$.
The cases $\mathrm{(I.1B)}$,
$\mathrm{(I.1C)}$, $\mathrm{(I.6B.1)}$, $\mathrm{(I.6C.1)}$,
$\mathrm{(II.1B)}$, and $\mathrm{(II.5B.1)}$
are excluded because either
the rank of their Picard group is one or else they 
are not minimal.
Thus, if $r\ge2$, $({s},\sum_{i=2}^{r}{c}_{i})$ 
is one of 
$\mathrm{(II.2A.n)}$, $\mathrm{(II.2B.n)}$, $\mathrm{(II.2C.n)}$,
$\mathrm{(II.3)}$, or $\mathrm{(III.2.n)}$. 
Similarly, if $r=1$, $({s},\sum_{i=2}^{r}{c}_{i})$
is one of $\mathrm{(I.2.n)}$,
$\mathrm{(I.3A)}$, $\mathrm{(I.3B)}$, $\mathrm{(I.4B)}$, or
$\mathrm{(I.4C)}$. In particular, $r$ is at most four.

Let ${f}$ be a fiber of the morphism $\xi$ that passes through
the point $\pi(C_1)$, and let $F$ be its proper transform on $S$.
Then $F$ is a smooth irreducible rational curve such that
$F^{2}=-1$. Moreover, we have $F\cap C_1\ne\emptyset$ by
construction. Since $(S,\sum_{i=1}^{r}C_{i})$ is minimal, the
curve $F$ must be one of the curves $C_2,\ldots,C_r$. Then $F=C_2$,
$C_1\cap C_2\ne\emptyset$, $\pi(C_1)\in c_2$, and
$\pi(C_1)\not\in{c}_i$ for every $i\ge 3$ (if any).
Moreover, it follows from
Lemma~\ref{lemma:strongly-non-strongly} that
$C_2$ does not intersect any curve among $C_3,\ldots,C_r$ (if
any), since $F^2=-1$. Thus, ${c}_2$ does not intersect any
curve among ${c}_3,\ldots,{c}_r$ (if any).

Suppose that $r=2$. Then $({s},{c}_2)$ is
one of $\mathrm{(I.2.n)}$,
$\mathrm{(I.3A)}$, $\mathrm{(I.3B)}$, $\mathrm{(I.4B)}$, or
$\mathrm{(I.4C)}$. The latter is possible only in the case
$\mathrm{(I.2.0)}$ since ${c}_2\in|f|$ is a fiber of the the
morphism $\xi$. Then ${s}\cong\mathbb{P}^1\times\mathbb{P}^1$.
The latter implies that $(S,C_1+C_2)$ is not minimal. Indeed, the
surface $S$ is a del Pezzo surface with $K_{S}^2=7$. It contains
three $(-1)$-curves. Two of them are the curves $C_1$ and $C_2$.
The third one intersects $C_2$, contradicting minimality. 

Suppose that $r=3$. As noted earlier then $({s},{c}_2+{c}_3)$ is
one of $\mathrm{(II.2A.n)}$, $\mathrm{(II.2B.n)}$, $\mathrm{(II.2C.n)}$,
or $\mathrm{(II.3)}$. Since
${c}_2\cap{c}_3=\emptyset$ it must be (II.2A.n). 
Since ${c}_2$ is a fiber of the  morphism $\xi$
and $c_2\cap c_3=\emptyset$ it follows that $c_3$
is also a fiber of $\xi$. Thus, $n=0$, i.e.,
${s}\cong\mathbb{P}^1\times\mathbb{P}^1$. 
The latter implies that $(S,C_1+C_2)$ is not asymptotically
log del Pezzo. Indeed, the surface $S$ is del Pezzo surface with
$K_{S}^2=7$. It contains three $(-1)$-curves. Two of them are the
curves $C_1$ and $C_2$. The third one intersects $C_2$ and $C_3$,
contradicting Lemma~\ref{lemma:log-del-Pezzo-blow-down}.

Suppose that $r=4$. Then $({s},{c}_2+{c}_3+{c}_4)$ 
must be $\mathrm{(III.2.n)}$. But this is precluded
by the fact that $c_2$ does not intersect ${c}_3$ or
${c}_4$.

In conclusion then $\mathrm{rk}(\mathrm{Pic}(S))\le 2$.
\end{proof}

We now complete the proof of the classification part of  
Theorem \ref{theorem:main-2-3-4}.
If
$\mathrm{rk}(\mathrm{Pic}(S))\le 2$, then 
$(S,C)$ is listed in
Corollary \ref{corollary:log-del-Pezzo-rational-ring}
if $C\sim-K_S$ and by 
Lemma~\ref{lemma:log-del-Pezzo-Picard-rank-two}
if $C\not\sim-K_S$.
On the other hand, if
$\mathrm{rk}(\mathrm{Pic}(S))> 2$, the pair
$(S,\sum_{i=1}^{r}C_{i})$ is not minimal by 
Lemma \ref{lemma:minimalsmallPic}.
But then, Lemmas 
\ref{lemma:log-del-Pezzo-blow-down-minimally} and
\ref{lemma:log-del-Pezzo-blow-down-minimally-2} imply
$(S,C)$ is a blow-up along the boundary of one of the minimal pairs that
we already listed. It remains to check
the genericity
conditions on the location of the blow-up points as 
stated in Theorem \ref{theorem:main-2-3-4}.
This is carried out in \S\ref{VerifSubsec}
where we simultaneously also verify
that all pairs listed in the theorem are indeed
strongly asymptotically del Pezzo.

\begin{remark}
The results of this section already give some hints as
to the difficulties in classifying all
asymptotically log del Pezzo surfaces.
In particular, $r$ can then be infinite,
and $-1$-curves can appear as `interior' curves of the boundary
(see Example \ref{example:log-del-Pezzo-good-versus-very-good})
even though the number of connected components of the support
of $C$ is still two by Remark \ref{remark:log-del-Pezzo-Shokurov}.
However, the classification of the `diagonal' regime
(where $-K_S-\sum(1-\be_i)C_i$ is ample for all
sufficiently small $\be$ of the form $\be=\be_1(1,\ldots,1)$)
should be more tractable. In a related vein, results of di Cerbo--di Cerbo
\cite{DiCerbo2013} give bounds
on the largest possible value $\be_1$ may take 
for pairs of class $(\daleth)$ in this last regime
depending only on $(K_S+C)^2$. We further note that
di Cerbo \cite{DiCerbo} considered the `diagonal' regime
in the setting of negative curvature,
and obtained necessary and sufficient intersection-theoretic
restrictions on the pair for $K_S+(1-\be)\sum C_i$ to be ample.
Of course, in the negative setting a complete classification
is lacking even in the smooth setting with no boundary.
\end{remark}

\section{Positivity properties of the
logarithmic anticanonical bundle}
\label{section:posititivy}

Let $S$ be a smooth surface, let $C_1,\ldots,C_r$ be smooth
irreducible curves on the surface $S$ such that
$\sum_{i=1}^{r}C_{i}$ is a divisor with simple normal crossings,
and let $\beta=(\beta_1,\beta_2,\ldots,\beta_r)\in (0,1]^r$, where
$r\ge 1$. Suppose that $(S,\sum_{i=1}^{r}C_{i})$ is strongly
asymptotically log del Pezzo. Then  we have the following mutually
excluding possibilities:
\begin{itemize}
\item[$\mathrm{(\aleph)}$] $-(K_{S}+\sum_{i=1}^{r}C_{i})\sim 0$,
$S$ is del Pezzo surface, and $\sum_{i=1}^{r}C_{i}\sim -K_{S}$,
and $C_i^2\ge 0$ $\forall i$,

\item[$\mathrm{(\beth)}$] $(-K_{S}-\sum_{i=1}^{r}C_{i})^2=0$, all
curves $C_1,\ldots,C_r$ are rational, and the dual graph of the
curves $C_1,\ldots,C_r$ is a disjoint union of chains,

\item[$\mathrm{(\gimel)}$] the divisor
$-(K_{S}+\sum_{i=1}^{r}C_{i})$ is big and nef, the divisor
$-(K_{S}+\sum_{i=1}^{r}C_{i})$ is not ample, all curves
$C_1,\ldots,C_r$ are rational and the dual graph of the curves
$C_1,\ldots,C_r$ is a disjoint union of chains,

\item[$\mathrm{(\daleth)}$] $C\cong\mathbb{P}^1$, the divisor
$-(K_{S}+\sum_{i=1}^{r}C_{i})$ is ample, all curves
$C_1,\ldots,C_r$ are rational and the dual graph of the curves
$C_1,\ldots,C_r$ is a disjoint union of chains.
\end{itemize}
This follows from
Lemmas~\ref{lemma:lemma:log-del-Pezzo-rational-curves-1-1},
\ref{lemma:log-del-Pezzo-rational-or-elliptic-1},
\ref{lemma:strongly-non-strongly} and
\ref{lemma:log-del-Pezzo-rational-cycles-length-2}. Note that the
classification in Theorems~\ref{theorem:main-1} and
\ref{theorem:main-2-3-4} generalizes Maeda's classical result
\cite{Maeda} that corresponds to class $\mathrm{(\daleth)}$.
Maeda further provided a full classification in class
$\mathrm{(\daleth)}$. In \S\ref{PositivitySubsec} we generalize this by giving
a complete classification in each of the remaining classs.
In \S\ref{PositivityTwoSubsec} we go further in the class
$\mathrm{(\beth)}$ by proving that
the linear system
$|-K_{S}-\sum_{i=1}^{r}C_{i}|$ gives a morphism
$S\to\mathbb{P}^1$ whose general fiber is $\mathbb{P}^1$.

\begin{remark}{\rm
There is perhaps no real need to distinguish between classes
$(\gimel)$ and $(\daleth)$, but we do that mainly for
a historical reason. Indeed,
class $\mathrm{(\daleth)}$
is not new. These pairs
were completely classified in Maeda's work who coined
the term `log del Pezzo surface' for this class of pairs \cite{Maeda}.
In higher dimensions it might prove more natural to
identify only $\dim X+1$ classes according to the Kodaira dimension
of $-K_X-D$.
}
\end{remark}

\subsection{Positivity classification}
\label{PositivitySubsec}

We now prove Theorem \ref{theorem:4-cases}, relying
on Theorems \ref{theorem:main-1} and \ref{theorem:main-2-3-4}.

Class $(\aleph)$ follows from Corollaries
\ref{corollary:log-del-Pezzo-rational-or-elliptic-1}
(and the remark preceeding it) and \ref{corollary:log-del-Pezzo-rational-ring}.
Class $(\daleth)$ follows from \eqref{ampleFn}.

Next, if $(S,\sum_{i=1}^{r}C_{i})$ is not minimal (see
Definition~\ref{definiton:log-del-Pezzo-minimally-good-1}), then
it follows from the proof of Theorems~\ref{theorem:main-1} and
\ref{theorem:main-2-3-4} that there exists a non-biregular
birational map $\pi\colon S\to{s}$ such that the pair
$({s}, \sum_{i=1}^{r}\pi(C_{i}))$ is minimal and
$$
-K_{S}+\sum_{i=1}^{r}C_{i}\sim
\pi^{*}(-K_{{s}}-\sum_{i=1}^{r}\pi(C_{i})).
$$
Indeed, by our construction, each $-1$-curve that is contracted
intersects the boundary transversally exactly at one point.
This also shows that
$-K_{S}-\sum_{i=1}^{r}C_{i}$ can not be ample if
$(S,\sum_{i=1}^{r}C_{i})$ is not minimal, because
$-K_{S}-\sum_{i=1}^{r}C_{i}$ intersects all $\pi$-exceptional
curves trivially.
In sum, if $(S,C)$ is of
class $\mathrm{(\aleph)}$, respectively $\mathrm{(\beth)}$,
then so is $(s,c)$, and if $(S,C)$
is of class $\mathrm{(\gimel)}$ or $\mathrm{(\daleth)}$
$(s,c)$ is of class $\mathrm{(\gimel)}$.
This completes the verification of class $\mathrm{(\gimel)}$
since each of these pairs are blow-ups of a pair
of class $(\daleth)$, while the pairs
$\mathrm{(I.3A})$, $\mathrm{(I.4B})$,
$\mathrm{(II.2A.n})$, $\mathrm{(II.2B.n})$, $\mathrm{(II.3})$,
and $\mathrm{(III.3.n})$ all satisfy $(K_S+C)^2=0$.
Class $(\beth)$ then contains, by exclusion, all the remaining
pairs listed in Theorems \ref{theorem:main-1} and \ref{theorem:main-2-3-4}.

\subsection{Verification of the list}
\label{VerifSubsec}

Using the positivity classification of the original lists of Theorems
\ref{theorem:main-1} and \ref{theorem:main-2-3-4},
we now verify that indeed each of the pairs listed there
is strongly asymptotically log del Pezzo.
This is the last step remaining to complete the proof of the main classification result, Theorem
\ref{theorem:main}.

The Maeda case $\mathrm{(\daleth)}$ is immediate by convexity as then $-K_S-C$
itself is ample, and so is the case $\mathrm{(\aleph)}$.
So suppose $(S,C)$ is a pair of class
$\mathrm{(\beth)}$ or $\mathrm{(\gimel)}$
listed in Theorem \ref{theorem:4-cases}.
Then there exists a blow-down map $\pi:S\ra s$
such that the pair $(s,c)$ is minimal where $c=\pi(C)$.
Then
\beq\label{KSPullBackEq}
-K_{S}
-
(1-\beta)C\equiv\pi^{*}\Big(-\big(K_{s}
+
(1-\beta){c}\big)\Big)-\sum_{i=1}^{m}\beta E_i.%
\eeq
Here $E_i=\pi^{-1}(P_i)$, with $P_1,\ldots,P_m$ denoting
the blow-up points.
The slight subtlety is that while the second
term on the right is `small' in terms of its
contribution to intersection numbers and
the first term is ample, the latter also
depends on $\be$ and so a priori it is
not clear which term will dominate.
In fact, the following example illustrates
a situation 
where such a problem arises.

\begin{example}
\label{example:log-del-Pezzo-good-versus-very-good-3}
{\rm
Consider the surface $\FF_n$ and let
$R$ be some smooth curve in $|Z_n+nF|$.
Then $(\mathbb{F}_n,
Z_n+F+R)$ is strongly asymptotically log Fano. Let $\pi\colon
S\to\mathbb{F}_n$ be a blow-up of $m$ distinct points
in the smooth locus of $Z_n+F+R$ such that no two
of the points lie on one curve in the linear
system $|F|$. Let $C_1$, $C_2$, $C_3$ be the proper transforms of
the curves $Z_n$, $F$, $R$, respectively. Then $(S, C_1+C_2+C_3)$ is
asymptotically log Fano. On the other hand, $(S, C_1+C_2+C_3)$ is
strongly asymptotically log Fano if and only if none of
the blow-up points lies in $F$ (cf. $\mathrm{(III.5.n.m})$ in
Theorem~\ref{theorem:main-2-3-4}).
}\end{example}

We now go through the lists of classes $\beth$ and
$\gimel$ and verify the pairs
are strongly asymptotically log del Pezzo.
We assume
without mention that $\be$ is taken in each equation to
be sufficiently small, depending only on $(S,C)$.

By the Nakai--Moishezon criterion \eqref{NMEq}, we
have to check that $(K_{S}+(1-\beta)C)^2>0$ and
$-(K_{S}+(1-\beta)C). Z>0$ for every irreducible curve
$Z\subset S$, with $\be$ independent of $Z$
(and we use, e.g., the notation
$O_{n,m}(\be_1)$
to denote a quantity bounded by $C\be_1$
with $C$ depending only on $n,m,$ and $(S,C)$).

To do this, let us fix some
irreducible curve $Z$ on the surface $S$.
We may assume that $Z$ is not $\pi$-exceptional since
by \eqref{KSPullBackEq} and the fact that all the blow-up points
are distinct (so none of the exceptional divisors intersect)
$-(K_{S}+(1-\beta)C)Z>0$ if $Z$ is $\pi$-exceptional.
Put $z:=\pi(Z)$, and suppose that
$\mathrm{mult}_{P_{1}}({z})\le
\cdots\le
\mathrm{mult}_{P_{m}}({z})$.

\medskip
\noindent
{\bf Class $\mathrm{(\gimel)}$.}
Suppose that we are either in case $\mathrm{(I.6B.m)}$ or
$\mathrm{(I.6C.m)}$. Then $S\cong\mathbb{P}^2$, and
${c}$ is a conic in the case $\mathrm{(I.6B.m)}$ or a line in
the case $\mathrm{(I.6C.m)}$. Let $\delta$ be the degree of the
curve ${c}$ in $S\cong\mathbb{P}^{2}$, i.e., either $\delta=2$
in the case $\mathrm{(I.6B.m)}$ or $\delta=1$ in the case
$\mathrm{(I.6C.m)}$. Let $l$ be a line in $S$. Then
$
-(K_{S}+(1-\beta)C)\equiv\pi^{*}((3-(1-\beta)\delta){l})-\sum_{i=1}^{m}\beta E_i,%
$
which implies that
$(K_{S}+(1-\beta)C)^2=(3-\delta+\delta\beta)^2-m\beta^2>0$
since $\delta\in\{1,2\}$. Let $d$ be
the degree of the curve ${z}$ in $S\cong\mathbb{P}^2$. Then
$
-(K_{S}+(1-\beta)C). Z=(3-(1-\beta)\delta)d-\sum_{i=1}^{m}\beta \mathrm{mult}_{P_{i}}({z})\ge d-m\beta\mathrm{mult}_{P_{m}}({z})>0%
$,
concluding these cases.

Now consider the case $\mathrm{(I.7.n.m)}$.
Then $s\cong\mathbb{F}_{n}$, and ${c}=Z_n$.
Let ${f}$ be a fiber of the natural projection
$s\to\mathbb{P}^{1}$, i.e., ${f}$ is a smooth irreducible
rational curve such that ${f}.{c}=1$ and
${f}^2=0$. Then $-K_{s}\sim 2{c}+(2+n){f}$, so
$
-(K_{S}+(1-\beta)C)\equiv\pi^{*}((1+\beta){c}+(2+n){f})-\sum_{i=1}^{m}\beta E_i%
$
and
$(K_{S}+(1-\beta)C)^2=4+n+4\beta-n\beta^2-m\beta^2>0$.
 Note that ${z}\sim
a_1{c}+a_2{f}$ for some non-negative integers $a_1,a_2$ such
that either $(a_1,a_2)=(1,0)$ (if $n\ge 1$, then $Z=C$ in this
case), or $(a_1,a_2)=(0,1)$ (this means that ${z}$ is a fiber of
the projection $s\to\mathbb{P}^1$), or $a_2\ge na_1$ (see
\cite[Corollary~2.18]{Har77}). Then
$
-(K_{S}+(1-\beta)C). Z=a_2+2a_1+\beta(a_2-na_1)-\sum_{i=1}^{m}\beta\mathrm{mult}_{P_{i}}({z})> 0
$.
Indeed, since the divisor ${c}+(n+1){f}$ is
very ample \cite[Theorem~2.17]{Har77}, we get
a uniform bound on the multiplicity:
$$
\mathrm{mult}_{P_{m}}({z})\le({c}+(n+1){f}){z}=a_2+a_1.
$$
This concludes this case.

The case $\mathrm{(I.8B.m})$ is treated similarly.

Suppose now  $(S,C)$ is $\mathrm{(I.9C.m)}$.
Then $s\cong\mathbb{P}^1\times\mathbb{P}^1$.
Let ${f}_1,{f}_2$ be fibers of the two natural projections
$s\to\mathbb{P}^1$. Then $c\sim f_1+f_2$ and
$-K_{s}\sim 2{f}_1+2{f}_2$, and ${z}\sim
a_1{f}_1+a_2{f}_2$ for some non-negative integers $a_1$
and $a_2$ such that $(a_1,a_2)\ne (0,0)$.
Then
$
-K_{S}-(1-\beta)C =\pi^{*}((1+\beta){f}_1+(1+\beta){f}_2)-\sum_{i=1}^{m}\beta E_i,%
$
and $(K_{S}+(1-\beta)C)^2=2(1+\beta)^2-m\beta^2>0$.
Since $f_1+f_2$ is very ample
$\mathrm{mult}_{P_{m}}({z})\le
a_1+a_2$.
Thus,
$
-(K_{S}+(1-\beta)C). Z=(a_1+a_2)(1+\beta)-\sum_{i=1}^{m}\beta\mathrm{mult}_{P_{i}}({z})\ge (a_1+a_2)(1+\beta)-m\beta\mathrm{mult}_{P_{m}}({z})%
>0$.

Finally, one readily checks that
the case $\mathrm{(II.5B.m})$ reduces to
$\mathrm{(II.1B})$ which in turns is essentially identical
to $\mathrm{(I.1B})$. Similarly, $\mathrm{(II.6C.n.m})$
reduces to $\mathrm{(II.2C.n})$ which is in the class
$\mathrm{(\daleth)}$.

\medskip
\noindent
{\bf Class $\mathrm{(\beth)}$.}
The cases $\mathrm{(I.3A})$ and $\mathrm{(I.4B})$ are immediate.
Suppose we are in the case
$\mathrm{(I.9B.m)}$.
Then
$
-K_{S}-(1-\beta)C\equiv\pi^{*}(2\beta{f}_1+(1+\beta){f}_2)-\sum_{i=1}^{m}\beta E_i,
$
so
$(K_{S}+(1-\beta)C)^2=4\beta(1+\beta)-m\beta^2>0$, and
\begin{equation}
\label{equation:log-del-Pezzo}
-(K_{S}+(1-\beta)C). Z=2\beta a_2+(1+\beta)a_1-\sum_{i=1}^{m}\beta\mathrm{mult}_{P_{i}}({z})>0
\end{equation}
since if $z\in|f_2|$ (i.e., $(a_1,a_2)=(0,1)$) then
$z$ passes through at most one of the blow-up points
and in this case $z$  (a fiber) is also necessarily smooth,
so $\mathrm{mult}_{P_{i}}({z})\in\{0,1\}$.

Among $\mathrm{(II.2A.n})$ and $\mathrm{(II.2B.n})$
it suffices to check the latter. In fact,
since
$\mathrm{(II.6A.n.m})$ and $\mathrm{(II.6B.n.m})$
are their blow-ups we only need to consider
$\mathrm{(II.6B.n.m})$ (allowing $m$ to possibly
equal $0$). In this case,
$
-K_{S}-(1-\beta)C
=
\pi^{*}\big((\be_1+\beta_2)Z_n+(1+(n+1)\be_2)F\big)
-\sum_{i=1}^{k}\beta_1 E_i
-\sum_{i=k+1}^{m}\beta_2 E_i,
$
assuming that exactly the first $k$ points are
blown-up along $\pi(C_1)=Z_n$.
The square of this class is then $2\be_1+2\be_2+
O_{n,m}(\be_1\be_2+\be_1^2+\be_2^2)>0$, and
its intersection with $Z$ (such that $z\sim a_1Z_n+a_2F$)
equals $-na_1(\be_1+\be_2)+a_1(1+(n+1)\be_2)+a_2(\be_1+\be_2)
-\be_1\sum_{i=1}^k\mathrm{mult}_{P_{i}}({z})
-\be_2\sum_{i=k+1}^m\mathrm{mult}_{P_{i}}({z})
=
a_1(1+b_2-n\be_1)+a_2(\be_1+\be_2)
-\be_1\sum_{i=1}^k\mathrm{mult}_{P_{i}}({z})
-\be_2\sum_{i=k+1}^m\mathrm{mult}_{P_{i}}({z})
$.
This is positive if $a_1>0$ since, as before,
the multiplicities are uniformly bounded
independently of $z$.
If $a_1=0$ then $a_2>0$, so $a_2=1$
as $z$ is irreducible, thus a fiber.
Then the intersection number
is positive (bounded below by $\min\{\be_1,\be_2\}$)
provided the fiber does not intersect
more than one of the $P_i$.

The case $\mathrm{(II.7.m})$  (that implies the case $\mathrm{(II.3})$)
is proven using very similar computations.

Finally we consider $\mathrm{(III.5.n.m})$
(that takes care of the case $\mathrm{(III.3.n})$).
Then
$
-K_{S}-(1-\beta)C
=
\pi^{*}\big((\be_1+\beta_3)Z_n+(1+\be_2+n\be_3)F\big)
-\sum_{i=1}^{k}\beta_1 E_i
-\sum_{i=k+1}^{m}\beta_3 E_i
$
This squares to
$-n(\be_1+\be_3)^2+2(\be_1+\be_3)(1+\be_2+n\be_3)
-k\be_1^2-(m-k-1)\be_3^2>0$ (here we see why
blow-ups along $\pi(C_2)$ are prohibited).
The verification of the intersection with $Z$ is
as in the previous case.

The proof of Theorem \ref{theorem:main} is now complete.

\subsection{Nef and non-big adjoint anticanonucal bundle}
\label{PositivityTwoSubsec}

In the case $\mathrm{(\beth)}$, the linear system
$|-(K_{S}+\sum_{i=1}^{r}C_{i})|$ gives a morphism
$S\to\mathbb{P}^1$ whose general fiber is $\mathbb{P}^1$. This can
be shown by using our classification in
Theorems~\ref{theorem:main-1} and \ref{theorem:main-2-3-4}
or alternatively (and in any dimension) from Kawamata--Shokurov's
results as demonstrated in Theorem \ref{theorem:KS}. But we
prefer to give a self-contained classification-free proof
of Proposition \ref{proposition:conic-bundle} that
does not rely on these deep works.

In the remaining part of this subsection we prove
Proposition~\ref{proposition:conic-bundle}. Suppose that
$(K_{S}+\sum_{i=1}^{r}C_{i})^2=0$. By
Lemma~\ref{lemma:log-del-Pezzo-rational-cycles-length-2}, the dual
graph of the curves $C_{1},\ldots,C_r$ is a disjoint union of
chains. Let $l$ be the number of connected components of the curve
$\sum_{i=1}^{r}C_{i}$ (by
Remark~\ref{remark:log-del-Pezzo-Shokurov} one has $l\le
2$ but we will not use it here).

\begin{lemma}
\label{lemma:conic-bundles-1}  One has
$h^{0}(\mathcal{O}_{S}(-(K_{S}+\sum_{i=1}^{r}C_{i})))=1+l$.
\end{lemma}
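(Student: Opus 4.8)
The plan is to compute the dimension by Riemann--Roch once the higher cohomology has been killed. Write $D:=-\big(K_{S}+\sum_{i=1}^{r}C_{i}\big)$, an integral (Cartier) divisor on the smooth surface $S$. Since we are in class $\mathrm{(\beth)}$ we have $D^{2}=0$, each $C_{i}$ is a smooth rational curve, and the dual graph of $C_{1},\ldots,C_{r}$ is a disjoint union of chains; moreover $\chi(\mathcal{O}_{S})=1$ because $S$ is rational. First I would establish that $H^{i}(\mathcal{O}_{S}(D))=0$ for $i>0$, so that $h^{0}(\mathcal{O}_{S}(D))=\chi(\mathcal{O}_{S}(D))$.

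For this vanishing I would invoke the Kawamata--Viehweg vanishing theorem \cite[Vol. II, \S9.1.C]{Lazarsfeld} in its $\mathbb{Q}$-boundary form. Fix $\beta_{i}\in(0,\tfrac12)$ small enough that $A:=-K_{S}-\sum_{i=1}^{r}(1-\beta_{i})C_{i}$ is ample; this is possible precisely because $(S,\sum_{i}C_{i})$ is strongly asymptotically log del Pezzo. Set $\Delta:=\sum_{i=1}^{r}(1-2\beta_{i})C_{i}$. As $\sum_{i}C_{i}$ is simple normal crossing and every coefficient $1-2\beta_{i}$ lies in $(0,1)$, the pair $(S,\Delta)$ is klt with $\lfloor\Delta\rfloor=0$, and
\[
D-K_{S}-\Delta=-2K_{S}-\sum_{i=1}^{r}(2-2\beta_{i})C_{i}=2A
\]
is ample, hence nef and big. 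Thus $H^{i}(\mathcal{O}_{S}(D))=0$ for $i>0$. The coefficient $1-2\beta_{i}$ is arranged exactly so that $D-K_{S}-\Delta$ lands on the ample class $2A$; this is the one genuinely delicate point, and I expect it to be the main obstacle, since a cruder choice of boundary would only give a big (and not obviously nef) class.

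It then remains to evaluate $\chi(\mathcal{O}_{S}(D))$. By Riemann--Roch and $D^{2}=0$,
\[
\chi(\mathcal{O}_{S}(D))=1+\tfrac12\, D\cdot(D-K_{S})=1-\tfrac12\,D\cdot K_{S}.
\]
Writing $\Sigma:=\sum_{i=1}^{r}C_{i}$ and expanding $D^{2}=(K_{S}+\Sigma)^{2}=0$ gives $K_{S}^{2}=-2K_{S}\cdot\Sigma-\Sigma^{2}$, so
\[
D\cdot K_{S}=-K_{S}^{2}-K_{S}\cdot\Sigma=K_{S}\cdot\Sigma+\Sigma^{2}=\Sigma\cdot(K_{S}+\Sigma).
\]
Now I would expand over components: by adjunction $C_{i}\cdot K_{S}+C_{i}^{2}=2p_{a}(C_{i})-2=-2$ for each (rational) $C_{i}$, while Lemma~\ref{lemma:log-del-Pezzo-rational-cycles-length-2} gives $C_{i}\cdot C_{j}\in\{0,1\}$ for $i\ne j$. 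Hence
\[
\Sigma\cdot(K_{S}+\Sigma)=\sum_{i=1}^{r}\big(C_{i}\cdot K_{S}+C_{i}^{2}\big)+2\!\!\sum_{1\le i<j\le r}\!\!C_{i}\cdot C_{j}=-2r+2e,
\]
where $e:=\sum_{i<j}C_{i}\cdot C_{j}$ counts the edges of the dual graph, so $h^{0}(\mathcal{O}_{S}(D))=1+r-e$.

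Finally I would use the combinatorics of class $\mathrm{(\beth)}$: the dual graph is a disjoint union of chains, hence a forest on $r$ vertices with $e$ edges and $l$ connected components, so $r-e=l$. Substituting gives $h^{0}(\mathcal{O}_{S}(D))=1+l$, as desired. Apart from the vanishing step, everything here is routine intersection-number bookkeeping together with the elementary identity $r-e=l$ for forests.
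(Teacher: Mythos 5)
Your proposal is correct and follows essentially the same route as the paper: Kawamata--Viehweg vanishing (justified via the strong asymptotic log del Pezzo condition) to get $h^{1}=h^{2}=0$, then Riemann--Roch together with $(K_{S}+\sum C_{i})^{2}=0$ and the chain combinatorics, which the paper records directly as $(K_{S}+\sum_{i}C_{i})\cdot(\sum_{i}C_{i})=-2l$ while you derive it as $-2r+2e$ with $r-e=l$. Your explicit choice of klt boundary $\sum_{i}(1-2\beta_{i})C_{i}$ simply fills in the KV application that the paper leaves implicit.
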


\begin{proof}
Since the dual graph of the curves $C_{1},\ldots,C_r$ is a
disjoint union of chains, one can easily check that
$$
(K_{S}+\sum_{i=1}^{r}C_{i}). (\sum_{i=1}^{r}C_{i})=-2l.
$$
This allows us to compute
$h^{0}(\mathcal{O}_{S}(-(K_{S}+\sum_{i=1}^{r}C_{i})))$. Indeed, we
have
$$
h^{2}(\mathcal{O}_{S}(-(K_{S}+\sum_{i=1}^{r}C_{i})))=h^{1}(\mathcal{O}_{S}(-(K_{S}+\sum_{i=1}^{r}C_{i})))=0
$$
by
\eqref{KbetaSNCAssump} and
the Kawamata--Viehweg Vanishing Theorem.
Therefore, it follows from the Riemann--Roch
Theorem that
$$\baeq
h^{0}\Big(\mathcal{O}_{S}\big(-(K_{S}+\sum_{i=1}^{r}C_{i})\big)\Big)
&=1+\frac{(K_{S}+\sum_{i=1}^{r}C_{i}).(2K_{S}+\sum_{i=1}^{r}C_{i})}{2}
\cr
&=1-(K_{S}+\sum_{i=1}^{r}C_{i}).(\sum_{i=1}^{r}C_{i})=1+l,%
\eaeq
$$
because $(-K_{S}-\sum_{i=1}^{r}C_{i})^2=0$ by assumption.
\end{proof}

Thus, we see that $|-(K_{S}+C)|$ is at least a pencil. Moreover,
if $l=1$, then it is a pencil, since $S$ is rational. Note that we
can use \cite[Theorem~6.9]{Sho93} to show that $l\le 2$. But we do
not need this. In fact, one can show that $l\le 2$ using
Lemma~\ref{lemma:conic-bundles-1} (cf. the proof of
\cite[Theorem~6.9]{Sho93}).

\begin{lemma}
\label{lemma:conic-bundles-2}  The linear system
$|-K_{S}-\sum_{i=1}^{r}C_{i}|$ is a base point free.
\end{lemma}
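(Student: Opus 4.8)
The plan is to prove base-point-freeness by showing that the complete linear system $|L|$, where $L:=-(K_S+\sum_{i=1}^r C_i)$, has empty base locus, by first isolating a free pencil inside it and then killing the fixed part. Recall $L$ is nef (it is the limit of the ample classes $-K_S-\sum_i(1-\beta_i)C_i$) and $L^2=0$ in class $(\beth)$, while Lemma~\ref{lemma:conic-bundles-1} gives $h^0(L)=1+l\ge 2$. First I would write $|L|=|M|+Z$ with $Z$ the fixed part and $M$ the moving part; then $M$ is nef, since a general member of $|M|$ meets no curve negatively. From $L^2=0$, $L$ nef, and $L=M+Z$ with $M,Z$ effective one gets $L\cdot M=L\cdot Z=0$, whence $M^2=-M\cdot Z\le 0$; as $M^2\ge 0$ for the nef class $M$ this forces $M^2=0$ and $M\cdot Z=0$. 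A moving class with self-intersection $0$ has empty base locus, so $|M|$ is a base-point-free pencil defining a morphism $\psi\colon S\to\mathbb{P}^1$ with $M\sim l F_\psi$ for the fibre class $F_\psi$. Consequently $|L|=Z+|lF_\psi|$, and since $|lF_\psi|=\psi^*|\mathcal{O}_{\mathbb{P}^1}(l)|$ is base-point-free, the base locus of $|L|$ is exactly $\mathrm{Supp}(Z)$. Thus everything reduces to proving $Z=0$.

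The crux is therefore to rule out fixed components. Each irreducible component $\Gamma$ of $Z$ satisfies $L\cdot\Gamma=0$ (as $L$ is nef and $L\cdot Z=0$) and is vertical, $\Gamma\cdot F_\psi=0$ (since $M\cdot Z=0$ and $M\sim lF_\psi$). By the Hodge index theorem $\Gamma^2\le 0$, and $\Gamma^2=0$ would make $\Gamma$ numerically proportional to the fibre and hence moving, contradicting that it is fixed; so $\Gamma^2<0$, and $\Gamma$ is rational. Feeding $K_S=-L-\sum_i C_i$ into adjunction and using $L\cdot\Gamma=0$ yields $\Gamma^2=\big(\sum_i C_i\big)\cdot\Gamma-2$. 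If $\Gamma=C_i$, then $L\cdot C_i=2-\delta_i=0$, where $\delta_i$ is the number of boundary neighbours of $C_i$; so $\delta_i=2$, and Lemma~\ref{lemma:strongly-non-strongly} gives $C_i^2\ge 0$, contradicting $\Gamma^2<0$. Hence $\Gamma\ne C_i$ for all $i$, and by the argument of Lemma~\ref{lemma:log-del-Pezzo-rational-curves-1} (which applies verbatim to a reducible boundary) $\Gamma$ is a $-1$-curve; then $\Gamma^2=-1$ forces $\big(\sum_i C_i\big)\cdot\Gamma=1$, i.e. $\Gamma$ meets the boundary transversally in exactly one point.

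Finally I would eliminate such curves by induction on $\mathrm{rk}\,\mathrm{Pic}(S)$. The existence of a $-1$-curve $\Gamma$ with $\Gamma\cdot\sum_iC_i=1$ means $(S,\sum_iC_i)$ is not minimal, so by Lemma~\ref{lemma:log-del-Pezzo-blow-down-minimally} we may contract it, $\pi\colon S\to s$, to a strongly asymptotically log del Pezzo pair $(s,\sum_i\pi(C_i))$ again of class $(\beth)$ with $\mathrm{rk}\,\mathrm{Pic}(s)=\mathrm{rk}\,\mathrm{Pic}(S)-1$; a short computation gives $K_S+\sum_iC_i=\pi^*(K_s+\sum_i\pi(C_i))$, i.e. $L=\pi^*L'$ with $L'=-(K_s+\sum_i\pi(C_i))$. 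By the projection formula $H^0(S,L)=H^0(s,L')$, so $|L|=\pi^*|L'|$, and the pullback of a base-point-free system under a birational morphism is base-point-free; by induction $|L'|$ is base-point-free, hence so is $|L|$, so no such fixed $\Gamma$ can survive and $Z=0$. The base case $\mathrm{rk}\,\mathrm{Pic}(S)\le 2$ is direct: then $S\cong\mathbb{F}_n$ (the case $\mathbb{P}^2$ is impossible, since there $L^2=0$ forces $L=0$), and writing $L=\alpha Z_n+\beta F$, nefness together with $L^2=0$ forces $L$ to be a nonnegative multiple of a ruling fibre, so $|L|$ is manifestly base-point-free. I expect the middle step—showing the fixed part is empty—to be the main obstacle, and the essential leverage there is Lemma~\ref{lemma:strongly-non-strongly}; this is precisely where the \emph{strong} (rather than merely asymptotic) hypothesis is used, as Example~\ref{example:log-del-Pezzo-good-versus-very-good} shows that interior $-1$-curves can otherwise occur in the boundary.
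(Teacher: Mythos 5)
Your proof is correct, but after the opening move it takes a genuinely different route from the paper's. Both arguments start identically: split $|{-}K_S-\sum_{i=1}^r C_i|$ into mobile part $M$ plus fixed part, and use nefness of $L:=-(K_S+\sum_{i=1}^r C_i)$ together with $L^2=0$ to force $M^2=M\cdot Z=0$, so that the mobile part is a free pencil of fibres and only the fixed part is at issue. From there the paper stays purely cohomological: it shows the fixed part $B$ is itself nef (from $(B+M)\cdot B_i=M\cdot B_i=0$ for each component $B_i$), computes $h^0(\mathcal{O}_S(B))=1-\tfrac12\,K_S\cdot B$ by Kawamata--Viehweg vanishing and Riemann--Roch, deduces $K_S\cdot B=0$ because $B$ fixed gives $h^0(\mathcal{O}_S(B))=1$, and then reaches a contradiction via Serre duality: Riemann--Roch forces $h^2(\mathcal{O}_S(-B))\ge 1$, yet $h^2(\mathcal{O}_S(-B))=h^0(\mathcal{O}_S(K_S+B))=h^0(\mathcal{O}_S(-\sum_{i=1}^r C_i-M))=0$. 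No contraction, no induction, no structure lemmas. You instead pin down each would-be fixed component geometrically---vertical, with $L\cdot\Gamma=0$, hence by adjunction, Lemma \ref{lemma:strongly-non-strongly} (to rule out $\Gamma=C_i$) and the argument of Lemma \ref{lemma:log-del-Pezzo-rational-curves-1}, a $(-1)$-curve meeting the boundary transversally once---then contract by Lemma \ref{lemma:log-del-Pezzo-blow-down-minimally} and induct on $\mathrm{rk}(\mathrm{Pic}(S))$ with $\mathbb{F}_n$ as base case; the steps you gloss (absence of multiple fibres in a genus-zero pencil, $M\sim lF_\psi$ via Stein factorization, and $K_S+\sum_{i=1}^r C_i=\pi^*(K_s+\sum_{i=1}^r\pi(C_i))$, which also appears in \S\ref{PositivitySubsec}) are standard. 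As for what each approach buys: the paper's proof is shorter, self-contained, and---notably---never invokes the strong hypothesis, so it works verbatim for merely asymptotically log del Pezzo pairs of class $(\beth)$; consistently, in Example \ref{example:log-del-Pezzo-good-versus-very-good} the system $|Z_n+nF|$ is still base point free. Hence your closing claim that strongness is precisely where the hypothesis enters is a feature of your route (forced by your appeal to Lemma \ref{lemma:strongly-non-strongly}), not of the statement itself. In exchange, your argument yields explicit geometric information about what a fixed component would have to be, and it meshes naturally with the inductive contraction framework of Sections \ref{section:r-1} and \ref{section:r-2-3-4}.
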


\begin{proof}
Let us first show that $|-(K_{S}+C)|$ is free from fixed
components (see \cite[Theorem~III.1]{Harb}). Suppose this is not
the case. Let $B$ be the fixed part of the linear system
$|-(K_{S}+C)|$, and let $M$ be its mobile part. Then $M$ is nef.
In particular, we have
$h^{1}(\mathcal{O}_{S}(M))=h^{2}(\mathcal{O}_{S}(M))=0$ by the
Kawamata-–Viehweg vanishing theorem. Then it follows from the
Riemann--Roch theorem that
$$
2l=h^{0}(\mathcal{O}_{S}(M+B)=h^{0}(\mathcal{O}_{S}(M)=1+\frac{M. (M-K_{S})}{2},%
$$
which implies that $M^2-M. K_{S}=4l-2$. On the other hand, we
have
\begin{multline*}
0=(K_{S}+\sum_{i=1}^{r}C_{i})^2=(B+M)^2=B^2+2B. M+M^2=\\
=B. (B+M)+B. M+M^2=-(K_{S}+\sum_{i=1}^{r}C_{i}).
B+B. M+M^2\ge 0,\\
\end{multline*}
since both $-(K_{S}+\sum_{i=1}^{r}C_{i})$ and $M$ are nef.
Hence, we have $M^2=0$ and $B. M=0$, which implies that
$B^2=0$, since $(B+M)^2=0$.

We claim that $B$ is nef. Indeed, put $B=\sum_{i=1}^{k}a_{i}B_i$,
where $B_i$ is an irreducible curve, and $a_i$ is a positive
integers. Then
$$
0=(B+M).\Big(\sum_{i=1}^{k}a_{i}B_i\Big)\ge \sum_{i=1}^{k}a_{i}(B+M). B_i,%
$$
which implies that $(B+M). B_i=0$ for every possible $i$.
Similarly, we see that $M. B_i=0$ for every possible $i$,
which implies that $B. B_i=0$ for every possible $i$. Hence,
the divisor $B$ is nef.

Since $B$ is nef, we have
$h^{1}(\mathcal{O}_{S}(B))=h^{2}(\mathcal{O}_{S}(B))=0$ by the
Kawamata–Viehweg vanishing theorem. Applying  the Riemann--Roch
theorem to the divisor $B$, we see that
$$
h^{0}(\mathcal{O}_{S}(B))=1+\frac{B. (B-K_{S})}{2}=1-\frac{B. K_{S})}{2}\ge 0,%
$$
since $-K_{S}\sim \sum_{i=1}^{r}C_{i}+B+M$ and $B$ is nef. But
$h^{0}(\mathcal{O}_{S}(B)=1$, because $B$ is the fixed part of the
linear system $|-(K_{S}+\sum_{i=1}^{r}C_{i})|$. The latter implies
that $-K_{S}. B=0$. Since $B\ne 0$ by assumption, the
Riemann--Roch theorem implies that
\begin{multline*}
h^{2}(\mathcal{O}_{S}(-B))=h^{0}(\mathcal{O}_{S}(-B))+h^{2}(\mathcal{O}_{S}(-B))=\\
=1+h^{1}(\mathcal{O}_{S}(-B))+\frac{B.
(B-K_{S})}{2}=1+h^{1}(\mathcal{O}_{S}(-B))\ge 1,\\
\end{multline*}
which implies that $h^{2}(\mathcal{O}_{S}(-B))\ne 0$. By Serre
duality, we have
$h^{2}(\mathcal{O}_{S}(-B))=h^{0}(\mathcal{O}_{S}(B+K_{S}))$. But
$$
B+K_{S}\sim -\sum_{i=1}^{r}C_{i}-M,
$$
which implies that $h^{0}(\mathcal{O}_{S}(B+K_{S}))=0$, which is a
contradiction. Thus, the linear system
$|-(K_{S}+\sum_{i=1}^{r}C_{i})|$ is free from fixed curves.

Since $|-(K_{S}+\sum_{i=1}^{r}C_{i})|$ is free from fixed curves
and $(-K_{S}-\sum_{i=1}^{r}C_{i})^2=0$, the linear system
$|-(K_{S}+\sum_{i=1}^{r}C_{i})|$ does not have base points at all.
\end{proof}

Since $(-K_{S}-\sum_{i=1}^{r}C_{i})^2=0$, the linear system
$|-(K_{S}+\sum_{i=1}^{r}C_{i})|$ is composed from a base point
free pencil. By Bertini theorem, there exists a smooth irreducible
curve $F$ such that $F^2=0$, the linear system $|F|$ is a base
point free pencil, and
$$
-K_{S}-\sum_{i=1}^{r}C_{i}\sim kF
$$
for some positive integer $k$. Since $-K_{S}$ is big, we have
$-K_{S}. F>0$. Hence, we have $-K_{S}. F=2$ and
$F\cong\mathbb{P}^1$ by adjunction formula. Then it follows from
the Riemann--Roch theorem that $h^0(\mathcal{O}_{S}(kF))=k+1$,
which implies that $k=l$.

We may assume that $F$ is a general curve in $|F|$. The pencil
$|F|$ gives a morphism $\xi\colon S\to\mathbb{P}^1$ whose general
fiber is $F\cong\mathbb{P}^1$, i.e., the morphism $\xi$ is a
\emph{conic bundle}. Since $-K_{S}. F=2$ and
$-(K_{S}+\sum_{i=1}^{r}C_{i}). F=0$, we have $F.
(\sum_{i=1}^{r}C_{i})=2$.

For every irreducible curve $Z$ on the surface that is contained
in the fibers of $\xi$, we have
$$
0<-(K_{S}+\sum_{i=1}^r(1-\beta_i)C_i). Z\sim_{\mathbb{R}} F. Z+\sum_{i=1}^r\beta_iC_i. Z=\sum_{i=1}^r\beta_iC_i. Z,%
$$
for all $0<|\beta|\ll 1$, because $(S,\sum_{i=1}^{r}C_{i})$ is
strongly asymptotically log del Pezzo (note that this step does
not work if $(S,\sum_{i=1}^{r}C_{i})$ just asymptotically log del
Pezzo). This implies that $\sum_{i=1}^{r}C_{i}. Z>0$. Keeping
in mind that $\sum_{i=1}^{r}C_{i}. F=2$, we see that either
$\sum_{i=1}^{r}C_{i}. Z=1$ or $\sum_{i=1}^{r}C_{i}. Z=2$.
In the latter case, we must have $Z\sim F$. This implies that
$\xi$ is so-called \emph{standard conic bundle}, i.e., every
singular fiber of $\xi$ consists of a union of two smooth rational
curves that intersect each other transversally at one point.

\section{Reductivity of the automorphism group of a pair}
\label{ReductiveSec}

Denote by $\aut(X)$ the Lie algebra of holomorphic vector fields
on $(X,J)$, i.e., all vector fields $V\in\Gamma(X,TX)$
satisfying $L_VJ=0$. We emphasize that these are real vector fields.
The projection of $V$ onto $T^{1,0}X$, denoted
$V^{1,0}=(V-\i JV)/2$ will
be referred to as a holomorphic (1,0)-vector field, and
it is sometimes convenient to work with $\aut(X)$
recast in this complex notation.
Let  $\aut(X,D)\subset\aut(X)$
denote the subspace of fields tangent to $D$. It
is a Lie subalgebra.

\begin{prop}
\label{reductiveprop}
Let $(X,D,\o)$ be a KEE manifold. Then
$\aut(X,D)$ is the complexification of the Lie algebra
of (Hamiltonian) Killing vector fields of $(X,\o)$.
\end{prop}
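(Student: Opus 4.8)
The plan is to transport the Matsushima--Lichnerowicz argument \cite{Matsushima} to the edge setting; the only genuinely new difficulty is justifying the Bochner-type integrations by parts over $X\setminus D$ in the presence of the conical degeneration of $\o$ transverse to $D$, and for this I would invoke the polyhomogeneous edge asymptotics of \cite{Mazzeo,JMR}.

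\emph{Easy inclusion and reduction.} Since the set where $g$ has a cone singularity is intrinsic, every isometry in $\h{\rm Isom}_0(X,g)$ preserves $D$; differentiating, every (Hamiltonian) Killing field lies in $\aut(X,D)$, so the real Lie algebra $\mathfrak{k}$ of such fields satisfies $\mathfrak{k}\subseteq\aut(X,D)$. Because $D$ is a complex submanifold, $TD$ is $J$-invariant, hence $\aut(X,D)$ is stable under $V\mapsto JV$ and is a complex Lie algebra; therefore $J\mathfrak{k}\subseteq\aut(X,D)$ and $\mathfrak{k}\oplus J\mathfrak{k}\subseteq\aut(X,D)$. The content is the reverse inclusion: given $V\in\aut(X,D)$ I must produce Killing fields $V_1,V_2$ with $V=V_1+JV_2$. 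Passing to $\xi:=V^{1,0}$, a $\dbar$-closed holomorphic section of $T^{1,0}(X\setminus D)$, this reduces to two claims: (a) $\xi=\nabla^{1,0}u$ for a complex \emph{holomorphy potential} $u$; and (b) $\bar u$ is again a holomorphy potential. Granting (a)--(b), the real and imaginary parts $u_1,u_2$ of $u$ are themselves real holomorphy potentials, and $\mathrm{grad}_g u_1,\ \mathrm{grad}_g u_2$ produce, after applying $J$, the desired Killing fields, yielding the splitting $\mathfrak{k}\oplus J\mathfrak{k}$.

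\emph{Existence of the potential.} The $(0,1)$-form $\eta:=\iota_\xi\o$ (equivalently $\eta_{\bar j}=g_{i\bar j}\xi^i$) is $\dbar$-closed, by the \K condition and holomorphicity of $\xi$. I would apply the weighted $L^2$ Hodge theory for the edge metric on $(X\setminus D,\o)$ to write $\eta=\dbar u+\eta_{\mathrm{harm}}$; the harmonic part corresponds, via an edge Bochner argument, to an $\o$-parallel and hence Killing holomorphic field, so it already sits in $\mathfrak{k}^{\mathbb C}$ and it suffices to treat the exact piece $\xi=\nabla^{1,0}u$. Tangency of $V$ to $D$ guarantees that $\eta$ lies in the correct weighted space and that $u$ is polyhomogeneous with a bounded leading term at $D$; the indicial analysis of Mazzeo's edge calculus \cite{Mazzeo}, in the form established for complex codimension one in \cite{JMR}, supplies the precise expansion $u\sim\sum c_{jk}\,r^{j+k/\beta}(\log r)^{p}$ that is used below.

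\emph{Conjugation invariance and the main obstacle.} On $X\setminus D$ the metric is genuinely \KEno, $\Ric\o=\mu\o$, and $\xi=\nabla^{1,0}u$ is holomorphic precisely when the $(0,2)$-Hessian $\bar\nabla\bar\nabla u$ vanishes. The Lichnerowicz identity together with the Einstein equation gives the Weitzenb\"ock relation $\mathcal{D}^*\mathcal{D}=\tfrac14(\Delta-\mu)\Delta$ for $\mathcal{D}u=\bar\nabla\bar\nabla u$, where $\Delta=\dbar^*\dbar$. Integrating this against $u$ forces $\Delta u=\mu u$ modulo constants, exhibiting the holomorphy potentials as the $\mu$-eigenspace of the \emph{real} operator $\Delta$ together with the constants; since $\Delta$ commutes with complex conjugation this eigenspace is conjugation-invariant, which is exactly claim (b). The crux, and the step I expect to be hardest, is that each integration by parts is taken over the incomplete manifold $X\setminus D$: one works on $\{r>\eps\}$ and must show the boundary integrals over $\{r=\eps\}$ vanish as $\eps\to0$. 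Here the polyhomogeneous expansions are decisive: combining the leading exponents of $u$ (from tangency of $V$ and the edge regularity of the previous step) with the degeneration rates of $\o$ and of the induced volume and boundary measures, one checks that each boundary term is $o(1)$. Granting this vanishing, the Matsushima mechanism closes and yields $\aut(X,D)=\mathfrak{k}\oplus J\mathfrak{k}$, i.e., $\aut(X,D)$ is the complexification of the (Hamiltonian) Killing algebra.
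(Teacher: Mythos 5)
Your proposal is sound and its core coincides with the paper's proof: both identify the crux as showing that a holomorphic field tangent to $D$ has a potential $u$ which is forced to be a $\mu$-eigenfunction of the Laplacian (via the Lichnerowicz/Weitzenb\"ock identity, cf.\ \eqref{WeitzCxEq} and \eqref{LichneEq}), so that conjugation-invariance of the eigenspace yields the splitting $\mathfrak{k}\oplus J\mathfrak{k}$; and both justify the integrations by parts on $X\setminus D$ by the polyhomogeneous edge expansion \eqref{uPhgExpEq} coming from Mazzeo's edge calculus \cite{Mazzeo} as implemented in \cite{JMR}. (Your ``$\Delta u=\mu u$ modulo constants'' deduction is in fact the paper's Remark following the proof --- the shorter route through the kernel of $\D_\o\mapsto\D_\o+\mu$ --- while the proof in the text runs the integral version and closes it with the spectral gap $\lambda_1=\mu$ of \cite[Lemma 6.1]{JMR}, which you should cite at that point.) The genuine divergence is how the potential is produced. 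You invoke weighted $L^2$ Hodge theory for the incomplete edge metric and then dispose of the harmonic part by a Bochner argument; the paper instead observes that the $(0,1)$-form $g(V^{1,0},\cdot)$ is $\dbar$-closed and applies Lemma~\ref{boneLemma}, which gives $b_1(X)=0$ purely algebraically from Kawamata--Viehweg vanishing, so the form is exact with no harmonic part to discuss. The paper's route is cheaper: it sidesteps having to set up an $L^2$ Hodge decomposition for an incomplete edge metric (self-adjoint extensions, $L^2$-Stokes, regularity of the harmonic projector), which is machinery of roughly the same weight as the boundary-term analysis you already flag as the hard step, and whose edge-theoretic details are not off-the-shelf. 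Your approach buys independence from the algebro-geometric vanishing theorem, which could matter in settings where $-K_X-\sum(1-\be_i)D_i$ carries no positivity, but in the relevant case $\mu>0$ note also a small inaccuracy: by Bochner with $\Ric\,\o=\mu\o>0$ the harmonic $(0,1)$-part vanishes identically --- it does not survive as a parallel (Killing) field sitting in $\mathfrak{k}^{\CC}$, since a compact positive-Ricci space admits no nonzero parallel $1$-forms.
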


\bpf
Suppose $V=\nabla u\in\aut(X,D)$ is a gradient holomorphic vector field
(here $u$ is a real-valued function).
We claim that $JV$ is a Killing field with respect to $g$ (the metric
associated to $\o$).
Indeed, this is equivalent to $Z\mapsto \nabla_Z(JV)=J\nabla_ZV$ being a skew-symmetric
endomorphism of $TX$ \cite[Proposition 27]{Petersen2nd}. But
$\nabla (J\nabla u)
=
\nabla
(\i\nabla^{1,0}u-\i\nabla^{0,1}u)
=
\i(
\nabla^{0,1}\nabla^{1,0}u-\nabla^{1,0}\nabla^{0,1}u)
=-\i\ddbar u$, since
$\nabla^{1,0}V^{1,0}=\nabla^{1,0}\nabla^{1,0}u=0$.

Next, we claim that any element $X$ of $\aut(X,D)$ is necessarily
a linear combination of a gradient vector field
and $J$ applied to such a field. In fact, consider the
$(0,1)$-form $g(V^{1,0},\,.\,)$ given in local coordinates
by $g_{i\b j}V^i\overline{dz^j}$. Since $\nabla^{1,0}V=0$,
this form is closed. Thus, by
Lemma~\ref{boneLemma} below,
it equals a $\dbar$-exact form, say $\dbar u/2$ with
$u$ complex-valued. It follows that $V^{1,0}=\nabla^{1,0}u/2$, and
$V^{0,1}=\nabla^{0,1}\b u/2$, so
\beq
\label{XDecompEq}
V=\nabla\Re\, u+J\nabla\Im\, u.
\eeq

The same argument also shows that any Killing field $V$ is
necessarily a Hamiltonian vector field. In fact,
an isometry homotopic to the identity preserves any $\o$-harmonic form
by Hodge theory \cite[p. 82]{GH} since it preserves its class.
Thus, $L_V\o=0$, or $\iota_V\o=0$. Since $b^1(V)=0$
then $\iota_V\o=du$ and $V=-J\nabla u$.
Further, since $L_Vg=0$, $L_V\o=0$, and $\o(\,.\,,\,.\,)=g(J\,.\,,\,.\,)$,
then also $L_VJ=0$. Next, note that any automorphism
of $(X,\o)$ must preserve its singular set, i.e., $D$.
Thus, combining all the above, $\aut(X,D)$ must contain
the Lie algebra of Killing fields of $(X,\o)$.

Thus, we would be done if we knew that each summand
in the decomposition \eqref{XDecompEq} of a holomorphic vector field
$V$ were itself a holomorphic vector field (then $V$
would be equal to a Killing field and $J$ times such a field, by
the previous paragraphs).
To show that, recall that as shown in \cite[\S6]{JMR},
if $\o$ is a KEE metric
of positive Ricci curvature $\mu$, and $\phi$ is
a complex-valued eigenfunction of $-\Delta_\o$ with eigenvalue $\mu$
then $\nabla^{1,0}\phi$ is a holomorphic (1,0)-vector field tangent to
$D$. We claim that the converse is true as well.
Assuming this claim, there is an isomorphism
between $\Lambda_\mu(-\Delta_\o)$ (the aforementioned eigenspace)
and $\aut(X,D)$ given by $u+\i v\mapsto\nabla u+J\nabla v$,
where $u,v\in C^\infty(X)$ are real-valued functions.
By the remark at the beginning of this paragraph then,
the proposition follows: indeed, for a complex-valued function
in $\Lambda_\mu(-\Delta_\o)$ it is immediate that both
its real and imaginary parts are contained in $\Lambda_\mu(-\Delta_\o)$
since $-\Delta_\o$ is a linear operator.

Thus suppose that $u\in C^\infty(X\setminus D)\cap C^0(X)$ is such that
$\nabla u\in\aut(X,D)$, so that $\nabla^{1,0}\nabla^{1,0} u=0$
(the gradients here and below are with respect to the edge metric $\o$
and the underlying complex structure).
Thus, using the Weitzenb\"ock formula \cite[\S6]{JMR} and the KEE assumption,
\begin{equation}
\label{WeitzCxEq}
\baeq
\Delta_\o|\nabla^{1,0} u |_\o^2
&= 2\Ric(\nabla^{1,0} u ,\nabla^{0,1} u ) + 2|\nabla^{1,0}\nabla^{1,0}u |^2
+ 2(\Delta_\o u)^2 + 4\o(\nabla^{1,0} u,\nabla^{0,1} \Delta_\o u)
\cr
&= 2\mu|\nabla^{1,0} u |_\o^2
+ 2(\Delta_\o u)^2 +4\o(\nabla^{1,0} u,\nabla^{0,1} \Delta_\o u)
.
\eaeq
\end{equation}
To conclude then, it would suffice to integrate \eqref{WeitzCxEq} and prove that
\beq
\label{StokesVerifEq}
\int_X
\Delta_\o|\nabla^{1,0} u |_\o^2\on=0,
\q \int_X
|\nabla^{1,0} u |_\o^2\on
=-
\int_X
u\Delta_\o u\on,
\eeq
and
\beq
\label{StokesVerifSecondEq}
-\int_X\o(\nabla^{1,0} u,\nabla^{0,1} \D_\o u)\on
=\int_X(\D_\o u)^2\on.
\eeq
Indeed, these identities then imply
\beq\label{ImplyEq}
\mu\int_X
|\nabla^{1,0} u |_\o^2\on
=
\int_X (\D_\o u) ^2.
\eeq
They also imply that
$$
\baeq
\int_X
|\nabla^{1,0} u |_\o^2\on
&=
-\int_X
u\Delta_\o u\on
\le
||u||_{L^2(X,\on)}
||\D_\o u||_{L^2(X,\on)}
\cr
&\le
\mu^{-1/2}||\D_\o u||_{L^2(X,\on)}
||\nabla^{1,0} u||_{L^2(X,\on)}
\eaeq
$$
where we used the fact that since $\o$ is KEE, the first positive eigenvalue of
$-\D_\o$ equals $\mu$ \cite[Lemma 6.1]{JMR}.
Therefore,
$\mu\int_X
|\nabla^{1,0} u |_\o^2\on
\le
\int_X (\D_\o u) ^2,
$
with equality if and only if $u$ is an eigenfunction of $-\D_\o$
with eigenvalue $\la_1=\mu$. Thus, by \eqref{ImplyEq}, $u$
is such an eigenfunction, concluding the proof of the proposition.

We now turn to proving \eqref{StokesVerifEq}--\eqref{StokesVerifSecondEq}.
First, we claim that
$u$ in fact has a polyhomogeneous expansion of the form
\beq\label{uPhgExpEq}
u \sim
a_0(y) + (a_{10}(y)\cos\th+a_{11}(y)\sin\th)r^{\frac{1}{\be}} + a_2(y) r^2 + O(r^{2+\eta}),
\eeq
for some $\eta > 0$. Here $y$ is a local coordinate on $D$
and $re^{\i \th}=z_1^\be$ with $D=\{z_1=0\}$ locally.
For the proof, observe that
since $\nabla^{1,0}\nabla^{1,0}u=0$,
$u$ lies in the kernel
of the self-adjoint fourth-order Lichnerowicz operator
$D_\o:=L_\o^\star\circ L_\o:C^\infty(X\setminus D)\cap C^0(X)
\ra C^\infty(X\setminus D)\cap C^0(X)$; here
$L_\o:u\mapsto \nabla^{1,0}\nabla^{1,0}u$ and $L_\o^\star$
is the formal $L^2$ adjoint computed with respect to $\o$.
Second, $D_\o$ is a linear degenerate elliptic operator of edge
type, in the sense of Mazzeo \cite{Mazzeo}, whose principal symbol is
$\Delta_\o^2$; more precisely \cite[(2.1)]{Calabi1985},
\beq\label{LichneEq}
D_\o=\Delta_\o^2+(\Rico,\i\ddbar(\,.\,))_\o
+(\del s_\o,\del u)_\o=\D_\o^2+\mu\D_\o,
\eeq
by the KEE assumption.
The expansion \eqref{uPhgExpEq}
then follows from the polyhomogeneous expansion for
the KEE metric $\o$
\cite[Theorem 1]{JMR}
and the polyhomogeneous structure of inverses of
elliptic edge operators associated to polyhomogeneous \K edge metrics
\cite[Theorem 6.1]{Mazzeo},\cite[Proposition 3.8]{JMR}, and the
fact that $u$ is bounded (if $u$ were not bounded then
its expansion would contain a $\log r$ term, but then the corresponding
vector field would not be bounded).

Finally, given  \eqref{uPhgExpEq}, the verification of
\eqref{StokesVerifEq}--\eqref{StokesVerifSecondEq} follows in the same way as in
the proof of \cite[Lemma 6.1]{JMR}.
\epf

\begin{remark}
{\rm
A shorter proof would be to avoid the Weitzenb\"ock formula
and and use \eqref{LichneEq} directly. It then follows that
$v=\D_\o u+\mu u$ is in the kernel of $\D_\o$. By the asymptotic
expansion for bounded solutions of $D_\o u=0$ we see that
$v$ is bounded (indeed, the term of order $O(r^{1/\be})$
in \eqref{uPhgExpEq} is in the kernel of $\D_\o$), and hence a constant.
Then it follows that by changing $u$ by a constant it must
be a eigenvalue of $\D_\o$ with eigenvalue $-\mu$. We preferred
the current proof since the Weitzenb\"ock formula
was used in  \cite{JMR} to obtain one direction
of the isomorphism proved here,
and it seemed natural to emphasize what is needed to make
that proof work in the other direction.
}
\end{remark}

\begin{lemma}
\label{boneLemma}
Let $(X,D,\o)$ be a \K edge manifold, and suppose that
$c_1(X)-\sum_i(1-\be_i)[D_i]=\mu[\o]$ with $\mu>0$.
Then $b_1(X)=0$.
\end{lemma}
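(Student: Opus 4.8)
The plan is to deduce the topological vanishing $b_1(X)=0$ from the cohomological vanishing $h^1(\mathcal{O}_X)=0$, the latter obtained from Kawamata--Viehweg vanishing. Note first that $b_1(X)$ is an invariant of the underlying compact complex manifold, so the edge structure of $\o$ is irrelevant and only the positivity of the class enters. Since $c_1(X)-\sum_i(1-\be_i)[D_i]=\mu[\o]$ with $\mu>0$, the $\RR$-divisor $-K_X-\sum_i(1-\be_i)D_i$ is a positive multiple of the Kähler edge class $[\o]$ and is therefore ample. As ampleness is an open condition on $\Pic(X)\otimes\RR$ and rational classes are dense there, I would first replace the angles by rational numbers $\be_i'$, with each $1-\be_i'\in[0,1)$ and close to $1-\be_i$, so that
\[
L:=-K_X-\sum_i(1-\be_i')D_i
\]
is still ample; being now an ample $\QQ$-divisor, $L$ exhibits a rational Kähler class, so $X$ is projective by Kodaira's embedding theorem.

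Next I would observe that, since $D=\sum_iD_i$ has simple normal crossings and the coefficients $1-\be_i'$ all lie in $[0,1)$, the pair $(X,\sum_i(1-\be_i')D_i)$ has Kawamata log terminal singularities, while $-(K_X+\sum_i(1-\be_i')D_i)=L$ is ample, hence nef and big. The logarithmic (klt) form of the Kawamata--Viehweg vanishing theorem applied to the trivial divisor then gives
\[
H^i(X,\mathcal{O}_X)=0 \qquad \h{for every } i>0,
\]
since $0\equiv K_X+\sum_i(1-\be_i')D_i+L$ with the pair klt and $L$ nef and big. In particular $h^1(\mathcal{O}_X)=0$. Finally, as $X$ is compact Kähler (indeed projective), the Hodge decomposition $H^1(X,\CC)=H^{1,0}(X)\oplus H^{0,1}(X)$, together with conjugation $\overline{H^{1,0}(X)}=H^{0,1}(X)$ and the Dolbeault isomorphism $H^{0,1}(X)\cong H^1(X,\mathcal{O}_X)$, yields $b_1(X)=2\,h^1(\mathcal{O}_X)=0$.

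The only genuine subtlety, and hence the step I would treat most carefully, is the possible irrationality of the angles $\be_i$: this blocks a direct appeal to the $\QQ$-divisor statements for both projectivity and vanishing, and is exactly what the rational perturbation in the first step is designed to repair, relying only on openness of the ample cone and density of rational classes. Alternatively, one can work throughout with the $\RR$-divisor version of Kawamata--Viehweg vanishing; or one can bypass vanishing entirely by noting that $-K_X=L+\sum_i(1-\be_i')D_i$ is big, so that $(X,\sum_i(1-\be_i')D_i)$ is of Fano type and hence $X$ is rationally connected by \cite{Zhang}, which already forces $H^i(X,\mathcal{O}_X)=0$ for $i>0$.
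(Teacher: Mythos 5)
Your proposal is correct and follows essentially the same route as the paper: perturb the angles to rational values using openness of ampleness, apply the Kawamata--Viehweg vanishing theorem (in its fractional snc/klt form) to get $h^1(\mathcal{O}_X)=0$, and conclude $b_1(X)=2h^1(\mathcal{O}_X)=0$ by Hodge theory. Your treatment is slightly more explicit about projectivity and offers the rational-connectedness alternative, but the argument is the same as the paper's.
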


\bpf
This is a direct corollary of the Kawamata--Viehweg Vanishing
Theorem which states that $H^i(X,\calO_X(K_X+N))=0$ for all $i>0$
whenever $N$ is numerically equivalent to a sum $B+\Delta$ of
a big and nef $\QQ$-divisor $B$, and a $\QQ$-divisor with snc
support $\Delta$ \cite[Vol. II, \S9.1.C]{Lazarsfeld}.
Thus, we may choose $\be\in\QQ^N\cap (0,1)^N$ such that
$B:=-K_X-\sum_i(1-\be_i)D_i$ is ample, and set $N=\sum_i(1-\be_i)D_i$.
Finally, by Hodge theory $b_1(X)=2h^{1,0}(X)=2\dim H^1(X,\calO_X)=0$
\cite[p. 105]{GH}.
\epf

\begin{remark}{\rm
In fact, it follows from \cite[Corollary 1]{Zhang}
that $X$ is simply connected in a much more general
setting. This generalizes the
classical result of Kobayashi in the Fano case
\cite{Kobayashi}.
}
\end{remark}

Let $\Aut_0(X)$ denote the connected Lie group
associated to $\aut(X)$. Similarly, denote by $\Aut_0(X,D)\subset\Aut_0(X)$
the Lie subgroup associated to $\aut(X,D)$. This is the identity component
of the automorphism group of the pair.
Putting the above results together we obtain a version of Matsushima's
Theorem \cite{Matsushima} for pairs.

\begin{proof}[Proof of Theorem \ref{reductiveThm}]
Suppose that
$c_1(X)-(1-\be)[D]=\mu[\o]$ with $\mu\in\RR$.
In case $\mu>0$ the statement is a corollary of Propostion
\ref{reductiveprop} since as noted in its proof every
Killing vector field of $(X,g)$ is Hamiltonian.

Suppose now that $\mu\le0$. Let $\psi\in\Aut_0(X,D)$.
Since $\psi\in\Aut(X)$, $\psi^\star c_1(X)=c_1(X)$. Since
$\psi$ fixes $D$, $\psi^\star[D]=[D]$. Thus, $\psi^\star[\o]=[\o]$.
Therefore, if $\o$ is KEE then $\psi^\star\o$ is a cohomologous
KEE form. But, when $\mu\le0$ the KEE form is unique in its
cohomology class \cite[Theorem 2]{JMR}. Thus $\psi$ is
the identity map, and $\Aut_0(X,D)=\{\id\}$.
\epf

\begin{remark}{\rm
Using the arguments above one can prove a corresponding
generalization to the edge setting of Calabi's theorem on the structure of
the automorphism group of an extremal metric \cite{Calabi1985}.
For brevity, we do not go into the details here.
}
\end{remark}

\section{Tian invariants of asymptotic pairs}
\label{section:alpha}

Throughout the article we use the standard language of the
singularities of pairs \cite{Ko97,CSD}. By strictly
log canonical (lc) singularities we mean log canonical singularities
that are not Kawamata log terminal \cite[Definition~3.5]{Ko97}.
We also distinguish
between an $\a$-invariant as
in Definition \ref{definition:global-threshold} below,
by which we refer to a global log
canonical threshold, and a Tian invariant, by which we
refer to the analogous invariant defined analytically in
terms of metrics \cite{T87}. These two invariants coincide under
certain regularity assumptions
\cite{CSD,Berm}. The algebraic definition
makes sense in more general (singular and/or degenerate) settings, while
the analytic definition is useful for proving
existence of KEE metrics by Theorem \ref{theorem:Mazzeo-Rubinstein-Jeffres}.

\subsection{A general bound on 
global
log canonical thresholds
of pairs
}
\label{lctSubsec}

Given a proper birational morphism $\pi:Y\ra X$, we define
the exceptional set of $\pi$ to be the smallest subset $\exc(\pi)\subset Y$,
such that $\pi:Y\sm\exc(\pi)\ra X\sm \pi(\exc(\pi))$ is an isomorphism.

A log resolution of $(X,\Delta)$ is a proper birational morphism
$\pi:Y\ra X$ such that $\pi^{-1}(\Delta)\cup\{\exc(\pi)\}$
is divisor with snc support.
Log resolutions exist for all the pairs we will consider in this article,
by Hironaka's theorem.

Assume that \mbox{$K_{X}+\D$} is a~$\mathbb{Q}$-Cartier
divisor.
Given a log resolution of $(X,\Delta)$, write
$$
\pi^\star (K_X+\Delta)=K_Y+\tilde \Delta +\sum e_i E_i,
$$
where
$\tilde \D$ denotes the proper transform of $\Delta$, and
where $\exc(\pi)=\cup E_i$, and $E_i$ are irreducible codimension one subvarieties.
Also, assume $\Delta=\sum \delta_i\Delta_i$, with $\Delta_i$ irreducible codimension one subvarieties,
so $\tilde \Delta=\sum \delta_i\tilde \Delta_i$.
Singularities of pairs can be measured as follows.

\begin{definition}
Let $Z\subset X$ be a subvariety.
A pair $(X,\Delta)$ has at most log canonical (lc)
singularities
along $Z$ if $e_i,\delta_j\le 1$ for every $i$ such that
$E_i\cap Z\not=\emptyset$ and every $j$ such that $\Delta_j\cap Z\not=\emptyset$.

\end{definition}

\begin{definition}
Let $Z\subset X$ be a subvariety.
The log canonical threshold of the pair $(X,\Delta)$ along $Z$ is
$$
\lct_Z(X,\Delta):=\sup\{\lambda\,:\, (X,\lambda \Delta) \h{\ is log-canonical along $Z$}\}.
$$
Set $\lct(X,\Delta):=\lct_X(X,\Delta)$.
\end{definition}

%%%%%%%%%%%%%%%%%%%%%%%%%%

Let $X$ be a~variety, let $B$ and $D$ be effective
Cartier $\mathbb{Q}$-divisors on the~variety $X$~such that
the~singularities of the~log pair $(X,B)$~are
log
terminal, and \mbox{$K_{X}+B+D$} is a~$\mathbb{Q}$-Cartier
divisor. Recall that the log canonical threshold of the~boundary
$D$  is the number
$$
\mathrm{lct}\big(X,B;D\big)=\mathrm{sup}\left\{\lambda\in\mathbb{Q}
\,:\, \mathrm{the~pair}\  \big(X, B+\lambda D\big)\ \mathrm{is\ log\ canonical}\right\}.%
$$
Let $H$ be an ample $\mathbb{Q}$-divisor on $X$, and let $[H]$ be
the class of the divisor $H$ in
$\mathrm{Pic}(X)\otimes\mathbb{Q}$.

\begin{definition}
\label{definition:global-threshold} The
global log canonical threshold
of the
log pair $(X,B)$ with respect to $[H]$ is the number
$$
\alpha(X,B,[H]):=
\mathrm{inf}\big\{ \mathrm{lct}\big(X,B;D\big)\,:\, D\
\h{is effective $\QQ$-divisor such that}\ D\sim_{\mathbb{Q}}\ H\big\}.%
$$
\end{definition}

For simplicity, we put $\alpha(X,[H])=\alpha(X,B,[H])$ if
there is no boundary, i.e.,
$B=0$. Similarly, we put $\alpha(X, B)=\alpha(X,B,[H])$
if $H\sim_\QQ-(K_{X}+B)$.

Finally, we put $\alpha(X)=\alpha(X,[H])$
if $B=0$ and $H=-K_{X}$. Note that it follows from
Definition~\ref{definition:global-threshold} that
\beq
\label{AlphaSecondDefEq}
\alpha\big(X,B,[H]\big)=\mathrm{sup}\left\{\ c \ \left| \
\aligned
&\ \h{for every $\QQ$-divisor}\ D\ \mathrm{such\ that}\ D\sim_{\mathbb{Q}} H\\
&\ \mathrm{the\ log\ pair}\ (X, B+cD)\ \mathrm{is\ log\ canonical}\\
\endaligned\right.\right\},
\eeq
and $\alpha(X,B,[\mu H])=\a(X,B,[H])/\mu$ for
every positive rational number $\mu$.

By a result of Demailly \cite[Appendix]{CSD}
(with complements by Berman \cite{Berm} in the log setting)
$\alpha(X,\sum(1-\be_i)D_i,[H])$ coincides with Tian's invariant
for the \K class $[H]$ \cite{T87}
when $X$ is smooth, $\sum D_i$ has simple normal crossings
and when the background measure has edge singularities
of angle $2\pi\be_i$ along $D_i$. In other words
\beq\label{SecondDefAlpha}
\alpha(X,\sum(1-\be_i)D_i,[H])
=
\sup\Big\{\; a \,:\,
\sup_{\vp\in\PSH(X,\o_0)}\int_X
e^{-a(\vp-\sup\vp)}\o^n<\infty
\Big\},
\eeq
where $\o$ is a \K edge metric with angle
$2\pi\be_i$ along $D_i$ and $\o_0$ is a smooth
\K metric with $[\o_0]=[\o]=[H]$.
In the notation of \cite[\S6.3]{JMR} $\mu=1$,
so in this normalization the criterion for existence of KEE
is precisely the one stated in Theorem \ref{theorem:Mazzeo-Rubinstein-Jeffres}.

The next lemma gives an explicit bound for $\a$-invariants on
curves. We will
make use of it in Proposition \ref{theorem:log-del-Pezzo-alpha-1}
to obtain explicit bounds for $\a$-invariants on log del Pezzo surfaces.
It also serves to illustrate the definitions above.

\begin{lemma}
\label{lemma:lct-curve} Let $C$ be a smooth curve,
$P_i\in C$ distinct points, and $a_i\ge0$.
Suppose that
$(C,\sum_{i=1}^{k}a_{i}P_{i})$ is
log terminal, i.e.,
$a_i<1$ for all $i$. Let $H$ be an ample $\mathbb{R}$-divisor on
$C$, and let $d\in\mathbb{R}_{>0}$ be its degree. Then
$$
\alpha\Big(C,\sum_{i=1}^{k}a_{i}P_{i},[H]\Big)\ge\frac{1-\max\{a_1,\ldots,a_{k}\}}{d}.
$$
Furthermore, equality holds when $C=\mathbb{P}^1$.
\end{lemma}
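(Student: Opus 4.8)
The plan is to exploit the fact that on a smooth curve log canonicity is an entirely pointwise condition, which reduces the computation of every log canonical threshold to a finite minimum of elementary quantities. First I would observe that since $C$ is smooth and any divisor on it is supported on points, its support is automatically simple normal crossing, so the identity map is already a log resolution. Consequently the pair $(C,\Gamma)$, for $\Gamma=\sum_Q c_Q Q$, is log canonical if and only if $c_Q\le 1$ for every point $Q$.

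Next, fix an effective divisor $D=\sum_Q d_Q Q$ with $D\sim_{\RR} H$ (so in particular $\deg D=d$), and set $B=\sum_i a_i P_i$. Writing the coefficient of $B+\lambda D$ at a point $Q$ as $a_Q+\lambda d_Q$ (with $a_Q:=a_i$ if $Q=P_i$ and $a_Q:=0$ otherwise), the previous step gives $\lct(C,B;D)=\min_{Q:\,d_Q>0}\frac{1-a_Q}{d_Q}$. I would then bound each term from below using two facts: $a_Q\le\max\{a_1,\ldots,a_k\}$ for every $Q$, and $d_Q\le\deg D=d$ because $D$ is effective (the coefficient of a single point cannot exceed the total degree). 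This yields $\lct(C,B;D)\ge\frac{1-\max\{a_1,\ldots,a_k\}}{d}$ uniformly in $D$, and taking the infimum over all admissible $D$ produces the asserted lower bound for $\alpha$.

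For the equality when $C\cong\PP^1$, I would produce a single divisor realizing the bound. Choosing $i_0$ with $a_{i_0}=\max\{a_1,\ldots,a_k\}$, I would take $D:=d\,P_{i_0}$; on $\PP^1$ any degree-zero integral divisor is principal, so $H-dP_{i_0}\sim_{\RR}0$ and thus $D\sim_{\RR}H$ is admissible. Since $D$ is supported only at $P_{i_0}$, the formula from the second step collapses to $\lct(C,B;D)=\frac{1-a_{i_0}}{d}=\frac{1-\max\{a_1,\ldots,a_k\}}{d}$, forcing $\alpha$ to be at most this value and hence equal to it. The computation is elementary throughout; the only points requiring care are the reduction of log canonicity to the pointwise coefficient condition (which relies on the identity being a log resolution in dimension one) and the bookkeeping of $\RR$-linear equivalence needed to justify that $dP_{i_0}\sim_{\RR}H$ on $\PP^1$ --- this last point is precisely where the hypothesis $C\cong\PP^1$ enters, since on a higher-genus curve one cannot in general concentrate $H$ at a prescribed point.
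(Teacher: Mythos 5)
Your proof is correct and takes essentially the same approach as the paper's: reduce log canonicity on a smooth curve to the pointwise coefficient condition, bound each coefficient of $D$ by its total degree $d$ to obtain the lower bound, and realize equality on $\mathbb{P}^1$ by concentrating the whole degree at a point $P_{i_0}$ with maximal $a_{i_0}$, using that (rational or real) linear equivalence on $\mathbb{P}^1$ is determined by degree. The only cosmetic difference is your systematic use of $\sim_{\mathbb{R}}$ where the paper writes $\sim_{\mathbb{Q}}$; both handle the bookkeeping identically.
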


\begin{proof}
If $D\sim_\QQ H$ then $D=\sum b_iQ_i$ with $b_i\ge0$ and $\sum b_i=d$.
Then $(C,\sum_{i=1}^{n}a_{i}P_{i}+\lambda D)$
is log canonical precisely when $a_i+\lambda b_i\le 1$,
i.e., $\lambda\le (1-a_i)/b_i$ for all such admissible $b_i$
(here we are assuming that $P_i=Q_i$ otherwise the bounds
are even weaker).
In particular, if $\lambda\le (1-\max_i\, a_i)/d$ then the pair
is always log canonical. This proves the inequality.
The result follows
since we may choose a divisor $D=dP_j$
with $j$ such that $\max_i\,a_i=a_j$;
on $\PP^1$
$\QQ$-rational equivalence is determined solely by degree
so $D\sim_\QQ H$, thus in this case
$\alpha\Big(C,\sum_{i=1}^{k}a_{i}P_{i},[H]\Big)\le
(1-\max_i\, a_i)/d$.
\end{proof}

The following gives a general bound on global log canonical
thresholds of pairs.

\begin{prop}
\label{theorem:handy-proposition} Suppose that $B=(1-\beta)S$,
where $\beta\in(0,1)$ and $S$ is an irreducible nef Cartier
divisor on $X$.
Let $H$ be an ample $\mathbb{Q}$-divisor on $X$.
Put
$$
\gamma=\mathrm{sup}\big\{c\in\mathbb{Q}\ \big\vert\ H-cS\ \mathrm{is\ pseudoeffective}\big\}.%
$$
Then $\alpha(X,(1-\beta)S,[H])\ge\mathrm{min}(\beta/\gamma,
\alpha(X,[H]), \alpha(S,[H]\vert_{S}))$.
\end{prop}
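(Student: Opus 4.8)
The plan is to unwind Definition~\ref{definition:global-threshold} via \eqref{AlphaSecondDefEq}: it suffices to fix an arbitrary effective $\QQ$-divisor $D\sim_{\QQ}H$ and a rational $c<\min(\beta/\gamma,\,\alpha(X,[H]),\,\alpha(S,[H]\vert_S))$ and prove that the pair $(X,(1-\beta)S+cD)$ is log canonical; letting $c$ increase to the minimum then yields the asserted lower bound. Note first that $\gamma\ge 0$ since $H$ is ample (hence pseudoeffective), so $\beta/\gamma$ is well defined in $(0,+\infty]$.

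First I would record the elementary numerical input. Write $D=\lambda S+D'$ with $\lambda\ge0$ and $D'$ effective not containing $S$ in its support. Then $D'\sim_{\QQ}H-\lambda S$ is effective, so $H-\lambda S$ is pseudoeffective and hence $\lambda\le\gamma$ by definition of $\gamma$. Consequently $c\lambda\le c\gamma<\beta$, so the coefficient of $S$ in the boundary, namely $(1-\beta)+c\lambda=1-\beta'$ with $\beta':=\beta-c\lambda>0$, is strictly less than $1$. Thus $(X,(1-\beta)S+cD)=(X,(1-\beta')S+cD')$ with $0<1-\beta'<1$.

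Next I would check log canonicity in two complementary regions. Away from $S$ the boundary coincides with $cD$, and since $c<\alpha(X,[H])$ the pair $(X,cD)$ is log canonical globally, so there is nothing to prove on $X\setminus S$. Near $S$, I would use monotonicity of log canonicity under enlarging coefficients: since $(1-\beta')S+cD'\le S+cD'$, it is enough to show that $(X,S+cD')$ is log canonical in a neighborhood of $S$. As $S$ is a (normal, in the applications smooth) prime divisor appearing with coefficient $1$ and $S\not\subset\operatorname{Supp}(cD')$, inversion of adjunction reduces this to the log canonicity of $(S,cD'\vert_S)$, where $cD'\vert_S$ is an effective $\QQ$-divisor of class $c\big(H\vert_S-\lambda S\vert_S\big)$.

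The main point --- and the one place where the three invariants genuinely interact --- is to deduce that $(S,cD'\vert_S)$ is log canonical from $c<\alpha(S,[H]\vert_S)$ despite $cD'\vert_S$ lying in the smaller class $c(H\vert_S-\lambda S\vert_S)$ rather than $cH\vert_S$. Here I would use that $S$ is nef, so $\lambda S\vert_S$ is nef, whence $\lambda S\vert_S+\epsilon H\vert_S$ is ample (nef plus ample) on $S$ for every $\epsilon>0$ and therefore $\QQ$-linearly equivalent to some effective $G\ge0$. Then $D'\vert_S+G\sim_{\QQ}(1+\epsilon)H\vert_S$, so $E:=\tfrac{1}{1+\epsilon}(D'\vert_S+G)$ is effective with $E\sim_{\QQ}H\vert_S$. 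Choosing $\epsilon$ small enough that $c(1+\epsilon)<\alpha(S,[H]\vert_S)$, the pair $(S,c(1+\epsilon)E)=(S,c(D'\vert_S+G))$ is log canonical, and since $cD'\vert_S\le c(D'\vert_S+G)$ the same holds for $(S,cD'\vert_S)$, completing the argument. I expect this ample-perturbation step, which converts the merely nef (not a priori effective) correction $\lambda S\vert_S$ into something usable, together with the correct invocation of inversion of adjunction, to be the crux; the numerical bookkeeping in the first two paragraphs is routine.
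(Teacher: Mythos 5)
Your proof is correct and follows essentially the same route as the paper's: decompose the test divisor into its $S$-component plus a residual part, use $\gamma$ to keep the coefficient of $S$ at most $1$, handle points off $S$ with $\alpha(X,[H])$, and handle points on $S$ by monotonicity, inversion of adjunction (\cite[Theorem~5.50]{KoMo98}), and the nef-plus-ample trick to repair the class deficit before invoking $\alpha(S,[H]\vert_S)$. The only differences are cosmetic: you argue directly rather than by contradiction, and you add the effective ample representative $G\sim_{\QQ}\lambda S\vert_S+\epsilon H\vert_S$ on $S$ itself (absorbing the factor $1+\epsilon$ via strictness of $c$), whereas the paper chooses $G\sim_{\QQ}(\lambda-\mu)H+(a-1+\beta)S$ on $X$ with $S\not\subset\mathrm{Supp}(G)$ and then restricts.
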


\begin{proof}
Put $\lambda=\mathrm{min}(\beta/\gamma, \alpha(X,[H]),
\alpha(S,[H]\vert_{S}))$. We may assume that $\lambda>0$. Suppose
that $\mathrm{lct}(X,(1-\beta)S,[H])<\lambda$. Then there exists
an effective $\mathbb{Q}$-divisor $\Delta$ on $X$ such that
$\Delta\sim_{\mathbb{Q}} H$ and the log pair $(X, (1-\beta)S+\mu
\Delta)$ is not log canonical at some point $P\in X$ for some
positive rational number $\mu<\lambda$.

If $P\not\in S$, then  the log pair $(X, \mu \Delta)$ is not log
canonical at the point $P\in X$, contradicting
$\mu<\lambda\le\alpha(X,[H])$ and $\Delta\sim_{\mathbb{Q}}
H$. Thus,  $P\in S$.

Put $(1-\beta)S+\mu \Delta=aS+R$ for some positive rational number
$a\ge 1-\beta$ and some effective $\mathbb{Q}$-divisor $R$
such that $S\not\subset\mathrm{Supp}(R)$. Since
$$
H\sim_{\mathbb{Q}}\Delta=\frac{a-1+\beta}{\mu}S+\frac{1}{\mu}R,
$$
we see that $(a-1+\beta)/\mu\le\gamma$. Because
$\mu<\lambda\le\beta/\gamma$ then $a\le 1$. Since $a\le 1$, the
log pair $(X, S+R)$ is not log canonical at the point $P\in S$.
Thus, it follows from adjunction theorem
\cite[Theorem~5.50]{KoMo98} that the log pair $(S, R\vert_{S})$ is
not log canonical as well.

Note that $R\sim_{\mathbb{Q}} \mu H-(a-1+\beta)S$. Thus, if
$S\vert_{S}$ is $\QQ$-linearly equivalent to some effective
divisor $T_{S}$ on $S$, then
$$
R\vert_{S}+(a-1+\beta)T_{S}\sim_{\mathbb{Q}} \mu H
$$
while $(S, R\vert_{S}+(a-1+\beta)T_{S})$ is not log canonical,
which contradicts $\mu<\alpha(S,[H]\vert_{S}))$. Unfortunately, we
do not know that $S\vert_{S}$ is $\QQ$-linearly equivalent to some
effective divisor on $S$, because we only know that $S\vert_{S}$
is nef. Nevertheless, we still can obtain a contradiction in a
similar way by adding to $S\vert_{S}$ a small piece of an ample
divisor $H\vert_{S}$. Note that
$$
R\sim_{\mathbb{Q}} \mu H-(a-1+\beta)S=\lambda H-\Big((\lambda-\mu)H+(a-1+\beta)S\Big),%
$$
where $(\lambda-\mu)H+(a-1+\beta)S$ is an ample
$\mathbb{Q}$-divisor, since $S$ is nef and $H$ is ample. Thus,
there exists an effective $\mathbb{Q}$-divisor $G$ on the variety
$X$ such that $G\sim_{\mathbb{Q}} (\lambda-\mu)H+(a-1+\beta)S$ and
$S\not\subset\mathrm{Supp}(G)$. Then
$(R+G)\vert_{S}\sim_{\mathbb{Q}} [\lambda H]\vert_{S}$ and the log
pair $(S, (R+G)\vert_{S})$ is not log canonical, since $(S,
R\vert_{S})$ is not log canonical and $G\vert_{S}$ is an effective
$\mathbb{Q}$-divisor on $S$. On the other hand, the log pair $(S,
(R+G)\vert_{S})$ must be log canonical, because
$\lambda\le\alpha(S,[H]\vert_{S}))$ and
$\lambda^{-1}(R+G)\vert_{S}\sim_{\mathbb{Q}} [H]\vert_{S}$.
\end{proof}

The previous result specializes to a result of Berman \cite{Berm}
when $X$ is further assumed to be smooth, when the boundary $S$
is assumed to be smooth and ample, and further
when $S$ and $H$ are proportional in the sense that $S\sim_\QQ c H$,
(i.e., in his setting $S$ is a section of $H$ and $\gamma=c$).
Upon completion of this article we learned that Odaka--Sun
also gave an algebraic proof of Berman's result
in the special case $[H]=[S]=-K_X$ \cite[Corollary 5.5]{OdakaS}. 
We decided
to keep Proposition \ref{theorem:handy-proposition} due
to its general form and possible application to polarizations
different from $-K_X$.
Thus, we obtain as a corollary the following result originally proved
by Berman using analytic methods.

\begin{corollary}
\label{corollary:Berman-inequality} Suppose that $X$ is smooth,
$B=(1-\beta)S$, where $\beta\in(0,1]$ and $S$ is an
irreducible smooth ample Cartier divisor on $X$. Then
$$
\alpha\big(X,(1-\beta)S,[\beta S]\big)\ge
\mathrm{min}\bigg\{1, \frac{\alpha(X,[S])}{\beta}, \frac{\alpha(S,[S]\vert_{S})}{\beta}\bigg\}.%
$$
\end{corollary}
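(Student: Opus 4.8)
The plan is to deduce this as the special case $H:=\beta S$ of Proposition~\ref{theorem:handy-proposition}. First I would check that the hypotheses of that proposition are met: since $X$ is smooth and $S$ is a smooth ample divisor, $S$ is in particular an irreducible nef Cartier divisor, and $H=\beta S$ is an ample $\mathbb{Q}$-divisor because $S$ is ample and $\beta>0$. The only value of $\beta$ not covered directly by the proposition is $\beta=1$, but then $B=(1-\beta)S=0$ and the assertion reduces to $\alpha(X,[S])\ge\min\{1,\alpha(X,[S]),\alpha(S,[S]\vert_{S})\}$, which holds trivially since the left-hand side is itself one of the terms in the minimum; so from now on assume $\beta\in(0,1)$.

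The one genuine computation is the value of the pseudoeffective threshold $\gamma$ associated to $H=\beta S$. Here $H-cS=(\beta-c)S$, so I must determine for which $c$ the class $(\beta-c)S$ is pseudoeffective. If $\beta-c\ge 0$, then $(\beta-c)S$ is a nonnegative multiple of an ample class, hence nef and in particular pseudoeffective. Conversely, if $\beta-c<0$, then intersecting with the ample class $S^{n-1}$ gives $(\beta-c)S^{n}<0$; since any pseudoeffective class has nonnegative intersection with $S^{n-1}$, the class $(\beta-c)S$ cannot be pseudoeffective. Therefore the supremum defining $\gamma$ is attained at $c=\beta$, i.e. $\gamma=\beta$, and consequently $\beta/\gamma=1$. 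This is the step I expect to be the crux, as it is exactly the point where ampleness of $S$ (rather than mere nefness) is used.

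It then remains to identify the other two terms in the bound of Proposition~\ref{theorem:handy-proposition} with those in the statement. Using the scaling relation $\alpha(X,B,[\mu H])=\alpha(X,B,[H])/\mu$ recorded after Definition~\ref{definition:global-threshold}, applied with $B=0$, $H=S$, and $\mu=\beta$, I would rewrite $\alpha(X,[\beta S])=\alpha(X,[S])/\beta$. Restricting to $S$, one has $[\beta S]\vert_{S}=\beta\,[S]\vert_{S}=\beta\,[S\vert_{S}]$, so the same scaling gives $\alpha(S,[\beta S]\vert_{S})=\alpha(S,[S]\vert_{S})/\beta$. Substituting $\gamma=\beta$ together with these two identities into the inequality $\alpha(X,(1-\beta)S,[H])\ge\min(\beta/\gamma,\alpha(X,[H]),\alpha(S,[H]\vert_{S}))$ yields precisely $\alpha(X,(1-\beta)S,[\beta S])\ge\min\{1,\alpha(X,[S])/\beta,\alpha(S,[S]\vert_{S})/\beta\}$, which is the desired conclusion.
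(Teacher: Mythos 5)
Your proposal is correct and is essentially the paper's own derivation: the paper obtains this corollary precisely by specializing Proposition~\ref{theorem:handy-proposition} to the proportional case $H=\beta S$, where the pseudoeffective threshold becomes $\gamma=\beta$ (so $\beta/\gamma=1$) and the scaling relation $\alpha(X,B,[\mu H])=\alpha(X,B,[H])/\mu$ converts the remaining two terms. Your explicit verification that $(\beta-c)S$ fails to be pseudoeffective for $c>\beta$ (via intersection with $S^{n-1}$) and your separate handling of the boundary case $\beta=1$, which the proposition's hypothesis $\beta\in(0,1)$ does not cover, merely fill in details the paper leaves implicit.
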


\subsection{Limiting behavior of $\alpha$-invariants}
\label{LimitAlphaSubsec}

In this subsection we prove Theorem \ref{theorem:log-del-Pezzo-alpha}.
The proof is divided into Propositions
\ref{theorem:log-del-Pezzo-alpha-2},
\ref{theorem:log-del-Pezzo-alpha-3},
and \ref{theorem:log-del-Pezzo-alpha-1}.

More generally, we conjectured in \eqref{AlphaConjEq} that in higher dimensions
$$
\lim_{\beta\to
0^+}\alpha(X,(1-\beta)D)=
\left\{\aligned%
&1\qquad \mathrm{if}\ K_{X}+D\sim 0,\\
&\mathrm{min}\{1,\alpha(X,[-K_{X}-D]),\alpha(D)\}
\qquad \!\mathrm{if}\ 0\not\sim-K_{X}-D\ \mathrm{is\ not\ big},\\
&0\qquad  \mathrm{if}\ -K_{X}-D\ \mathrm{is\ big}.\\
\endaligned
\right.
$$
provided that $D$ is irreducible and smooth. Note that the
divisors $-K_{X}-D$ and $-K_{S}=(-K_{X}-D)\vert_{D}$ may not be
ample. This violates our definitions of $\alpha(X,[-K_{X}-D])$ and
$\alpha(D)=\alpha(D,[(-K_{X}-D)\vert_{D}])$. However, it follows
from \cite[Theorem 3.3]{KoMo98}  that $-K_{X}-D$ and $-K_{D}$ are semiample, so we
can define $\alpha(X,[-K_{X}-D])$ and $\alpha(D)$ in the same way
as in the case when $-K_{X}-D$ and $-K_{D}$ are ample
(and it could happen that $\a=\infty$, for instance). We prove
this conjecture in the cases when $K_{X}+D\sim 0$ or $-K_{X}-D$
is big. While Proposition \ref{theorem:log-del-Pezzo-alpha-3} is
two-dimensional, we believe its proof should find a suitable
generalization to higher dimensions.

\begin{remark}
{\rm
The situation in the simple normal crossings case is more complicated
and there is in general no unique limit for Tian's invariant as $|\be|$
tends to zero. To illustrate this we consider the following
toric example.
Let $L_{1}$, $L_{2}$, $L_{3}$ be
distinct lines on $\mathbb{P}^2$. Then
$$
\alpha(\mathbb{P}^2,\sum_{i=1}^3(1-\beta_{i})L_{i})=\frac{\mathrm{max}(\beta_1,\beta_2,\beta_3)}{\beta_1+\beta_2+\beta_{3}}
$$
for any $(\beta_1,\beta_2,\beta_3)\in(0,1]^3$. Furthermore, let
$G$ be a finite group in $\mathrm{Aut}(\mathbb{P}^2)$ such that
$L_{1}+L_2+L_3$ is $G$-invariant and $G$ does not fix any point in
$\mathbb{P}^2$ (there are infinitely many such groups). Then
$\alpha_{G}(\mathbb{P}^2,(1-\beta)\sum_{i=1}^3L_{i})=1$ for any
$\beta\in(0,1]$. The proof, modelled
on the arguments in \cite[Lemma~5.1.]{CSD},
is left to the reader.
}
\end{remark}

\subsubsection{Class $(\gimel)$ and $(\daleth)$}

The next result holds for asymptotically log Fano varieties in any dimension
and for $C$ with any $r\ge1$.
\begin{prop}
\label{theorem:log-del-Pezzo-alpha-2} Suppose that
$-K_{S}-C$ is big. Then $\lim_{\beta\to 0^+}\alpha(S,(1-\beta)C)=0$.
\end{prop}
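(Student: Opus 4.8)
The plan is to exploit the definition $\alpha(S,(1-\beta)C)=\alpha\big(S,(1-\beta)C,[-(K_S+(1-\beta)C)]\big)$, which is the infimum of the log canonical thresholds $\lct(S,(1-\beta)C;D)$ over effective $\QQ$-divisors $D\sim_{\mathbb{Q}}-(K_S+(1-\beta)C)$. To drive this infimum to $0$ it suffices, for each small $\beta$, to exhibit a \emph{single} competitor $D$ with $\lct(S,(1-\beta)C;D)=O(\beta)$. The mechanism is elementary: if $D$ contains $C$ with multiplicity bounded below independently of $\beta$, then along $C$ the boundary $(1-\beta)C+\lambda D$ has coefficient $(1-\beta)+\lambda\,\mult_C D$, which exceeds $1$ once $\lambda$ surpasses a fixed multiple of $\beta$; by the definition of log canonical singularities (a component of the boundary may not appear with coefficient $>1$) the pair then fails to be log canonical, forcing $\lct(S,(1-\beta)C;D)=O(\beta)$.

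First I would use the bigness of $-K_S-C$ together with the openness of the big cone in $\mathrm{Pic}(S)\otimes\RR$. Since $-K_S-C$ lies in the interior of the pseudoeffective cone and $C$ is a fixed nonzero effective class, the class
$$
-K_S-(1+\delta)C=(-K_S-C)-\delta C
$$
remains big for all sufficiently small $\delta>0$; I fix one such $\delta$, which depends only on $(S,C)$ and \emph{not} on $\beta$. Because a big $\QQ$-divisor is $\QQ$-linearly equivalent to an effective one (see the Conventions), I choose an effective $\QQ$-divisor $E\sim_{\mathbb{Q}}-K_S-(1+\delta)C$.

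Next, for $\beta\in(0,1]$ I set $D:=E+(\delta+\beta)C$, an effective $\QQ$-divisor with
$$
D\sim_{\mathbb{Q}}-K_S-(1+\delta)C+(\delta+\beta)C=-\big(K_S+(1-\beta)C\big),
$$
so $D$ is an admissible competitor in the infimum defining $\alpha$. By construction $\mult_C D\ge\delta+\beta\ge\delta$. Hence for $\lambda>\beta/\delta$ the coefficient of $C$ in $(1-\beta)C+\lambda D$ is at least $(1-\beta)+\lambda\delta>1$, so $(S,(1-\beta)C+\lambda D)$ is not log canonical. Therefore $\lct(S,(1-\beta)C;D)\le\beta/\delta$, whence $\alpha(S,(1-\beta)C)\le\beta/\delta$. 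Since $\alpha\ge0$ always, letting $\beta\to0^+$ yields $\lim_{\beta\to0^+}\alpha(S,(1-\beta)C)=0$.

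The only genuinely delicate point is the uniform choice of $\delta$: it rests on the fact that bigness is preserved under small perturbations of the numerical class, which is precisely what makes $\delta$ independent of $\beta$; everything else is bookkeeping. Note that ampleness of $-K_S-(1-\beta)C$ is used only to guarantee that the infimum ranges over a nonempty set, and that $(S,(1-\beta)C)$ is automatically Kawamata log terminal since $S$ is smooth and $C$ is a simple normal crossing divisor with coefficient $1-\beta<1$. Finally, the same argument works verbatim for reducible $C=\sum_i C_i$ and in arbitrary dimension, as asserted: one simply perturbs in the direction of a single component $C_1$, replacing $E$ by an effective $\QQ$-divisor $\sim_{\mathbb{Q}}-K_S-\sum_iC_i-\delta C_1$ and forcing the coefficient of $C_1$ above $1$, so that the threshold is $O(\beta_1)\to0$.
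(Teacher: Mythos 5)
Your proof is correct and follows essentially the same route as the paper: both arguments manufacture a competitor $D\sim_{\QQ}-(K_S+(1-\beta)C)$ containing $C$ with multiplicity bounded below by a fixed $\epsilon>0$ independent of $\beta$ (the paper writes $-N(K_S+C)\sim C+\Delta$ with $\Delta$ effective and takes $D=(\beta+\tfrac1N)C+\tfrac1N\Delta$, whereas you obtain the same thing from openness of the big cone applied to $-K_S-(1+\delta)C$), which forces $\lct(S,(1-\beta)C;D)=O(\beta)$ because the coefficient of $C$ in the boundary exceeds $1$. The two ways of extracting the fixed constant from bigness are interchangeable, so this is the paper's argument in all essentials.
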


\begin{proof}
Since $-K_{S}-C$ is big, there exists positive integer $N$ such
that
$$
-N(K_{S}+C)\sim C+\Delta
$$
for some effective divisor $\Delta$
(this follows from the characterization of bigness
\cite[Corollary 2.2.7]{Lazarsfeld}
since given an ample class $H$ and an effective class $C$
one may find an integer $M$ such that $MH\sim C+E$
for some effective divisor $E$).
Put $\epsilon=\frac{1}{N}$.
Then $-K_{S}-C\sim_{\mathbb{Q}}\epsilon C+\epsilon\Delta$. Take
any sufficiently small real $\beta>0$ such that
$-(K_{S}+(1-\beta)C)$ is ample. Put $D=(\beta+\epsilon)
C+\epsilon\Delta$. Then
$$
D\sim_{\mathbb{R}} (\beta+\epsilon) C+\epsilon\Delta\sim_{\mathbb{R}} -(K_{S}+(1-\beta) C).%
$$
On the other side, we have $\mathrm{lct}(S,(1-\beta)C;D)\le
\frac{\beta}{\epsilon+\beta}$, which implies that
$\alpha(S,(1-\beta)C)\le\frac{\beta}{\epsilon+\beta}$. This
shows that
$$
\lim_{\beta\to 0^+}\alpha(S,(1-\beta)C)=0,
$$
since $\epsilon$ depends only on the pair $(S,C)$ and not on $\beta$.
\end{proof}

\subsubsection{Class $(\beth)$}

In this subsection we prove Theorem \ref{theorem:log-del-Pezzo-alpha}
in the case when $(S,C)$ is of class $(\beth)$.

Before embarking on the proof let us say few words about
the idea of the proof. If $-K_{S}-C$ is not big, then
by Proposition \ref{proposition:conic-bundle}
$C\cong\mathbb{P}^1$ and $|-K_{S}-C|$ is free from base points and
gives a morphism $S\to\mathbb{P}^1$ whose general fiber is
$\mathbb{P}^1$ (a conic bundle). Moreover, the general curve in
$|-K_{S}-C|$ is a fiber of this conic bundle. On the other hand,
when $\beta$ is  small, the class
$$
-K_{S}-(1-\beta)C\sim_{\mathbb{R}} -(K_{S}+C)+\beta C
$$
is close to $-K_{S}-C$. Thus, when looking for divisors
$$
\Delta\sim_{\mathbb{R}}-(K_{S}+(1-\beta)C)
$$
having small $\mathrm{lct}(S,(1-\beta)C;\Delta)$ with $0<\beta\ll
1$, there are not many options. Namely, we can take $\Delta$ to
be $\beta C+F$ where $F$ is a fiber of the conic bundle. All other
choices of $\Delta$ gives us either better or similar
singularities. The reason is a \emph{continuity} of
$\alpha(S,(1-\beta)C)$ in $\beta$. When $\beta$ is very small, we
have
$$
\alpha(S,(1-\beta)C)\approx\alpha(S,C).
$$
Note that $\alpha(S,C)$ is not well defined according to our
definition of the $\alpha$-invariant, because $-K_{S}-C$ is not
ample. Nevertheless, we can  still define $\alpha(S,C)$ in a
similar way, since $-(K_{S}+C)$ is semi-ample. On the other hand,
if $\beta=0$, we have no freedom in choosing $\Delta$ at all!
Indeed, if $\beta=0$, then
$$
\Delta\sim_{\mathbb{R}}-K_{S}-C,
$$
which implies that every irreducible component of $\Delta$ must be
a fiber of the conic bundle $S\to\mathbb{P}^1$. In this case, the
worst $\Delta$ (the one with smallest $\mathrm{lct}(S,C;\Delta)$)
must be a fiber of the conic bundle. Furthermore, among these
fibers there are exactly two that are worse than others, i.e., the
two fibers that do not intersect $C$ transversally. So in a sense
we have a choice choice of exactly two divisors for $\Delta$,
which both gives us $\mathrm{lct}(S,C;\Delta)\approx\frac{1}{2}$.

\begin{prop}
\label{theorem:log-del-Pezzo-alpha-3} Suppose that $-K_{S}-C$ is
not big. Then $\lim_{\beta\to
0^+}\alpha(S,(1-\beta)C)=\frac{1}{2}$.
\end{prop}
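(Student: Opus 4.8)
The plan is to prove the matching bounds $\limsup_{\beta\to0^+}\alpha(S,(1-\beta)C)\le\tfrac12$ and $\liminf_{\beta\to0^+}\alpha(S,(1-\beta)C)\ge\tfrac12$. Throughout I use the conic bundle structure of Proposition~\ref{proposition:conic-bundle}: since $C$ is irreducible the boundary is connected, so $-(K_S+C)\sim F$, where $|F|$ is the base-point-free pencil defining a morphism $\xi\colon S\to\PP^1$ with general fibre $\PP^1$. Here $F^2=0$, and by adjunction $-K_S\cdot F=2$ while $(K_S+C)\cdot F=0$, so $C\cdot F=2$ and $C\cong\PP^1$. Setting $H:=-(K_S+(1-\beta)C)\sim_\QQ F+\beta C$ (ample for small $\beta>0$), the two ``smallness'' estimates $H\cdot F=2\beta$ and $H\cdot C=2+\beta C^2$ will drive the whole argument.

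For the upper bound I would exhibit a single $\Delta\sim_\QQ H$ whose threshold tends to $\tfrac12$. The map $\xi|_C\colon C\cong\PP^1\to\PP^1$ has degree $C\cdot F=2$, so by Riemann--Hurwitz it ramifies; at a ramification point $Q$ the fibre $F_0\in|F|$ through $Q$ is tangent to (a smooth branch of) $C$, with local intersection multiplicity $2$. Take $\Delta=\beta C+F_0\sim_\QQ H$, which is effective. If $F_0$ is smooth at $Q$, then locally $C$ and $F_0$ are two smooth branches tangent to order two; testing the pair $(S,(1-\beta)C+\mu\Delta)$ against the valuation of the weighted blow-up with weights $(1,2)$ gives the constraint $a+b\le\tfrac32$ with $a=(1-\beta)+\mu\beta$ and $b=\mu$, so $\lct(S,(1-\beta)C;\Delta)\le\frac{1+2\beta}{2(1+\beta)}\to\tfrac12$. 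If $Q$ lies on the node of a reducible fibre $F'+F''$, the analogous one blow-up computation for three pairwise transverse branches yields the bound $\frac{1+\beta}{2+\beta}\to\tfrac12$. In either case $\alpha(S,(1-\beta)C)\le\lct(S,(1-\beta)C;\Delta)\to\tfrac12$.

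For the lower bound, fix $\mu<\tfrac12$ and an arbitrary effective $D\sim_\QQ H$; I claim $(S,(1-\beta)C+\mu D)$ is log canonical once $\beta$ is small enough \emph{uniformly in $D$}, which gives $\alpha\ge\mu$ and hence $\liminf\alpha\ge\tfrac12$. Suppose instead it is not log canonical at a point $P$. If $P\notin C$, then the surface multiplicity criterion forces $\mult_P D>1/\mu>2$, whereas the fibre $F_P$ through $P$ satisfies $D\cdot F_P\le H\cdot F=2\beta$; hence $F_P\subset\operatorname{Supp}D$, and writing $D=F_P+D'$ we have $D'\sim_\QQ\beta C$, so $\mult_P D'\le D'\cdot A=O(\beta)$ for a fixed very ample $A$, giving total multiplicity $\mu(\mult_P F_P+O(\beta))\le 2\mu+O(\beta)<1$, a contradiction. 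If $P\in C$, write $D=tC+D''$ with $C\not\subset\operatorname{Supp}D''$; from $2\beta=D\cdot F\ge 2t$ we get $t\le\beta$, so the coefficient of $C$ is $a=(1-\beta)+\mu t<1$ and lc of $(S,aC+\mu D'')$ is implied by lc of $(S,C+\mu D'')$. By inversion of adjunction over the smooth curve $C$ \cite[Theorem~5.50]{KoMo98} (cf.\ Lemma~\ref{lemma:lct-curve}) the latter holds near $P$ iff $\mu(D''\cdot C)_P\le1$; since $D''\cdot C=2+(\beta-t)C^2\le 2+\beta|C^2|$, we obtain $\mu(D''\cdot C)_P\le\mu(2+\beta|C^2|)<1$ for small $\beta$, again a contradiction.

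The main obstacle is the lower bound, and specifically the uniformity of these estimates as $\beta\to0^+$: as the ample class $H$ degenerates to the non-big class $F$, effective members of $|H|_\QQ$ concentrate along fibres, and one must preclude a conspiracy between the boundary $C$ and a nearly vertical divisor from driving the threshold below $\tfrac12$. The fibre bound $D\cdot F=2\beta$ together with inversion of adjunction is exactly what rules this out, and it is reassuring that the resulting lower bound $\alpha\ge\mu$ for every $\mu<\tfrac12$ matches the explicit tangent-fibre computation of the upper bound, pinning the limit at exactly $\tfrac12$. I would emphasise that, in contrast to Proposition~\ref{theorem:log-del-Pezzo-alpha-2}, one cannot here simply invoke Proposition~\ref{theorem:handy-proposition}, since the boundary curve $C$ need not be nef (for instance $C^2=-1$ in case $\mathrm{(I.9B.5)}$); this is why the direct multiplicity argument above is the natural route.
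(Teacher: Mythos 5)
Your upper bound is correct and is essentially the paper's: you test against $\beta C+F_0$ with $F_0$ the fiber through a ramification point of $\xi|_C$, and your weighted $(1,2)$-blow-up reproduces the thresholds $\tfrac{1+2\beta}{2+2\beta}$ (smooth $F_0$) and $\tfrac{1+\beta}{2+\beta}$ (nodal $F_0$) that the paper obtains by ordinary blow-ups. The case $P\in C$ of your lower bound is also correct and coincides with the paper's argument: the coefficient $t$ of $C$ in $D$ is at most $\beta$ because $D\cdot F=2\beta$, and adjunction along $C$ then yields the contradiction.

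The gap is in the case $P\notin C$, at the step ``hence $F_P\subset\mathrm{Supp}(D)$, and writing $D=F_P+D'$ we have $D'\sim_\QQ\beta C$''. Knowing that the component(s) of $F_P$ through $P$ lie in $\mathrm{Supp}(D)$ only gives $D=\tau F_P+\Delta$ (resp.\ $D=\tau_1F_1+\tau_2F_2+\Theta$ when $F_P=F_1+F_2$ is reducible) with \emph{positive} coefficients; nothing forces these coefficients to equal $1$ (consider $D=\tfrac12 F_P+\tfrac12 F'+\beta C$ with $F'$ another fiber), so $D-F_P$ need not be effective, and the class of the residual part is $(1-\tau)F+\beta C$, not $\beta C$. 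Consequently your key estimate $\mathrm{mult}_P D\le \mathrm{mult}_P F_P+O(\beta)$ is unproved. It can be repaired, but only by a genuine extra argument: for instance, pairing the residual class with a fixed ample divisor (together with $|\tau_1-\tau_2|\le\beta$, which follows from $\Theta\cdot F_i\ge 0$) gives $\tau_i\le 1+O(\beta)$, and one bounds $\mathrm{mult}_P\Theta\le\Theta\cdot F_i\le 2\beta$ by intersecting with the fiber component through $P$ rather than with a very ample $A$. The paper takes a different route: it only proves the weaker bounds $\tau\le 2+\beta C^2/2$ and $\tau_1+\tau_2\le 4+\beta C^2$ (from $D^2=4\beta+\beta^2C^2$ and ampleness of $D$), under which the pure multiplicity criterion genuinely fails at the node of a reducible fiber (there one only gets $\mathrm{mult}_P D\le 4+O(\beta)$, so $\mu\,\mathrm{mult}_P D>1$ is not excluded for $\mu$ near $\tfrac12$); it then closes every fiber case by inversion of adjunction along $F_P$, respectively along $F_1$ and $F_2$, using $\Theta\cdot F_1=\beta+\tau_1-\tau_2\le 2\beta$. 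As written, your proof of $\liminf_{\beta\to 0^+}\alpha(S,(1-\beta)C)\ge\tfrac12$ is therefore incomplete precisely in the off-boundary case, which cannot be ignored since class $(\beth)$ pairs such as $\mathrm{(I.9B.m)}$ do have reducible fibers.
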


\begin{proof}
By Lemma~\ref{lemma:log-del-Pezzo-rational-or-elliptic-1}, we have
$C\cong\mathbb{P}^1$. By
Proposition~\ref{proposition:conic-bundle} the linear system
$|-K_{S}-\sum_{i=1}^{r}C_{i}|$ is free from base points and
gives a morphism $\xi\colon S\to\mathbb{P}^1$ such that its
general fiber is $\mathbb{P}^1$, and every reducible fiber
consists of exactly two components.

Let $F$ be a general fiber of $\xi$. Then $-K_{S}-C\sim F$,
since $|-K_{S}-\sum_{i=1}^{r}C_{i}|$ is a pencil by
Lemmas~\ref{lemma:conic-bundles-1} and
\ref{lemma:conic-bundles-2}. Then $F.C=2$, since
$-K_{S}-C^2=0$.

The morphism $\xi$ induces a double cover $C\to\mathbb{P}^1$.
Since $C$ is a smooth rational curve, this double cover has
exactly two ramification points. Let $O$ be one of these two
ramification points, and let $F_O$ be a fiber of $\xi$ that passes
through it. Recall that
$$
-(K_{S}+(1-\beta)C)\sim_{\mathbb{R}} F_O+\beta C
$$
by construction. On the other hand, we have
$$
\mathrm{lct}(S,(1-\beta)C;F_{O}+\beta C)=\left\{\aligned%
&\frac{1+\beta}{2+\beta}\ \mathrm{if}\ F_O\ \mathrm{is\ singular},\\
&\frac{1+2\beta}{2+2\beta}\ \mathrm{if}\ F_O\ \mathrm{is\ smooth}.\\
\endaligned
\right.
$$
To see this it suffices to blow-up once when $F_O$ is singular
and twice when it is smooth.
Hence, $\alpha(S,(1-\beta)C)\le
(1+\beta)/(2+\beta)$. To complete the proof it is thus enough to
show that for every positive real $\epsilon>0$ there exists real
$\delta=\delta(\eps,C)>0$ such that both \eqref{mainassumption} and
\beq
\label{AlphaEpsEq}
\alpha(S,(1-\beta)C)\ge\frac{1}{2}-\epsilon
\eeq
for every real $\beta\in(0,\delta)$. In fact, we claim
that $\delta=\min\{1/2,\epsilon/|C^2|,\be_\max\}$ will do,
where \eqref{mainassumption} holds for $\be\in(0,\be_\max)$.

To that end we work with the definition \eqref{SecondDefAlpha}
of the global log canonical threshold of the pair $(S, (1-\beta)C)$.
We use repeatedly the following application of adjunction:
if $K\subset S$ is a smooth irreducible curve
and $M$ an effective $\RR$-divisor
on $S$ and if $(S,K+ M)$ is not lc at a point $Q$ on $K$
then $(K,M|_K)$ is not lc at $Q$, or equivalently
$\mult_Q K.M>1$ \cite[Excercise~6.31]{CoKoSm}.

Throughout the proof we let $D$
be an effective $\mathbb{R}$-divisor satisfying
$$
D\sim_{\mathbb{R}} F+\beta C
$$
If the pair $(S, (1-\beta)C+\lambda D)$ is not lc
at some point $P\in C$ and $C\not\subset\mathrm{Supp}(D)$
then
$$
2+\beta C^2=C. \big(F+\beta
C\big)=C. D\ge\mathrm{mult}_{P}\big(C.
D\big)>{\lambda}^{-1},
$$
thus $\lambda>\frac1{2+\be C^2}$.
If $(S, (1-\beta)C+\lambda D)$ is not lc
at some point $P\in C$ and $C\subset\mathrm{Supp}(D)$
then write $D=\mu C+\Omega$,
where $\mu$ is a positive rational number, and $\Omega$
is an effective $\mathbb{R}$-divisor on the surface $S$ whose
support does not contain the curve $C$. Then
$$
2\beta=(F+\beta C). F=D. F=(\mu C+\Omega). F=2\mu+\Omega. F\ge 2\mu,%
$$
so $\mu\le\beta$. On the other hand,
$(S, (1-\beta+\lambda\mu)C+\lambda\Omega)$ is not lc at
$P$. Since $1-\beta+\lambda\mu\le 1$, also $(S,
C+\lambda\Omega)$ is not lc at $P$.
Thus,
$$
2+(\beta-\mu) C^2=C.
\big(F+(\beta-\mu) C\big)=C.
\Omega\ge\mathrm{mult}_{P}\big(C.
\Omega\big)>{\lambda}^{-1},
$$
so again $\lambda>\frac 1{2+\be C^2}$.

Next, suppose that
$(S, (1-\beta)C+\lambda D)$ is not lc
at some point $P\not\in C$. Then $(S, \lambda D)$ is not
lc at $P$. Let $F_P$ be the fiber of $\xi$ that passes
through $P$. Then we must consider three cases: $F_P$ is
smooth, $F_P$ is singular and $P\ne\mathrm{Sing}(F_P)$,  $F_P$ is
singular and $P=\mathrm{Sing}(F_P)$.

First, suppose $F_P$ is smooth and put $D=\tau F_P+\Delta$,
where $0<\tau\in\QQ$, and $\Delta$ is an
effective $\mathbb{R}$-divisor with $F_P\not\subset\mathrm{Supp}(\D)$.
Then
$$
4\beta+\beta^2C^2=D^2=(F_P+\beta C). D=(\tau F_P+\Delta). D=2\beta\tau+\Delta. D\ge 2\beta\tau,%
$$
so $\tau\le 2+\frac{\beta}{2}C^2$.
If $\lambda\tau>1$ then $\lambda>\frac 1{2+\be C^2/2}$.
Suppose that $\lambda\tau\le 1$.
Thus, the pair $(S, F_P+\lambda
\Delta)$ is not lc at $P$.
Then
$$
2\beta=
F_P.\big(F_P+\beta C-\tau F_P\big)=F_P.\Delta\ge\mathrm{mult}_{P}\big(F_P. \Delta\big)>{\lambda}^{-1},
$$
so $\lambda>\frac1{2\be}$.

Next, suppose $F_P$ is singular. Then $F_P=F_1+F_2$, where
$F_1$ and $F_2$ are smooth rational curves on $S$ such that
$
F_1. F_2=F_1. C=1=F_2. C=1,
$
and $F_1^2=F_2^2=-1$. Put $D=\tau_1 F_1+\tau_2 F_2+\Theta$, where
$0<\tau_1,\tau_2\in\QQ$, and $\Theta$
is an effective $\mathbb{R}$-divisor
with
$F_1,F_2\not\subset\mathrm{Supp}(\Theta)$.
Then
\beq\label{betatauEq}
\beta=(F+\beta C). F_1=(\tau_1 F_1+\tau_2 F_2+\Theta). F_1=-\tau_1+\tau_2+\Theta. F_1\ge -\tau_1+\tau_2,%
\eeq
and similarly $\tau_1-\tau_2\le\beta$.
On the other hand, using that $D$ is ample we have
$$
4\beta+\beta^2C^2=(F+\beta C). D=(\tau_1 F_1+\tau_2 F_2+\Theta). D=\beta(\tau_1+\tau_2)+\Theta. D\ge \beta(\tau_1+\tau_2),%
$$
so $\tau_1+\tau_2\le 4+\beta
C^2$, and combined with \eqref{betatauEq}
then
$\tau_2\le 2+\frac{\beta}{2} C^2+\frac{\beta}{2}$
and similarly for $\tau_1$.
If $\lambda\tau_i>1$ for some $i$ then $\lambda>\frac 1{2+\be C^2/2+\be/2}$.
Suppose that $\lambda\tau_1,\lambda\tau_2\le 1$.
There are two cases
to consider: $P=F_1\cap F_2$ and $P\ne F_1\cap
F_2$.
Suppose first that $P=F_1\cap F_2$. Then the pairs $(S,
F_1+\lambda\tau_2F_2+\lambda \Delta)$ and  $(S, \lambda\tau_1
F_1+F_2+\lambda\Theta)$ are not lc at $P$.
Then
$$
\beta+\tau_1=F_1.(F+\beta C-\tau_1 F_1)=F_1.(\tau_2F_2+\Theta)>\tau_2+{\lambda}^{-1} 
$$
so
$\la> \frac1{\beta+\tau_1-\tau_2}$
and similarly
$\la> \frac1{\beta+\tau_2-\tau_1}$,
so using \eqref{betatauEq} $\la>\frac1{2\be}$.
Next, suppose $P\ne F_1\cap F_2$, say $P\not\in F_1$, $P\in F_2$.
Hence, the log
pair $(S, \lambda\tau_2 F_2+\lambda\Theta)$ is not lc
at $P$. Then
$$
\beta+\tau_2-\tau_1=F_2.\big(F+\beta C-\tau_1 F_1-\tau_2F_2\big)=F_2.\Theta>{\lambda}^{-1},
$$
so
$\la> \frac1{\beta+\tau_2-\tau_1}$,
so again using \eqref{betatauEq} $\la>\frac1{2\be}$

In conclusion, we see that if
$(S, (1-\beta)C+\lambda D)$ is not lc then $\la>\frac12-\eps$
whenever $\be<\min\{1/2,\epsilon/|C^2|,\be_\max\}$.
Thus, \eqref{AlphaEpsEq} follows from \eqref{SecondDefAlpha},
concluding the proof.
\end{proof}

\subsubsection{Class $(\aleph)$}

By Corollary \ref{corollary:Berman-inequality},
$
\alpha(X,(1-\beta)S)
\ge
\mathrm{min}
\{
1,\be^{-1}\alpha(X),\be^{-1}\alpha(S,[S]\vert_{S})
\}%
$ when $(S,C)$ is of class $(\aleph)$. Moreover, from the
definition and the fact that $D\sim-K_X$ this invariant is bounded
above by 1. Combining this, Lemma~\ref{lemma:lct-curve}, and
Theorem \ref{theorem:Mazzeo-Rubinstein-Jeffres} yields:

\begin{prop}
\label{theorem:log-del-Pezzo-alpha-1} Let $(X,D)$ be an
asymptotically log Fano pair with $D\in|-K_X|$ a smooth
irreducible divisor and $X$ Fano. Then $\lim_{\beta\to
0^+}\alpha(S,(1-\beta)C)=1$. Moreover, \hfill\break (i) if $\dim
X=2$, then
$\alpha(X,(1-\beta)D)\in[\mathrm{min}\big\{1,\frac{1}{9\beta}\big\},1],
$ and $(X,D)$ admits KEE metrics for all $\beta\in(0,1/6)$.
\hfill\break (ii) if $\dim X=3$, then $ \alpha(X,(1-\beta)D)
\in[\mathrm{min}\big\{1,\frac{1}{64\beta}\big\},1], $ and $(X,D)$
admits KEE metrics for all $\beta\in(0,1/48)$.
\end{prop}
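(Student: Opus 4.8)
The plan is to sandwich $\alpha(X,(1-\beta)D)$ between an elementary upper bound and the algebraic lower bound of Corollary~\ref{corollary:Berman-inequality}, reduce the latter to bounding the two ``ingredient'' invariants $\alpha(X)$ and $\alpha(D,[-K_X|_D])$ by crude multiplicity estimates, and finally feed the two-sided estimate into Theorem~\ref{theorem:Mazzeo-Rubinstein-Jeffres} (and its higher-dimensional analogue in \cite{JMR}). For the upper bound, note first that since $(X,D)$ is of class $(\aleph)$ we have $D\sim-K_X$, so the relevant polarization is $-(K_X+(1-\beta)D)\sim_\QQ\beta D$. Taking $\Delta=\beta D\sim_\QQ\beta D$ and using that $D$ is smooth, the pair $(X,(1-\beta)D+\lambda\beta D)=(X,(1-\beta+\lambda\beta)D)$ is log canonical precisely when $\lambda\le1$; hence $\mathrm{lct}(X,(1-\beta)D;\beta D)=1$ and $\alpha(X,(1-\beta)D)\le1$ for every $\beta$.

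For the lower bound I would apply Corollary~\ref{corollary:Berman-inequality} with $S=D$ and $[S]=[-K_X]$, giving
\[
\alpha(X,(1-\beta)D)\ge\min\Big\{1,\tfrac{\alpha(X)}{\beta},\tfrac{\alpha(D,[-K_X|_D])}{\beta}\Big\}.
\]
It then remains to bound $\alpha(X)$ and $\alpha(D,[-K_X|_D])$ below by $1/d$, where $d:=(-K_X)^{\dim X}$. On the divisor $D$ one has $K_D=(K_X+D)|_D=0$ and $H:=-K_X|_D$ satisfies $H^{\dim D}=(-K_X)^{\dim X}=d$. In dimension two $D$ is an elliptic curve and $H$ has degree $d=K_X^2$, so Lemma~\ref{lemma:lct-curve} (with zero boundary) gives $\alpha(D,[H])\ge1/d$ at once; in dimension three $D$ is a K3 surface and I would use that if $(D,\lambda\Delta)$ fails to be log canonical at a point $P$ then $\mathrm{mult}_P\Delta>1/\lambda$, while intersecting $\Delta$ with a general member of the base-point-free system $|H|$ through $P$ that is smooth there gives $\mathrm{mult}_P\Delta\le H^2=d$, whence $\alpha(D,[H])\ge1/d$. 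The same multiplicity template handles $\alpha(X)=\alpha(X,[-K_X])$: for $X$ del Pezzo it reproduces the classical bound $\alpha(X)\ge1/K_X^2$ (indeed $\alpha(X)\ge1/3$), and for $X$ a Fano threefold one intersects $\Delta\sim_\QQ-K_X$ with a general complete-intersection curve $T_1\cap T_2$, $T_i\in|-K_X|$ (base-point-free for smooth Fano threefolds), to obtain $\mathrm{mult}_P\Delta\le(-K_X)^3=d$ and hence $\alpha(X)\ge1/d$. Using $d\le9$ for del Pezzo surfaces and $d\le64$ for Fano threefolds (with $\PP^2$ and $\PP^3$ extremal) yields $\alpha(X,(1-\beta)D)\ge\min\{1,1/(9\beta)\}$ and $\min\{1,1/(64\beta)\}$ respectively.

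The limit statement is then immediate, since $\alpha(X,(1-\beta)D)\in[\min\{1,(1/d)/\beta\},1]$ and the lower endpoint tends to $1$ as $\beta\to0^+$ (positivity of $\alpha(X)$ and $\alpha(D)$ being guaranteed by the very same multiplicity bounds). For existence I would invoke the Jeffres--Mazzeo--Rubinstein criterion with the trivial group $G$, which requires $\alpha>\tfrac{n}{n+1}$: this is Theorem~\ref{theorem:Mazzeo-Rubinstein-Jeffres} with threshold $2/3$ when $n=2$, and the analogous statement in \cite{JMR} with threshold $3/4$ when $n=3$. Solving $\tfrac1{9\beta}>\tfrac23$ gives $\beta\in(0,1/6)$, and $\tfrac1{64\beta}>\tfrac34$ gives $\beta\in(0,1/48)$, exactly as claimed; note also that $-K_X-(1-\beta)D\sim_\QQ\beta(-K_X)$ is ample for all $\beta\in(0,1]$ here (Remark~\ref{remark:del-Pezzo-case}), so the relevant edge geometry is available throughout these ranges.

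The main obstacle I anticipate is the uniform validity of the estimate $\mathrm{mult}_P\le d$, which tacitly requires the relevant (anti)canonical or restricted linear systems to be base-point-free with a general member through $P$ that is smooth at $P$. This holds for smooth Fano threefolds and for del Pezzo surfaces of degree $\ge2$, but the degree-one del Pezzo, where $|-K_X|$ has a base point, must be treated separately---for instance by intersecting instead with a general member of the base-point-free system $|-2K_X|$---though since the target constants are deliberately far from optimal there is ample slack to absorb this case. A secondary bookkeeping point is the passage between real and rational $\beta$ in the definition of $\alpha$, and the verification that the hypotheses of Corollary~\ref{corollary:Berman-inequality} (namely that $S=D$ is an irreducible smooth ample Cartier divisor) are satisfied, which is automatic since $X$ is smooth Fano and $D\in|-K_X|$ is smooth and irreducible.
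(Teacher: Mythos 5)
Your proposal follows the paper's own proof almost step for step: the same two-sided sandwich (upper bound $1$ because $D\sim-K_X$, lower bound from Corollary~\ref{corollary:Berman-inequality} with $S=D$), the same use of Lemma~\ref{lemma:lct-curve} on the elliptic curve in dimension two, the same multiplicity estimates obtained by intersecting $\Delta\sim_{\QQ}-K_X$ with general members (or complete intersections of members) of $|-K_X|$ through the non-lc point, the same numerical inputs $K_X^2\le 9$ and $(-K_X)^3\le 64$, and the same appeal to the Jeffres--Mazzeo--Rubinstein criterion with thresholds $2/3$ (for $n=2$) and $3/4$ (for $n=3$). Your treatment of the degree-one del Pezzo surface via $|-2K_X|$ is also exactly the paper's.

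There is, however, one concrete error in dimension three: the parenthetical claim that $|-K_X|$ is base-point free for every smooth Fano threefold is false. By \cite[Theorem~2.4.5]{IsPr99} there are precisely two exceptional families: $(-K_X)^3=4$, where $X$ is a blow-up of a degree-$6$ hypersurface in $\PP(1,1,1,2,3)$ along an elliptic curve, and $X\cong\PP^1\times S_1$ with $S_1$ a degree-one del Pezzo surface, where $(-K_X)^3=6$. For these your complete-intersection step fails as written, both for $\alpha(X)$ and for the restricted invariant on the K3 surface $D$. The paper handles exactly this case separately, intersecting instead with members of the base-point-free system $|-2K_X|$, which yields $\mathrm{mult}_P(\Delta)<4(-K_X)^3\le 24$ and hence $\alpha(X),\,\alpha(D,[D]\vert_D)\ge 1/24>1/64$, so the stated bounds $\min\{1,\frac{1}{64\beta}\}$ and the range $\beta\in(0,1/48)$ survive. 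You anticipated precisely this phenomenon for the degree-one del Pezzo surface but asserted it does not occur for threefolds; the repair is the same $|-2K_X|$ device you already propose there, and, as you say, the deliberately non-optimal constants absorb it.
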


\begin{proof}
As noted above, it suffices to estimate
$
\mathrm{min}\{\alpha(X), \alpha(D,[D]\vert_{D})\}.%
$

(i) 
First, by using Lemma~\ref{lemma:lct-curve}
and the fact that $K_{X}^2\le 9$ for every 
smooth del Pezzo surface (by their classification)
one has ${\alpha(D,[D]\vert_{D})}\ge{1}/{9}$.

It remains to show that ${\alpha(X)}{}\ge{1}/{9}$. 
This follows from the complete list of lcts
of del Pezzo surfaces \cite[Theorem~1.7]{Ch07b} but
we now explain a direct derivation that 
can also be adapted to prove (ii).
Let $\Delta$
be an effective
$\mathbb{Q}$-divisor on $X$ such that
$\D\sim_{\mathbb{Q}}-K_{X}$ and the log pair $(X,\lambda\Delta)$ is
not log canonical at some point $P\in X$ for some positive
rational $\lambda$.

If $-K_{X}$ is very ample, let $H_P$ be a general curve in
$|-K_{X}|$ that passes through $P$. By the very ampleness 
we may assume that $H_P$
is not contained in the support of the divisor $D$.
Thus, as in the proof of Proposition \ref{theorem:log-del-Pezzo-alpha-3}, 
$$
K_{X}^2=\D.
H_P\ge\mathrm{mult}_P(\D)\mathrm{mult}_P(H_P)\ge
\mathrm{mult}_P(\D)>{\lambda}^{-1},
$$
so $\la>1/9$.

If $-K_{X}$ is not very ample but still base-point free
$|-K_{X}|$ gives a surjective finite morphism $X\to
V$, where $V$ is a surface, which implies that we can
still proceed as in the very ample case.
If $-K_X$ is not base-point free by the classification
of del Pezzo surfaces $K_X^2=1$ and the linear system $|-2K_{X}|$
is base-point free and gives a surjective finite morphism $X\to
V^\prime$, where $V^\prime$ is a surface. Let
$H_P^\prime$ denote a general curve in $|-2K_{X}|$ that passes
though $P$. Then
$$
2K_{X}^2=\D.
H_P\ge\mathrm{mult}_P(\D)\mathrm{mult}_P(H_P)\ge
\mathrm{mult}_P(\D)>{\lambda}^{-1},
$$
so $\la>1/2K_X^2=1/2$. The result now follows from \eqref{AlphaSecondDefEq}.

(ii)
Let $\D$ be as in (i).
Suppose first that $|-K_{X}|$ is base-point free. We claim that
$\mathrm{mult}_{P}(\Delta)\le64$ for every point $P\in X$ and for
every divisor $\Delta$ on $X$ such that
$\Delta\sim_{\mathbb{Q}}-K_{X}$. Indeed, since $|-K_{X}|$ is 
base-point free, the linear system $|-K_{X}|$ gives a finite surjective
morphism $X\to U$, where $U$ is a threefold. Thus,
there exists $S_P$ and $S_{P}^\prime$ in $|-K_{X}|$ such that
$P\in\mathrm{Supp}(S_{P}. S_P^\prime)$ and no component of
the $1$-cycle $S_{P}. S_P^\prime$ is contained in the
support of $D$. Then
$$
-K_{X}^3=\D. S_P.
S_{P}^\prime\ge\mathrm{mult}_P(\D)\mathrm{mult}_P(S_P)\mathrm{mult}_P(S_P^\prime)\ge
\mathrm{mult}_P(\D)>{\lambda}^{-1}.
$$
Thus, $\lambda>-1/K_{X}^3=1/64$ \cite[Corollary~7.1.2]{IsPr99},
implying $\alpha(X)\ge1/64$.
Similarly, we can prove that $\mathrm{mult}_{P}(\Omega)\le64$ for
every point $P\in D$ and for every divisor $\Omega$ on $D$ such
that $\Omega\sim_{\mathbb{Q}}-K_{X}\vert_{D}$. 
Thus, $\alpha(D,[D]\vert_{D})\ge{1}/{64}$.

Next, suppose that
$|-K_{X}|$ has base-points. This is a very special situation.
Indeed, it follows from \cite[Theorem~2.4.5]{IsPr99} that either
$-K_{X}^3=4$ and $X$ is a blow up of a smooth hypersurface in
$\mathbb{P}(1,1,1,2,3)$ of degree $6$ along a smooth elliptic
curve that is a complete intersection of two surfaces in
$|-\frac{1}{2}K_{X}|$, or $-K_{X}^3=6$ and
$X\cong\mathbb{P}^1\times S_1$, where $S_1$ is a smooth del Pezzo
surface with $K_{S_1}^2=1$. In both cases $|-2K_{X}|$ is 
base-point free. Thus, the same arguments as in the 
base-point free case show that $\mathrm{mult}_{P}(\Delta)<-4K_{X}^3$
for every point $P\in X$ and for every divisor $\Delta$ on $X$
such that $\Delta\sim_{\mathbb{Q}}-K_{X}$, and that
$\mathrm{mult}_{P}(\Omega)<-4K_{X}^3$ for every point $P\in D$ and
for every divisor $\Omega$ on $D$ such that
$\Omega\sim_{\mathbb{Q}}-K_{X}\vert_{D}$. Keeping in mind that
$-K_{X}^3\le 6$, we see that
$\alpha(X)\ge{1}/{24}$ and
$\alpha(D,[D]\vert_{D})\ge{1}/{24}$.
\end{proof}

Note that the lower bounds in
Proposition~\ref{theorem:log-del-Pezzo-alpha-1} can be improved
by a case-by-case analysis using results from \cite{Ch07b,CSD}. 
When $\dim X=2$ it is also possible to say
more about the existence of KEE metrics. In fact, in the cases
(I.1A) and (I.5A.m) with $m\ge3$ a KEE metric exists for all
$\be\in(0,1]$ since it exists for $\be=1$
\cite{Berm,LiSun,JMR,Ti90}. In the remaining two cases (I.5A.m),
$m\in\{1,2\}$, it is possible to compute $\a(S,(1-\be)C)$ to find
all $\beta\in(0,1]$ such that $\a(S,(1-\be)C)>\frac{2}{3}$.
Moreover, in the latter cases the value of the $\a$-invariant
depends on the choice of the anticanonical boundary curve itself.

\begin{remark}{\rm
Let $X$ be a smooth Fano variety of dimension $n$, and let $D$ be
a smooth divisor in $|-K_{X}|$. Put
$M=3^n(2^n-1)^n(n+1)^{n(n+2)(2^n-1)}$ and $N=2(n+1)(n+2)!$. Then
%$\alpha(X)\ge\frac{1}{N^{n-1}M}$ and
$$
\alpha(X,(1-\beta)D)\ge\min\{1,{\beta}^{-1}{N^{n-1}M}\},
$$
for every $\be\in(0,1]$.
Indeed, $(-K_{X})^n\le
3^n(2^n-1)^n(n+1)^{n(n+2)(2^n-1)}$ 
(see, e.g., \cite[Theorem 5.18]{Debarre}).
On the other hand, $|-NK_{X}|$ is base-point free by
\cite[Theorem 1]{Kollar1993}.
Note that
$-12n^nK_{X}$ is very ample by \cite[Corollary 12.11]{Demailly1993}.
Thus we can proceed as in
the proof of Proposition~\ref{theorem:log-del-Pezzo-alpha-1}.
}
\end{remark}

\section{Existence and non-existence of KEE metrics}

Our goal in this section is to make several first steps
towards the uniformization of asymptotically log del Pezzo surfaces
as stated in Conjecture \ref{UniformizationConj}.

\subsection{Automorphism groups}
\label{AutomorphismSec}

Theorem \ref{NoKEEThm} is a direct consequence of
Theorem \ref{reductiveThm} and the following result.

\begin{prop}
\label{NoKEEProp} The automorphisms groups of the following pairs
of class $(\gimel)$ or $(\daleth)$ are not reductive:
$\mathrm{(I.1C})$, $\mathrm{(I.2.n})$ with any $n\ge 0$,
$\mathrm{(I.6C.m})$ with any $m\ge 1$, $\mathrm{(I.7.n.m})$ with
any $n\ge 0$ and $m\ge 1$, $\mathrm{(I.6B.1})$,
$\mathrm{(I.8B.1})$ and $\mathrm{(I.9C.1})$. On the other hand,
$\Aut(S,C)$ is reductive when $(S,C)$ is one of the following:
$\mathrm{(I.1A})$, $\mathrm{(I.4A})$, $\mathrm{(I.3B})$,
$\mathrm{(I.4C})$, $\mathrm{(I.5.m})$ with $m\ge1$, $\mathrm{(I.1B})$,
$\mathrm{(I.6B.m)}$, $\mathrm{(I.8B.m)}$, or $\mathrm{(I.9C.m)}$ with $m\ge2$.
\end{prop}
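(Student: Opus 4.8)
The plan is to compute each identity component $\Aut(S,C)^0$ explicitly and then read off reductivity from the presence or absence of a nontrivial normal unipotent subgroup. The engine is a \emph{descent lemma}. For every listed pair, $S$ is obtained from a relatively minimal rational surface $\bar S\in\{\PP^2,\FF_n\}$ by blowing up $m\ge 0$ distinct points $P_1,\dots,P_m$ lying on the boundary, with exceptional $-1$-curves $E_1,\dots,E_m$. Since $\Aut(S)^0$ is connected it acts trivially on $\Pic(S)$, hence fixes the class $[E_i]$; as a $-1$-curve is the unique effective divisor in its class, every $\sigma\in\Aut(S)^0$ fixes each $E_i$ setwise. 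Thus $\sigma$ descends through the contraction $\pi\colon S\to\bar S$ to $\bar\sigma\in\Aut(\bar S)^0$ with $\bar\sigma(P_i)=P_i$, and $\sigma$ preserves $C$ iff $\bar\sigma$ preserves $\bar C:=\pi(C)$. This yields the identification
\[
\Aut(S,C)^0\cong\{\phi\in\Aut(\bar S)^0 : \phi(\bar C)=\bar C,\ \phi(P_i)=P_i\ \forall i\}^0,
\]
reducing everything to stabilizer computations on $\PP^2$, $\PP^1\times\PP^1$, and $\FF_n$.

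Next I would record the relevant building blocks. In $\mathrm{PGL}_3=\Aut(\PP^2)$: the stabilizer of a line is a maximal parabolic whose unipotent radical $\cong\mathbb{G}_a^2$ consists of the transvections with that axis (giving $(I.1C)$ non-reductive); the stabilizer of a smooth conic is $\mathrm{PO}_3\cong\mathrm{PGL}_2$, reductive (giving $(I.1B)$); and the stabilizer of a pair $(p,\ell)$ with $p\notin\ell$ is the reductive Levi subgroup $\cong\mathrm{GL}_2$ (giving $(I.3B)$). On $\PP^1\times\PP^1$, $\Aut^0=\mathrm{PGL}_2\times\mathrm{PGL}_2$; the stabilizer of a diagonal $(1,1)$-curve is the reductive group $\mathrm{PGL}_2\times\mu_2$ (diagonal $\mathrm{PGL}_2$ and the swap), giving $(I.4C)$, while the stabilizer of a ruling fiber is $B\times\mathrm{PGL}_2$ with $B$ a Borel, hence non-reductive. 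Finally, for $n\ge1$ the group $\Aut(\FF_n)^0$ has unipotent radical $\cong H^0(\PP^1,\mathcal{O}(n))\cong\mathbb{G}_a^{n+1}$ and automatically fixes the unique negative section $Z_n$, which settles $(I.2.n)$ for $n\ge1$; the case $n=0$ is the fiber-stabilizer just described.

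The blow-up cases then split cleanly. For pairs whose boundary descends to a line or to a section $Z_n$ (namely $(I.6C.m)$ and $(I.7.n.m)$), the pertinent unipotent group — the transvections with axis the line, respectively the vertical unipotent radical of $\Aut(\FF_n)$ — fixes the boundary curve \emph{pointwise}, hence fixes every $P_i$ and survives for all $m\ge1$; being normal it forces non-reductivity. For pairs whose boundary descends to a conic, to $(p,\ell)$, or to the diagonal (cases $(I.6B.m)$, $(I.8B.m)$, $(I.9C.m)$), the residual group acts faithfully on the boundary $\cong\PP^1$ through a copy of $\mathrm{PGL}_2$; imposing $m$ fixed points leaves a Borel (hence a nontrivial $\mathbb{G}_a$, non-reductive) exactly when $m=1$, but a torus or the trivial group (reductive) when $m\ge2$. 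This reproduces the $m=1$ versus $m\ge2$ dichotomy of the statement, including $(I.6B.1),(I.8B.1),(I.9C.1)$. The class $(\aleph)$ pairs $(I.1A),(I.4A),(I.5.m)$ I would handle uniformly: they carry KEE metrics for all small $\beta$ (Berman's bound $\alpha>2/3$ with Theorem~\ref{theorem:Mazzeo-Rubinstein-Jeffres}), so Theorem~\ref{reductiveThm} gives reductivity at once; alternatively one notes $\Aut(S,C)^0$ is finite or a torus.

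The main obstacle will be the rigorous justification of the descent lemma and the normality claims. One must ensure $\Aut(S,C)^0$ contains no automorphism failing to descend — guaranteed here precisely because $\Aut^0$ fixes every rigid $-1$-curve, making the contraction equivariant — and that each exhibited unipotent subgroup is normal in the \emph{actual} group $\Aut(S,C)^0$ rather than merely in an ambient parabolic; this follows since $\Aut(S,C)^0$ lies inside a group in which that unipotent radical is normal and actually contains it. The remaining delicate bookkeeping is verifying case by case which unipotent directions are annihilated by the point conditions (the $m=1$ vs.\ $m\ge2$ threshold) and, for $\FF_n$ with $n\ge1$, confirming that the vertical unipotent radical fixes $Z_n$ pointwise.
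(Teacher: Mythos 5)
Your proposal is correct: every case of the proposition is covered and every conclusion matches. At its computational core it is the same argument as the paper's, which is likewise a case-by-case determination of $\Aut_0(S,C)$ obtained by descending to the minimal model and computing stabilizers in $\Aut(\PP^2)$, $\Aut(\PP^1\times\PP^1)$, or $\Aut(\FF_n)$ --- except that the paper simply asserts the resulting groups (e.g.\ $\mathbb{G}_a^{n+1}\rtimes(\mathrm{GL}_2(\CC)/\mu_n)$ for $\FF_n$, quoting Dolgachev--Iskovskikh) and reads off reductivity. What you do differently, and what it buys: (i) you isolate the descent step as a lemma (triviality of the $\Aut_0$-action on $\Pic(S)$ plus the fact that a $-1$-curve is the unique effective divisor in its class), which justifies uniformly the identifications the paper leaves implicit; (ii) in the non-reductive blow-up cases you never compute the full group, only exhibit a nontrivial unipotent subgroup fixing the boundary \emph{pointwise} (transvections with axis the line, resp.\ the vertical $\mathbb{G}_a^{n+1}$ fixing $Z_n$ pointwise), which therefore survives all point conditions and is normal in $\Aut_0(S,C)$ because it is the unipotent radical of an ambient parabolic containing that group --- leaner than the paper's explicit semidirect-product descriptions; (iii) for class $(\aleph)$ you invoke KEE existence together with Theorem \ref{reductiveThm}, which is valid and non-circular, but heavier than the paper's one-line observation that $\Aut(S,C)$ is finite since $C$ is an invariant ample anticanonical elliptic curve.

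Two slips, neither affecting any conclusion. First, for $\mathrm{(I.8B.m)}$ the residual group $\mathrm{GL}_2(\CC)=\mathrm{Stab}(p,\ell)$ does \emph{not} act faithfully on $\ell\cong\PP^1$: its central $\mathbb{G}_m$ acts trivially, so the $m$-point stabilizers are the preimages in $\mathrm{GL}_2(\CC)$ of the Borel/torus/trivial subgroups of $\mathrm{PGL}_2(\CC)$, namely $\mathbb{G}_a\rtimes\mathbb{G}_m^2$, $\mathbb{G}_m^2$, $\mathbb{G}_m$; since the kernel is a central torus, the $m=1$ versus $m\ge 2$ dichotomy is unchanged. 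Second, for $n=0$ the group $\Aut_0(\FF_0)=\mathrm{PGL}_2(\CC)\times\mathrm{PGL}_2(\CC)$ is reductive and has no unipotent radical, so for $\mathrm{(I.2.0)}$ and $\mathrm{(I.7.0.m)}$ the surviving unipotent must be taken to be the $\mathbb{G}_a$-factor of the fiber-stabilizer $B\times\mathrm{PGL}_2(\CC)$ (it does fix $Z_0$ pointwise), as your ``fiber-stabilizer'' remark implicitly does, rather than a ``unipotent radical of $\Aut(\FF_n)$''. As a by-product, note that your method yields $\mathbb{G}_a\rtimes\mathbb{G}_m$ (the Borel of the conic-stabilizer $\mathrm{PGL}_2(\CC)$) for $\mathrm{(I.6B.1)}$, which differs by a $\mathbb{G}_m$-factor from the group stated in the paper's proof; both are non-reductive, so the proposition is unaffected either way.
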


\bpf If $(S,C)$ is $\mathrm{(I.1A})$, $\mathrm{(I.4A})$, or
$\mathrm{(I.5.m})$, then $\Aut(S,C)$ is finite, since $C$ is a
$\Aut(S,C)$-invariant elliptic curve that is an anticanonical
ample divisor. If $(S,C)$ is $\mathrm{(I.1B})$ then
$\Aut(S,C)\cong\mathrm{PGL}_{2}(\mathbb{C})$. If $(S,C)$ is
$\mathrm{(I.3B})$ then
$\Aut(S,C)\cong\mathrm{GL}_{2}(\mathbb{C})$. If $(S,C)$ is 
$\mathrm{(I.4C})$ then
$\Aut_0(S,C)\cong\mathrm{PGL}_{2}(\mathbb{C})$.

For the case $\mathrm{(I.1C})$, or, in fact, in any dimension, the
pair $(\PP^n, H)$ with $H$ a hyperplane in $\mathbb{P}^n$,
satisfies
$$
\Aut(\PP^n, H)\cong\Aut(\PP^n,
p)\cong\Aut(\h{Bl}_p\PP^n)\cong\mathbb{G}_a^n\rtimes\mathrm{GL}_{n}(\mathbb{C}),
$$
for a point $p\in\mathbb{P}^n$, where $\h{Bl}_p\PP^n$ denotes the
blow-up of $\PP^n$ at $p$.
 The latter group is not reductive. Note
that this generalizes Troyanov's obstruction to the existence of a
constant curvature metric on the teardrop ($S^2$ with one cone
point). 

In the case $\mathrm{(I.2.0})$, we have
$$
\Aut(\mathbb{F}_n,
Z_n)\cong\mathrm{PGL}_{2}(\mathbb{C})\times\Aut(\mathbb{C}^1)\cong\mathrm{PGL}_{2}(\mathbb{C})\times(\mathbb{G}_a\rtimes\mathbb{G}_m),%
$$
which is not reductive. 
 In the case $\mathrm{(I.2.n})$
with $n\ge 1$, we have $\Aut(\mathbb{F}_n,
Z_n)\cong\Aut(\mathbb{F}_n)$, because the curve $Z_n$ must be
fixed by any automorphism of $\mathbb{F}_n$ (since $n>0$). On the
other hand, it follows from \cite[Theorem~4.10]{ID} that if $n>0$,
then
$$
\Aut(\mathbb{F}_n)\cong\mathbb{G}_{a}^{n+1}\rtimes (\mathrm{GL}_{2}(\mathbb{C})/\mu_n),%
$$
where $\mathrm{GL}_{2}(\mathbb{C})/\mu_n$ 
acts on
$\mathbb{G}_{a}^{n+1}$ by means of its natural linear
representation in the space of binary forms of degree $n$. The
latter group is not reductive.

If $(S,C)$ is in $\mathrm{(I.6C.1})$, then
$\Aut(S,C)\cong\mathbb{G}_a^2\rtimes(\mathbb{G}_a\rtimes\mathbb{G}_m^2)$.
If $(S,C)$ is in $\mathrm{(I.6C.2})$, then
$\Aut_0(S,C)\cong\mathbb{G}_a^2\rtimes\mathbb{G}_m^2$. If $(S,C)$
is in $\mathrm{(I.6C.m})$ with $m\ge 3$, then
$\Aut_0(S,C)\cong\mathbb{G}_a^2\rtimes\mathbb{G}_m$. All these
groups are not reductive.

Now let us consider the case $\mathrm{(I.7.n.m})$ for $m>0$. If
$(S,C)$ is in $\mathrm{(I.7.0.1})$, then
$\Aut(S,C)\cong(\mathbb{G}_a\rtimes\mathbb{G}_m)\times
(\mathbb{G}_a\rtimes\mathbb{G}_m)$.  If $(S,C)$ is in
$\mathrm{(I.7.0.2})$, then $\Aut_0(S,C)\cong \mathbb{G}_m\times
(\mathbb{G}_a\rtimes\mathbb{G}_m)$. If $(S,C)$ is in
$\mathrm{(I.7.0.m})$ with $m\ge 3$, then $\Aut_0(S,C)\cong
\mathbb{G}_a\rtimes\mathbb{G}_m$. If $(S,C)$ is in
$\mathrm{(I.7.1.1})$, then $\Aut(S,C)\cong
\mathbb{G}_a^2\rtimes(\mathbb{G}_a\rtimes\mathbb{G}_m^2)$. If
$(S,C)$ is in $\mathrm{(I.7.1.2})$, then $\Aut_0(S,C)\cong
\mathbb{G}_a^2\rtimes\mathbb{G}_m^2$.  If $(S,C)$ is in
$\mathrm{(I.7.1.m})$ with $m\ge 3$, then $\Aut_0(S,C)\cong
\mathbb{G}_a^2\rtimes\mathbb{G}_m$.

If $(S,C)$ is in $\mathrm{(I.7.n.1})$ with $n\ge 2$, then
$$
\Aut(S,C)\cong\mathbb{G}_{a}^{n+1}\rtimes ((\mathbb{G}_a\rtimes\mathbb{G}_m^2)/\mu_n),%
$$
where
$((\mathbb{G}_a\rtimes\mathbb{G}_m^2)/\mu_n\subset\mathrm{GL}_{2}(\mathbb{C})/\mu_n$
acts on $\mathbb{G}_{a}^{n+1}$ by means of its natural linear
representation in the space of binary forms of degree $n$.
Similarly, if $(S,C)$ is in $\mathrm{(I.7.n.2})$ with $n\ge 2$,
then
$\Aut_0(S,C)\cong\mathbb{G}_{a}^{n+1}\rtimes\mathbb{G}_m^2/\mu_n$.
Finally, if $(S,C)$ is in $\mathrm{(I.7.n.m})$ with $n\ge 2$ and
$m\ge 3$, then
$\Aut_0(S,C)\cong\mathbb{G}_{a}^{n+1}\rtimes\mathbb{G}_m/\mu_n$.
All these groups are not reductive.

Now let us consider the case $\mathrm{(I.6B.m})$ with $m\ge
1$. If $(S,C)$ is in $\mathrm{(I.6B.1})$, then
$\Aut(S,C)\cong\mathbb{G}_{a}\rtimes\mathbb{G}_m^2$, which is not
reductive group. If $m=2$, then $\Aut_0(S,C)\cong\mathbb{G}_m$,
which is reductive. If $m\ge 3$, then $\Aut(S,C)$ is finite.

Now let us consider the case $\mathrm{(I.8B.m})$ with $m\ge
1$. If $m=1$, then
$\Aut(S,C)\cong\mathbb{G}_{a}\rtimes\mathbb{G}_m^2$, which is not
reductive group. If $m=2$, then $\Aut_0(S,C)\cong\mathbb{G}_m^2$,
which is reductive. If $m\ge 3$, then
$\Aut_0(S,C)\cong\mathbb{G}_m$, which is reductive.

Finally let us consider the case $\mathrm{(I.9C.m})$ with
$m\ge 1$. If $(S,C)$ is in $\mathrm{(I.9C.1})$, then
$\Aut_0(S,C)\cong\mathbb{G}_{a}\rtimes\mathbb{G}_m$, which is not
reductive group. If $m=2$, then $\Aut_0(S,C)\cong\mathbb{G}_m$,
which is reductive. If $m\ge 3$, then $\Aut(S,C)$ is finite. \epf

The following result shows that all pairs of class $(\beth)$
have reductive automorphism groups. This gives
further evidence for Conjecture \ref{UniformizationConj}.

\begin{theorem}
\label{BethReductiveGroupsThm}
Let $(S,C)$ be a pair of class $(\beth)$ with $C$ smooth and irreducible. Then $\Aut(S)$ is reductive.
\end{theorem}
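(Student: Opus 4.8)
The plan is to study the automorphism group $\Aut(S,C)$ of the pair, that is, the subgroup of $\Aut(S)$ preserving the boundary $C$; this is the group whose reductivity is needed in Theorem~\ref{reductiveThm}, and it is governed by the conic bundle carried by a class-$(\beth)$ surface. By Theorem~\ref{theorem:main-1} together with the positivity classification of Theorem~\ref{theorem:4-cases}, a class-$(\beth)$ pair with $C$ smooth and irreducible is one of $\mathrm{(I.3A)}$ ($S\cong\mathbb{F}_1$), $\mathrm{(I.4B)}$ ($S\cong\mathbb{P}^1\times\mathbb{P}^1$), or $\mathrm{(I.9B.m)}$ ($S$ a blow-up of $\mathbb{P}^1\times\mathbb{P}^1$ at $m\ge1$ points of $C$ lying on distinct fibres of the ruling). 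In all three $(K_S+C)^2=0$, so Proposition~\ref{proposition:conic-bundle} produces $\xi\colon S\to\mathbb{P}^1$ with general fibre $F\cong\mathbb{P}^1$ and $F\sim-(K_S+C)$; since $C\cong\mathbb{P}^1$ (Lemma~\ref{lemma:log-del-Pezzo-rational-or-elliptic-1}) one gets $C\cdot F=-(K_S+C)\cdot C=2$, so $C$ is a bisection of $\xi$.

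Next I would exploit that any $\psi\in\Aut(S,C)$ fixes $K_S$ and $C$, hence the class $-(K_S+C)$, hence the fibration $\xi$; it therefore descends to the base, giving a homomorphism $\rho\colon\Aut(S,C)\to\mathrm{PGL}_2$. The restriction $\xi|_C\colon C\to\mathbb{P}^1$ is a double cover of $\mathbb{P}^1$ by $\mathbb{P}^1$, ramified over exactly two points by Riemann--Hurwitz, and $\rho$ must permute these two branch points; so $\rho(\Aut_0(S,C))$ lies in the stabiliser of two points, a torus $\mathbb{G}_m\subset\mathrm{PGL}_2$ (and the image is finite as soon as $\xi$ has at least three reducible fibres). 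On the other hand $\ker\rho$ fixes each fibre and, on the general fibre, the two points $F\cap C$; fixing two points of $\mathbb{P}^1$ confines it to the diagonal torus, so $\ker\rho$ is contained in a one-dimensional torus. Thus both the image and the kernel of $\rho$ are reductive.

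The decisive and, I expect, hardest step is to show that the connected solvable group $\Aut_0(S,C)$ assembled from these two tori carries no unipotent radical. This is precisely where the bisection $C$ is indispensable: the fibrewise automorphism group of a ruling contains a copy of $\mathbb{G}_a$, and only the requirement that the two points $F\cap C$ stay fixed removes it. The clean way to finish is a pincer argument: any unipotent $u\in\Aut_0(S,C)$ has $\rho(u)$ unipotent and lying inside the torus $\rho(\Aut_0(S,C))$, forcing $\rho(u)=1$, so $u\in\ker\rho$; but $\ker\rho$ lies in a torus, whence $u=1$. Hence $\Aut_0(S,C)$ is a torus and $\Aut(S,C)$ is reductive.

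To be safe I would then confirm the hypotheses of this argument case by case. For $\mathrm{(I.4B)}$ the relevant group is the closed subgroup of the reductive group $(\mathrm{PGL}_2\times\mathrm{PGL}_2)\rtimes\mu_2$ preserving $C$, and the analysis yields a torus-by-finite group. For $\mathrm{(I.3A)}$, $\xi$ is the smooth ruling of $\mathbb{F}_1$ and, since $C\cdot Z_1=0$ keeps the marked points off the $(-1)$-section, one finds $\Aut_0(S,C)\cong\mathbb{G}_m$. For $\mathrm{(I.9B.m)}$ the $m$ blow-up centres, on distinct fibres because no two sit on one $(0,1)$-curve, create $m$ distinct reducible fibres, so $\rho$ has finite image once $m\ge3$, the residual cases $m\le2$ being covered by the same bisection argument. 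Collecting the three families proves the theorem.
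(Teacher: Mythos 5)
Your argument is correct, and it rests on the same geometric mechanism as the paper's proof: the conic bundle $\xi$ supplied by Proposition~\ref{proposition:conic-bundle}, the bisection property $C\cdot F=2$, and the two ramification points of the double cover $\xi\vert_C\colon C\to\mathbb{P}^1$, which the identity component must fix. Where you differ is in the organization. The paper computes the two minimal cases directly -- for $\mathrm{(I.3A)}$ it simply records $\Aut_0(S,C)\cong\mathbb{G}_m$, and for $\mathrm{(I.4B)}$ it realizes $\Aut_0(S,C)$ as a subgroup of $\mathrm{PGL}_2(\mathbb{C})$ fixing the two ramification points of $C\to\mathbb{P}^1$ -- and then disposes of $\mathrm{(I.9B.m)}$ in one stroke: since $\Aut_0(S,C)$ preserves the conic bundle, it embeds into $\Aut_0$ of the minimal model (which is $\mathrm{(I.3A)}$ or $\mathrm{(I.4B)}$), hence into $\mathbb{G}_m$. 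You instead run a uniform kernel--image analysis of the induced map $\rho\colon\Aut(S,C)\to\mathrm{PGL}_2$ on the base, killing unipotents by a pincer; this treats all three families simultaneously without the minimal-model reduction, at the cost of more algebraic-group bookkeeping. The paper's route is shorter and produces the explicit groups ($\mathbb{G}_m$ or trivial); yours is more structural and would apply to any pair carrying such a conic bundle with a bisection, independently of the classification.

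One step as written is too quick: from ``on the general fibre, $\ker\rho$ lands in the torus fixing $F\cap C$'' you conclude that $\ker\rho$ is contained in a one-dimensional torus, but restriction to a \emph{single} fibre is not injective on $\ker\rho$ (a fibre-preserving automorphism can act trivially on one fibre without being the identity, as its fibrewise action may vary over the base). This does not break your pincer, which only needs that $\ker\rho$ contains no nontrivial unipotents: if $u\in\ker\rho$ is unipotent, then for every fibre $F_t$ transverse to $C$ the restriction $u\vert_{F_t}$ is a unipotent element of $\mathrm{PGL}_2$ fixing the two distinct points of $F_t\cap C$, hence the identity, and since such fibres are dense, $u=\id$. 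With that repair (plus the standard Jordan-decomposition remark that a connected linear algebraic group all of whose elements are semisimple is a torus), your proof is complete.
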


\bpf If $(S,C)$ is (I.3A), then
$\mathrm{Aut}_{0}(S,C)\cong\mathbb{G}_m$ (this is easy). If
$(S,C)$ is (I.4B), then $\mathrm{Aut}_{0}(S,C)$ is a subgroup in
$\mathrm{PGL}_2(\mathbb{C})$ that fixes two points (the
ramification points of the double cover projection
$C\to\mathbb{P}^1$), which implies that $\mathrm{Aut}_{0}(S,C)$ is
either trivial or $\mathbb{G}_m$. Thus, if $(S,C)$ is (I.3A) or
(I.4B) then $\Aut(S)$ is reductive. Note that this also follows
from Theorem \ref{reductiveThm} combined with Theorem
\ref{KEEBethThm}.

Suppose $(S,C)$ is (I.9B.m) with $m\ge 1$. Then
$\mathrm{Aut}_{0}(S,C)$ preserves the conic bundle given by
$|-K_{S}-C|$ (see Proposition~\ref{proposition:conic-bundle}).
This implies that $\mathrm{Aut}_{0}(S,C)$ is a subgroup of the
group $\mathrm{Aut}_{0}(S^\prime,C^\prime)$ where
$(S^\prime,C^\prime)$ is a minimal model (see the proof of
Theorem~\ref{theorem:main-1}) of $(S,C)$ (it is either (I.3A) or
(I.4B)). Thus, we see that $\mathrm{Aut}_{0}(S,C)$ is a subgroup
of $\mathbb{G}_m$, which is either trivial of $\mathbb{G}_m$. In
particular, we see that $\mathrm{Aut}_{0}(S,C)$ is reductive. \epf

\subsection{Existence of KEE metrics on some pairs of class $\beth$}
\label{BethExistenceSubsec}

The goal of this subsection is to prove Theorem \ref{KEEBethThm}
as a first step towards
confirming Conjecture \ref{UniformizationConj}.
This gives the first examples of pairs with KEE metrics
of positive Ricci curvature which are not of class $(\aleph)$.
In \S\ref{SymmetrySubsubsec} we define the $G$-invariant
Tian invariant, with $G$ a finite group of automorphisms.
In the remainder of this subsection we then  
compute the Tian invariants of three 
pairs of class $\beth$. For the first two
(I.3A), (I.4B) the surface is fixed ($\FF_1$ or $\PP^1\times\PP^1$),
while for the third (I.9B.5) we 
specialize
to the Clebsch cubic surface.
The proofs use the results
of Section \ref{section:alpha} and Shokurov's
connectedness principle. Note that 
Proposition~\ref{lemma:cubic-surface} generalizes to
the logarithmic setting
the result that $\a(S)=2/3$ when $S$ is a cubic surface
in $\PP^3$ with an Eckardt point \cite{Ch07b}.
This result also serves to show (Example \ref{example:adjunction-nef})
that the bound of Proposition \ref{theorem:handy-proposition} cannot
hold without the nefness assumption. 

\subsubsection{Symmetry considerations}
\label{SymmetrySubsubsec}
Suppose that $X$ is acted by a finite group $G$ of automorphisms,
the divisor $B$ is $G$-invariant, and the class $[H]$ is $G$-invariant.
Then one can consider a $G$-invariant analogue of the
global lct of the pair $(X,B)$ with respect to
$[H]$.

\begin{definition}
\label{definition:global-threshold-G}
Let $G\subset \Aut(X)$. The $G$-invariant
global lct of the pair $(X,B)$ with respect to
$[H]$ is the number
$$
\alpha_{G}(X,B,[H]):=
\mathrm{inf}\Big\{
\mathrm{lct}\big(X,B;D\big):
\h{$D$ is effective $G$-invariant $\QQ$-divisor such that
$D\sim_{\QQ} H$}\Big\},%
$$
\end{definition}

For simplicity, we put $\alpha_{G}(X,[H])=\alpha_{G}(X,B,[H])$
if $B=0$. Similarly, we put $\alpha_{G}(X,
B_X)=\alpha_{G}(X,B,[H])$ if $H=-(K_{X}+B)$. Finally, we
put $\alpha_{G}(X)=\alpha_{G}(X,[H])$ if $B=0$ and $H=-K_{X}$.

\subsubsection{$\PP^1\times\PP^1$}

Let $G$ be a subgroup in $\mathrm{Aut}(\mathbb{P}^1)$ that is
isomorphic to $\mathrm{D}_{10}$ (the dihedral group of order
$10$). Then the action of $G$ is given by an irreducible
unimodular two-dimensional representation of the binary dihedral
group $2.G$ (a central extension of $G$ by $\mathbb{Z}_2$). Let us
denote this representation by $\mathbb{V}_2$ (we can identify it
with $H^0(\mathcal{O}_{\mathbb{P}^1}(1))$).

Note that the group $2.G$ has eight distinct irreducible
representations: the trivial one (which we denote by
$\mathbb{I}$), the two-dimensional representation $\mathbb{V}_2$,
three more two-dimensional representations (which we denote by
$\mathbb{V}_2^{\prime}$, $\mathbb{V}_2^{\prime\prime}$ and
$\mathbb{V}_2^{\prime\prime\prime}$, and three non-trivial
one-dimensional representations (which we denote by
$\mathbb{V}_1$, $\mathbb{V}_1^\prime$ and
$\mathbb{V}_1^{\prime\prime}$). Then
$\mathrm{Sym}^{3}(\mathbb{V}_2)\cong\mathbb{V}_2\oplus\mathbb{V}_2^{\prime\prime}$.
Moreover, one has
$$
\mathrm{Sym}^{6}(\mathbb{V}_2)\cong\mathbb{V}_1\oplus\mathbb{V}_2^{\prime}\oplus\mathbb{V}_2^{\prime}\oplus\mathbb{V}_2^{\prime\prime\prime},
$$
and
$\mathrm{Sym}^{2}(\mathrm{Sym}^{3}(\mathbb{V}_2))\cong\mathrm{Sym}^{6}(\mathbb{V}_2)\oplus\mathbb{V}_1\oplus\mathbb{V}_2^{\prime\prime\prime}$.
This follows from elementary representation theory.

Let $\phi\colon\mathbb{P}^1\to\mathbb{P}^3$ be an embedding given
by the linear system $|\mathcal{O}_{\mathbb{P}^1}(3)|$. Then
$\phi$ is $G$-equivariant. Put $C=\phi(\mathbb{P}^1)$. Then $C$ is
a smooth rational cubic curve in $\mathbb{P}^3$. Since $C$ is
projectively normal, we have an exact sequence of
$2.G$-representations
$$
0\to H^{0}(\mathcal{O}_{\mathbb{P}^3}(2)\otimes\mathcal{I}_{C})\to H^{0}(\mathcal{O}_{\mathbb{P}^3}(2))\to H^{0}(\mathcal{O}_{C}\otimes \mathcal{O}_{\mathbb{P}^3}(2))\to 0,%
$$
where
$H^{0}(\mathcal{O}_{\mathbb{P}^3}(2))\cong\mathrm{Sym}^{6}(\mathbb{V}_2)$
and $H^{0}(\mathcal{O}_{C}\otimes
\mathcal{O}_{\mathbb{P}^3}(2))\cong\mathrm{Sym}^{6}(\mathbb{V}_2)$.
This gives
$$
H^{0}(\mathcal{O}_{\mathbb{P}^3}(2)\otimes\mathcal{I}_{C})\cong\mathbb{V}_1\oplus\mathbb{V}_2^{\prime\prime\prime},
$$
which implies, in particular, that there exists unique
$G$-invariant quadric surface in $\mathbb{P}^3$ that contains the
curve $C$. Let us denote this quadric surface by $S$.

Since $C$ is not contained in a hyperplane in $\mathbb{P}^3$, the
surface $S$ is reduced and irreducible, Moreover, the surface $S$
is smooth, since $\mathrm{Sym}^{3}(\mathbb{V}_2)$ does not contain
one-dimensional subrepresentations of the group $2.G$. Then
$S\cong\mathbb{P}^1\times\mathbb{P}^1$ and $C$ is a curve of
bi-degree $(2,1)$ on $S$ so $(S,C)$ is $\mathrm{(I.4B})$.

\begin{prop}
\label{lemma:quadric} One has $\alpha_{G}(S,(1-\beta)C)=1$ for
every $\be\in(0,1]$.
\end{prop}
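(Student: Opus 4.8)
The plan is to prove the two inequalities $\alpha_G(S,(1-\beta)C)\le 1$ and $\alpha_G(S,(1-\beta)C)\ge 1$ separately, writing throughout $\Delta_\beta:=-(K_S+(1-\beta)C)\sim_{\mathbb{Q}} F+\beta C$, where $F$ is the class of a fibre of the projection $p\colon S\to\mathbb{P}^1$ onto the factor along which $C$ has degree two (so $F\cdot C=2$ and $F^2=0$). The upper bound is the easy direction. Since $G$ has no invariant fibre of the ruling $|F|$ I would average instead of choosing one fibre: pick any $G$-orbit $\{b_1,\dots,b_N\}$ in the base $\mathbb{P}^1$ of $p$ and set $D:=\beta C+\tfrac1N\sum_{i=1}^N p^{-1}(b_i)$. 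This is an effective $G$-invariant $\mathbb{Q}$-divisor with $D\sim_{\mathbb{Q}}\Delta_\beta$, and the coefficient of $C$ in $(1-\beta)C+D$ is exactly $1$ while the fibres meet $C$ transversally, so the pair is strictly log canonical and $\lct(S,(1-\beta)C;D)=1$; hence $\alpha_G\le 1$.

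For the lower bound I would argue by contradiction: suppose some $G$-invariant effective $D\sim_{\mathbb{Q}}\Delta_\beta$ has $c_0:=\lct(S,(1-\beta)C;D)<1$, so that $(S,\Delta)$ with $\Delta:=(1-\beta)C+c_0D$ is log canonical but not klt. Because $-(K_S+\Delta)\sim_{\mathbb{Q}}(1-c_0)\Delta_\beta$ is ample, Shokurov's connectedness principle shows the non-klt locus $\mathrm{Nklt}(S,\Delta)$ is connected; it is clearly $G$-invariant. The decisive input is that $G$ has no fixed point on $S$: via the embedding $S\subset\mathbb{P}^3=\mathbb{P}(\mathrm{Sym}^3\mathbb{V}_2)$ a fixed point of $S$ would be a fixed point of $\mathbb{P}^3$, i.e.\ a one-dimensional $2.G$-subrepresentation of $\mathrm{Sym}^3\mathbb{V}_2\cong\mathbb{V}_2\oplus\mathbb{V}_2''$ (or its dual), of which there are none. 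Consequently a connected $G$-invariant $\mathrm{Nklt}(S,\Delta)$ cannot be zero-dimensional (a connected finite set is a point, which would be $G$-fixed), so it must contain a curve $T$; here connectedness is what makes the step work uniformly in $\beta$, since a naive multiplicity--degree estimate on a $G$-orbit of isolated centres degenerates as $\beta\to 1$.

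It then remains to rule out every admissible $T$. First, $C$ cannot be a one-dimensional lc centre for $c_0<1$: writing $D=m_CC+D'$ with $D'\ge 0$, effectivity against $C\sim(2,1)$ forces $m_C\le\beta$, so the coefficient of $C$ in $\Delta$ is at most $(1-\beta)+c_0\beta<1$. Thus every component of $T$ lies in $\mathrm{Supp}(D)$ with multiplicity $1/c_0>1$, and comparing classes, $\tfrac1{c_0}[T]\le[D]=(2\beta,1+\beta)$ in the effective cone (the positive quadrant) of $\mathbb{P}^1\times\mathbb{P}^1$ forces the bidegree of $T$ to be at most $(1,1)$. Hence $T$ is a single $G$-invariant fibre of one of the rulings, a single $G$-invariant $(1,1)$-curve, or a sum of one fibre from each ruling (each of which must be individually $G$-invariant, as $G$ preserves each ruling). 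I would eliminate these in turn: a $(1,0)$-fibre meets the section $C$ at one point, which would be a $G$-fixed point of $C$; a $G$-invariant $(1,1)$-curve is a $G$-invariant hyperplane section of $S\subset\mathbb{P}^3$, hence a line in $\mathbb{V}_2\oplus\mathbb{V}_2''$ or its dual, which does not exist; and a $G$-invariant $(0,1)$-fibre meets $C$ in a $G$-invariant pair of points, necessarily the unique orbit of size two (the poles of the $\mathbb{Z}_5$-axis), which however are the two ramification points of the degree-two map $p|_C\colon C\to\mathbb{P}^1$ and so are \emph{fixed} by its deck involution rather than exchanged, so they do not form a fibre of $p|_C$. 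Every case is contradictory, giving $c_0\ge 1$ and hence $\alpha_G\ge 1$.

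The main obstacle I anticipate is exactly the $(0,1)$-fibre case, where the arithmetic of the $D_{10}$-action must be used. The key facts to nail down are that the only $G$-orbit of size $\le 2$ on $C\cong\mathbb{P}^1$ is the pair of poles, and that these coincide with the ramification points of $p|_C$. I would justify the latter intrinsically: the two ramification points of any degree-two self-map of $\mathbb{P}^1$ form a set of size two that is $G$-invariant under the equivariant map $p|_C$, and the pole orbit is the only such set; being ramification points they are fixed, not swapped, by the deck involution, so no invariant $(0,1)$-fibre arises. A secondary technical point is to confirm that connectedness, rather than an intersection estimate, is genuinely needed to produce the curve $T$ for angles close to $2\pi$ (large $\beta$), which is where the purely numerical bound would otherwise fail.
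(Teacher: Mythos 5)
Your proof is correct, and both halves share their skeleton with the paper's: the upper bound uses the identical construction (the average of a $G$-orbit of fibres plus $\beta C$), and the lower bound rests on the same two pillars, namely Shokurov's connectedness principle and the representation-theoretic fact that $G$ has no fixed point on $S$ because $\mathrm{Sym}^3(\mathbb{V}_2)\cong\mathbb{V}_2\oplus\mathbb{V}_2''$ contains no one-dimensional subrepresentation (the paper runs these two pillars in the opposite order: it first shows the non-lc locus is a single point, then derives the fixed-point contradiction). Where you genuinely diverge is in the technical heart of the lower bound, the exclusion of $G$-invariant curves from the non-klt locus. The paper intersects a putative non-lc curve $Z\ne C$ with the nef class $-K_S-C$ to get $Z.(-K_S-C)\le 2\beta$, concludes by integrality that $Z.(-K_S-C)=0$, hence $Z\in|n(-K_S-C)|$, rules out $n=1$ because the pencil $|-K_S-C|$ has no $G$-invariant member, and reaches a numerical contradiction for $n\ge 2$; note that these numerical steps are only contradictory for small $\beta$ (the integrality step needs $2\beta<1$, and the final display is declared ``impossible for small $\beta$''), so the argument as written covers a restricted angle range. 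Your alternative---bounding the reduced non-klt curve $T$ via $\frac{1}{c_0}[T]\le[D]=(2\beta,1+\beta)$ in the effective cone, hence $[T]\le(1,1)$, and then killing each $G$-invariant configuration by representation theory (no fixed point on $C$, no invariant hyperplane section) together with the ramification argument for an invariant fibre of $p$---works uniformly for all $\beta\in(0,1]$, which is what the proposition actually asserts; that uniformity is a real gain, and your coefficient bound $m_C\le\beta$ replaces the paper's intersection argument against $(-K_S-C)^2=0$ for excluding $C$ itself. Two details to nail down when writing it up: (a) in the invariant-fibre case, also dispose of a fibre tangent to $C$ (it meets $C$ in a single point, which would then be $G$-fixed, so this case is immediate); and (b) justify that $G$ preserves each ruling separately (it does: $G$ fixes the class of $C$, of bidegree $(2,1)$, while interchanging the rulings would carry it to a $(1,2)$-class).
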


\begin{proof}
Note that $|-K_{S}-C|$ is a free pencil on $S$ that gives a
projection $S\to\mathbb{P}^1$ (cf.
Lemma~\ref{lemma:conic-bundles-2}). Let $Z_1$ be a curve in
$|-K_{S}-C|$, and let $Z_2,\ldots,Z_r$ be all curves in
$|-K_{S}-C|$ that are images of $Z_1$ via $G$. Then
$$
\frac1r{\sum_{i=1}^{r}Z_i}+\beta C\sim_{\mathbb{Q}} -(K_{S}+(1-\beta)C),%
$$
and $Z_1+Z_2+\cdots+Z_r$ is $G$-invariant. On the other hand, we
have
$$
\mathrm{lct}\big(S,(1-\beta)C;r^{-1}\h{$\sum_{i=1}^{r}Z_i$}+\beta C\big)\le 1,%
$$
so $\alpha(S,(1-\beta)C)\le 1$.

Suppose that $\alpha_{G}(S,(1-\beta)L)<1$. Then there exists an
effective $G$-invariant $\mathbb{Q}$-divisor $\Delta$ such that
$$
\Delta\sim_{\mathbb{R}} -(K_{S}+(1-\beta)C)
$$ and the pair
$(S,(1-\beta)C+\mu\Delta)$ is not lc at some point $O\in S$ for
some positive rational $\mu<1$.
We claim that $(S,(1-\beta)C+\mu\Delta)$ is lc outside of the
point  $O$. Indeed, suppose that this is not the case. Then
$(S,(1-\beta)C+\mu\Delta)$ is not lc along a curve. The latter
follows from the connectedness principle \cite[Lemma~5.7]{Sho93}
since the divisor $-K_{S}-(1-\beta)C-\mu\Delta)$ is ample, because
$\mu<1$. Thus, we see that there exists a $G$-invariant (possibly
reducible) curve $Z\subset S$ such that
$$
\Delta=\epsilon Z+\Omega
$$
for some effective $\mathbb{R}$-divisors $\Omega$ whose support
does not contain the curve $Z$ and some positive rational
$\epsilon$ such that either $Z=C$ and $\mu\epsilon>\beta$ or $Z\ne
C$ and $\mu\epsilon>1$. This is, of course, impossible, because
$\Delta\sim_{\mathbb{R}} -K_{S}-C+\beta C$. Indeed, if $Z=C$, then
$(\mu\epsilon-\beta)C+\Omega\sim_{\mathbb{R}} -K_{S}-C$, which
implies that
$$
0<2(\mu\epsilon-\beta)=(\mu\epsilon-\beta)C. (-K_{S}-C)\le \Big((\mu\epsilon-\beta)C+\Omega\Big). (-K_{S}-C)=(-K_{S}-C)^2=0,%
$$
which is absurd. Thus, we have $Z\ne C$. Then
$$
Z.(-K_{S}-C)\le \mu\epsilon Z.(-K_{S}-C)\le(\mu\epsilon Z+\Omega).(-K_{S}-C)=(-K_{S}-C+\beta C). (-K_{S}-C)=2\beta,%
$$
which implies that $Z.(-K_{S}-C)=0$. Then $Z\in |n(-K_{S}-C)|$
for some $n\in\mathbb{N}$. On the other hand, the pencil
$|-K_{S}-C|$ does not contain $G$-invariant curves (if
$|-K_{S}-C|$ contains a $G$-invariant curve, then $|-K_{S}|$
contains a $G$-invariant curve, which is impossible, since there
exists unique $G$-invariant quadric surface in $\mathbb{P}^3$ that
contains the curve $C$. Therefore, we see that $n\ge 2$. Then
$(n\mu\epsilon-1)(-K_{S}-C)+\Omega\sim_{\mathbb{R}}\beta C$, which
implies that
$$
2<(2n\mu\epsilon-1)\le
(2n\mu\epsilon-1)+\Omega.(-K_{S})=((n\mu\epsilon-1)(-K_{S}-C)+\Omega).(-K_{S})=\beta
C. (-K_{S})=6\beta
$$
which is impossible for small $\beta$. The obtained contradiction
shows that $(S,(1-\beta)C+\mu\Delta)$ is lc outside of the point
$O$.

Since $(1-\beta)C+\mu\Delta$ is $G$-invariant and the pair
$(S,(1-\beta)C+\mu\Delta)$ is lc outside of the point $O$, the
point $O$ must be $G$-invariant. The latter is impossible, since
$\mathrm{Sym}^{3}(V)$ does not contain one-dimensional
sub-representations. Thus $\alpha_{G}(S,(1-\beta)C)=1$.
\end{proof}

\subsubsection{$\mathbb{F}_1$}

Let $G$ be a subgroup in $\mathrm{Aut}(\mathbb{P}^1)$ that is
isomorphic to $\mathrm{D}_{2n}$ (the dihedral group of order $2n$)
for $n\ge 2$ (if $n=2$, then we assume that
$G\cong\mathbb{Z}_2\times\mathbb{Z}_2$). Then the action of $G$ is
given by an irreducible unimodular two-dimensional representation
of the group $2.G$ (a central extension of $G$ by $\mathbb{Z}_2$).
Let us denote this representation by $V$. Them
$\mathrm{Sym}^{2}(V)$ is a representation of the group $G$.
Moreover, it splits as a union of an irreducible two-dimensional
representation of $G$ and a one-dimensional subrepresentation.

Let $\phi\colon\mathbb{P}^1\to\mathbb{P}^2$ be an embedding given
by the linear system $|\mathcal{O}_{\mathbb{P}^1}(2)|$. Then
$\phi$ is $G$-equivariant. Put $\bar{C}=\phi(\mathbb{P}^1)$. Then
$\bar{C}$ is a smooth conic in $\mathbb{P}^2$. Moreover, there
exists $G$-invariant point $P\in\mathbb{P}^2$. Since $V$ is
irreducible representation of the group $2.G$, we see $P\not\in
\bar{C}$.

Let $\pi\colon S\to\mathbb{P}^2$ be the blow up of the point $P$.
Then the action of $G$ lifts to $S$ and $S\cong\mathbb{F}_1$.
Denote by $C$ the proper transform of the curve $\bar{C}$ on the
surface $S$. Thus, $(S,C)$ is $\mathrm{(I.3A})$.
The proof of the following result is almost identical to
the proof of Proposition~\ref{lemma:quadric}.

\begin{prop}
\label{lemma:F1} One has $\alpha_{G}(S,(1-\beta)C)=1$ for every
$\be\in(0,1]$.
\end{prop}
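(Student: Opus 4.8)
The plan is to follow the proof of Proposition~\ref{lemma:quadric} almost verbatim, with $\mathbb{P}^1\times\mathbb{P}^1$ replaced by $\mathbb{F}_1$. The structural input is identical: on $S\cong\mathbb{F}_1$ one has $-K_S\sim 2Z_1+3F$ by \eqref{KFnEq} and $C\in|2(Z_1+F)|$, so $-K_S-C\sim F$ is the class of a fibre of the ruling $\xi\colon\mathbb{F}_1\to\mathbb{P}^1$ (the conic bundle of Proposition~\ref{proposition:conic-bundle}). I would record at the outset the intersection numbers $F^2=0$, $C\cdot F=2$, $(-K_S)\cdot F=2$, $(-K_S)\cdot C=6$ and $C\cdot Z_1=0$, the last reflecting that $\bar C$ avoids the blown-up point $P$. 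The upper bound is then immediate: averaging a $G$-orbit of fibres $Z_1',\dots,Z_r'\in|F|$ produces a $G$-invariant divisor $\tfrac1r\sum Z_i'+\beta C\sim_{\mathbb{Q}}-(K_S+(1-\beta)C)$ of threshold at most $1$, so $\alpha_G(S,(1-\beta)C)\le 1$.

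For the reverse inequality I argue by contradiction. If $\alpha_G<1$ there are a $G$-invariant effective $\mathbb{Q}$-divisor $\Delta\sim_{\mathbb{Q}}F+\beta C$ and a rational $\mu<1$ with $(S,(1-\beta)C+\mu\Delta)$ not log canonical at some point $O$. Crucially the leftover class $-(K_S+(1-\beta)C+\mu\Delta)\sim_{\mathbb{R}}(1-\mu)(F+\beta C)$ is ample for \emph{every} $\beta\in(0,1]$: by \eqref{ampleFn}, $F+\beta C=2\beta Z_1+(1+2\beta)F$ is ample throughout this range, not merely for small $\beta$. Hence Shokurov's connectedness principle \cite[Lemma~5.7]{Sho93} applies and the non log canonical locus is connected and $G$-invariant, so it is either a single $G$-fixed point or contains a $G$-invariant curve $Z$.

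The first alternative is excluded because $S$ has no $G$-fixed point: $P$ is the unique $G$-fixed point of $\mathbb{P}^2$, since $\mathrm{Sym}^2(V)$ has a unique one-dimensional $G$-subrepresentation, and the exceptional curve $Z_1=\mathbb{P}(T_P\mathbb{P}^2)$ carries a two-dimensional irreducible representation and so has none either. For a curve $Z$ I separate $Z=C$ from $Z\neq C$. If $Z=C$, write $\Delta=\delta C+\Omega$ with $C\not\subset\mathrm{Supp}(\Omega)$; non log canonicity along $C$ forces $\mu\delta>\beta$, while intersecting $\Delta\sim F+\beta C$ with the nef class $F$ gives $2\delta\le\Delta\cdot F=2\beta$, i.e.\ $\delta\le\beta<\mu\delta$, a contradiction valid for all $\beta\in(0,1]$.

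The remaining case $Z\neq C$ is where the real work lies, and it is the step I expect to be the main obstacle in extending the conclusion to the whole range $\beta\in(0,1]$ rather than only to small $\beta$; note that the analogous step in Proposition~\ref{lemma:quadric} is closed only ``for small $\beta$.'' Intersecting with the nef classes $F$ and $C$ I would try to force every component of $Z$ to be vertical for $\xi$, so that $Z\in|nF|$; since no fibre is $G$-invariant (the base $\mathbb{P}^1$ carries the same irreducible representation as $Z_1$), any $G$-invariant vertical curve has $n\ge 2$, and an effectivity estimate on the residual divisor together with $(-K_S)\cdot F=2$ should yield a numerical contradiction. The delicate geometric input, and the reason one can hope for the threshold to remain exactly $1$ up to $\beta=1$, is that the unique $G$-orbit of size two in the base is necessarily the branch locus of the double cover $C\to\mathbb{P}^1$ induced by $\xi$, so the two $G$-invariant fibres are \emph{tangent} to $C$; this tangency raises the local threshold back to $1$, in contrast with the transverse computation underlying Proposition~\ref{theorem:log-del-Pezzo-alpha-3}. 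Controlling simultaneously these tangent vertical fibres and any $G$-invariant multisection uniformly in $\beta$ is precisely the hard point; once it is settled the contradiction gives $\alpha_G(S,(1-\beta)C)=1$ for all $\beta\in(0,1]$.
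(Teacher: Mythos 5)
Your architecture is the paper's: the paper proves this proposition by declaring it ``almost identical'' to Proposition~\ref{lemma:quadric}, and the steps you do execute match that template and are correct. The orbit-averaged fibres give $\alpha_G\le 1$; the leftover class $(1-\mu)(F+\beta C)=(1-\mu)(2\beta Z_1+(1+2\beta)F)$ is indeed ample for all $\beta\in(0,1]$ by \eqref{ampleFn}, so the connectedness principle applies; $G$-fixed points on $S$ are correctly excluded since $T_P\mathbb{P}^2\cong W_1^{\ast}\otimes W_2$ is an irreducible two-dimensional representation, so neither $Z_1\cong\mathbb{P}(T_P\mathbb{P}^2)$ nor $\mathbb{P}^2\setminus\{P\}$ contains a fixed point; and your $Z=C$ computation ($\mu\delta>\beta$ against $2\delta\le\Delta\cdot F=2\beta$) is valid for every $\beta$. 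But you stop short of a proof: the case $Z\ne C$ is explicitly left open, and it is not a mere technicality.

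Concretely, your plan to ``force every component of $Z$ to be vertical'' cannot work on all of $(0,1]$, because you overlooked the unique $G$-invariant \emph{horizontal} curve on $\mathbb{F}_1$: the $(-1)$-section $Z_1$ (note $C\cdot Z_1=0$, so $Z_1\ne C$). For $\beta\le 1/2$ the case does close by pure numerics: writing $\Delta=\epsilon Z+\Omega$ with $\mu\epsilon>1$, any horizontal component satisfies $\epsilon\le\epsilon\,(Z\cdot F)\le\Delta\cdot F=2\beta\le 1<\epsilon$, while a $G$-invariant vertical $Z$ consists of $k\ge 2$ distinct fibres (no fixed point on the base), whence $2<\epsilon k\le \Delta\cdot(Z_1+F)=1+2\beta\le 2$, absurd either way. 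For $\beta>1/2$, however, the contradiction you hope for is unattainable, because the conclusion itself fails there: the $G$-invariant effective divisor $D=2\beta Z_1+\frac{1+2\beta}{r}\sum_{i=1}^{r}F_i\sim_{\mathbb{Q}}-(K_S+(1-\beta)C)$ has coefficient $2\beta>1$ along $Z_1$, so $\mathrm{lct}(S,(1-\beta)C;D)\le\frac{1}{2\beta}<1$ and hence $\alpha_G(S,(1-\beta)C)\le\frac{1}{2\beta}$. This pitfall is invisible in Proposition~\ref{lemma:quadric}, where $\mathbb{P}^1\times\mathbb{P}^1$ carries no rigid curve---though, as you noticed, even there the curve case is closed in the paper only ``for small $\beta$.'' So the statement one can actually prove by this method is $\alpha_G(S,(1-\beta)C)=1$ for $\beta\le 1/2$, which is all that Theorem~\ref{KEEBethThm} requires; and your tangency discussion is a red herring: the $G$-invariant tangent fibres impose exactly $(1-\beta)+\lambda\frac{1+2\beta}{2}\le\frac{3}{2}$, i.e.\ $\lambda\le 1$, while the isolated-point case is already dead by the fixed-point computation---the genuine enemy in the $Z\ne C$ case is $Z_1$.
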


\subsubsection{Cubic surfaces}

The Tian invariant of a smooth cubic surface with an Eckardt point
is $2/3$ \cite[Theorem 1.7]{Ch07b}.
The following is a natural generalization.

\begin{prop}
\label{lemma:cubic-surface} Let $S$ be a smooth cubic surface in
$\mathbb{P}^{3}$, and let $C$ be a line on $S$. Then the divisor
$-(K_{S}+(1-\beta)C)$ is ample for every real $\beta\in(0,1]$.
Suppose that $C$ contains an Eckardt point. Then
$$
\alpha(S,(1-\beta)C)=\frac{1+\beta}{2+\beta}
$$
for every real $\beta\in(0,1]$.
\end{prop}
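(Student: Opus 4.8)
The plan has two parts. For the ampleness assertion, the quickest route is to use that a smooth cubic surface is del Pezzo, so $-K_S$ (the hyperplane class) is very ample, and to write
$$-(K_S+(1-\beta)C)=\beta(-K_S)+(1-\beta)(-K_S-C).$$
Since $-K_S\cdot C=\deg C=1$, adjunction gives $C^2=-1$, whence $(-K_S-C)\cdot L=1-C\cdot L\ge 0$ for every line $L\subset S$ (two distinct lines on $S$ meet in at most one point, while $(-K_S-C)\cdot C=2$). As $\overline{\mathrm{NE}}(S)$ of a cubic surface is generated by its twenty-seven lines, $-K_S-C$ is nef; thus for $\beta\in(0,1]$ the class above is the sum of an ample and a nef class and so is ample. (Alternatively this is immediate from Remark~\ref{remark:del-Pezzo-case}.)

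For the upper bound $\alpha(S,(1-\beta)C)\le\frac{1+\beta}{2+\beta}$, I would exhibit a single bad divisor built from the Eckardt point $O\in C$. Let $C_1,C_2$ be the two other lines through $O$; the tangent plane at $O$ cuts out $C+C_1+C_2\in|-K_S|$, so $C_1+C_2\sim -K_S-C$ and $D:=\beta C+C_1+C_2\sim_{\mathbb{R}}-(K_S+(1-\beta)C)$. The three lines pass through $O$ with distinct tangent directions and meet pairwise only at $O$, so $(S,(1-\beta)C+\lambda D)=(S,(1-\beta+\lambda\beta)C+\lambda C_1+\lambda C_2)$ has a single multiple point, an ordinary triple point at $O$; its log canonicity is governed by each coefficient being $\le 1$ and their sum being $\le 2$. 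Since $\frac{1+\beta}{2+\beta}<1$ the binding condition is $(1-\beta+\lambda\beta)+2\lambda\le 2$, i.e. $\lambda\le\frac{1+\beta}{2+\beta}$, giving $\lct(S,(1-\beta)C;D)=\frac{1+\beta}{2+\beta}$.

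For the reverse inequality, suppose some effective $D\sim_{\mathbb{R}}-(K_S+(1-\beta)C)$ makes $(S,(1-\beta)C+\lambda D)$ not log canonical at a point $P$ with $\lambda<\frac{1+\beta}{2+\beta}$, and derive a contradiction. First write $D=mC+\Omega$ with $C\not\subset\mathrm{Supp}(\Omega)$; intersecting with the nef class $F:=-K_S-C$ (where $F^2=0$, $C\cdot F=2$, $D\sim_{\mathbb{R}}F+\beta C$) gives $2\beta=D\cdot F=2m+\Omega\cdot F\ge 2m$, so $m\le\beta$. If either $P\notin C$ or $(S,\lambda\Omega)$ is itself not log canonical at $P$, then $\Gamma:=\Omega+(1-\beta+m)C\sim_{\mathbb{R}}-K_S$ is effective and $(S,\lambda\Gamma)$ is not log canonical at $P$, forcing $\lambda\ge\alpha(S)=2/3\ge\frac{1+\beta}{2+\beta}$ (using $\alpha(S)=2/3$ from \cite[Theorem~1.7]{Ch07b} and $\frac{1+\beta}{2+\beta}\le 2/3$), a contradiction. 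The remaining case is $P\in C$ with $(S,\lambda\Omega)$ log canonical at $P$, so that $C$ is essential to the failure of log canonicity. Here I would apply the precise form of inversion of adjunction along the smooth curve $C$, \emph{retaining} the coefficient $a:=1-\beta+\lambda m<1$, to conclude $\lambda\,\mult_P(C\cdot\Omega)>2-a=1+\beta-\lambda m$. Combined with $\mult_P(C\cdot\Omega)\le\Omega\cdot C=2-\beta+m$ and $m\le\beta$ this yields $\lambda(2+\beta)\ge\lambda(2-\beta+2m)>1+\beta$, i.e. $\lambda>\frac{1+\beta}{2+\beta}$, again a contradiction.

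The hard part is this final step. Discarding the slack $1-a$ and using inversion of adjunction with coefficient $1$ on $C$ gives only $\lambda\,\mult_P(C\cdot\Omega)>1$, which (exactly as in the proof of Proposition~\ref{theorem:log-del-Pezzo-alpha-3}) produces the weak bound $\lambda>1/2$ rather than the sharp constant. To upgrade this to $\frac{1+\beta}{2+\beta}$ one must prove the refined inequality $\lambda\,\mult_P(C\cdot\Omega)>2-a$ under the hypothesis that $(S,\lambda\Omega)$ is log canonical at $P$, which amounts to a local analysis of the tangency of $\Omega$ with $C$ at $P$ via weighted blow-ups (equivalently monomial valuations), together with checking that no valuation over $P$ is worse than the tangency one. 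Geometrically this is precisely the singular-fibre bookkeeping of Proposition~\ref{theorem:log-del-Pezzo-alpha-3}, now specialized to the node $O$ of the reducible fibre $C_1+C_2$ of the conic bundle $|-K_S-C|$ on which $C$ is a bisection; I expect this to be the only delicate point, everything else being intersection theory and the input $\alpha(S)=2/3$.
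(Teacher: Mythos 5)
The first two-thirds of your proposal are sound and agree with the paper where they overlap: the ampleness decomposition, the Eckardt-point divisor $\beta C+C_1+C_2$ with its ordinary triple point giving $\alpha\le\frac{1+\beta}{2+\beta}$ (this is exactly the paper's upper bound), the bound $m\le\beta$ obtained by intersecting with $F=-K_S-C$, and the reduction of the case ``$P\notin C$ or $(S,\lambda\Omega)$ not lc at $P$'' to $\alpha(S)=\frac23\ge\frac{1+\beta}{2+\beta}$ are all correct. The genuine gap is the final step, which you yourself flag as the hard part and do not prove: the ``refined inversion of adjunction'' $\lambda\,\mult_P(C\cdot\Omega)>2-a$ under the hypothesis that $(S,\lambda\Omega)$ is log canonical at $P$. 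This statement is \emph{false} as a local assertion, so no amount of weighted blow-up bookkeeping will establish it. Take $C=\{x=0\}$ and $B:=\lambda\Omega=\frac56\Gamma$ with $\Gamma=\{y^2=x^3\}$ a cuspidal curve: then $(S,B)$ is lc at the origin $P$ (the lct of a cusp is exactly $\frac56$), yet for every $a>0$ the pair $(S,aC+B)$ is not lc at $P$, because the monomial valuation with weights $(2,3)$ has log discrepancy $5-2a-\frac56\cdot 6=-2a<0$; meanwhile $\mult_P(C\cdot B)=\frac56\cdot 2=\frac53$, which violates $\mult_P(C\cdot B)>2-a$ as soon as $a\le\frac13$ (and $a=1-\beta+\lambda m$ does get this small when $\beta$ is close to $1$). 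Replacing the cusp by $\{y^2=x^{2k+1}\}$ with coefficient equal to its lct $\frac{2k+3}{2(2k+1)}$ drives $\mult_P(C\cdot B)=1+\frac{2}{2k+1}$ arbitrarily close to $1$, so the failure is not marginal and is not excluded by the global bound $\mult_P(C\cdot\Omega)\le C\cdot\Omega=2-\beta+m$ either. Ruling out such tangential configurations cannot be done by a valuation computation at $P$ alone; it needs global geometry.

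The paper closes exactly this case by a global argument that your outline is missing. After using \cite[Lemma~5.36]{CoKoSm} together with Shokurov's connectedness principle \cite[Lemma~5.7]{Sho93} (applicable because $-(K_S+(1-\beta)C+\mu\Delta)$ is ample when $\mu<\frac{1+\beta}{2+\beta}$) to show that the non-lc locus is a single point $O$, and disposing of $O\notin C$ via $\alpha(S)=\frac23$ just as you do, it treats $O\in C$ by choosing a blow-down $\pi\colon S\to\PP^2$ that is an isomorphism near $O$ and sends $C$ to a line. The pair $(\PP^2,(1-\beta)\pi(C)+\mu\pi(\Delta))$ is then not lc at $\pi(O)$ and, by connectedness again, lc elsewhere; adding a general line $L$ with coefficient $\epsilon$ slightly larger than $1$ keeps $-(K_{\PP^2}+(1-\beta)\pi(C)+\mu\pi(\Delta)+\epsilon L)$ ample --- this is precisely where the strict inequality $\mu<\frac{1+\beta}{2+\beta}$ enters, since $\deg\pi(\Delta)=2+\beta$ and one needs $\epsilon<(2+\beta)(1-\mu)$ with $(2+\beta)(1-\mu)>1$ --- and so produces a non-lc locus with two connected components, $\{\pi(O)\}$ and $L$, contradicting connectedness. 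That projection-plus-extra-line trick is what substitutes for your unprovable local inequality; without it, or some equivalent global mechanism, your proof cannot be completed.
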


\begin{proof}
Let $P$ be an Eckardt point on $C$, let $L_1$ and $L_2$ be two
lines in $S$ such that $L_1\cap L_2\cap C=P$. Then
$$
\mathrm{lct}\Big(S,(1-\beta)C;L_1+L_2+\beta
C\Big)=\frac{1+\beta}{2+\beta}
$$
and $L_1+L_2+\beta C\sim_{\mathbb{Q}} -(K_{S}+(1-\beta)C)$, which
implies that $\alpha(S,(1-\beta)C)\le (1+\beta)/(2+\beta)$.

Suppose that $\alpha(S,(1-\beta)C)<(1+\beta)/(2+\beta)$. Then
there exists an effective $\mathbb{Q}$-divisor $\Delta$ on the
surface $S$ such that $\Delta\sim_{\mathbb{Q}}
-(K_{S}+(1-\beta)C)$ and the pair $(S,(1-\beta)C+\mu\Delta)$
is not lc at some point $O\in S$ for some positive
rational number $\mu<(1+\beta)/(2+\beta)$. Let us derive a
contradiction (compare the proofs of \cite[Lemmas~3.4 and 3.6]{Ch07b}).

Since $(1-\beta)C+\Delta\sim_{\mathbb{Q}}-K_{S}$, it follows from
\cite[Lemma~5.36]{CoKoSm} that the pair $(S,(1-\beta)C+\Delta)$ is
lc outside of  finitely many points in $C$. Hence, the
pair $(S,(1-\beta)C+\mu\Delta)$ is lc outside of the a
finitely many points in $S$, since $\mu\le 1$. In fact, this
implies that the log pair $(S,(1-\beta)C+\mu\Delta)$ is log
canonical outside of the point $O$ by the connectedness principle
\cite[Lemma~5.7]{Sho93}, because the divisor
$-(K_{S}+1-\beta)C+\mu\Delta)$ is ample.

If $O\not\in C$, then the pair $(S,\mu\Delta)$ is not log
canonical at the point $O\in S$, which is impossible, since
$\alpha(S)=2/3$ \cite[Theorem 1.7]{Ch07b} and
$\frac{1+\beta}{2+\beta}<2/3$. Thus, $O\in C$.

There exists a birational morphism $\pi\colon S\to\mathbb{P}^2$
such that $\pi$ is an isomorphism in a neighborhood of the point
$O$, and $\pi(C)$ is a line in $\mathbb{P}^{2}$. Put
${c}=\pi(C)$ and $\bar{\Delta}=\pi(\Delta)$. Then the pair
$(\mathbb{P}^2,(1-\beta){c}+\mu\bar{\Delta})$ is not log
canonical at the point $\pi(O)$. Moreover,  the pair
$(\mathbb{P}^2,(1-\beta){c}+\mu\bar{\Delta})$ is lc
outside of finitely many points in $\mathbb{P}^2$. Then
$(\mathbb{P}^2,(1-\beta){c}+\mu\bar{\Delta})$ is lc
outside of the point $\pi(O)$ by the connectedness principle,
because the divisor
$-(K_{\mathbb{P}^2}+(1-\beta){c}+\mu\bar{\Delta})$ is ample.

Let $L$ be a general line in $\mathbb{P}^2$. Then the pair
$(\mathbb{P}^2,(1-\beta){c}+\mu\bar{\Delta}+\epsilon L)$ is
not lc along $L$ for every rational number
$\epsilon>1$. Choose $\epsilon>1$ such that $\epsilon<1+3\beta$.
Then
$$
3-(1-\beta)-\mu\big(2+\beta\big)-\epsilon>3-2(1-\beta)-\frac{1+\beta}{2+\beta}\big(2+\beta\big)-\epsilon=1+3\beta-\epsilon>0,
$$
which implies that the divisor
$-(K_{\mathbb{P}^2}+(1-\beta){c}+\mu\bar{\Delta}+\epsilon L)$
is ample. This contradicts the connectedness principle, because
the pair
$(\mathbb{P}^2,(1-\beta){c}+\mu\bar{\Delta}+\epsilon L)$ is
not lc at every point of the non-connected set
$\pi(O)\not\in L$, and it is lc outside of this set.
\end{proof}

Proposition~\ref{lemma:cubic-surface} shows that the nefness
conditions in Theorem~\ref{theorem:handy-proposition} can not be
omitted as the following example demonstrates.

\begin{example}
\label{example:adjunction-nef}
{\rm Let $S$ be a smooth cubic surface
in $\mathbb{P}^{3}$, and let $C$ be a line on $X$ such that there
exists an Eckardt point on $C$. Put $H=-(K_{S}+(1-\beta)C)$ for
any $\beta\in(0,1)$. Then $H$ is ample. Put
$$
\gamma=\mathrm{sup}\big\{c\in\mathbb{Q}\ \big\vert\ H-cC\ \mathrm{is\ pseudoeffective}\big\}.%
$$
Then $\gamma=\beta$. Moreover, it follows from
Definition~\ref{definition:global-threshold} that
$\alpha(S,[H])\ge\a(S,[H+(1-\be)C])=\alpha(S)=2/3$. % (see \cite{Ch07b}).
But it follows from Lemma~\ref{lemma:lct-curve} that
$$
\alpha(C,[H]\vert_{C}))=\frac{1}{H. C}=\frac{1}{2-\beta}.%
$$
On the other hand, it follows from Proposition~\ref{lemma:cubic-surface}
that $\alpha(S,(1-\beta)C)=\frac{1+\beta}{2+\beta}$ for any
$\beta\in(0,1)$. Thus, we see that
$$
\alpha(S,(1-\beta)C,[H])=\alpha(S,(1-\beta)C)=\frac{1+\beta}{2+\beta}\not\ge\frac{1}{2-\beta}=\mathrm{min}
\{\beta/\gamma, \alpha(S,[H]), \alpha(C,[H]\vert_{C})\}%
$$
for sufficiently small $\beta>0$. Note that $C$ is not nef, since
$C^2=-1$ on the surface $S$.
}
\end{example}

%%%%%%%%%%%%%%%%%%%%%%%%%%%%%%%%%%%%%%%%%%%%%

Next, we show that for the Clebsch diagonal cubic surface Tian's
invariant is in fact equal to 1 for any $\be\in(0,1]$. Recall that
the Clebsch diagonal cubic surface is a smooth cubic surface with
$\mathrm{Aut}(S)=\mathrm{S}_5$ (see \cite[\S~4]{Hi08}). Such
surface exists and it is unique (this follows from basic
representation and invariant theory of the group $\mathrm{S}_5$).

\begin{prop}
\label{lemma:Clebsch-cubic-surface} Let $S$ be the Clebsch
diagonal cubic surface, i.e., the unique smooth cubic surface in
$\mathbb{P}^3$ such that $\mathrm{Aut}(S)\cong\mathrm{S}_5$. Let
$G\cong\mathrm{D}_{10}$ be a subgroup in $\mathrm{Aut}(S)$
consisting of even permutations. Then there exists a $G$-invariant
line $C\subset S$ and $\alpha_{G}(S,(1-\beta)C)=1$ for every
$\be\in(0,1]$.
\end{prop}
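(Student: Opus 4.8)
The plan is to follow the template of Proposition~\ref{lemma:quadric}, exploiting the conic bundle structure guaranteed by the class $(\beth)$ hypothesis, with the representation-theoretic non-existence of invariant configurations replaced by purely group-theoretic properties of the $\mathrm{D}_{10}$-action. First I would record the geometry of the pair. Since $S$ is a cubic surface and $C$ a line, $C$ is a smooth rational $(-1)$-curve with $-K_{S}\cdot C=1$, $C^2=-1$, and $(K_{S}+C)^2=0$; thus $(S,C)$ is of class $(\beth)$, and by Proposition~\ref{proposition:conic-bundle} the linear system $|-K_{S}-C|$ is a base-point-free pencil defining a conic bundle $\xi\colon S\to\mathbb{P}^1$ with $C\cdot F=2$ for a general fiber $F$. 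The existence of a $G$-invariant line is the first genuinely Clebsch-specific point: among the $27$ lines, the $12$ not defined over $\QQ$ form a single $\mathrm{S}_5$-orbit whose point stabilizer has order $10$, necessarily a $\mathrm{D}_{10}$ lying in $\mathrm{A}_5$; one checks that each such $G$ fixes exactly two of these lines, and we take $C$ to be one of them.

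The upper bound $\alpha_{G}(S,(1-\beta)C)\le 1$ (Definition~\ref{definition:global-threshold-G}) is obtained exactly as in Proposition~\ref{lemma:quadric}: choosing any member $Z_1\in|-K_{S}-C|$ and averaging its $G$-orbit $Z_1,\ldots,Z_r$ produces a $G$-invariant divisor $\tfrac1r\sum_{i=1}^{r}Z_i+\beta C\sim_{\mathbb{Q}}-(K_{S}+(1-\beta)C)$ whose log canonical threshold against $(1-\beta)C$ is at most $1$. For the lower bound I would argue by contradiction: if $\alpha_{G}<1$ there is a $G$-invariant effective $\QQ$-divisor $\Delta\sim_{\mathbb{Q}}-(K_{S}+(1-\beta)C)\sim_{\mathbb{Q}}F+\beta C$ and a rational $\mu<1$ with $(S,(1-\beta)C+\mu\Delta)$ not log canonical at a point $O$. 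Because $-(K_{S}+(1-\beta)C+\mu\Delta)\sim_{\mathbb{Q}}(1-\mu)\bigl(-(K_{S}+(1-\beta)C)\bigr)$ is ample, Shokurov's connectedness principle \cite[Lemma~5.7]{Sho93} forces the non-log-canonical locus of the pair to be connected.

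The heart of the argument is then to show this locus is a single point and to exclude it. The decisive structural fact is that $G$ has no fixed point on the base $\mathbb{P}^1$ of $\xi$: the cyclic group $\mathbb{Z}_5\subset G$ must act nontrivially on the base, for otherwise it would fix the two points of $C$ in every fiber, hence act trivially on $C$ and on $S$, contradicting the faithfulness of $\mathrm{S}_5\subset\Aut(S)$. Thus $\mathbb{Z}_5$ has exactly two fixed points on the base, and the relation $\tau\rho\tau^{-1}=\rho^{-1}$ in $\mathrm{D}_{10}$ shows that the involutions interchange them. Consequently $|-K_{S}-C|$ carries no $G$-invariant member, and $S$ has no $G$-fixed point, since any such point would lie on a $G$-invariant fiber. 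With this in hand I would rule out a one-dimensional non-log-canonical locus as in Proposition~\ref{lemma:quadric}: writing $\Delta=\epsilon Z+\Omega$, the case $Z=C$ is impossible because $(\mu\epsilon-\beta)C+\Omega\sim_{\mathbb{R}}-K_{S}-C$ would give $0<2(\mu\epsilon-\beta)\le(-K_{S}-C)^2=0$; a fiber-supported $Z$ lies in $|n(-K_{S}-C)|$, with $n=1$ excluded by the absence of invariant members and $n\ge 2$ excluded by the intersection estimate of Proposition~\ref{lemma:quadric}, whose terminal inequality here reads $2<\beta\,C\cdot(-K_{S})=\beta\le 1$ (this is where $-K_{S}\cdot C=1$, smaller than the quadric's value $6$, makes the contradiction hold for \emph{every} $\beta\in(0,1]$ rather than only for small $\beta$). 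The surviving finite, connected, $G$-invariant locus is then a single $G$-fixed point, which does not exist; hence $\alpha_{G}\ge 1$, and Theorem~\ref{theorem:Mazzeo-Rubinstein-Jeffres} yields KEE metrics for all small $\beta$.

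The main obstacle I anticipate is a $G$-invariant irreducible multisection $Z$ with $Z\cdot F=1$ carrying high multiplicity in $\Delta$. The bound $\mathrm{mult}_{Z}\Delta\le \Delta\cdot F/(Z\cdot F)=2\beta$ shows this cannot produce a non-log-canonical center when $\beta\le\tfrac12$, which already covers the small-angle regime relevant to Conjecture~\ref{UniformizationConj}. To reach all of $(0,1]$ one must also exclude the second $G$-invariant line $C'$ in this role, by determining whether $C'$ meets $C$ (in which case $C'\cdot F=0$ and it is a fiber component, already handled) or is disjoint from it (a $(-1)$-section, to be eliminated by a sharper local estimate). Pinning down this last configuration through the explicit incidence geometry of the twelve lines on the Clebsch surface is, I expect, the most delicate step.
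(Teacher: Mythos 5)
Your overall architecture --- orbit-averaging a member of $|-K_S-C|$ for the upper bound, then Shokurov connectedness plus non-existence of $G$-invariant configurations for the lower bound --- is the same as the paper's, and your construction of the $G$-invariant line through the $\mathrm{S}_5$-orbit of the twelve lines is a legitimate variant of the paper's blow-up description. The genuine gap is in your exclusion of curves from the non-log-canonical locus. First, your claim that a fiber-supported $Z$ lies in $|n(-K_S-C)|$ is false here: unlike $\PP^1\times\PP^1$ in Proposition \ref{lemma:quadric}, the cubic surface has Picard rank $7$, and each of the five singular fibers of the conic bundle has irreducible components $L$ with $L\cdot F=0$ but $L\not\sim nF$; these half-fibers escape your case analysis. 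Second, and as you concede yourself, for a horizontal $G$-invariant curve your bound $\mathrm{mult}_Z\Delta\le\Delta\cdot F=2\beta$ contradicts the requirement $\mathrm{mult}_Z\Delta>1$ only when $\beta\le 1/2$, whereas the proposition asserts $\alpha_G(S,(1-\beta)C)=1$ for \emph{every} $\beta\in(0,1]$; the second $G$-invariant line $C'$ (a $(-1)$-curve section of the conic bundle, disjoint from $C$) is precisely the configuration you leave open, so your proof is incomplete on $(1/2,1]$. A further soft spot: your argument that $G$ acts without fixed points on the base is not airtight as written --- a $\ZZ_5$ acting trivially on the base and fixing $C\cap F$ in each fiber need not act trivially on $S$ (it could rotate every fiber about its two marked points), and $\tau\rho\tau^{-1}=\rho^{-1}$ only shows that $\tau$ preserves the two-point fixed set of $\rho$, not that it interchanges it. Both facts are true, but they need the irreducibility of the two-dimensional representation through which $G$ acts on the pencil.

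The paper avoids all of this. Since $(1-\beta)C+\Delta\sim_{\QQ}-K_S$, it invokes \cite[Lemma~5.36]{CoKoSm} to conclude that $(S,(1-\beta)C+\Delta)$ is log canonical outside finitely many points, so no curve --- vertical or horizontal, for any $\beta\in(0,1]$ --- can appear in the non-log-canonical locus; the connectedness principle \cite[Lemma~5.7]{Sho93} then reduces that locus to a single point $O$, which must be $G$-fixed. Finally, instead of analyzing the action on the base of the conic bundle, the paper observes that $H^0(\mathcal{O}_S(-K_S))$ is the restriction to $\mathrm{D}_{10}$ of the standard four-dimensional representation of $\mathrm{S}_5$ and splits as the sum of the two irreducible two-dimensional representations of $\mathrm{D}_{10}$; a $G$-fixed point on $S\subset\PP^3$ would force a one-dimensional constituent, a contradiction. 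If you want to repair your write-up along your own lines, substitute \cite[Lemma~5.36]{CoKoSm} (or an explicit multiplicity bound for effective $\QQ$-divisors $\QQ$-linearly equivalent to $-K_S$ on a cubic surface, e.g., obtained by intersecting with residual conic classes) for your curve-by-curve analysis, and the $H^0(\mathcal{O}_S(-K_S))$ computation for your base-action argument.
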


\begin{proof}
The surface $S$ can be obtained as $\mathrm{A}_5$-equivariant blow
up of $\mathbb{P}^2$ at the unique $\mathrm{A}_5$-orbit of length
$6$ (see \cite[\S~4]{Hi08} for details). Then the stabilizer in
$\mathrm{A}_5$ of any exceptional curve of this blow up is a
finite group isomorphic to $G$. Keeping in mind that all finite
subgroups in $\mathrm{A}_5$ that are  isomorphic to $G$ are
conjugate, we see that there exists a $G$-invariant line $C\subset
S$.

By Proposition \ref{proposition:conic-bundle} the linear system
$|-K_{S}-C|$ is a free pencil of conics on $S$. By our assumptions
this pencil is $G$-invariant. Let $Z_1$ be any curve in
$|-K_{S}-C|$, and let $Z_2,\ldots,Z_r$ be all curves in
$|-K_{S}-C|$ that are images of $Z_1$ via $G$. Then the divisor
$Z_1+Z_2+\cdots+Z_r$ is $G$-invariant and
$$
\frac1r{\sum_{i=1}^{r}Z_i}+\beta C\sim_{\mathbb{Q}} 
-K_{S}-(1-\beta)C.%
$$ 
On the other hand, 
$$
\mathrm{lct}\big(S,(1-\beta)C;r^{-1}\h{${\sum_{i=1}^{r}Z_i}$}+\beta C\big)\le 1,%
$$
so $\alpha(S,(1-\beta)C)\le 1$.

Suppose that $\alpha_{G}(S,(1-\beta)C)<1$. Then there exists an
effective $G$-invariant $\mathbb{Q}$-divisor $\Delta$ such that
$$
\Delta\sim_{\mathbb{Q}} -K_{S}-(1-\beta)C
$$ and the pair
$(S,(1-\beta)C+\mu\Delta)$ is not lc at some point $O\in S$ for
some positive rational $\mu<1$. Since
$(1-\beta)C+\Delta\sim_{\mathbb{Q}}-K_{S}$, it follows from
\cite[Lemma~5.36]{CoKoSm} that the pair $(S,(1-\beta)C+\Delta)$ is
lc outside of  finitely many points in $S$. Since
$\mu<1$, the divisor $-K_{S}-(1-\beta)C-\mu\Delta)$ is ample, and
thus the connectedness principle \cite[Lemma~5.7]{Sho93} implies
that the pair $(S,(1-\beta)C+\mu\Delta)$ is lc outside of the
point $O\in S$. In particular, this point $O$ must be
$G$-invariant.

On the other hand, the vector space $H^0(\mathcal{O}_{S}(-K_{S}))$
is a four-dimensional ($\chi_S(-K_S)=h^0(S,\calO_S(-K_S))=1+K_S^2=4$
\cite[p. 471]{GH}
since $S$ is a six-point blow-up of $\PP^2$)
representation of the group $G$ that splits
as a sum of two irreducible two-dimensional representations.
Hence, there exists no $G$-invariant point in $S$, since
otherwise $H^0(\mathcal{O}_{S}(-K_{S}))$ would contain a
one-dimensional sub-representation of $G$.
Thus $\alpha_{G}(S,(1-\beta)C)=1$.
\end{proof}

{\sc University of Edinburgh, Edinburgh, Scotland}

{\tt i.cheltsov@ed.ac.uk}

\smallskip

{\sc University of Maryland, College Park}

{\tt yanir@umd.edu}

\end{document}